\theoremstyle{definition}% upright text, extra space above and below;
\newtheorem{thm}{Theorem}[section]
\newtheorem{lem}[thm]{Lemma}
\newtheorem{prop}[thm]{Proposition}
\newtheorem{cor}[thm]{Corollary}
\theoremstyle{definition}% upright text, extra space above and below;
\newtheorem{defn}[thm]{Definition}
\newtheorem{exmp}[thm]{Example}
\theoremstyle{remark}% upright text, no extra space above or below.
\newtheorem*{rem}{Remark}
\newtheorem*{conv}{Convention}
\newcommand{\tup}[1]{\left( #1 \right)}
\newcommand{\sbr}[1]{\left[ #1 \right]} % square brackets in general
\newcommand{\interval}[1]{\left[ #1 \right]} % interval [a, b] or [n]
\newcommand{\ponl}[1]{\left[ #1 \right]} % permutation written in one-line notation
\newcommand{\set}[1]{\left\{ #1 \right\}}
\newcommand{\cyc}[1]{\operatorname{cyc}_{#1}}
\newcommand{\card}[1]{\left| #1 \right|} % cardinality of a set
\newcommand{\abs}[1]{\left| #1 \right|} % absolute value of a number
\newcommand{\NN}{\mathbb{N}}
\newcommand{\ZZ}{\mathbb{Z}}
\newcommand{\QQ}{\mathbb{Q}}
\newcommand{\RR}{\mathbb{R}}
\newcommand{\kk}{\mathbf{k}}
\newcommand{\ksn}{\kk\left[S_n\right]}
\newcommand{\kf}{\kk\mathcal{F}}
\newcommand{\kfsn}{\left(\kf\right)^{S_n}}
\newcommand{\dsn}{D\left(S_n\right)}
\newcommand{\A}{\mathbf{A}}
\newcommand{\BB}{\mathbf{B}}
\newcommand{\BBA}{\BB_{\tup{1,n-1}}}
\newcommand{\BBa}{\mathbf{B}_{\alpha}}
\newcommand{\BBt}{\widetilde{\mathbf{B}}}
\newcommand{\BBta}{\widetilde{\mathbf{B}}_{\alpha}}
\newcommand{\BBi}{\mathbf{B}_{I}}
\newcommand{\TT}{\mathbf{T}}
\newcommand{\DD}{\mathbf{D}}
\newcommand{\FF}{\mathbf{F}}
\newcommand{\GG}{\mathbf{G}}
\newcommand{\CF}{\mathcal{F}}
\newcommand{\xx}{\mathbf{x}}
\newcommand{\bgamma}{\bm{\gamma}}
\newcommand{\wo}{w_0} % reflection
\newcommand{\wotil}{\widetilde{w}_0}
\newcommand{\na}{n_{ \alpha } }% % knapsack numbers
\newcommand{\nA}{n_{ \tup{1,n-1} } }% fix the composition as (1,n-1)
\newcommand{\Na}{N_{ \alpha } }% % knapsack sets
\newcommand{\nn}{\widetilde{n}}% % signed knapsack numbers
\newcommand{\nna}{\widetilde{n}_{ \alpha } }% % signed knapsack numbers
\newcommand{\Omal}{\Omega_{\alpha}}
\newcommand{\Des}{\operatorname{Des}}
\newcommand{\sign}{\operatorname{sign}}
\newcommand{\End}{\operatorname{End}}
\newcommand{\gaps}{\operatorname{gaps}}
\newcommand{\sub}{\operatorname{sub}}
\newcommand{\rev}{\operatorname{rev}}
\newcommand{\red}{\operatorname{red}}
\newcommand{\type}{\operatorname{type}}
\newcommand{\Sin}{\operatorname{Sin}}
\newcommand{\id}{\operatorname{id}}
\newcommand{\con}{\twoheadrightarrow} % containment relation symbol
\newcommand{\defin}[1]{\textbf{#1}}
\newenvironment{thm}[1][Theorem]{\noindent\textbf{#1.} }{\ \rule{0.5em}{0.5em}}
\newenvironment{lem}[1][Lemma]{\noindent\textbf{#1.} }{\ \rule{0.5em}{0.5em}}
\newenvironment{prop}[1][Proposition]{\noindent\textbf{#1.} }{\ \rule{0.5em}{0.5em}}
\newenvironment{cor}[1][Corollary]{\noindent\textbf{#1.} }{\ \rule{0.5em}{0.5em}}
\newenvironment{defn}[1][Definition]{\noindent\textbf{#1.} }{\ \rule{0.5em}{0.5em}}
\newenvironment{exmp}[1][Example]{\noindent\textbf{#1.} }{\ \rule{0.5em}{0.5em}}
\newenvironment{rem}[1][Remark]{\noindent\textbf{#1.} }{\ \rule{0.5em}{0.5em}}
\newenvironment{proof}[1][Proof]{\noindent\textbf{#1.} }{\ \rule{0.5em}{0.5em}}
\title{Top to random and reverse: analysis of a new descent algebra shuffle}
\author{Darij Grinberg\footnote{Drexel University, Philadelphia, PA. \href{mailto:darijgrinberg@gmail.com}{\texttt{darijgrinberg@gmail.com}}}, Jonathan Parlett\footnote{University of Georgia, Athens, GA. \href{mailto:jonathan.m.parlett@gmail.com}{\texttt{jonathan.m.parlett@gmail.com}}}}
\date{Extended and corrected version of the second author's honors thesis at Drexel University (2025). \medskip\\
August 8, 2025}
\begin{document}
\maketitle

\section{\label{sec.intro}Introduction}

Consider the symmetric group $S_n$, consisting of permutations of the $n$-element set $\interval{n} := \set{1, 2, \ldots, n}$.
The permutations $w \in S_n$ are often visualized as a shuffled deck of $n$ cards that are labeled $1,2,\dots,n$:
Any permutation $w \in S_n$ corresponds to the state in which the cards are ordered $w\tup{1},w\tup{2},\dots,w\tup{n}$.
We may define a random state (i.e., a probability distribution on the set of states) to be a formal linear combination of permutations $\sum_{w \in S_n} a_{w}w \in \mathbb{R}[S_n]$ where the $a_{w} \in \mathbb{R}$ are non-negative and $\sum_{w \in S_n} a_{w}=1$.
Then $a_{w}$ is the probability that the deck is in state $w$.
We may drop the $\sum_{w \in S_n} a_{w}=1$ condition as we may simply ``normalize''\ our sum (divide by $\sum_{w \in S_n }a_{w}$) to obtain a random state.
For example, the element $1 + \cyc{1,2,3}$ (where $\cyc{1,2,3}$ is the $3$-cycle $1 \mapsto 2 \mapsto 3 \mapsto 1$) corresponds to the random state where the probability of the deck being ordered $1,2,3$ is $1/2$ and ordered $2,3,1$ is $1/2$.

A random shuffle can then be defined as a Markov chain on $S_n$, or, equivalently, a linear endomorphism of the $\RR$-vector space $\RR[S_n]$ that is given by a stochastic matrix.
However, $\RR[S_n]$ is itself an $\RR$-algebra -- the group algebra of the group $S_n$ -- and thus each of its elements gives rise to two endomorphisms, one by left and one by right multiplication.
Hence, each element of $\RR[S_n]$ with nonnegative coefficients gives rise to two random shuffles.
For instance, left multiplication by $1 + \cyc{1,2}$ swaps the cards labelled $1$ and $2$ with probability $1/2$ and otherwise leaves the deck unchanged; whereas right multiplication by $1 + \cyc{1,2}$ swaps the top two cards (whatever their labels are) with probability $1/2$ and otherwise leaves the deck unchanged.
This rests on the fact that the multiplication in $\RR[S_n]$ is just extending by linearity the multiplication in the symmetric group $S_n$, which corresponds to composing permutations (performing one after the other).

% Our algebra multiplication then allows us to interpret these elements also as card shuffling operators. Multiplying a random state by another random state corresponds to making two random choices of shuffles to apply to the deck of cards. For example

% \begin{align*}
% \tup{1+\cyc{1,2,3}}^{2} &= 1 + 2\cyc{1,2,3}  + \cyc{1,2,3}^{2}\\
                        % &= 1 + 2\cyc{1,2,3}  + \cyc{1,3,2}\\
% \end{align*}

% tells us that the probability our deck is in state $2,3,1$ given that we started with either the sorted deck $1,2,3$ or the  deck $2,3,1$ and either didn't change the deck or shifted the cards left is $1/2$. 

% [DG] Page 19 of https://www.cip.ifi.lmu.de/~grinberg/algebra/alcove2025.pdf

This algebraic viewpoint on card shuffling first rose to prominence when Diaconis and Shahshahani bounded the mixing time of some shuffles by computing the eigenvalues of the respective operators \cite{DiaSha81}; this answers the (admittedly open-ended) question ``how many times should we shuffle a deck of cards to obtain a well-shuffled deck''.
See \cite{DiaconisFulman,Diaconis03} for surveys of work done along these lines.
Recent research efforts have focused on identifying shuffles whose eigenvalues admit a combinatorial description \cite{GriLaf22, DiaSha81, Phatar91, DieSal18}.
We extend this line of research by studying a new shuffle derived from the well-known top-to-random shuffle.

Let $\TT_k$ be the element of the group algebra $\RR[S_n]$ given by
\[
\TT_k = \sum_{\substack{w \in S_n;\\ w^{-1}\tup{ k+1 } < \cdots < w^{-1}\tup{ n }}} w \qquad\tup{ \text{for each integer } 0 \leq k \leq n}.
\]
The element $ \TT_k $ is known as the \defin{$k$-top-to-random} shuffle and has been well studied in \cite{Phatar91}.
In particular it is known that $\TT_1$ is diagonalizable with integer eigenvalues that admit a combinatorial description.
That is, the \defin{minimal polynomial} of $\TT_1$ -- meaning the lowest-degree monic polynomial that vanishes when applied to $\TT_1$ -- is a product of linear factors, namely
\[
\prod_{k \in \set{0, 1, \ldots, n-2, n}} \tup{x-k} \in \RR[x]
\]
(note the conspicuous absence of the $x-\tup{n-1}$ factor).
This entails that when acting on $\RR[S_n]$ either by left or right multiplication, $\TT_1$ is a diagonalizable endomorphism with eigenvalues $0, 1, \ldots, n-2, n$; their multiplicities too can be computed.
This analysis can be extended to arbitrary base rings $\kk$ instead of $\RR$ (we are restricting ourselves to $\RR$ in this introduction for simplicity's sake),
and to all the $\TT_k$'s (in fact, each $\TT_k$ is a polynomial in $\TT_1$, thus inheriting the diagonalizability of the latter).
See \cite{MO308600} (where $\TT_k$ is denoted $\BB_k$) for details and further references.
Informally, $\TT_k$ -- acting as a random shuffle by right multiplication -- corresponds to taking the top $k$ cards of the deck and moving them to random positions.

The \defin{antipode} of $ \ksn $ is the $ \kk $-linear map $ S : \ksn \to \ksn $ that sends each permutation $ w \in S_{n} $ to its inverse $ w^{-1} $.
This is a $ \kk $-algebra anti-isomorphism, and thus preserves the minimal polynomial.
%That is for each $ \mathbf{a} \in \ksn $ both $ \mathbf{a} $ and $ S\tup{ \mathbf{a} } $ have the same minimal polynomial. See section \ref{sec.minpol}, and in particular \eqref{eq.mu.fa=a.antimor} for details.
Hence, to determine the minimal polynomial of some element it suffices to study its antipode.

The antipodes of the elements $\TT_k$ are instances of a wider class of shuffle-like elements, which form a basis of the \defin{descent algebra} of $\ksn$.
These are indexed by the \defin{compositions} of $n$, that is, by the tuples of positive integers with sum $n$.
Given a composition $\alpha = \tup{\alpha_1, \alpha_2, \ldots, \alpha_k}$ of $n$, we set
\[
\BBa = \sum w,
\]
where the sum ranges over all permutations $w \in S_n$ that increase on the smallest $\alpha_1$ elements of $\interval{n}$, on the next-smallest $\alpha_2$ elements of $\interval{n}$, on the next-smallest $\alpha_3$ elements of $\interval{n}$, etc. (but can decrease between these blocks).
Note that $S\tup{ \TT_k } = \BBa$ for $\alpha = \tup{1, 1, \ldots, 1, n-k}$ (with $k$ many $1$s).
These elements $\BBa$ are known as the ``$\alpha$-shuffles'' in the shuffling world, and are diagonalizable as well; their eigenvalues are known since Bidigare's pioneering work \cite[\S 4.1.9]{Bidigare-thesis}.
Moreover, any linear combination of the $\BBa$ with nonnegative real coefficients is still diagonalizable and its eigenvalues are known, by a result of Brown \cite[Theorem 4.1]{Schocker}.

In this work, we extend this analysis to a ``mirror version''\ of the $\alpha$-shuffles and their linear combinations.
The ``mirroring'' refers to reversing the deck, which is modelled by an application of the \defin{longest word} $\wo$.
Formally, $\wo$ is defined as the permutation in $S_n$ with one-line notation $ \tup{ n,n-1,\dots,1 } $.
We shall study the shuffles $ \wo\BBa $ and $ \BBa\wo $, which are conjugate because $\wo^{-1} = \wo$.

We shall now overview our results in the simple-looking (but already highly nontrivial) case when $\alpha = \tup{1, n-1}$, in which case $\BBa = S\tup{ \TT_1 }$.
% , since a permutation that is increasing on its last $ n-1 $ numbers is an inverse of permutation being summed by $ \TT_{1} $.
The $1$-top-to-random shuffle $\TT_1$ is denoted by $\A$, and is known as the \defin{top-to-random shuffle} or the \defin{Tsetlin library}; it has appeared (among other places) in Lie algebra cohomology \cite[Appendix]{Wallach} and machine learning \cite[Lemma 29]{Reizen19}.

Informally, the shuffle $\wo\TT_1$ corresponds to picking the top card, moving it to a random position, and reversing the order of all cards in the deck, while $\TT_1\wo$ corresponds to first reversing the deck, then moving the top card to a random position.
We thus call $ \wo\TT_{1} $ and $ \TT_{1}\wo $ the \defin{reverse top-to-random} shuffles.

Our first main result for this case is saying that $ \wo\TT_1 $ is diagonalizable with integer eigenvalues. To be more precise, we have the following theorem:
% \jonathan{It is a bit odd we have this result stated as two different theorems. Maybe we should reference the theorem in section 9 and place the formula here instead.}
% \darij{Added the down-reference. I don't think it can be further improved...}
\begin{thm}[Theorem~\ref{bb1_thm2} further on]
    \label{bb1_thm}
    Let $n > 1$. The minimal polynomial of $ \wo\A $ and $ \A\wo $ over $\RR$ is
    \begin{equation}
    \label{bb1_thm.eq}
    \prod_{k \in \set{-n+2} \cup \interval{-n+4, n-3} \cup \set{0} \cup \set{n}} \tup{x-k} \in \RR\sbr{x}
    \end{equation}
(where $\interval{a,b} := \set{k \in \ZZ : a \le k \le b}$).
Thus, the elements $\wo\A$ and $\A\wo$ -- acting either by left or by right multiplication on $\RR[S_n]$ -- are diagonalizable linear endomorphisms whose eigenvalues are $0$, as well as $\pm k$ for $k \in \set{1,2,\ldots,n-4}$, as well as $n-3$ and $-n+2$ and $n$.
\end{thm}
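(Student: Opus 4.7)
My plan is to combine a squaring identity with representation-theoretic analysis on the isotypic components of $\RR\sbr{S_n}$.

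First, the problem reduces to a purely algebraic computation. The elements $\wo\A$ and $\A\wo$ are conjugate in $\RR\sbr{S_n}$ via $\wo$ (since $\wo^2 = 1$), so they share a minimal polynomial; moreover, this coincides with the minimal polynomial of either as a left or right multiplication operator on $\RR\sbr{S_n}$, since $\RR\sbr{S_n}$ is faithful and cyclic over itself under either action (so $p\tup{L_x}=0$ is equivalent to $p\tup{L_x}\tup{1}=p\tup{x}=0$, and similarly for $R_x$). Hence it suffices to compute the minimal polynomial of $\wo\A$ inside the algebra.

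Next, set $\A^{\star} := \wo\A\wo$. This is conjugate to $\A$ in $\RR\sbr{S_n}$, hence shares its minimal polynomial $\prod_{k \in \set{0,1,\ldots,n-2,n}}\tup{x-k}$; combinatorially, $\A^{\star}$ is the ``bottom-to-random'' shuffle. The crucial identity
\[
  \tup{\wo\A}^2 \;=\; \wo\A\wo\A \;=\; \A^{\star}\A
\]
recasts $\tup{\wo\A}^2$ as a two-sided product. Applying the antipode $S$ yields $\A^{\star}\A = S\tup{\BBA\,\BB_{\tup{n-1,1}}}$, so $\tup{\wo\A}^2$ shares its minimal polynomial with the descent algebra element $\BBA\,\BB_{\tup{n-1,1}}$. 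I would compute the spectrum of this descent algebra element (via the Bidigare/Brown framework, or by a direct isotypic computation using the known spectra of $\BBA$ and its $\wo$-conjugate), obtaining $\set{k^2 : k \in \set{0,1,\ldots,n-2,n}}$, exactly the squares of the claimed eigenvalues of $\wo\A$. This confines the spectrum of $\wo\A$ to $\set{0, \pm 1, \ldots, \pm\tup{n-2}, \pm n}$, and it restricts potential Jordan blocks to the eigenvalue $0$ alone, since nonzero Jordan blocks would survive squaring.

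Finally, I would identify which of the candidate signs actually occur. Decomposing $\RR\sbr{S_n}$ into bimodule isotypic components $V^\lambda \otimes \tup{V^\lambda}^*$, the minimal polynomial of $\wo\A$ equals the least common multiple of the minimal polynomials of $\rho_\lambda\tup{\wo}\,\rho_\lambda\tup{\A}$ on each irreducible $V^\lambda$. Several eigenvalues fall out from low-dimensional irreducibles: the trivial representation contributes $n$; the sign representation contributes an eigenvalue of the form $\sign\tup{\wo}\sum_w \sign\tup{w}$ summed over the support of $\A$, giving a $\pm\tup{n-2}$-type value (or $0$) depending on parity; the $\tup{n-1,1}$-hook contributes $n-3$. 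The interior eigenvalues $\pm k$ for $k \in \set{1,\ldots,n-4}$ demand explicit eigenvector constructions on intermediate Specht modules, cleanly obtained by specializing the general Theorem~\ref{bb1_thm2} proved later in the paper.

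I expect the hardest step to be this final sign-distinguishing one. The squared identity only reveals eigenvalues up to sign, so genuine combinatorial or representation-theoretic input is required to rule out the absent candidates $-\tup{n-3}$, $+\tup{n-2}$, $\pm\tup{n-1}$, and $-n$, and to witness the ``unpaired'' eigenvalues $n-3$, $-\tup{n-2}$, and $n$ that do appear --- precisely the content that the general $\BBa\wo$ machinery developed in the body of the paper is designed to deliver.
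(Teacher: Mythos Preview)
Your squaring identity $(\wo\A)^2 = \A^\star\A$ is exactly what the paper uses in Section~\ref{sec.longpol} (see Corollary~\ref{w0conj_cor} and Theorem~\ref{thm.main.1}) to obtain a \emph{non-sharp} annihilating polynomial whose roots are $\{0,\pm 1,\ldots,\pm(n-2),\pm n\}$. So the first half of your sketch tracks the paper.

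The gap is in the second half. You acknowledge that the sign-determination is the hard part, and then resolve it by ``specializing the general Theorem~\ref{bb1_thm2}''. But Theorem~\ref{bb1_thm2} \emph{is} the theorem you are proving --- it is just the restatement of Theorem~\ref{bb1_thm} in Section~\ref{sec.ttr} --- so this is circular. What the paper actually does for the sharp result is quite different from an isotypic-component analysis: it passes to the face algebra $\kf$, introduces the \emph{signed} knapsack numbers $\nna(F) = (-1)^{n-\ell(F)}\na(F)$, and proves via a refined filtration argument (Lemma~\ref{main.lem.3}, Theorem~\ref{main.thm.3}, Theorem~\ref{thm.mp.1}) that $\mu(\wo\BBa) = \prod_{k\in\nna(\CF)}(x-k)$ for \emph{any} $\alpha$. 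The specific set $L(n)$ then falls out from a direct computation of $\nn_{(1,n-1)}(\CF)$ via $\sin F$ (Proposition~\ref{prop.1n-1.specs}). No Specht-module decomposition enters.

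Two smaller issues: your claim that the sign representation contributes ``a $\pm(n-2)$-type value'' is incorrect --- on the sign representation, $\A = \sum_{i=1}^n \cyc{1,\ldots,i}$ acts by $\sum_{i=1}^n (-1)^{i-1}$, which is $0$ or $1$, not $\pm(n-2)$. And your argument does not rule out a nilpotent Jordan block for $\wo\A$ at the eigenvalue $0$: even if $(\wo\A)^2$ is diagonalizable, a $2\times 2$ nilpotent block for $\wo\A$ squares to zero, which is perfectly diagonalizable. The paper handles diagonalizability via the triangularity in Lemma~\ref{mblks_lem} combined with the annihilating polynomial of Theorem~\ref{main.thm.3}.
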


%% Summary of each section.

Our path to proving Theorem \ref{bb1_thm} begins with a tour of Solomon's descent algebra $\dsn$, which we define in Section \ref{sec.desalg} (including its \defin{B-basis} elements $\BBa$), and its connection to Bidigare's face algebra $ \kf $, which we introduce in Section \ref{sec.facealg}.
In these two sections, we will prove various combinatorial and algebraic properties of these objects, including some classical results of Bidigare and Brown.
The underlying combinatorial objects are \defin{compositions} (i.e., tuples of positive integers) and \defin{set compositions} (also known as ``ordered set partitions'', i.e., tuples of disjoint nonempty sets with a given union).
We will refer to set compositions as ``\defin{faces}'', as they are known to correspond to the faces of the (type-$A$) braid arrangement; however, we will not use any geometry in this paper.
We let $\CF$ be the set of all set compositions of $\interval{n}$; this set is equipped with a monoid structure (Definition~\ref{def.facemon}).

In Section \ref{sec.facealg}, we will also introduce the \defin{knapsack numbers} $ \na\tup{ F } $, which -- roughly speaking -- count the ways that the blocks of a given set composition $F$ of $ \interval{n}$ can be ``packed'' into bags of sizes $ \alpha_1, \alpha_2, \ldots, \alpha_k $ (the entries of the composition $\alpha$).
The set of all these knapsack numbers is denoted $ \na\tup{ \CF } $.
These numbers (for $\alpha$ fixed and $F$ ranging over all faces) turn out to be the eigenvalues of $\BBa$, as (implicitly) shown by Brown \cite[Theorem 4.1]{Schocker}.

% We see our first hint at generality in section \ref{sec.longpol} with Theorem \ref{main.thm.3a} which concerns the minimal polynomial of the \defin{B-basis} elements $ \BBa $. Namely we show that
% \begin{equation}
% \prod_{k\in \na\tup{\CF}} \tup{\BBa - k}
% =0.
% \label{eq.main.3a_intro}
% \end{equation}
% This result also follows as a special case of \cite[Theorem 4.1]{Schocker}, but here we prove the result using the face algebra with an eye towards a similar proof of our main theorem.

One of our main results is a similar description of the eigenvalues of $\wo\BBa$:
We will show (Theorem~\ref{thm.mp.1}) that these eigenvalues are the \defin{signed knapsack numbers} $\nna\tup{F} = \tup{-1}^{n - \ell\tup{F}} \na\tup{F}$ of all faces $F$, where $\ell\tup{F}$ denotes the number of blocks of $F$.
Again, the element $\wo\BBa$ is diagonalizable (over $\QQ$), so that its minimal polynomial is therefore
\[
\prod_{k \in \nna\tup{\CF}} \tup{x-k} .
\]
This will be proved in Section~\ref{sec.minpol}, after the necessary tools have been developed in Sections~\ref{sec.longpol},~\ref{sec.altsum} and~\ref{sec.shortpol}.

% This leads to a corresponding result concerning the elements $\BBa\wo$, and $ \wo\BBa $ obtained in section \ref{sec.shortpol}, after developing the necessary tools in section \ref{sec.altsum}. Here we introduce the signed knapsack numbers $ \nna\tup{ \CF }$, before proving Theorem \ref{main.thm.3} which states that\begin{equation}
% \prod_{k \in \nna\tup{\CF}} \tup{\wo\BBa - k}
% =0
% \end{equation}
% and
% \begin{equation}
% \prod_{k \in \nna\tup{\CF}} \tup{\BBa\wo - k}
% =0.
% \end{equation}
% In section \ref{sec.minpol} we prove explicitly that
% \[
    % \prod_{k \in \nna\tup{ \CF }} \tup{ x-k }
% \]
% is the minimal polynomial of the right multiplications maps of the elements $ \wo\BBa $ and $ \BBa\wo $.

In Section~\ref{sec.ttr}, we will apply our results to the particular case $\alpha = \tup{1, n-1}$, proving Theorem \ref{bb1_thm} (restated as Theorem \ref{bb1_thm2}).
The perhaps strange-looking set $\set{-n+2} \cup \interval{-n+4, n-3} \cup \set{0} \cup \set{n}$ will be revealed there as the set $\nn_{\tup{1,n-1}}\tup{\CF}$ of signed knapsack numbers.

In Section \ref{sec.pos}, we will extend our analysis from a single element $\BBa$ to an arbitrary linear combination $\BB_{\bgamma} := \sum_{\alpha} \gamma_\alpha \BBa$ (where $\alpha$ ranges over the compositions of $n$) of such elements with nonnegative real coefficients $\gamma_\alpha \geq 0$.
Just like Brown determined the minimal polynomial of any such combination $\BB_{\bgamma}$, we will compute it for any $\wo \BB_{\bgamma}$.
The result (Theorem~\ref{thm.gen.mp.1}) is that any such $\wo \BB_{\bgamma}$ is again diagonalizable, and its eigenvalues are the respective linear combinations $\sum_{\alpha} \gamma_\alpha \nna\tup{F}$ of the signed knapsack numbers.
The proof is rather similar to the case of a single $\BBa$, and so we will just outline the differences.
Note that the nonnegativity of the coefficients $\gamma_\alpha$ is important for the diagonalizability, but not for the computation of the eigenvalues.

With the eigenvalues computed, an obvious question to ask is about their algebraic multiplicities.
This question can be asked about any element of the descent algebra (nonnegative coefficients are irrelevant), and it comes in four versions: the element can act either by left or by right multiplication, and it can do so either on the descent algebra or on the whole $\ksn$.
These four questions have three different answers (not four, because left and right action on $\ksn$ yield the same multiplicities).
The answer in the $\ksn$ case can be pieced together from the work of Bidigare \cite[Sections 3.8 and 4.1]{Bidigare-thesis} and Brown \cite[Appendix B]{Brown-Methods} (see \cite[Theorem 1.2]{dc2023}, \cite[Proposition 4.2]{ReSaWe14} and \cite[Corollary 4.1.3]{Bidigare-thesis} for the specific results needed);
the descent algebra case appears to lead into the combinatorics of integer matrices (via Solomon's Mackey formula, \cite[Corollary 2.4]{Saliola}).
Thus, we will avoid this topic in the present paper, hoping to study it in the detail it deserves in future work.

As mentioned above, we used the base ring $\RR$ in this introduction for the sake of concreteness.
In the rest of the paper, we shall work over an arbitrary commutative ring $\kk$ much of the time; occasionally, we will restrict ourselves to fields of characteristic $0$ (when discussing diagonalizability and minimal polynomials).
When $\kk$ is a field of positive characteristic, the polynomials that would normally be minimal still vanish at the respective elements, but are not necessarily minimal any more; sometimes diagonalizability is also lost.
(For example, for $n = 4$ and $\kk = \mathbb{F}_3$, the element $\wo\A$ has minimal polynomial $x\tup{x-1}^2$.)
We do not venture any guesses on what the minimal polynomials over finite fields shall be.

\subsubsection*{Acknowledgments}

The first author would like to thank Sarah Brauner, Patricia Commins and Franco Saliola for many enlightening conversations about the descent algebra.

\section{\label{sec.not}Notation}

\begin{enumerate}[]
    \item Let $\NN = \set{0,1,2,\ldots}$.
    For each $n \in \ZZ$, we let $\interval{n}$ denote the set $ \set{1,2,\dots,n} $. (This is empty if $n \leq 0$.)
	More generally, if $a, b \in \ZZ$ are any two integers, then $\interval{a, b}$ will denote the set $\set{x \in \ZZ \mid a \leq x \leq b} = \set{a,a+1,a+2,\ldots,b}$. (Thus, $\interval{n} = \interval{1, n}$.) Note that $\interval{a, b} = \varnothing$ if $a > b$.

    \item We fix a nonnegative integer $ n \in \NN $.

    \item The symmetric group on $ n $ letters is denoted $ S_n $; it is the set of all bijections from $\interval{n}$ to $\interval{n}$.
	Its multiplication is given by $\tup{\alpha \circ \beta}\tup{i} = \alpha\tup{\beta\tup{i}}$.

    \item The \defin{one-line notation} of a permutation $\sigma \in S_n$ is the $n$-tuple $\tup{\sigma\tup{1}, \sigma\tup{2}, \ldots, \sigma\tup{n}} \in \interval{n}^n$.
	Usually, we enclose this $n$-tuple in square brackets instead of parentheses.

	By abuse of notation, we sometimes identify a permutation $\sigma$ with its one-line notation -- e.g., we write ``the permutation $\ponl{2,1,4,3}$'' for ``the permutation with one-line notation $\ponl{2,1,4,3}$''.
	We hope this will not be confused with the interval notation $\interval{a, b}$.

	\item Let $\wo$ denote the permutation in $S_n$ with one-line notation $ \tup{ n,n-1,\dots,1 } $. More explicitly, $\wo\tup{ i } = n-i+1$ for each $i \in \interval{n}$.

    \item Fix a commutative ring $\kk$. (The reader might take $\kk = \ZZ$ without losing much generality.)

    \item We let $ \ksn $ denote the group algebra of $ S_n $ over $\kk$.
\end{enumerate}

\section{\label{sec.desalg}The Descent Algebra}

\subsection{Compositions, subsets and the bijections between them}

\begin{defn}\phantomsection\ %
    \begin{enumerate}[]
        \item\textbf{(a)} A \defin{composition} of an integer $n \in \NN$ into $k$ parts is a $k$-tuple $\alpha = \tup{\alpha_1, \alpha_2, \dots, \alpha_k} \in \interval{n}^k$ such that $\alpha_1 + \alpha_2 + \cdots + \alpha_k = n$.
		A tuple $\alpha$ is called simply a composition of $n$ if it is a composition of $n$ into $k$ parts for some $k \geq 0$. In this case we may write $\alpha \models n$.
        \item\textbf{(b)} The \defin{length} of a composition $ \alpha $, denoted $ \ell\tup{ \alpha } $, is its number of parts: $ \ell\tup{ \alpha_1, \alpha_2, \dots, \alpha_k } = k $.
    \end{enumerate}
    
\end{defn}

\begin{exmp}
    The distinct compositions of $ n=4 $, grouped by their length, are
    \begin{align*}
        \begin{tabular}{r|l}
        \text{length} & \text{compositions of this length} \\ \hline
        $1$ & $\tup{ 4 }$; \\
        $2$ & $\tup{ 3,1 }, \quad \tup{ 2,2 }, \quad \tup{ 1,3 }$; \\
        $3$ & $\tup{ 2,1,1 }, \quad \tup{ 1,2,1 }, \quad \tup{ 1,1,2 } $; \\
        $4$ & $\tup{ 1,1,1,1 }$.
        \end{tabular}
    \end{align*}
\end{exmp}

\begin{defn}
  Let $\mathcal{P}\tup{\interval{n-1}}$ denote the set of all subsets of $\interval{n-1} = \set{1,2,\dots,n-1}$,
  and $C\tup{n} = \set{ \alpha \models n}$ denote the set of compositions of $n$.
  We define the following maps:
    \begin{enumerate}[]
        \item\textbf{(a)} The map $\gaps : \mathcal{P}\tup{\interval{n-1}} \to C\tup{n}$ is given by
        \[
        \gaps\tup{J}=\tup{j_{1}-j_0,j_{2}-j_{1},\dots,j_m-j_{m-1}},
        \]
        where $J = \set{j_{1},j_{2},\dots,j_{m-1}} \subseteq \interval{n-1}$ with $j_{i} < j_{i+1}$ and where we set $j_0 = 0$ and $j_m=n$.
		Visually speaking, $\gaps\tup{J}$ is the composition whose entries are the distances between adjacent elements of $\set{0} \cup J \cup \set{n}$, that is, the lengths of the little intervals into which the set $J$ splinters the interval $\interval{n}$.
        \item\textbf{(b)} The map $\rev : C\tup{n} \to C\tup{n}$ is given by
        \[
        \rev\tup{\alpha_1, \alpha_2, \dots, \alpha_k} = \tup{\alpha_k, \alpha_{k-1}, \dots, \alpha_1}.
        \]
		The composition $\rev \alpha$ is called the \defin{reverse} of the composition $\alpha$.
        \item\textbf{(c)} The map $\sub : \mathcal{P}\tup{\interval{n-1}} \to \mathcal{P}\tup{\interval{n-1}}$ is given by
        \[
        \sub\tup{J}=n-J = \set{n - j : j \in J}.
        \]
		Visually speaking, $\sub\tup{J}$ is the reflection of $J$ across the point $n/2$ on the number line.
\end{enumerate}

\end{defn}

\begin{prop}
    These three maps $\gaps$, $\rev$ and $\sub$ are bijections.
	In particular, $ \sub$ and $\rev $ are involutions, and we have
    \begin{equation}
    \label{eq.sub.1}
    \sub = \gaps^{-1} \circ \rev \circ \gaps .
    \end{equation}
\end{prop}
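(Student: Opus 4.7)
The plan is to verify each claim by direct construction, since nothing deeper than careful bookkeeping is required.

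First I would prove that $\gaps$ is a bijection by writing down an explicit inverse. Namely, I define a map $\operatorname{psum} : C(n) \to \mathcal{P}(\interval{n-1})$ sending a composition $\alpha = (\alpha_1, \alpha_2, \ldots, \alpha_k)$ to its set of proper partial sums
\[
\set{\alpha_1,\ \alpha_1 + \alpha_2,\ \ldots,\ \alpha_1 + \alpha_2 + \cdots + \alpha_{k-1}}.
\]
Since $\alpha_i \geq 1$ for all $i$ and $\alpha_1 + \cdots + \alpha_k = n$, these partial sums form a strictly increasing sequence of elements of $\interval{n-1}$, so $\operatorname{psum}$ is well-defined. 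I would then verify directly from the definitions that $\gaps \circ \operatorname{psum} = \id_{C(n)}$ and $\operatorname{psum} \circ \gaps = \id_{\mathcal{P}(\interval{n-1})}$, using the convention $j_0 = 0$ and $j_m = n$: the partial sums of $(j_1 - j_0,\ j_2 - j_1,\ \ldots,\ j_m - j_{m-1})$ telescope to $j_1, j_2, \ldots, j_{m-1}$, and conversely.

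Next, the involutivity of $\rev$ and $\sub$ is immediate. Reversing a tuple twice recovers it, and $\sub(\sub(J)) = n - (n - J) = J$. In particular, each is a bijection.

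Finally, for the identity \eqref{eq.sub.1}, I would take an arbitrary $J = \set{j_1 < j_2 < \cdots < j_{m-1}} \in \mathcal{P}(\interval{n-1})$ and compute both sides. Applying $\gaps$ gives the composition $(j_1 - j_0,\ j_2 - j_1,\ \ldots,\ j_m - j_{m-1})$; applying $\rev$ yields $(j_m - j_{m-1},\ j_{m-1} - j_{m-2},\ \ldots,\ j_1 - j_0)$; and applying $\gaps^{-1} = \operatorname{psum}$ produces the set of partial sums
\[
\set{j_m - j_{m-1},\ j_m - j_{m-2},\ \ldots,\ j_m - j_1} = \set{n - j_{m-1},\ n - j_{m-2},\ \ldots,\ n - j_1},
\]
which is exactly $n - J = \sub(J)$. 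This proves \eqref{eq.sub.1}, and as a byproduct gives a second proof that $\sub$ is a bijection (being a composition of three bijections).

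The only ``obstacle'' is notational: keeping the endpoint conventions $j_0 = 0$ and $j_m = n$ consistent with the reversal, and checking that the partial sums of the reversed tuple really telescope to $n - j_i$. There is no subtlety beyond that.
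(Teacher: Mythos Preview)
Your proposal is correct and follows essentially the same approach as the paper, just made fully explicit: the paper sketches the identity \eqref{eq.sub.1} in one sentence (reflecting $J$ across $n/2$ reverses the gap sequence) and defers the formal verification to a reference, whereas you carry out the telescoping computation directly and also supply the explicit inverse $\operatorname{psum}$ for $\gaps$ that the paper leaves implicit.
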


\begin{proof}
Visually, this is obvious:%
\footnote{A formalized proof can be found in \cite[ancillary file comps-compiled.pdf, Theorem 3.8 (i), Corollary 3.10]{GriVas24} (where \eqref{eq.sub.1} is stated in the equivalent form ``$D\tup{\rev \alpha} = \rev_n\tup{D\tup{\alpha}}$ for any composition $\alpha$ of $n$'', using the notations $D$ and $\rev_n$ for what we call $\gaps^{-1}$ and $\sub$).}
Reflecting a set $J \subseteq \interval{n-1}$ across the point $n/2$ causes the distances between adjacent elements of $\set{0} \cup J \cup \set{n}$ to get reversed (i.e., the same distances appear in opposite order), since $0$ and $n$ trade places;
thus, the composition $\gaps J$ is replaced by its reverse $\rev \tup{\gaps J}$.
\end{proof}

\begin{defn}[Descents]
    Let $w \in S_n$ be a permutation.
    \begin{enumerate}[]
    \item\textbf{(a)} An element $i \in \interval{n-1}$ is called a \defin{descent} of $w$ if $w\tup{i} > w\tup{i+1}$.
    \item\textbf{(b)} The \defin{descent set} of $w$ is the set of the descents of $w$.
	It is denoted by $\Des w$.
	That is,
    \[
    \Des w = \set{i \in \interval{n-1} : w\tup{i} > w\tup{i+1}} .
    \]
    \end{enumerate}
\end{defn}

\begin{exmp}
    The permutation $w = \ponl{ 5,2,3,4,1 } \in S_{5}$ has descents $ 1 $ (since $ w\tup{ 1 }=5 > w\tup{ 2 }=2 $) and $4$, but not $ 2 $ or $ 3 $.
	Thus, $ \Des w = \set{1,4} $.
\end{exmp}

\subsection{The descent algebra and its B- and D-bases}

\begin{defn}\phantomsection
    \label{dsn_def}\ %
    \begin{enumerate}[]
        \item\textbf{(a)} For any $I \subseteq \interval{n-1}$, we define an element
        \[
        \BB_{I} := \sum_{\substack{w \in S_n; \\ \Des w \subseteq I}} w \in \ksn.
        \]
        \item\textbf{(b)} For any $I \subseteq \interval{n-1}$, we define an element
        \[
        \DD_{I} := \sum_{\substack{w \in S_n; \\ \Des w = I}} w \in \ksn.
        \]
        \item\textbf{(c)} We may also index the elements $\BB_{I}$ by the composition of $n$ that corresponds to the subset $I$ under the map $\gaps$.
        That is, for each $\alpha \models n$, we set
        \[
        \BB_{\alpha} := \BB_{\gaps^{-1}\tup{\alpha}}
        = \sum_{\substack{w \in S_n; \\ \Des  w \subseteq \gaps^{-1}\tup{\alpha}}} w .
        \]
        \item\textbf{(d)} The \defin{descent algebra} $\dsn$ is defined as the $\kk$-linear span of the
        elements $\BB_I$ for all $I \subseteq \interval{n-1}$ inside $\ksn$.
        It follows from \eqref{eq.BD-cob.B} and \eqref{eq.BD-cob.D} below
        that this span is also the span of the elements $\DD_I$ for all $I \subseteq \interval{n-1}$.
        Both families $\tup{\BB_I}_{I \subseteq \interval{n-1}}$ and
        $\tup{\DD_I}_{I \subseteq \interval{n-1}}$ are known to be bases of the $\kk$-module $\dsn$
        (this is easy to see\footnote{The family $\tup{\DD_I}_{I \subseteq \interval{n-1}}$
        is linearly independent, since its entries $\DD_I$ are sums of disjoint nonempty
        sets of permutations
		(nonempty because each $I \subseteq \interval{n-1}$ is the descent set of at
        least one permutation $w \in S_n$).
        Thus, the family $\tup{\BB_I}_{I \subseteq \interval{n-1}}$ is also linearly
        independent, since Proposition~\ref{prop.BD-cob} establishes a
        triangular change-of-basis relationship between these two families.
        Therefore, this latter family $\tup{\BB_I}_{I \subseteq \interval{n-1}}$
        is a basis of its span, which is what we call $\dsn$.
        Thus, $\tup{\DD_I}_{I \subseteq \interval{n-1}}$ is a basis of $\dsn$ as well,
        again because of Proposition~\ref{prop.BD-cob}.});
        we call them the \defin{$\BB$-basis} and the
        \defin{$\DD$-basis}.
    \end{enumerate}
\end{defn}

As the name suggests, the descent algebra $\dsn$ is actually a subalgebra of $\ksn$.
This is a celebrated result of Solomon \cite[Theorem 1]{Sol76}, which we will say more about later (Theorem~\ref{dsn_kfsn_morph}); at the present moment we will not need it.

\begin{exmp}
    For $ n=4 $, we have
    \begin{align*}
        \DD_{\set{1,3}} &= \ponl{ 2,1,4,3 } + \ponl{ 3,1,4,2 } + \ponl{ 3,2,4,1 } + \ponl{ 4,1,3,2 } + \ponl{ 4,2,3,1 }, \\
        \DD_{\set{1}} &= \ponl{ 2,1,3,4 } + \ponl{ 3,1,2,4 } + \ponl{ 4,1,2,3 }, \\
        \DD_{\set{3}} &= \ponl{ 2,3,4,1 } + \ponl{ 1,3,4,2 } + \ponl{ 1,2,4,3 }, \\
		\DD_{\varnothing} &= \ponl{ 1,2,3,4 },
    \end{align*}
    and 
    \begin{align*}
        \BB_{\set{1,3}} &= \DD_{\set{1,3}} + \DD_{\set{1}} + \DD_{\set{3}} + \DD_{\varnothing} \\
                        &= \ponl{ 2,1,4,3 } + \ponl{ 3,1,4,2 } + \ponl{ 3,2,4,1 } + \ponl{ 4,1,3,2 } + \ponl{ 4,2,3,1 }\\
                        & \qquad + \ponl{ 2,1,3,4 } + \ponl{ 3,1,2,4 } + \ponl{ 4,1,2,3 }
						+ \ponl{ 2,3,4,1 } + \ponl{ 1,3,4,2 } + \ponl{ 1,2,4,3 } \\
						& \qquad + \ponl{ 1,2,3,4 }.
    \end{align*}

\end{exmp}

\begin{exmp}
\label{exam.Bn-1}
    The element $\BB_{\interval{n-1}} \in \ksn$ is
	defined as the sum of all permutations $w\in S_n$ satisfying
	$\Des w \subseteq \interval{n-1}$.
	Thus, it is simply the
	sum of all permutations $w\in S_n$ (since the condition
	$\Des w \subseteq \interval{n-1}$ is a tautology).
	In other words,
	\begin{align}
	\BB_{\interval{n-1}} = \sum_{w \in S_n} w.
	\label{eq.exam.Bn-1.1}
	\end{align}
\end{exmp}

The following proposition gives change-of-basis formulas between the $\BB$-basis and the $\DD$-basis:

\begin{prop}
\label{prop.BD-cob}
For any $I \subseteq \interval{n-1}$, we have
    \begin{align}
    \BB_{I} = \sum_{J \subseteq I} \DD_J
    \label{eq.BD-cob.B}
    \end{align}
and
    \begin{align}
    \DD_I = \sum_{J \subseteq I} \tup{-1}^{\card{I\setminus J}} \BB_J.
    \label{eq.BD-cob.D}
    \end{align}
\end{prop}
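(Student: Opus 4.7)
The plan is to prove the first identity \eqref{eq.BD-cob.B} directly from the definitions, and then deduce the second identity \eqref{eq.BD-cob.D} by Möbius inversion on the Boolean lattice of subsets of $\interval{n-1}$.

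For \eqref{eq.BD-cob.B}, I would unfold the definition of $\BB_I$:
\[
\BB_I = \sum_{\substack{w \in S_n \\ \Des w \subseteq I}} w.
\]
Every permutation $w \in S_n$ has a well-defined descent set $\Des w \subseteq \interval{n-1}$, so I can partition the set $\set{w \in S_n : \Des w \subseteq I}$ according to the actual value of $\Des w$. That value ranges over all $J \subseteq I$, and for each such $J$ the inner sum $\sum_{w : \Des w = J} w$ is precisely $\DD_J$. This gives the claim immediately; no real work is required.

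For \eqref{eq.BD-cob.D}, the standard approach is Möbius inversion on the poset $\tup{\mathcal{P}\tup{\interval{n-1}}, \subseteq}$. Since the Möbius function of the Boolean lattice satisfies $\mu\tup{J, I} = \tup{-1}^{\card{I \setminus J}}$, and equation \eqref{eq.BD-cob.B} expresses $\BB_I$ as the sum of $\DD_J$ over all $J \leq I$ in this poset, Möbius inversion yields
\[
\DD_I = \sum_{J \subseteq I} \tup{-1}^{\card{I \setminus J}} \BB_J,
\]
which is exactly \eqref{eq.BD-cob.D}. Alternatively, and perhaps more elementarily, one can substitute \eqref{eq.BD-cob.B} into the right-hand side of \eqref{eq.BD-cob.D}, exchange summations, and use the standard identity $\sum_{K \subseteq L \subseteq I} \tup{-1}^{\card{I \setminus L}} = \ponl{K = I}$ (an inclusion-exclusion cancellation on the Boolean interval $\sbr{K, I}$) to collapse the sum down to $\DD_I$.

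There is no genuine obstacle here: both identities are essentially bookkeeping. The only thing worth being careful about is the direction of the inversion and the sign convention $\card{I \setminus J}$ (as opposed to $\card{J \setminus I}$ or $\card{I} - \card{J}$, which happen to coincide when $J \subseteq I$). Since the identities hold over $\ZZ$ and both sides are $\kk$-linear combinations of permutations, they descend to any commutative ring $\kk$, so no hypothesis on $\kk$ is needed.
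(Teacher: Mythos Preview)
Your proposal is correct and follows essentially the same approach as the paper: the first identity is immediate from the definitions by partitioning according to the actual descent set, and the second follows by M\"obius inversion on the Boolean lattice $\tup{\mathcal{P}\tup{\interval{n-1}}, \subseteq}$. The paper invokes this inversion by reference, while you spell it out (and offer the alternative direct substitution), but the argument is the same.
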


\begin{proof}
    The first formula is obvious.
	The second follows from it by M\"obius inversion on the Boolean lattice $\tup{\mathcal{P}\tup{\interval{n-1}},\subseteq}$ (see, e.g., \cite[Theorem 6.2.10]{21s} or \cite[the dual form of Theorem 2.1.1]{EC1}).\footnote{In more detail: Apply \cite[Theorem 6.2.10]{21s} to $S = \interval{n-1}$ and $A = \ksn$ and $a_I = \DD_I$ and $b_I = \BB_I$. Then, the theorem shows that \eqref{eq.BD-cob.D} follows from \eqref{eq.BD-cob.B}.}
\end{proof}

Note that $\wo$ is the unique permutation in $S_n$ having descent set $\interval{n-1}$.
Thus, $\wo = \DD_{\interval{n-1}} \in \dsn$.
Hence, the change-of-basis formula \eqref{eq.BD-cob.D} yields the following corollary.

\begin{cor}
   \label{w0_dsn_cor}
   Let $n \geq 1$. Then,
   \begin{equation}
       \wo = \sum_{I \subseteq \interval{n-1}} \tup{-1}^{n-\card{I}-1} \BB_{I}.
	   \label{eq.w0_dsn_cor.eq}
   \end{equation}
\end{cor}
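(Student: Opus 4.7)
The plan is to recognize $\wo$ as a single element of the $\DD$-basis and then apply the Möbius-type inversion of Proposition~\ref{prop.BD-cob} to expand it in the $\BB$-basis. Everything hard has already been done; the corollary is essentially a substitution.

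First I would verify the remark preceding the statement, namely that $\Des \wo = \interval{n-1}$. This is immediate: since $\wo\tup{i} = n-i+1$, the sequence $\wo\tup{1}, \wo\tup{2}, \ldots, \wo\tup{n} = n, n-1, \ldots, 1$ is strictly decreasing, so $\wo\tup{i} > \wo\tup{i+1}$ for every $i \in \interval{n-1}$. Conversely, any $w \in S_n$ with $\Des w = \interval{n-1}$ must have $w\tup{1} > w\tup{2} > \cdots > w\tup{n}$, forcing $w = \wo$. Hence $\wo$ is the unique permutation with descent set $\interval{n-1}$, and therefore, by Definition~\ref{dsn_def}\textbf{(b)},
\[
\DD_{\interval{n-1}} = \sum_{\substack{w \in S_n;\\ \Des w = \interval{n-1}}} w = \wo.
\]

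Next I would apply formula~\eqref{eq.BD-cob.D} from Proposition~\ref{prop.BD-cob} with $I = \interval{n-1}$. Since any $J \subseteq \interval{n-1}$ satisfies $\card{\interval{n-1} \setminus J} = \tup{n-1} - \card{J} = n - \card{J} - 1$, this gives
\[
\wo = \DD_{\interval{n-1}} = \sum_{J \subseteq \interval{n-1}} \tup{-1}^{\card{\interval{n-1}\setminus J}} \BB_J = \sum_{J \subseteq \interval{n-1}} \tup{-1}^{n-\card{J}-1} \BB_J,
\]
which (after renaming the summation index $J$ to $I$) is exactly~\eqref{eq.w0_dsn_cor.eq}.

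There is no real obstacle here: the only things to check are the identification $\wo = \DD_{\interval{n-1}}$ (a one-line descent-set computation) and the arithmetic $\card{\interval{n-1}\setminus J} = n-\card{J}-1$ (which uses $n \geq 1$, justifying the hypothesis of the corollary). The substantive content sits entirely in Proposition~\ref{prop.BD-cob}.
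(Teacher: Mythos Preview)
Your proof is correct and follows essentially the same approach as the paper: identify $\wo = \DD_{\interval{n-1}}$, apply \eqref{eq.BD-cob.D} with $I = \interval{n-1}$, and simplify the exponent using $\card{\interval{n-1}\setminus J} = n-\card{J}-1$. You even spell out the descent-set verification slightly more explicitly than the paper does.
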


\begin{proof}
Applying \eqref{eq.BD-cob.D} to $I = \interval{n-1}$, we find
\[
    \DD_{\interval{n-1}}
	= \sum_{J \subseteq \interval{n-1}} \tup{-1}^{\card{\interval{n-1}\setminus J}} \BB_J
	= \sum_{I \subseteq \interval{n-1}} \tup{-1}^{\card{\interval{n-1}\setminus I}} \BB_I
	= \sum_{I \subseteq \interval{n-1}} \tup{-1}^{n-\card{I}-1} \BB_{I},
\]
since each $I \subseteq \interval{n-1}$ satisfies
$\card{\interval{n-1}\setminus I} = \tup{n-1} - \card{I} = n-\card{I}-1$.
Since $\wo = \DD_{\interval{n-1}}$, this is precisely the claim of the corollary.
\end{proof}

Note that $\wo$ is an involution: $\wo^2 = \id$, thus $\wo^{-1} = \wo$. Hence, conjugation by $\wo$ takes any element $a$ of $\ksn$ to $\wo a \wo$.
On the descent algebra, this takes the following simple form:

\begin{prop}
    \label{w0conj_prop}
    Let $I \subseteq \interval{n-1}$. Then we have
    \[
        \wo \BBi \wo = \BB_{\sub I}.
    \]
\end{prop}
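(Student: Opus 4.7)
My plan is to unfold the definition of $\BB_I$, pull conjugation by $\wo$ inside the sum, and then reindex using the fact that conjugation by $\wo$ permutes $S_n$.

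First, since $\BB_I = \sum_{w \in S_n,\ \Des w \subseteq I} w$, linearity gives
\[
\wo \BB_I \wo = \sum_{\substack{w \in S_n \\ \Des w \subseteq I}} \wo w \wo.
\]
The crucial step is to identify $\Des\tup{\wo w \wo}$ in terms of $\Des w$. Writing $v := \wo w \wo$ and using $\wo\tup{i} = n+1-i$, I would compute
\[
v\tup{i} = \wo\tup{w\tup{\wo\tup{i}}} = n+1 - w\tup{n+1-i}
\]
for each $i \in \interval{n}$. Comparing $v\tup{i}$ with $v\tup{i+1}$, the inequality $v\tup{i} > v\tup{i+1}$ becomes $w\tup{n+1-i} < w\tup{n-i}$, i.e., $n-i \in \Des w$. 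Thus $i \in \Des\tup{\wo w \wo}$ if and only if $n-i \in \Des w$, which by the definition of $\sub$ says exactly $\Des\tup{\wo w \wo} = \sub\tup{\Des w}$.

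Next I would use that $\sub$ is an involution (proved earlier in the excerpt), so $\Des w \subseteq I$ is equivalent to $\Des\tup{\wo w \wo} = \sub\tup{\Des w} \subseteq \sub\tup{I}$. Since conjugation by $\wo$ (which is its own inverse) is a bijection $S_n \to S_n$, substituting $v = \wo w \wo$ reindexes the sum:
\[
\wo \BB_I \wo = \sum_{\substack{v \in S_n \\ \Des v \subseteq \sub\tup{I}}} v = \BB_{\sub I},
\]
which is the desired identity.

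The only real obstacle is the index bookkeeping in the descent computation for $\wo w \wo$; everything else is a tautological reindexing. I expect the proof to fit in a few lines once that computation is done.
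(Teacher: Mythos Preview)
Your proof is correct and follows essentially the same approach as the paper: the paper isolates your descent computation $\Des(\wo w \wo) = \sub(\Des w)$ as a separate lemma (Lemma~\ref{w0conj_lem}) and the resulting equivalence of inclusion conditions as another (Lemma~\ref{w0conj_lem2}), then reindexes the sum exactly as you do. One small remark: the step ``$\Des w \subseteq I \iff \sub(\Des w) \subseteq \sub I$'' uses not just that $\sub$ is an involution but that it preserves inclusion (which is obvious from its definition); the paper makes this explicit, and you may want to as well.
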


In order to prove this, we show two simple combinatorial lemmas:

\begin{lem}
    \label{w0conj_lem}
   Let $\sigma \in S_n$. Then, $\Des \tup{\wo \sigma \wo} = \sub \tup{\Des \sigma}$. 
\end{lem}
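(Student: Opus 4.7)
The plan is to prove this by a direct computation, unpacking the definitions of $\wo$, $\Des$, and $\sub$, and tracking indices carefully.

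First, I would compute the one-line notation of $\wo \sigma \wo$. For any $i \in \interval{n}$, using $\wo\tup{k} = n - k + 1$, I get
\[
\tup{\wo \sigma \wo}\tup{i} = \wo\tup{\sigma\tup{\wo\tup{i}}} = \wo\tup{\sigma\tup{n-i+1}} = n - \sigma\tup{n-i+1} + 1.
\]
Informally, this says that the one-line notation of $\wo \sigma \wo$ is obtained from that of $\sigma$ by reversing the order of entries and then replacing each entry $k$ by $n-k+1$.

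Next, I would translate the descent condition. For $i \in \interval{n-1}$, we have $i \in \Des\tup{\wo \sigma \wo}$ iff $\tup{\wo \sigma \wo}\tup{i} > \tup{\wo \sigma \wo}\tup{i+1}$. Substituting the formula above, this becomes
\[
n - \sigma\tup{n-i+1} + 1 > n - \sigma\tup{n-i} + 1,
\]
which simplifies (since the map $k \mapsto n-k+1$ is order-reversing) to $\sigma\tup{n-i} > \sigma\tup{n-i+1}$. Setting $j := n-i$, this says exactly that $j \in \Des \sigma$. Note that as $i$ ranges over $\interval{n-1}$, so does $j = n-i$, so there are no boundary issues.

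Finally, I would conclude: we have shown that $i \in \Des\tup{\wo \sigma \wo}$ iff $n - i \in \Des \sigma$, which is the same as saying $i \in \set{n - j : j \in \Des \sigma} = \sub\tup{\Des \sigma}$. This gives $\Des\tup{\wo \sigma \wo} = \sub\tup{\Des \sigma}$, completing the proof. There is no genuine obstacle here; the only thing to watch out for is the index shift (the ``$+1$'' in the descent condition $w\tup{i} > w\tup{i+1}$ interacts with the reversal $i \mapsto n-i+1$ to produce $j$ and $j+1$ on the other side), but this is handled cleanly by the substitution $j = n - i$.
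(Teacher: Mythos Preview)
Your proof is correct and takes essentially the same approach as the paper: both arguments unpack the definitions to show the equivalence $i \in \Des\tup{\wo \sigma \wo} \iff n-i \in \Des \sigma$ by a direct index computation. The only cosmetic difference is that you first compute the closed formula $\tup{\wo \sigma \wo}\tup{i} = n - \sigma\tup{n-i+1} + 1$, whereas the paper handles the outer $\wo$ by invoking that it is strictly decreasing; the content is the same.
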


\begin{proof}
    Let $i \in \interval{n-1}$. It suffices to show that $ i \in \Des \tup{\wo \sigma \wo} $ if and only if $ i \in \sub \tup{\Des \sigma}$.
    
    The permutation $\wo$ sends each $k \in \interval{n}$ to $n+1-k$. By the definition of $\Des w$, we have the following chain of equivalences:%
	\footnote{In the following computation, we will use the shorthand notation $uk$ for the image $u\tup{k}$ of an element $k \in \interval{n}$ under a permutation $u \in S_n$.
	Thus, an expression of the form $uvk$ with $u,v \in S_n$ and $k \in \interval{n}$ can be read either as $\tup{uv}k$ or as $u\tup{vk}$.
	This ambiguity is harmless, since $\tup{uv}k$ and $u\tup{vk}$ are equal.
	Likewise, expressions like $uvwk$ can be made sense of.}
    \begin{align*}
             & \ \tup{i \in \Des \tup{\wo \sigma \wo}} \\
        \iff & \ \tup{\wo \sigma \wo i > \wo \sigma \wo \tup{i+1}}
		            \qquad \tup{\text{by the definition of a descent}} \\
        \iff & \ \tup{\sigma \wo i < \sigma \wo \tup{i+1}} \\
        & \qquad \tup{\begin{array}{c}\text{since $\wo$ is strictly decreasing, so that} \\ \text{two elements $p,q \in \interval{n}$ satisfy $\wo p > \wo q$ if and only if $p<q$} \end{array}} \\
        \iff & \ \tup{\sigma\tup{n+1-i} < \sigma\tup{n-i}} \\
        & \qquad \tup{\text{since $\wo i = n+1-i$ and $\wo\tup{i+1} = n-i$}} \\
        \iff & \ \tup{\sigma\tup{n-i} > \sigma\tup{n+1-i}} \\ 
        \iff & \ \tup{\sigma\tup{n-i} > \sigma\tup{(n-i)+1}} \\
        \iff & \ \tup{n-i \in \Des \sigma}
		            \qquad \tup{\text{by the definition of a descent}} \\
        \iff & \ \tup{i \in \sub \tup{\Des \sigma}}
			        \qquad \tup{\text{by the definition of the map $\sub$}}.
    \end{align*}
    This proves the desired equivalence.
\end{proof}

\begin{lem}
    \label{w0conj_lem2}
    Let $\sigma \in S_n$, and $I \subseteq \interval{n-1}$.
    Then, $\Des \sigma \subseteq I$ if and only if $\Des \tup{\wo \sigma \wo} \subseteq \sub I$.
\end{lem}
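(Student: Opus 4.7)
The plan is to reduce the claim to Lemma~\ref{w0conj_lem} together with the elementary fact that any bijection on a set preserves and reflects inclusion on its subsets.

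First, I would apply Lemma~\ref{w0conj_lem} to rewrite $\Des(\wo \sigma \wo) = \sub(\Des \sigma)$. This immediately recasts the desired equivalence as
\[
    \Des \sigma \subseteq I \iff \sub(\Des \sigma) \subseteq \sub I.
\]
So the statement is no longer about permutations at all; it is purely a statement about the map $\sub$ applied to two subsets of $\interval{n-1}$.

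Next, I would appeal to the fact (already noted in the excerpt) that $\sub$ is a bijection from $\mathcal{P}(\interval{n-1})$ to itself (indeed, an involution). For any bijection $f : X \to X$ and any subsets $A, B \subseteq X$, one has $A \subseteq B$ if and only if $f(A) \subseteq f(B)$: the forward direction is immediate since $f(A) = \{f(a) : a \in A\}$, and the backward direction follows by applying $f^{-1}$, which exists because $f$ is a bijection. Applying this with $f = \sub$, $A = \Des \sigma$, and $B = I$ gives exactly the equivalence we need.

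I do not expect any obstacle here; the lemma is essentially a repackaging of Lemma~\ref{w0conj_lem}. The only subtlety worth mentioning explicitly in the writeup is that $\sub$ sends subsets of $\interval{n-1}$ to subsets of $\interval{n-1}$ (not, say, to subsets of a shifted interval), so that the inclusion $\sub(\Des \sigma) \subseteq \sub I$ even makes sense as stated; this is why $\sub$ was defined as a self-map of $\mathcal{P}(\interval{n-1})$ rather than, e.g., a map to $\mathcal{P}(\interval{0, n-1})$.
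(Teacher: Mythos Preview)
Your proposal is correct and follows essentially the same approach as the paper: both use Lemma~\ref{w0conj_lem} to rewrite $\Des(\wo\sigma\wo)$ as $\sub(\Des\sigma)$ and then invoke the fact that the bijection $\sub$ on $\mathcal{P}(\interval{n-1})$ preserves and reflects inclusion. The only cosmetic difference is the order in which these two ingredients are invoked.
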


\begin{proof}
    The map $\sub : \mathcal{P}\tup{\interval{n-1}} \to \mathcal{P}\tup{\interval{n-1}}$ clearly respects containment of sets: Two sets $U, V \in \mathcal{P}\tup{\interval{n-1}}$ satisfy $U \subseteq V$ if and only if $\sub U \subseteq \sub V$.
    Hence, we have $\Des \sigma \subseteq I$ if and only if $\sub \tup{\Des \sigma} \subseteq \sub I$. But Lemma~\ref{w0conj_lem} yields that $\sub \tup{\Des \sigma} = \Des \tup{\wo \sigma \wo}$,
    so this can be rewritten as ``$\Des \sigma \subseteq I$ if and only if $\Des \tup{\wo \sigma \wo} \subseteq \sub I$''.
    This proves the lemma.
\end{proof}

\begin{proof}[Proof of Proposition \ref{w0conj_prop}]
    By the definition of $\BBi$, we have $\BBi = \sum_{\substack{\sigma \in S_n; \\ \Des \sigma \subseteq I}} \sigma$. Thus,
   \begin{align*}
       \wo \BBi \wo
        &= \wo \sum_{\substack{\sigma \in S_n; \\ \Des \sigma \subseteq I}} \sigma \wo
        = \sum_{\substack{\sigma \in S_n; \\ \Des \sigma \subseteq I}} \wo \sigma \wo \\
        &= \sum_{\substack{\sigma \in S_n; \\ \Des \tup{ \wo \sigma \wo}  \subseteq \sub I}} \wo \sigma \wo
        \qquad \tup{\text{by Lemma \ref{w0conj_lem2}}} \\
        &= \sum_{\substack{\sigma \in S_n; \\ \Des \sigma  \subseteq \sub I}} \sigma
        \qquad \tup{\text{here, we have substituted $\sigma$ for $\wo \sigma \wo$}}
        \\
        &= \BB_{\sub I} \qquad \tup{\text{by the definition of $\BB_{\sub I}$}},
   \end{align*}
   as desired.
\end{proof}

Translating from the set to the composition indexing of the $ \BB $-basis, we obtain the following corollary.

\begin{cor}
    \label{cor.woBBawo}
    Let $\alpha \models n$. Then,
    \[
    \wo \BBa \wo = \BB_{\rev \alpha}.
    \]
\end{cor}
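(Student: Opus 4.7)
The plan is to reduce the corollary directly to Proposition~\ref{w0conj_prop} by translating between the set indexing and the composition indexing of the $\BB$-basis. Set $I := \gaps^{-1}\tup{\alpha}$, so that $\BBa = \BB_I$ by Definition~\ref{dsn_def}(c). Then Proposition~\ref{w0conj_prop} immediately gives
\[
\wo \BBa \wo = \wo \BB_I \wo = \BB_{\sub I}.
\]
It remains to identify $\BB_{\sub I}$ with $\BB_{\rev \alpha}$.

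For this I would invoke the relation \eqref{eq.sub.1}, namely $\sub = \gaps^{-1} \circ \rev \circ \gaps$. Applying both sides to $I = \gaps^{-1}\tup{\alpha}$ and using $\gaps \circ \gaps^{-1} = \id$ yields
\[
\sub I = \gaps^{-1}\tup{\rev\tup{\gaps\tup{\gaps^{-1}\tup{\alpha}}}} = \gaps^{-1}\tup{\rev \alpha}.
\]
Therefore $\BB_{\sub I} = \BB_{\gaps^{-1}\tup{\rev \alpha}} = \BB_{\rev \alpha}$ by Definition~\ref{dsn_def}(c) again, and the corollary follows.

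There is no real obstacle here: once Proposition~\ref{w0conj_prop} and the identity \eqref{eq.sub.1} are in hand, this is a bookkeeping exercise in unfolding the definition $\BBa = \BB_{\gaps^{-1}\tup{\alpha}}$. The only thing worth being careful about is making sure one applies \eqref{eq.sub.1} in the correct direction (to $\gaps^{-1}\tup{\alpha}$, not to $\alpha$), so that the inner $\gaps$ cancels the outer $\gaps^{-1}$ and one lands on $\gaps^{-1}\tup{\rev \alpha}$ rather than some reversed expression.
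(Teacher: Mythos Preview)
Your proof is correct and follows essentially the same route as the paper: set $I = \gaps^{-1}\tup{\alpha}$, apply Proposition~\ref{w0conj_prop}, and use \eqref{eq.sub.1} to identify $\sub I$ with $\gaps^{-1}\tup{\rev \alpha}$. The only difference is cosmetic ordering of the steps.
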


\begin{proof}
    Let $I = \gaps^{-1}\tup{\alpha}$. Then,
	$\BBa = \BBi$ by Definition \ref{dsn_def} \textbf{(c)}.
	Moreover, \eqref{eq.sub.1} yields 
	\[
	\sub I
	= \tup{\gaps^{-1} \circ \rev \circ \gaps}\tup{I}
	= \gaps^{-1}\tup{\rev \tup{\gaps I}}
	= \gaps^{-1}\tup{\rev \alpha}
	\]
	(since the definition of $I$ shows that
	$\gaps I = \alpha$).
	Hence, $\BB_{\sub I} = \BB_{\gaps^{-1}\tup{\rev \alpha}}
	= \BB_{\rev \alpha}$
	(again by Definition \ref{dsn_def} \textbf{(c)}).
	But Proposition \ref{w0conj_prop} yields
	$\wo \BBi \wo = \BB_{\sub I} = \BB_{\rev \alpha}$.
	In view of $\BBa = \BBi$, this rewrites as
	$\wo \BBa \wo = \BB_{\rev \alpha}$.
	This proves the corollary.
\end{proof}

\section{\label{sec.facealg}The Face Algebra}

\subsection{Faces and their algebra}

We shall now introduce another combinatorial notion -- that of \emph{set compositions}.
Realizing their relevance to the descent algebra was Bidigare's main contribution \cite{Bidigare-thesis}.
\footnote{Bidigare used them to reprove Solomon's main results from \cite{Sol76} in simpler and more conceptual ways. \par
Recently, a similar approach was taken by Kenyon, Kontsevich, Ogievetsky, Pohoata, Sawin and Shlosman \cite{KKOPSS24} in a more general setting:
In fact, if their poset $X$ is an antichain, their matrix $M^X$ represents the left multiplication by a generic element $\sum_{\beta \models n} a_\beta \DD_\beta$ of the descent algebra $\dsn$ on the symmetric group algebra $\ksn$ (though they index the sum not by the compositions but by the ``$\varepsilon$ functions'', i.e., bitstrings of length $n-1$); and their main result \cite[Theorem 1]{KKOPSS24} is saying that the eigenvalues of this matrix $M^X$ are $\ZZ$-linear combinations of the coefficients $a_\beta$.
This is implicit in Bidigare's thesis \cite{Bidigare-thesis} and \cite[Theorem 4.1]{Schocker}, and explicit in \cite[Proposition 3.12]{ncsf2} (each of the sources describes the eigenvalues).
Their proof of this fact proceeds by embedding the descent algebra in the face algebra $\kf$, just as Bidigare did (their ``filters''\ are in bijection with the set compositions of $\interval{n}$), but subsequently they (implicitly) extend $\kf$ to a larger space indexed by the \emph{tournaments} on $\interval{n}$.
Thus, their approach is an interesting variation and new viewpoint on Bidigare's. }
We will use them to describe the eigenvalues of our operators.

\begin{defn}[Set compositions]\phantomsection
    \label{def.setcomp}\ %
    \begin{enumerate}[]
        \item\textbf{(a)} A \defin{set composition} of $\interval{n}$ -- or, for short, a \defin{face} of $\interval{n}$ -- means an (ordered) tuple $F = \tup{F_{1},F_{2},\dots,F_{k}}$ of nonempty sets satisfying $\bigcup\limits_{i=1}^k F_{i} = \interval{n}$ and $F_{i} \cap F_{j} = \varnothing$ for each $i \ne j$.
        If $F$ is a set composition of $\interval{n}$, we may write $F \models \interval{n}$.
        \item\textbf{(b)} A \defin{block} of a set composition $F=\tup{F_{1},F_{2},\dots,F_{k}}$ is one of its entries $F_{i}$ for $i \in \interval{k}$.
        \item\textbf{(c)} The \defin{length} of a set composition is its number of blocks. We denote the length of $F$ by $\ell \tup{F}$. More formally, if $F = \tup{F_{1},F_{2},\dots,F_k}$, then $\ell \tup{F}=k$.
        \item\textbf{(d)} We define the \defin{type} of a set composition $\tup{ F_{1},F_2,\dots,F_{k} }$ of $\interval{n}$ to be the composition
        \[
        \type\tup{ F_{1},F_2,\dots,F_{k} } := \tup{ \card{F_{1}},\card{F_2}, \dots, \card{F_{k}} } \models n.
        \]
        \item\textbf{(e)} For an integer $ n \in \NN $, we denote the set of all set compositions of $\interval{n}$ by $ \CF $ or by $ \CF\tup{ n } $ when $ n $ is not clear from context.
    \end{enumerate}
\end{defn}

\begin{exmp}
    Let $ n=3$. Then, $ \tup{ \set{1,2,3} }$, $\tup{ \set{1,2},\set{3} } $ and $ \tup{ \set{3},\set{1,2} } $ are three distinct set compositions / faces of the set $ \interval{3} = \set{1,2,3} $.
    When it is not confusing to do so, we will omit braces and commas when writing a face. With this convention, the above three faces are $ \tup{ 123 }$, $\tup{ 12,3 } $ and $ \tup{ 3,12 } $. The blocks of $ \tup{ 12,3 } $ are $ \set{1,2} $ and $ \set{3} $. The faces $ \tup{ 12,3 }$ and $\tup{ 3,12 } $ have length $ 2 $, while $ \tup{ 123 } $ has length $ 1 $. We have $ \type\tup{ 12,3 }=\tup{ 2,1 } $, $ \type\tup{ 3,12 }=\tup{ 1,2 } $ and $ \type\tup{ 123 }=\tup{ 3 } $.
\end{exmp}

\begin{exmp}
    \label{exa.F3type}
    Here are all set compositions of $\interval{3}$, grouped by type:
    \begin{align*}
        \begin{tabular}{r|l}
        \text{type} & \text{set compositions of this type} \\ \hline
        $\tup{3}$ & $\tup{ 3 }$; \\
        $\tup{2,1}$ & $\tup{ 12,3 }, \quad \tup{ 13,2 }, \quad \tup{ 23,1 }$; \\
        $\tup{1,2}$ & $\tup{ 1,23 }, \quad \tup{ 2,13 }, \quad \tup{ 3,12 } $; \\
        $\tup{1,1,1}$ & $\tup{ 1,2,3 }, \quad \tup{ 1,3,2 }, \quad \tup{ 2,1,3 }, \quad \tup{ 2,3,1 }, \quad \tup{ 3,1,2 }, \quad \tup{ 3,2,1 }$.
        \end{tabular}
    \end{align*}
\end{exmp}

The word ``face''\ harkens back to the bijection between set compositions and the faces of the $n$-th braid arrangement (see \cite[\S 4.1.4]{Bidigare-thesis} or \cite[\S 1.2.1]{Saliola}).
We will not use this bijection -- or any geometry for that matter -- but we will still often use the word ``face''\ for its brevity.

\begin{defn}[Face monoid/algebra]\phantomsection
    \label{def.facemon}\ %
    \begin{enumerate}[]
        \item\textbf{(a)} If $H = \tup{H_1,H_2,\ldots,H_m}$ is a list of sets, then its \defin{reduction} $H^{\red}$ means the list obtained by removing all the empty sets from $H$.
        \item\textbf{(b)} Recall that $\CF$ is the set of set compositions of $\interval{n}$. We define a multiplication on $\CF$ as follows:
		For $F=\tup{F_{1},F_{2},\dots,F_k} \in \CF$ and $G=\tup{G_{1},G_{2},\dots,G_{m}} \in \CF$, set
         \[
         \tup{F_1, F_2, \dots, F_k} \cdot \tup{G_1, G_2, \dots, G_m}
		 = \tup{F_1 \cap G_1, \dots, F_1 \cap G_m, \dots, F_k \cap G_m}^{\red},
         \]
        where the list on the right hand side contains all the pairwise intersections $F_i \cap G_j$ in the order of lexicographically increasing $\tup{i,j}$.
		In other words, $FG$ is obtained from $F$ by replacing each block $F_i$ with its $m$ intersections $F_i \cap G_1, \ F_i \cap G_2, \ \ldots, \ F_i \cap G_m$ (in this order) and throwing away the empty intersections.
        By Lemma~\ref{lem.monoid} below, this turns $\CF$ into a monoid.
        \item\textbf{(c)} %Let $\kk$ be a commutative ring. Then 
        Let $\kf$ be the monoid algebra of the monoid $\CF$ over $\kk$. This is the free $\kk$-module with basis $\CF$, equipped with a multiplication that extends the multiplication of $\CF$ % basis vectors
		in the usual bilinear way.
   \end{enumerate}
\end{defn}

\begin{exmp}
    Consider the two faces $ \tup{ 12,3 },\tup{ 3,2,1 } \in \CF\tup{ 3 } $. Then we have
    \begin{align*}
        \tup{ 12,3 }\tup{ 3,2,1 } &= \tup{ 12 \cap 3, 12 \cap 2, 12 \cap 1, 3 \cap 3, 3 \cap 2, 3 \cap 1 }^{ \red}\\ 
                                  &= \tup{ \varnothing, 2,1,3,\varnothing,\varnothing}^{ \red}=\tup{ 2,1,3 },
    \end{align*}
    while $ \tup{ 3,2,1 }\tup{ 12,3 }=\tup{ 3,2,1 } $. We also have
    \[
    \tup{ 12,3 }\tup{ 123 }=\tup{ 12 \cap 123, 3 \cap 123 }=\tup{ 12,3 },
    \]
    and more generally, the set composition $ \tup{ \interval{n} } $ is the unity of $ \CF\tup{ n } $. Let $ \kk = \ZZ $; then $ 3\tup{ 12,3 } $ and $ \tup{ 123 }-\tup{ 3,12 } $ are elements of the algebra $ \ZZ\CF\tup{ 3 } $. Their product is
    \begin{align*}
        3\tup{ 12,3 }\tup{ \tup{ 123 }-\tup{ 3,12 } } &= 3\tup{ \tup{ 12,3 }\tup{ 123 }-\tup{ 12,3 }\tup{ 3,12 } }\\
                                                      &= 3\tup{ \tup{ 12,3 }-\tup{ 12 \cap 3, 12 \cap 12, 3 \cap 3, 3 \cap 12 }^{ \red} }\\
                                                      &= 3\tup{ \tup{ 12,3 }-\tup{ 12,3 } }= 3 \cdot 0 = 0.
    \end{align*}
    Thus we see that $\kf$ has zero divisors in general. 
    
\end{exmp}

Note that the monoid $\CF$ (and thus the algebra $\kf$) is not commutative in general: For two faces $F$ and $G$, the products $FG$ and $GF$ have the same blocks, but usually appearing in different orders.

\begin{lem}
\label{lem.monoid}
The multiplication defined in Definition~\ref{def.facemon} \textbf{(b)} is associative and has neutral element $(\interval{n})$; thus, $\CF$ really is a monoid.
\end{lem}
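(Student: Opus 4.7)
The plan is to reduce both the associativity and the identity claim to a single observation: for any three faces $F = \tup{F_1,\ldots,F_k}$, $G = \tup{G_1,\ldots,G_m}$ and $H = \tup{H_1,\ldots,H_p}$ in $\CF$, the products $\tup{FG}H$ and $F\tup{GH}$ both equal the list of all nonempty triple intersections $F_i \cap G_j \cap H_\ell$ ordered lexicographically by $\tup{i,j,\ell}$. Once this is established, associativity is immediate; and the identity claim is a direct verification using $\interval{n} \cap F_i = F_i$ for each block of a face $F$ (since $F_i \subseteq \interval{n}$).

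First I would confirm that $FG$ is indeed a face whenever $F,G \in \CF$: its blocks are nonempty by construction of $\tup{\cdot}^{\red}$; they are pairwise disjoint because the $F_i$'s (resp.\ $G_j$'s) are pairwise disjoint, so any two distinct intersections $F_i \cap G_j$ and $F_{i'} \cap G_{j'}$ are disjoint; and their union equals $\tup{\bigcup_i F_i} \cap \tup{\bigcup_j G_j} = \interval{n} \cap \interval{n} = \interval{n}$. This makes $FG \in \CF$, so the multiplication is well-defined on $\CF$.

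Next, for associativity: when forming $\tup{FG}H$, the blocks of $FG$ appear in lex order on $\tup{i,j}$, and each such block $F_i \cap G_j$ is then split into its intersections with $H_1,\ldots,H_p$ in this order, with empties discarded. The resulting order on triples $\tup{i,j,\ell}$ is lex on $\tup{i,j}$ first, then on $\ell$, i.e.\ lex on $\tup{i,j,\ell}$. Symmetrically, $F\tup{GH}$ lists the intersections $F_i \cap \tup{G_j \cap H_\ell}$ in lex order on $\tup{i,\tup{j,\ell}}$, which is again lex on $\tup{i,j,\ell}$. Thus both products agree term-by-term.

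The only subtlety worth spelling out, and the main technical obstacle, is that the reduction $\tup{\cdot}^{\red}$ commutes with the concatenation used in the two-step products: if one forms a long list by concatenating sub-lists and then discards empties, one obtains the same thing as first discarding empties within each sub-list and then concatenating. This is what justifies ignoring the intermediate reductions in the argument above (the pre-reduction sub-list of length $p$ attached to each block $F_i \cap G_j$ of $FG$ is already listed in $\ell$-order, and similarly on the other side). This bookkeeping is routine, so I would dispatch it in one line and move on.
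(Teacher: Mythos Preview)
Your proposal is correct and follows essentially the same approach as the paper: both argue that $\tup{FG}H$ and $F\tup{GH}$ equal the list of nonempty triple intersections $F_i \cap G_j \cap H_\ell$ in lexicographic order on $\tup{i,j,\ell}$. The paper packages your ``reduction commutes with concatenation'' observation as an explicit \emph{auxiliary formula} (extending the product rule to tuples that may contain empty blocks), which it then applies twice; your direct treatment accomplishes the same thing, and you additionally spell out the well-definedness of $FG$ as a face, which the paper leaves implicit.
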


\begin{proof}
We define a \defin{set decomposition} of $\interval{n}$ in the same way as a set composition, but without requiring that its blocks be nonempty (so it is a tuple of disjoint sets
% subsets of $\interval{n}$
whose union is $\interval{n}$).
Thus, each set decomposition $F = \tup{F_{1},F_{2},\dots,F_k}$ gives rise to a set composition $F^{\red} = \tup{F_{1},F_{2},\dots,F_k}^{\red}$. Of course, when $F$ is itself a set composition, we just have $F^{\red} = F$.

Now we shall show the following \defin{auxiliary formula}:
\begin{quote}
\textbf{Auxiliary formula:}
If $\tup{F_{1},F_{2},\dots,F_k}$ and $\tup{G_{1},G_{2},\dots,G_{m}}$ are two set decompositions of $\interval{n}$, then the multiplication on $\CF$ satisfies
        \[
        \tup{F_{1},F_{2},\dots,F_{k}}^{\red} \cdot \tup{G_{1},G_{2},\dots,G_{m}}^{\red} = \tup{F_{1} \cap G_{1}, \dots,F_{1} \cap G_{m},\dots, F_{k} \cap G_{m}}^{\red}.
        \]
\end{quote}
Indeed, if a block $F_i$ is empty, then its intersections $F_i \cap G_j$ are empty for all $j$, and likewise an empty block $G_j$ gives rise to empty intersections $F_i \cap G_j$.
Hence, the right hand side of the auxiliary formula does not change if we remove an empty block from either $\tup{F_{1},F_{2},\dots,F_{k}}$ or $\tup{G_{1},G_{2},\dots,G_{m}}$.
Therefore, in particular, it does not change if we remove all the empty blocks, i.e., if we replace $\tup{F_{1},F_{2},\dots,F_{k}}$ and $\tup{G_{1},G_{2},\dots,G_{m}}$ by $\tup{F_{1},F_{2},\dots,F_{k}}^{\red}$ and $\tup{G_{1},G_{2},\dots,G_{m}}^{\red}$.
But then the above auxiliary formula just boils down to the definition of the multiplication on $\CF$.
So the auxiliary formula is proved.

Now, if $E = \tup{E_1, E_2, \ldots, E_p}$, $F = \tup{F_1, F_2, \ldots, F_q}$ and $G = \tup{G_1, G_2, \ldots, G_r}$ are three set compositions of $\interval{n}$, then we can use the above auxiliary formula to compute both products $\tup{EF} G$ and $E \tup{FG}$: Indeed, we get
\begin{align*}
F G  & = \tup{  F_{1},F_{2},\ldots,F_{q} }
\tup{ G_{1},G_{2},\ldots,G_{r} }
= \tup{ F_{1}\cap G_{1},\ldots,F_{1}\cap G_{r},\ldots,F_{q}\cap
G_{r} }^{\red}
\end{align*}
and thus
\begin{align*}
E \tup{F G}
& =
\underbrace{\tup{ E_{1},E_{2},\ldots ,E_{p} }}_{=
\tup{ E_{1},E_{2},\ldots,E_{p} }^{\red} }
\cdot\tup{ F_{1}\cap G_{1},\ldots,F_{1}\cap G_{r}
,\ldots,F_{q}\cap G_{r} }^{\red}\\
& = \tup{ E_{1},E_{2},\ldots,E_{p} }^{\red}
\cdot \tup{ F_{1}\cap G_{1},\ldots,F_{1}\cap G_{r},
\ldots,F_{q}\cap G_{r} }^{\red}\\
& = \tup{ E_{1}\cap\tup{ F_{1}\cap G_{1} }, \ldots,
E_{1}\cap\tup{ F_{1}\cap G_{r} }, \ldots,
E_{1}\cap\tup{ F_{q}\cap G_{r} }, \ldots,
E_{p}\cap\tup{ F_{q}\cap G_{r} } }^{\red} \\
& \qquad \qquad \qquad
\left(  \text{by the auxiliary formula}\right)  \\
& = \tup{ E_{1}\cap F_{1}\cap G_{1}, \ldots,
E_{p}\cap F_{q}\cap G_{r} }^{\red},
\end{align*}
where the triple intersections $E_i \cap F_j \cap G_k$ on the
right hand side are listed in the order of lexicographically
increasing $\tup{i, \tup{j, k}}$, or, equivalently, of
lexicographically increasing $\tup{i, j, k}$. A similar
computation yields the same expression for $\tup{E F} G$. Hence,
$E \tup{FG} = \tup{EF} G$, and thus associativity is proved.

It remains to show that $\tup{\interval{n}}$ is a neutral element
for the multiplication on $\CF$.
But this is clear, since any block $F_i$ of any face $F \in \CF$
satisfies $\interval{n} \cap F_i = F_i \cap \interval{n} = F_i$.
\end{proof}

\begin{defn}[Actions of $S_n$ on $\CF$ and $\kf$; invariant subalgebra of $ \kf $]
    \label{kfsn_defn}
    Consider the left action of the symmetric group $ S_n $ on the set $ \CF $ that applies the permutation to each entry of each block of the set composition.
	That is, a permutation $ w \in S_n $ acts on a set composition $ \tup{ F_{1}, F_{2},\dots,F_k } \in \CF $ by
    \[
        w\tup{ F_{1}, F_{2},\dots,F_k } := \tup{ w\tup{ F_{1} }, w \tup{ F_{2} },\dots,w\tup{ F_k }}.
    \]
    This action extends linearly to an action of $S_n$ on $ \kf $. Then the invariant subalgebra $ \kfsn \subseteq \kf $ is defined by
    \[
    \kfsn := \set{\FF \in \kf : w\tup{ \FF } = \FF \text{ for all } w \in S_n}.
    \]
\end{defn}

\begin{prop}[Invariant subalgebra of $ \kf $]
    \label{kfsn_prop1}
    This invariant subalgebra $ \kfsn $ is actually a $\kk$-subalgebra of $\kf$.
\end{prop}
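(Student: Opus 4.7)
The plan is to verify the three defining properties of a $\kk$-subalgebra: closure under addition and $\kk$-scalar multiplication, closure under the product, and containing the multiplicative unit. The first is essentially automatic from the fact that the $S_n$-action on $\kf$ is $\kk$-linear by construction: if $\FF,\GG \in \kfsn$ and $\lambda \in \kk$, then for every $w \in S_n$ we have $w(\FF + \lambda \GG) = w(\FF) + \lambda w(\GG) = \FF + \lambda \GG$. So $\kfsn$ is a $\kk$-submodule of $\kf$.

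The substantive step is closure under multiplication, and the cleanest way to get this is to first prove the stronger claim that the $S_n$-action on $\kf$ is by $\kk$-algebra automorphisms, i.e.\ $w(\FF \GG) = w(\FF)\, w(\GG)$ for all $w \in S_n$ and $\FF,\GG \in \kf$. By $\kk$-bilinearity it suffices to verify this on basis elements $F,G \in \CF$. For $F = \tup{F_1,\dots,F_k}$ and $G = \tup{G_1,\dots,G_m}$, the definition gives
\[
FG = \tup{F_1 \cap G_1,\ \ldots,\ F_k \cap G_m}^{\red}.
\]
Applying the permutation $w$ entrywise and using the fact that $w$ is a bijection (so $w(F_i \cap G_j) = w(F_i) \cap w(G_j)$ and $w(F_i \cap G_j) = \varnothing$ iff $F_i \cap G_j = \varnothing$), the reduction survives intact and the lexicographic ordering of the $(i,j)$ indices is preserved. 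Hence
\[
w(FG) = \tup{w(F_1)\cap w(G_1),\ \ldots,\ w(F_k)\cap w(G_m)}^{\red} = w(F)\, w(G),
\]
as desired. Granting this, if $\FF,\GG \in \kfsn$, then for every $w \in S_n$ we get $w(\FF\GG) = w(\FF)\,w(\GG) = \FF\GG$, so $\FF\GG \in \kfsn$.

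Finally, the unity of $\kf$ is the face $\tup{\interval{n}}$, and this is evidently fixed by every $w \in S_n$, since $w(\interval{n}) = \interval{n}$; thus $1_{\kf} \in \kfsn$.

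There is no real obstacle here; the one thing to be careful about is the commutation of the reduction operation $(-)^{\red}$ with the entrywise action of $w$, which is why I would factor out the lemma $w(FG) = w(F)\,w(G)$ for $F,G \in \CF$ as the key computation. Everything else is formal.
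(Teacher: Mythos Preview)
Your proof is correct and follows essentially the same approach as the paper: both arguments reduce to showing that $S_n$ acts on $\kf$ by $\kk$-algebra automorphisms via the key observation $w(F_i \cap G_j) = w(F_i) \cap w(G_j)$, and then conclude that the invariant subspace is a subalgebra. You spell out a few more details (the submodule check, the unity, and the compatibility of the reduction with the $w$-action), but the core idea is identical.
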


\begin{proof}
    \begin{enumerate}
    \item The action of $S_n$ on $\CF$ introduced in Definition~\ref{kfsn_defn} %is well-defined, and
	respects the multiplication on $\CF$, meaning that any permutation $w \in S_n$ and any two set compositions $F, G \in \CF$ satisfy $w\tup{F \cdot G} = \tup{wF} \cdot \tup{wG}$.
    This is clear from the definitions, since $w\tup{F_i \cap G_j} = w\tup{F_i} \cap w\tup{G_j}$ for any blocks $F_i, G_j$ of $F, G$.
    \item Thus, $S_n$ acts on $\CF$ by monoid automorphisms, and hence (by linearity) acts on the monoid algebra $\kf$ by $\kk$-algebra automorphisms.
    Hence, the invariant subalgebra $\kfsn$ is a $\kk$-subalgebra of $\kf$
	(since this holds for every action by $\kk$-algebra automorphisms).
	\qedhere
	\end{enumerate}
\end{proof}

\begin{defn}
    \label{BBta_defn}
	For each composition $\alpha \models n$, we define the element
	\[
    \BBta := \sum_{\substack{F \in \CF; \\ \type F = \alpha}} F \in \kf.
    \]
\end{defn}

\begin{prop}[The $\BBta$ form a basis]
    \label{kfsn_prop2}
    The family of the elements $\BBta$, where $\alpha$ ranges over all compositions of $n$, is a basis of $\kfsn$.
\end{prop}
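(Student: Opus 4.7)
The plan is to recognize the elements $\BBta$ as the orbit sums of the $S_n$-action on $\CF$, and then apply the standard fact that orbit sums form a basis of the invariants of a permutation module.

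First, I would verify that for each composition $\alpha \models n$, the set $\set{F \in \CF : \type F = \alpha}$ is a single $S_n$-orbit. Two things must be checked. \emph{(i)} Any $w \in S_n$ preserves the type of a set composition: if $F = \tup{F_1,\dots,F_k}$, then $wF = \tup{w(F_1),\dots,w(F_k)}$ and $\card{w(F_i)} = \card{F_i}$ since $w$ is a bijection. Hence each orbit is contained in a single type class. \emph{(ii)} Conversely, if $F = \tup{F_1,\dots,F_k}$ and $G = \tup{G_1,\dots,G_k}$ both have type $\alpha$, then $\card{F_i} = \card{G_i} = \alpha_i$ for all $i$, so I can pick any bijection $w_i : F_i \to G_i$ for each $i$ and assemble them into a permutation $w \in S_n$ (which is well-defined because the $F_i$ are disjoint with union $\interval{n}$, and likewise for the $G_i$); this $w$ satisfies $wF = G$. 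So each type class is exactly one orbit, and thus $\BBta$ is literally the sum of the elements of this orbit.

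Next, I would invoke (or briefly reprove) the general fact: if a group $G$ acts on a set $X$, then the orbit sums $\sum_{x \in \mathcal{O}} x$ (as $\mathcal{O}$ ranges over the orbits of the action) form a basis of the invariant submodule $\tup{\kk X}^G$. Linear independence is immediate, since distinct orbits are disjoint and thus the orbit sums have disjoint supports in the basis $X$. For spanning, write any invariant $\FF = \sum_{x \in X} c_x \, x$; the invariance condition $w\FF = \FF$ for all $w \in G$ forces $c_{wx} = c_x$ for all $w,x$, so the coefficient function $x \mapsto c_x$ is constant on each orbit $\mathcal{O}$ with some common value $c_{\mathcal{O}}$, and therefore $\FF = \sum_{\mathcal{O}} c_{\mathcal{O}} \sum_{x \in \mathcal{O}} x$ is a $\kk$-linear combination of orbit sums.

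Applying this general principle to $G = S_n$ and $X = \CF$, and using the orbit identification from the first step, I conclude that the family $\tup{\BBta}_{\alpha \models n}$ is a basis of $\kfsn$. There is no real obstacle here; the only nontrivial point is step \emph{(ii)} above (explicitly constructing a permutation between two fixed-type set compositions), and that is straightforward.
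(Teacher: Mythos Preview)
Your proposal is correct and follows essentially the same approach as the paper: identify the $S_n$-orbits on $\CF$ with the type classes (checking both directions exactly as you do), and then invoke the standard fact that orbit sums form a basis of the invariants of a permutation module. The paper even defers the orbit-sum basis fact to an appendix, while you sketch its proof inline, but otherwise the arguments are the same.
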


\begin{proof}
    We must show that the elements $\BBta$ actually belong to $\kfsn$ and form a basis of $\kfsn$.
    
    The $S_n$-representation $\kf$ is a permutation module: The action of $S_n$ permutes the basis $\CF$ of $\kf$.
	Thus, the invariant subspace $\kfsn$ has a basis (as a $\kk$-module) consisting of the orbit sums (i.e., of the sums $\sum_{F \in \mathcal{O}} F$ for each orbit $\mathcal{O}$ of the $S_n$-action on $\CF$).\ \ \ \ \footnote{What we are using here is the following folklore fact: If a group $G$ acts on a finite set $X$, then the invariant subspace $\tup{\kk X}^G := \set{\xx \in \kk X : w\xx = \xx \text{ for all } w \in G}$ of the permutation module $\kk X$ has a basis consisting of the orbit sums (i.e., of the sums $\sum_{x \in \mathcal{O}} x$ for each orbit $\mathcal{O}$ of the $G$-action on $X$).
	For a proof of this fact, see Proposition~\ref{prop.perm-fix-basis} in the Appendix (Subsection~\ref{subsec.omitted_proofs.perm-fix-basis}).}
    It remains to show that the elements $\BBta$ are precisely these orbit sums.\footnote{We invite the reader to take another look at Example~\ref{exa.F3type}. The rows of the table in that example are precisely the orbits of the $S_n$-action on $\CF$ for $n = 3$.}
    
    The $S_n$-action on $\CF$ preserves the type of a set composition: If $F \in \CF$ and $w \in S_n$, then $\type\tup{wF} = \type{F}$, since $\card{w\tup{F_i}} = \card{F_i}$ for each block $F_i$ of $F$.
    Hence, two set compositions $F$ and $G$ lying in the same $S_n$-orbit must have the same type.
    Conversely, if two set compositions $F$ and $G$ have the same type, then they lie in the same $S_n$-orbit (indeed, we can easily construct a permutation $w \in S_n$ that satisfies $wF = G$: just let it map the elements of each block of $F$ to the elements of the corresponding block of $G$, in some arbitrarily chosen order).
	Summarizing, we see that two set compositions $F$ and $G$ in $\CF$ lie in the same $S_n$-orbit if and only if they have the same type.

    Hence, each orbit $\mathcal{O}$ of the $S_n$-action on $\CF$ is the set of all set compositions $F \in \CF$ having a given type $\alpha \models n$.
	The orbit sum $\sum_{F \in \mathcal{O}} F$ of the former orbit can thus be written as $\sum_{\substack{F \in \CF; \\ \type F = \alpha}} F = \BBta$.
	Moreover, the correspondence between the orbits $\mathcal{O}$ and the types $\alpha \models n$ is a bijection\footnote{Injectivity is obvious (different orbits correspond to different types), and surjectivity is easy (for each composition $\alpha = \tup{\alpha_1, \alpha_2, \ldots, \alpha_k} \models n$, there is at least one set composition $F \in \CF$ of type $\alpha$, since we can certainly subdivide $\interval{n}$ into subsets of sizes $\alpha_1, \alpha_2, \ldots, \alpha_k$).}.
    Therefore, the orbit sums $\sum_{F \in \mathcal{O}} F$ are precisely %the sums of all set compositions of a given type, i.e.,
	the elements $\BBta$. This completes our proof.
\end{proof}

\subsection{Bidigare's anti-isomorphism}

As we know, the family $\tup{\BB_I}_{I \subseteq \interval{n-1}}$ is a basis of the descent algebra $\dsn$ (as $\kk$-module).
Reindexing this family using compositions instead of subsets (see Definition~\ref{dsn_def} \textbf{(c)}), we conclude that the family $\tup{\BBa}_{\alpha \models n}$ is a basis of $\dsn$.
This allows us to define the following crucial map (due to Bidigare \cite[Theorem 3.8.1]{Bidigare-thesis}):

\begin{defn}
\label{defn_rho}
	Let $\rho : \dsn \to \kfsn$ be the $\kk$-linear map defined by
    \begin{align}
    \rho\tup{ \BBa } = \BBta \qquad \text{ for each } \alpha \models n
    \label{eq.rhoBa}
    \end{align}
    and extended linearly to all of $\dsn$.
\end{defn}

(We have used the fact that the $\BBta$ belong to $\kfsn$; this is part of Proposition~\ref{kfsn_prop2}.)

\begin{thm}[Bidigare]
    \label{dsn_kfsn_morph}
    The set $\dsn$ is a $\kk$-subalgebra of $\ksn$.
    The map $ \rho : \dsn \to \kfsn $ is a $ \kk $-algebra anti-isomorphism. 
\end{thm}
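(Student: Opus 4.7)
The plan is to follow Bidigare's original approach: embed $\ksn$ into $\kf$ by mapping each permutation to its associated \emph{chamber} (the set composition of $\interval{n}$ into singletons), and then show that left multiplication in $\kf$ corresponds to right multiplication in $\ksn$ under this embedding. Concretely, I would define the $\kk$-linear map
\[
\iota : \ksn \to \kf, \qquad \iota(w) = \tup{\set{w(1)}, \set{w(2)}, \ldots, \set{w(n)}} \quad \text{for } w \in S_n.
\]
This is injective, since distinct permutations produce distinct chambers. The heart of the proof is the identity
\[
\rho(x) \cdot \iota(a) = \iota(a \cdot x) \qquad \text{for all } x \in \dsn \text{ and } a \in \ksn,
\]
where the product on the left is taken in $\kf$ and the product on the right in $\ksn$.

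To prove this identity, by $\kk$-bilinearity it suffices to establish the special case $\BBta \cdot \iota(w) = \iota(w \BBa)$ for each $\alpha \models n$ and each $w \in S_n$. Here lies the main combinatorial work. Unpacking Definition~\ref{def.facemon}\textbf{(b)}, I would show that for each face $F = \tup{F_1, F_2, \ldots, F_k}$ of type $\alpha$, the product $F \cdot \iota(w)$ is itself a chamber, equal to $\iota(v_F)$, where $v_F$ is the permutation whose one-line notation lists the elements of $F_1$ in their $w$-order (i.e., in the order of increasing positions in the one-line notation of $w$), followed by the elements of $F_2$ in their $w$-order, and so on. Writing $v_F = w u_F$, the permutation $u_F \in S_n$ turns out to be increasing on each block $\interval{\alpha_1 + \cdots + \alpha_{i-1}+1,\, \alpha_1 + \cdots + \alpha_i}$, i.e., $\Des u_F \subseteq \gaps^{-1}(\alpha)$. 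Conversely, any $u \in S_n$ with $\Des u \subseteq \gaps^{-1}(\alpha)$ arises in this way from a unique face $F$ of type $\alpha$, whose $i$-th block is $\set{w(u(j)) : j \in \interval{\alpha_1+\cdots+\alpha_{i-1}+1,\, \alpha_1+\cdots+\alpha_i}}$. Summing over all faces of type $\alpha$ therefore produces $\sum_u \iota(wu) = \iota(w \BBa)$, completing the identity.

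With the identity in hand, the remaining claims follow cleanly. Setting $a = \id$ yields $\iota(x) = \rho(x) \cdot \iota(\id)$ for every $x \in \dsn$; hence for $x, y \in \dsn$ we obtain
\[
\iota(xy) \;=\; \rho(y) \cdot \iota(x) \;=\; \rho(y) \rho(x) \cdot \iota(\id).
\]
Since $\kfsn$ is a subalgebra of $\kf$ (Proposition~\ref{kfsn_prop1}), the product $\rho(y) \rho(x)$ lies in $\kfsn$, and expanding it in the basis $\tup{\BBta}_{\alpha \models n}$ shows that $\rho(y)\rho(x) \cdot \iota(\id) \in \iota(\dsn)$; by injectivity of $\iota$, this forces $xy \in \dsn$, establishing that $\dsn$ is a subalgebra. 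Knowing now that $xy \in \dsn$, we also have $\iota(xy) = \rho(xy) \cdot \iota(\id)$, and comparing gives $\tup{\rho(xy) - \rho(y)\rho(x)} \cdot \iota(\id) = 0$. The map $z \mapsto z \cdot \iota(\id)$ is injective on $\kfsn$ (any $z = \sum c_\alpha \BBta$ in its kernel satisfies $\iota\tup{\sum c_\alpha \BBa} = 0$, forcing every $c_\alpha = 0$), so $\rho(xy) = \rho(y) \rho(x)$. Finally, $\rho$ is bijective as it sends the basis $\tup{\BBa}_{\alpha \models n}$ of $\dsn$ to the basis $\tup{\BBta}_{\alpha \models n}$ of $\kfsn$. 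The main obstacle is the combinatorial bijection needed for the key identity, where the non-commutativity of the face monoid and the mismatch between the ``subset'' indexing of $\BBa$ and the ``type'' indexing of $\BBta$ must be tracked carefully.
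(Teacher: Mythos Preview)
Your proposal is correct and follows essentially the same route as the paper: both prove the combinatorial identity that multiplying a chamber by $\BBta$ in $\kf$ matches the $\ksn$-action of $\BBa$, via the same bijection between faces of type $\alpha$ and permutations with descent set contained in $\gaps^{-1}(\alpha)$. The only cosmetic differences are that the paper isolates an abstract monoid lemma (Lemma~\ref{lem.rho.monoid}) and reduces to the chamber $\iota(\id)$ using $S_n$-equivariance before doing the combinatorics, whereas you work directly with a general chamber $\iota(w)$ and deduce the algebraic consequences by hand.
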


\begin{proof}
    This is Bidigare's result \cite[Theorem 3.8.1]{Bidigare-thesis}, and appears all across the literature.
	We give a self-contained proof in the Appendix (Subsection~\ref{subsec.omitted_proofs.rho}).
    Here, however, let us outline how it can be obtained from more recent sources:
    Saliola, in \cite[proof of Theorem 2.1]{Saliola}, constructs a $\kk$-algebra anti-morphism $\xi : \kfsn \to \ksn$ and shows that it satisfies $\xi\tup{\BBta} = \BBa$ for each composition $\alpha \models n$ (in his notations, this takes the form $\xi\tup{a_B} = x_J$, with $B$ being a set composition of type $\alpha$ and $J$ being the set $\gaps^{-1}\tup{\alpha}$).
    The same is also proved in \cite[Theorem 1]{Hsiao} (take $G = \set{1}$ and observe that $\sigma_\alpha = \BBta$ and $X_\alpha = \BBa$) and in \cite[Appendix B, a few lines above Remark B.5]{BlessenohlSchocker} (their $q$ is our $\alpha$, so that $X^q = \BBta$ and $\Xi^q = \BBa$),
    as well as in \cite[Theorem 7]{Brown-SRM}.
	
	Either way, we now know that the $\kk$-algebra anti-morphism $\xi : \kfsn \to \ksn$ satisfies $\xi\tup{\BBta} = \BBa$ for each composition $\alpha \models n$.
    Since the $\BBta$ form a basis of $\kfsn$ (by Proposition~\ref{kfsn_prop2}), this shows that the image of $\xi$ is the span of the $\BBa$'s for $\alpha \models n$, which of course is the descent algebra $\dsn$.
	Hence, $\dsn$ is a $\kk$-subalgebra of $\ksn$ (since the image of a $\kk$-algebra anti-morphism is always a $\kk$-subalgebra of its target).
	Moreover, the $\kk$-algebra anti-morphism $\xi$ is clearly injective (since the $\BBa$'s are $\kk$-linearly independent), and thus is a $\kk$-algebra anti-\textbf{iso}morphism from $\kfsn$ to its image $\dsn$.
    Thus, we have found an anti-isomorphism from $\kfsn$ to $\dsn$ that sends each $\BBta$ to $\BBa$.
    Its inverse must obviously be our map $\rho$.
	This shows that $\rho$ is a $\kk$-algebra anti-isomorphism.
\end{proof}

Using this anti-isomorphism $\rho$ we can transport our identities from the descent algebra to the face algebra and back.
Let us set
\begin{equation}
\wotil := \rho\tup{\wo}
\label{eq.wotil=}
\end{equation}
(this is allowed, since $\wo \in \dsn$).
Applying $ \rho $ to Proposition \ref{w0conj_prop}, we obtain the following corollary:

\begin{cor}
    \label{w0conj_cor}
    For any $\alpha \models n$, we have
    \begin{align}
    \wotil \BBta \wotil = \BBt_{\rev \alpha}
    \label{eq.w0conj_cor.1}
    \end{align}
    and
    \begin{align}
    \tup{\wotil \BBta}^2 = \BBt_{\rev \alpha} \BBta .
    \label{eq.w0conj_cor.2}
    \end{align}
\end{cor}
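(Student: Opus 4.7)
The plan is to transport the identity $\wo \BBa \wo = \BB_{\rev \alpha}$ (from Corollary~\ref{cor.woBBawo}) across the anti-isomorphism $\rho$ from Theorem~\ref{dsn_kfsn_morph}. The key point is that $\rho$ reverses multiplication: for any three elements $x, y, z$ of $\dsn$, we have $\rho\tup{xyz} = \rho\tup{z}\rho\tup{y}\rho\tup{x}$. Applying this to $x = z = \wo$ and $y = \BBa$, the two copies of $\rho\tup{\wo} = \wotil$ end up in the same positions as before, and we obtain
\[
\rho\tup{\wo \BBa \wo} = \rho\tup{\wo}\,\rho\tup{\BBa}\,\rho\tup{\wo} = \wotil\,\BBta\,\wotil.
\]
On the other hand, $\rho\tup{\BB_{\rev \alpha}} = \BBt_{\rev \alpha}$ by the defining property \eqref{eq.rhoBa} of $\rho$. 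Combining these with Corollary~\ref{cor.woBBawo} yields \eqref{eq.w0conj_cor.1}.

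For \eqref{eq.w0conj_cor.2}, I would simply expand
\[
\tup{\wotil \BBta}^2 = \wotil\,\BBta\,\wotil\,\BBta = \tup{\wotil\,\BBta\,\wotil}\,\BBta,
\]
and then substitute \eqref{eq.w0conj_cor.1} into the bracketed factor to get $\BBt_{\rev \alpha}\,\BBta$.

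There is no real obstacle here; the only thing to be careful about is the direction of multiplication reversal under the anti-isomorphism, but because the outer factors $\wo$ on both sides of $\BBa$ coincide, the triple product $\wo \BBa \wo$ is mapped to $\wotil \BBta \wotil$ in the same order rather than in a reversed one.
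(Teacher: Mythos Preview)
Your proof is correct and follows essentially the same approach as the paper: apply $\rho$ to the identity $\wo \BBa \wo = \BB_{\rev \alpha}$ from Corollary~\ref{cor.woBBawo} to obtain \eqref{eq.w0conj_cor.1}, then multiply \eqref{eq.w0conj_cor.1} on the right by $\BBta$ to obtain \eqref{eq.w0conj_cor.2}. Your explicit remark about why the anti-morphism does not disturb the order (the outer $\wo$'s are identical) is a nice clarification.
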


\begin{proof}
Let $\alpha \models n$.
Then, Corollary~\ref{cor.woBBawo} yields $\wo \BBa \wo = \BB_{\rev \alpha}$.
% Proposition \ref{w0conj_prop} (applied to $I = \gaps^{-1}(\alpha)$) shows that
% \[
% \wo \BB_\alpha \wo = \BB_{\rev \alpha}
% \]
% (since $I = \gaps^{-1}(\alpha)$ entails $\BB_I = \BB_\alpha$ and $\BB_{\sub I} = \BB_{\rev \alpha}$).
Applying the algebra anti-morphism $\rho$ to this equality, we obtain precisely \eqref{eq.w0conj_cor.1}
(since $\rho$ sends $\wo$ to $\wotil$ and sends each $\BB_\beta$ to $\BBt_\beta$).

To prove \eqref{eq.w0conj_cor.2}, we multiply the equality \eqref{eq.w0conj_cor.1} by $\BBta$ from the right, obtaining
\begin{align*}
\wotil \BBta \wotil \BBta = \BBt_{\rev \alpha} \BBta.
\end{align*}
This proves \eqref{eq.w0conj_cor.2}, since the left hand side here is $\tup{\wotil \BBta}^2$.
\end{proof}

To make this more explicit, we give an explicit formula for $\wotil = \rho\tup{\wo}$:

\begin{prop}
    \label{w0face.prop}
    We have
    \begin{align}
        \wotil = \sum_{F \in \CF} \tup{ -1 }^{n-\ell\tup{ F }}F.
        \label{eq.w0face}
    \end{align}
\end{prop}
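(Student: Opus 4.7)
The plan is to derive this explicit formula by combining three ingredients already available in the excerpt: the expansion of $\wo$ in the $\BB$-basis given by Corollary~\ref{w0_dsn_cor}, Bidigare's anti-isomorphism $\rho$ sending $\BBa \mapsto \BBta$, and the definition of $\BBta$ as a sum over set compositions of a given type.

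First I would apply $\rho$ to the identity \eqref{eq.w0dsn_cor.eq} from Corollary~\ref{w0_dsn_cor}. Since $\rho$ is $\kk$-linear and $\wotil = \rho(\wo)$ by \eqref{eq.wotil=}, this gives
\[
\wotil = \sum_{I \subseteq \interval{n-1}} (-1)^{n - \card{I} - 1} \rho(\BB_I).
\]
Reindexing by compositions via $\alpha = \gaps(I)$ (so $\BB_I = \BBa$ by Definition~\ref{dsn_def}\textbf{(c)}) and using \eqref{eq.rhoBa} to replace $\rho(\BBa)$ by $\BBta$, I obtain
\[
\wotil = \sum_{\alpha \models n} (-1)^{n - \card{\gaps^{-1}(\alpha)} - 1} \BBta.
\]

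Next I would expand each $\BBta$ using Definition~\ref{BBta_defn} and swap the order of summation. Since every face $F \in \CF$ has a unique type $\type F \models n$, it appears in exactly one of the summands $\BBta$, namely the one with $\alpha = \type F$. This yields
\[
\wotil = \sum_{F \in \CF} (-1)^{n - \card{\gaps^{-1}(\type F)} - 1} F.
\]

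The only remaining task is to match the sign. From the definition of $\gaps$, a subset $J \subseteq \interval{n-1}$ with $\card J = m-1$ is sent to a composition of length $m$; equivalently, $\card{\gaps^{-1}(\alpha)} = \ell(\alpha) - 1$ for every $\alpha \models n$. Applied to $\alpha = \type F$, this gives $\card{\gaps^{-1}(\type F)} = \ell(\type F) - 1 = \ell(F) - 1$, since the type of a set composition has the same length as the set composition itself. Therefore the exponent $n - \card{\gaps^{-1}(\type F)} - 1$ simplifies to $n - \ell(F)$, completing the derivation of \eqref{eq.w0face}. No step is truly an obstacle here; the only point requiring attention is the off-by-one in the length-vs-cardinality bookkeeping between subsets of $\interval{n-1}$ and compositions of $n$, which is exactly what makes the final sign $(-1)^{n-\ell(F)}$ come out cleanly from the $(-1)^{n-\card I-1}$ in Corollary~\ref{w0_dsn_cor}.
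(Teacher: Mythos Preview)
Your proposal is correct and follows essentially the same route as the paper's own proof: apply $\rho$ to the expansion of $\wo$ from Corollary~\ref{w0_dsn_cor}, reindex by compositions via $\gaps$, replace $\rho(\BBa)$ by $\BBta$, expand the latter, and simplify the sign using $\card{\gaps^{-1}(\alpha)} = \ell(\alpha) - 1$. The only tiny omission is that Corollary~\ref{w0_dsn_cor} assumes $n \geq 1$, so the case $n = 0$ needs a one-line separate check (both sides equal the unique face $()$); the paper handles this the same way.
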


\begin{proof}
    We assume that $n \geq 1$, since the $n = 0$ case is trivial.
    Applying the $\kk$-linear map $\rho$ to both sides of \eqref{eq.w0_dsn_cor.eq}, we find
    \begin{align*}
        \rho\tup{ \wo }
		&= \sum_{I \subseteq \interval{n-1}} \underbrace{\tup{ -1 }^{n-\card{I}-1}}_{\substack{= \tup{-1}^{n - \ell\tup{\gaps I}} \\ \text{(since $\ell\tup{\gaps I} = \card{I} + 1$)}}} \ \underbrace{\rho\tup{ \BBi }}_{\substack{= \rho\tup{\BB_{\gaps I}} \\ \text{(since }\BBi = \BB_{\gaps I} \\ \text{by Definition \ref{dsn_def} \textbf{(c)})}}}
        = \sum_{I \subseteq \interval{n-1}} \tup{ -1 }^{n-\ell(\gaps I)}\rho\tup{ \BB_{\gaps I} }.
    \intertext{
    %(since each $I \subseteq \interval{n-1}$ satisfies $n-\card{I}-1 = n - \ell(\gaps I)$ and  by Definition \ref{dsn_def} \textbf{(c)}).
    Since $\gaps : \mathcal{P}\tup{\interval{n-1}} \to C\tup{n}$ is a bijection, we can now substitute $\alpha$ for $\gaps I$ on the right hand side, and this becomes
    }
        \rho\tup{ \wo }
        &= \sum_{\alpha \models n} \tup{ -1 }^{n-\ell\tup{ \alpha }}\rho\tup{ \BBa }\\
         &= \sum_{\alpha \models n} \tup{ -1 }^{n-\ell\tup{ \alpha }}\BBta
            \qquad \tup{\text{by \eqref{eq.rhoBa}}} \\
         &= \sum_{\alpha \models n} \tup{ -1 }^{n-\ell\tup{ \alpha }} \sum_{\substack{F \in \CF; \\ \type F = \alpha}} F
            \qquad \tup{\text{by the definition of $\BBta$}} \\
         &= \sum_{\alpha \models n}\sum_{\substack{F \in \CF; \\ \type F = \alpha}} \tup{ -1 }^{n-\ell\tup{ \alpha }}F
		 = \sum_{F \in \CF} \tup{ -1 }^{n-\ell\tup{ F }}F
    \end{align*}
    (since each set composition $F$ has a unique type $\alpha$ and satisfies $\ell\tup{\alpha} = \ell\tup{F}$).
    Since the left hand side is $\wotil$, we have thus proved the proposition.
\end{proof}

Thanks to the anti-isomorphism $\rho$, we can use the element $\wotil \BBta$ of $\kfsn$ as a stand-in for the elements $\BBa \wo$ and $\wo \BBa$ of $\dsn$.
The precise way this works is formalized in the following lemma:

\begin{lem}
\label{lem.standin}
Let $f\tup{x} \in \kk\sbr{x}$ be a polynomial.
Let $\alpha \models n$.
Then, we have the equivalences
\[
\tup{f\tup{\BBa \wo} = 0}
\ \Longleftrightarrow\ %
\tup{f\tup{\wo \BBa} = 0}
\ \Longleftrightarrow\ %
\tup{f\tup{\wotil \BBta} = 0}.
\]
\end{lem}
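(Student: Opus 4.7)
The plan is to chain two equivalences. The first, $f\tup{\BBa \wo} = 0 \iff f\tup{\wo \BBa} = 0$, follows from the fact that $\wo$ is an involution, so the two elements are conjugate inside $\ksn$. The second, $f\tup{\wo \BBa} = 0 \iff f\tup{\wotil \BBta} = 0$, requires the algebra anti-isomorphism $\rho$ together with the companion fact that $\wotil$ is an involution in $\kfsn$.

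For the first equivalence I would observe that $\wo^2 = \id$ gives $\wo \BBa = \wo \tup{\BBa \wo} \wo^{-1}$, so the two elements are conjugate in $\ksn$. Since conjugation by an invertible element is a $\kk$-algebra automorphism, it commutes with polynomial evaluation, yielding $f\tup{\wo \BBa} = \wo \, f\tup{\BBa \wo} \, \wo^{-1}$, and the invertibility of $\wo$ makes this zero exactly when $f\tup{\BBa \wo}$ is zero.

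For the second equivalence, the key observation is that although $\rho$ reverses the order of products of two \emph{different} elements, a trivial induction shows $\rho\tup{a^k} = \rho\tup{a}^k$ for every $a \in \dsn$ and every $k \in \NN$, so by $\kk$-linearity $\rho\tup{f\tup{a}} = f\tup{\rho\tup{a}}$ for any polynomial $f$ and any single argument $a$. Applying this with $a = \wo \BBa$, and using $\rho\tup{\wo \BBa} = \rho\tup{\BBa}\rho\tup{\wo} = \BBta \wotil$ together with the injectivity of $\rho$, I would conclude $f\tup{\wo \BBa} = 0 \iff f\tup{\BBta \wotil} = 0$. It then remains to swap $\BBta \wotil$ for $\wotil \BBta$: applying $\rho$ to $\wo^2 = \id$ gives $\wotil^2 = \rho\tup{\wo^2} = \rho\tup{\id} = \id$, so $\wotil$ is itself an involution in $\kfsn$, and the conjugation argument from the first step applies verbatim to yield $f\tup{\BBta \wotil} = 0 \iff f\tup{\wotil \BBta} = 0$. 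The only mildly subtle point — and really the only thing preventing a one-line proof — is precisely this need to reconcile $\BBta \wotil$ with $\wotil \BBta$, which is forced on us because $\rho$ is an \emph{anti}-isomorphism and therefore reverses the factors of $\wo \BBa$.
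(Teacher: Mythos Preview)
Your proof is correct and uses essentially the same ingredients as the paper (conjugation by an involution plus the fact that a $\kk$-algebra anti-isomorphism commutes with single-variable polynomial evaluation). The one noteworthy difference is organizational: the paper applies $\rho$ to $\BBa\wo$ rather than to $\wo\BBa$, so that the anti-morphism reverses the factors and lands directly on $\rho(\BBa\wo) = \rho(\wo)\rho(\BBa) = \wotil\BBta$, eliminating the need for your final conjugation-by-$\wotil$ step and the auxiliary observation that $\wotil^2 = \id$. Your route is perfectly valid; the paper's is just one step shorter because it lets the anti-isomorphism do the reordering for free.
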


\begin{proof}
The map $\rho$ is a $\kk$-algebra anti-isomorphism
(by Theorem~\ref{dsn_kfsn_morph}).
Thus,
\begin{align}
\rho\tup{\BBa\wo}
& = \rho\tup{\wo} \rho\tup{\BBa}
% \qquad\left(
% \text{since $\rho$ is a $\kk$-algebra anti-morphism}\right)
% \\
= \wotil \BBta
\label{pf.lem.standin.rhoBw}
\end{align}
(since $\rho\tup{\wo} = \wotil$ and
$\rho\tup{\BBa} = \BBta$).

Let $\omega$ be the map $\dsn\to\dsn, \ a \mapsto \wo a \wo^{-1}$.
This map $\omega$ is simply conjugation by $\wo$ in the
$\kk$-algebra $\dsn$.
Thus, it is a $\kk$-algebra isomorphism, and sends $\BBa\wo$
to
\begin{equation}
\omega\tup{\BBa\wo}
= \wo\tup{\BBa\wo}\wo^{-1} = \wo\BBa .
\label{pf.lem.standin.omBw}
\end{equation}

However, polynomials are known to commute with algebra
morphisms and anti-morphisms:
That is, if $A$ and $B$ are any two $\kk$-algebras,
if $\phi : A \to B$ is a $\kk$-algebra morphism or
anti-morphism, and if $g\tup{x} \in \kk\sbr{x}$ is any
polynomial, then
\begin{equation}
\phi\tup{g\tup{a}} = g\tup{\phi\tup{a}}
\qquad
\text{ for any } a \in A .
\label{pf.lem.standin.phig}
\end{equation}

Applying this fact to $A = \dsn$ and $B = \kfsn$ and
$\phi = \rho$ (a $\kk$-algebra anti-morphism) and
$g\tup{x} = f\tup{x}$ and $a = \BBa\wo$, we obtain
$\rho\tup{f\tup{\BBa\wo}} = f\tup{\rho\tup{\BBa\wo}}
= f\tup{\wotil \BBta}$ (by \eqref{pf.lem.standin.rhoBw}).

Applying \eqref{pf.lem.standin.phig} to $A = \dsn$ and
$B = \dsn$ and $\phi = \omega$ (a $\kk$-algebra morphism) and
$g\tup{x} = f\tup{x}$ and $a = \BBa\wo$, we obtain
$\omega\tup{f\tup{\BBa\wo}} = f\tup{\omega\tup{\BBa\wo}}
= f\tup{\wo\BBa}$ (by \eqref{pf.lem.standin.omBw}).

Now, the map $\rho$ is a $\kk$-module isomorphism
(being a $\kk$-algebra anti-isomorphism).
Hence, we have the equivalences
\[
\tup{f\tup{\BBa \wo} = 0}
\ \Longleftrightarrow\ %
\tup{\rho\tup{f\tup{\BBa\wo}} = 0}
\ \Longleftrightarrow\ %
\tup{f\tup{\wotil \BBta} = 0}
\]
(since $\rho\tup{f\tup{\BBa\wo}} = f\tup{\wotil \BBta}$).
Furthermore, the map $\omega$ is a $\kk$-algebra
isomorphism. Hence, we have the equivalences
\[
\tup{f\tup{\BBa \wo} = 0}
\ \Longleftrightarrow\ %
\tup{\omega\tup{f\tup{\BBa\wo}} = 0}
\ \Longleftrightarrow\ %
\tup{f\tup{\wo \BBa} = 0}
\]
(since $\omega\tup{f\tup{\BBa\wo}} = f\tup{\wo \BBa}$).
These two equivalences prove the lemma.
\end{proof}

A similar (but simpler) argument shows the following:

\begin{lem}
\label{lem.standin2}
Let $f\tup{x} \in \kk\sbr{x}$ be a polynomial.
Let $\alpha \models n$.
Then, we have the equivalence
\[
\tup{f\tup{\BBa} = 0}
\ \Longleftrightarrow\ %
\tup{f\tup{\BBta} = 0}.
\]
\end{lem}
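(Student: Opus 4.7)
The plan is to mimic the proof of Lemma~\ref{lem.standin}, but using only the anti-isomorphism $\rho$ and dispensing with the conjugation-by-$\wo$ step. The argument should be a direct one-liner.

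First I would invoke Theorem~\ref{dsn_kfsn_morph}, which gives us that $\rho : \dsn \to \kfsn$ is a $\kk$-algebra anti-isomorphism, and in particular a $\kk$-module isomorphism. I would also recall from \eqref{eq.rhoBa} that $\rho\tup{\BBa} = \BBta$.

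Next I would apply the general principle (stated as \eqref{pf.lem.standin.phig} in the proof of Lemma~\ref{lem.standin}) that polynomials in one variable commute with $\kk$-algebra anti-morphisms: explicitly, for any $\kk$-algebra anti-morphism $\phi : A \to B$, any polynomial $g\tup{x} \in \kk\sbr{x}$, and any $a \in A$, we have $\phi\tup{g\tup{a}} = g\tup{\phi\tup{a}}$. Applying this with $\phi = \rho$, $g = f$, and $a = \BBa$, I obtain
\[
\rho\tup{f\tup{\BBa}} = f\tup{\rho\tup{\BBa}} = f\tup{\BBta}.
\]

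Finally, since $\rho$ is an injective $\kk$-linear map (being a $\kk$-module isomorphism), an element of $\dsn$ is zero if and only if its image under $\rho$ is zero. Therefore
\[
\tup{f\tup{\BBa} = 0}
\ \Longleftrightarrow\
\tup{\rho\tup{f\tup{\BBa}} = 0}
\ \Longleftrightarrow\
\tup{f\tup{\BBta} = 0},
\]
which is the claimed equivalence. There is no real obstacle here; the only subtlety worth flagging is that the statement \eqref{pf.lem.standin.phig} applies equally to morphisms and anti-morphisms (for single-variable polynomials the order of multiplication in a monomial $a^k$ is immaterial), which is exactly why we do not need to worry that $\rho$ reverses products.
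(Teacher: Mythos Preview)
Your proposal is correct and is exactly the argument the paper intends: the paper states that a ``similar (but simpler) argument'' to Lemma~\ref{lem.standin} proves this, and leaves the details to the reader. Your write-up is that simpler argument---just the $\rho$-part of the proof of Lemma~\ref{lem.standin} without the conjugation-by-$\wo$ step.
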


\begin{proof}
Left to the reader.
\end{proof}

\subsection{The containment relation}

We return to the combinatorics of set compositions.

\begin{defn}
    Let $F = \tup{F_1, F_2, \dots, F_k}$ and $G = \tup{G_1, G_2, \dots, G_m}$ be two faces in $\CF$.
    We say $F$ is \defin{contained} in $G$ -- and we write $F \con G$ -- if every block of $F$ is a subset of a block of $G$. (More formally: if for each $i \in \interval{k}$, there is a $j \in \interval{m}$ such that $F_i \subseteq G_j$.)
\end{defn}

\begin{exmp}
   If $G = \tup{45,123}$ (we omit the commas and the set braces, so for example $45$ means $\set{4,5}$), then the faces $\tup{5,4,123}$, $\tup{123,4,5}$, $\tup{45,2,13}$, $\tup{5,1,2,4,3}$ (and several others) are contained in $G$, while $\tup{5,134,2}$ is not (as are many others).
   % Listing the elements of a block in increasing order to avoid creating the impression that the order matters.
\end{exmp}

Clearly, the relation $\con$ is transitive and reflexive.
It is not antisymmetric, because reordering the blocks of a face $F$ yields a new face $G$ that satisfies $F \con G \con F$.
More generally, the relation $F \con G$ depends only on the \textbf{set} of blocks of $F$ and the \textbf{set} of blocks of $G$, not on their respective orders.
Thus, the relation $\con$ is better understood as a relation between set partitions (known as the \defin{refinement order} of set partitions), not set compositions.
However, we will find it useful on set compositions.

If a face $F$ is contained in a face $G$, then $G$ can be obtained from $F$ by merging and reordering its blocks. What blocks are merged and where they are placed can be encoded by a set composition of $\interval{\ell \tup{F}}$, which is obtained from $F$ by renaming the blocks of $F$ as $1,2,\ldots,\ell\tup{F}$ from left to right.
For example, for $F = \tup{123,4,5}$ and $G = \tup{45,123}$, we can obtain $G$ from $F$ by merging the last two blocks and swapping the result with the first block; this is encoded by the set composition $\tup{23, 1}$ of $\interval{\ell\tup{F}} = \interval{3}$.
In full generality, the encoding is described by the following proposition:

\begin{prop}
    \label{ccb_prop}
     Let $F = \tup{F_1, F_2, \ldots, F_m} \in \CF$ be a face (i.e., a set composition of $\interval{n}$). Then, the map
     \begin{align*}
        f : \set{G \in \CF : F \con G} &\to \set{H : H\models \interval{m}}, \\
        \tup{G_1, G_2, \ldots, G_k} &\mapsto \tup{H_1, H_2, \ldots, H_k} \text{ where } H_{i} = \set{j \in \interval{m} : F_{j} \subseteq G_{i}}
    \end{align*}
    is a bijection. This bijection is furthermore length-preserving:
    \[
    \ell \tup{f\tup{G}} = \ell \tup{G}.
    \]
\end{prop}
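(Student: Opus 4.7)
The plan is to exhibit an explicit inverse $g$ to $f$ and then verify the two composition identities $f \circ g = \id$ and $g \circ f = \id$. Length-preservation falls out of the construction, since $f(G)$ has the same number of blocks as $G$ by definition.

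First I would check that $f$ is well-defined, i.e.\ that $(H_1, H_2, \ldots, H_k)$ really is a set composition of $\interval{m}$. The $H_i$ are pairwise disjoint: if $j \in H_i \cap H_{i'}$, then $F_j \subseteq G_i \cap G_{i'}$, and since $F_j$ is nonempty and the blocks of $G$ are disjoint, we must have $i = i'$. The $H_i$ cover $\interval{m}$: for any $j \in \interval{m}$, the containment $F \con G$ provides some $i$ with $F_j \subseteq G_i$, whence $j \in H_i$. Finally each $H_i$ is nonempty: pick any $x \in G_i$; since $F$ is a set composition of $\interval{n}$, there is some $j$ with $x \in F_j$, and the $F \con G$ hypothesis (together with disjointness of the $G_{i'}$'s) forces $F_j \subseteq G_i$, so $j \in H_i$.

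Next I would define the candidate inverse
\[
g : \set{H : H \models \interval{m}} \to \set{G \in \CF : F \con G}, \qquad \tup{H_1, \ldots, H_k} \mapsto \tup{G_1, \ldots, G_k}, \quad G_i := \bigcup_{j \in H_i} F_j.
\]
Well-definedness of $g$ is routine: the $G_i$ are nonempty because each $H_i$ is nonempty and each $F_j$ is nonempty; they are pairwise disjoint because the $F_j$ are disjoint and the $H_i$ are disjoint; their union is $\interval{n}$ because the union of the $H_i$ is $\interval{m}$ and the union of all $F_j$ is $\interval{n}$. The relation $F \con g(H)$ holds by construction, since for each $j \in \interval{m}$ we have $j \in H_i$ for a unique $i$, and then $F_j \subseteq G_i$.

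The two verifications are straightforward. For $f \circ g = \id$, starting from $H = (H_1, \ldots, H_k)$ and computing $f(g(H))$, we must show $\set{j : F_j \subseteq G_i} = H_i$; the inclusion $\supseteq$ is by construction of $G_i$, and the reverse inclusion uses the same disjointness argument as in step one (if $F_j \subseteq G_i$ and $j \in H_{i'}$, then $F_j \subseteq G_i \cap G_{i'}$ forces $i = i'$). For $g \circ f = \id$, starting from $G = (G_1, \ldots, G_k)$ with $F \con G$ and writing $H = f(G)$, the $i$-th block of $g(H)$ is $\bigcup_{j : F_j \subseteq G_i} F_j$, which clearly sits inside $G_i$; conversely, any $x \in G_i$ belongs to some $F_j$, and as above $F \con G$ forces $F_j \subseteq G_i$, so $x$ is recovered. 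There is no substantive obstacle here — the only thing one has to be careful about is the interplay of the two different disjointness statements (blocks of $F$ vs.\ blocks of $G$), and that everything respects the \emph{ordering} of blocks, which is automatic since $f$ and $g$ preserve the indexing by $i$.
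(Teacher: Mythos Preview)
Your proof is correct and follows essentially the same approach as the paper: you exhibit the same explicit inverse $g\tup{H_1,\ldots,H_k} = \tup{\bigcup_{j\in H_1}F_j,\ldots,\bigcup_{j\in H_k}F_j}$, verify well-definedness of both maps, and check the two composition identities. The paper's argument is a bit more leisurely in spelling out the case analysis for $g\circ f=\id$, but the logic is identical.
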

The proof of this proposition is a standard argument in combinatorics (see, e.g., \cite[Example 3.10.4]{EC1} for the analogue for unordered set partitions); we give it in the Appendix (Subsection \ref{subsec.omitted_proofs.ccb_prop}).

The following simple properties of the containment are also easy to prove:

\begin{prop}
    \label{cp_prop}
    Let $F$ and $G$ be two faces in $\CF$. Then:
    \begin{enumerate}[]
        \item\textbf{(a)} If $F$ is contained in $G$, then $\ell \tup{F} \ge \ell \tup{G}$.
        \item\textbf{(b)} The product $FG$ is contained in both $F$ and $G$.
        \item\textbf{(c)} We have $FG = F$ if and only if $F$ is contained in $G$.
        \item\textbf{(d)} We have $\ell \tup{FG} \ge \ell \tup{F}$. Equality holds if and only if $F$ is contained in $G$.
        \item\textbf{(e)} We have $\ell \tup{FG} \ge \ell \tup{G}$.
    \end{enumerate}
\end{prop}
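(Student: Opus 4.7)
The plan is to prove the five parts in an order that leverages the combinatorial content of the product rule from Definition~\ref{def.facemon} \textbf{(b)} while minimizing repeated work. Write $F = \tup{F_1, F_2, \ldots, F_k}$ and $G = \tup{G_1, G_2, \ldots, G_m}$ throughout, and recall that $FG$ is obtained by listing the intersections $F_i \cap G_j$ in lexicographic order of $\tup{i,j}$ and deleting the empty ones.

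First I would dispatch \textbf{(b)}, which is essentially immediate from the definition: every block of $FG$ has the form $F_i \cap G_j$ for some $\tup{i,j}$, and as such is a subset of both the block $F_i$ of $F$ and the block $G_j$ of $G$. Next I would handle \textbf{(c)}. One direction is trivial from \textbf{(b)}: if $FG = F$, then $F = FG \con G$. For the other direction, assume $F \con G$, so each $F_i$ is a subset of a unique block $G_{j\tup{i}}$ of $G$ (uniqueness comes from the disjointness of the $G_j$'s and the nonemptiness of $F_i$). Then $F_i \cap G_j$ equals $F_i$ if $j = j\tup{i}$ and is empty otherwise. Since we list these intersections in lexicographic order of $\tup{i,j}$, after reducing we get exactly $\tup{F_1, F_2, \ldots, F_k} = F$.

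For parts \textbf{(d)} and \textbf{(e)}, I would use the identity
\[
\ell\tup{FG} = \#\set{\tup{i,j} \in \interval{k} \times \interval{m} : F_i \cap G_j \neq \varnothing}.
\]
Since the blocks $G_1, \ldots, G_m$ are disjoint and cover $\interval{n}$, each nonempty $F_i$ satisfies $F_i = \bigsqcup_j \tup{F_i \cap G_j}$, so at least one intersection $F_i \cap G_j$ is nonempty; summing over $i$ gives $\ell\tup{FG} \geq k = \ell\tup{F}$, which is \textbf{(d)}. The analogous argument with the roles of $F$ and $G$ swapped proves \textbf{(e)}. For the equality case in \textbf{(d)}, note that equality forces exactly one $j$ to work for each $i$, which means $F_i$ is contained in that unique $G_j$; this is precisely the condition $F \con G$. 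Conversely, if $F \con G$, then $FG = F$ by \textbf{(c)}, so $\ell\tup{FG} = \ell\tup{F}$.

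Finally, part \textbf{(a)} follows painlessly from \textbf{(c)} and \textbf{(e)}: if $F \con G$, then $FG = F$, whence $\ell\tup{F} = \ell\tup{FG} \geq \ell\tup{G}$. There is really no main obstacle here; the only point requiring care is bookkeeping in the definition of the product (especially that the order of the blocks in $FG$, when $F \con G$, matches the order in $F$), and this is handled by the lexicographic listing convention.
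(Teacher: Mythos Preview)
Your proof is correct and takes essentially the same approach as the paper: you prove the parts in the same order \textbf{(b)}, \textbf{(c)}, \textbf{(d)}/\textbf{(e)}, \textbf{(a)}, using the same counting argument for $\ell\tup{FG}$ and the same derivation of \textbf{(a)} from \textbf{(c)} and \textbf{(e)}. The only cosmetic difference is that you argue the equality case in \textbf{(d)} directly (exactly one $j$ per $i$ forces $F_i \subseteq G_j$) while the paper takes the contrapositive.
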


We again refer to the Appendix (Subsection \ref{subsec.omitted_proofs.cp_prop}) for the proof of Proposition~\ref{cp_prop}.

\subsection{The knapsack numbers $ \na\tup{F} $}

The following definition will be crucial to the study of $\BBa$ and related elements:

\begin{defn}
    \label{knap_num_def}
    Let $\alpha \models n$ be a composition.
	
	\begin{enumerate}
	
	\item[\textbf{(a)}] Given a face $F \in \CF$, we define the finite set $\Na\tup{F}$ by
	\[
    \Na\tup{F} = \set{G \in \CF  : F \con G \text{ and } \type G = \alpha},
    \]
    and we define the nonnegative integer $\na\tup{F}$ by
    \[
    \na\tup{F} = \card{\Na\tup{F}}.
    \]
	Thus, $\na\tup{F}$ is the number of faces $G \in \CF$ satisfying $F \con G$ and $\type G = \alpha$.
    %By Proposition \ref{cp_prop} \textbf{(c)}, the set $\Na\tup{F}$ is the set of faces $G$ such that $FG=F$ and $\type G=\alpha$.
    We refer to $\na\tup{F}$ as the \defin{knapsack number} of $F$ with respect to $\alpha$.

	\item[\textbf{(b)}]
	We will furthermore write $\na\tup{\CF}$ for the set
	$\set{\na\tup{F} : F \in \CF}$, which consists
	of the knapsack numbers of all faces.
	This is the image of $ \CF $ under the map $\na$.
	It is obviously a finite set of nonnegative integers.
	
	\end{enumerate}
\end{defn}

\begin{exmp}
Let $n = 4$ and $\alpha = \tup{2,2}$. Then:
\begin{enumerate}
\item If $F = \tup{4,23,1}$, then
\[
\Na\tup{F} = \set{\tup{14,23}, \ \tup{23,14}}, \qquad \text{and thus }
\na\tup{F} = 2.
\]
\item If $F = \tup{123,4}$, then $\Na\tup{F} = \varnothing$ and thus $\na\tup{F} = 0$.
\item If $F = \tup{3,2,1,4}$, then $\Na\tup{F}$ consists of all six faces of type $\alpha = \tup{2,2}$, and thus $\na\tup{F} = 6$.
\item Systematically computing $\na\tup{F}$ for all faces $F \in \CF$ shows that $\na\tup{\CF} = \set{0, 2, 6}$.
\end{enumerate}
\end{exmp}

\begin{exmp}
\label{exa.knap_num_min}
Let $n \geq 1$ and $\alpha = \tup{n}$.
Then, the only face $G \in \CF$ of type $\alpha$ is $\tup{\interval{n}}$.
Each face $F \in \CF$ satisfies $F \con \tup{\interval{n}}$
and thus
\[
N_{\tup{n}}\tup{F}
= \set{G \in \CF  : F \con G \text{ and } \type G = \tup{n}}
= \set{\tup{\interval{n}}},
\]
so that
\begin{align}
n_{\tup{n}}\tup{F} = \card{\set{\tup{\interval{n}}}} = 1.
\label{eq.exa.knap_num_min.n}
\end{align}
\end{exmp}

The word ``knapsack number'' refers to the knapsack problem in combinatorial optimization.
Indeed, given a composition $\alpha = \tup{\alpha_1, \alpha_2, \ldots, \alpha_k} \models n$ and a face $F \in \CF$, we can view the knapsack number $\na\tup{F}$ as the number of ways to distribute the blocks of $F$ into $k$ (labeled) bags such that the total size of the blocks in the $i$-th bag is $\alpha_i$ for each $i$.
(In fact, such a distribution corresponds to a face $G \in \Na\tup{F}$, namely the face $G = \tup{G_1, G_2, \ldots, G_k}$ whose $i$-th block $G_i$ is the union of the blocks of $F$ in the $i$-th bag.)

We shall now show some simple properties of knapsack numbers.

\begin{prop}
    \label{na.prop.00}
    Let $\alpha \models n$.
	Let $F,G \in \CF$ be such that $F \con G$.
    Then, we have $\na\tup{F} \geq \na\tup{G}$.
\end{prop}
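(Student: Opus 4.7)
The plan is to exploit transitivity of the containment relation $\con$, which was noted to hold for $\con$ immediately after the definition. Specifically, I would show the set inclusion $N_\alpha(G) \subseteq N_\alpha(F)$, from which the desired inequality $\na(F) = \card{N_\alpha(F)} \geq \card{N_\alpha(G)} = \na(G)$ follows at once by taking cardinalities (both sets being finite).

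To establish the inclusion $N_\alpha(G) \subseteq N_\alpha(F)$, I would let $H \in N_\alpha(G)$ be arbitrary. By definition of $N_\alpha(G)$, this means $G \con H$ and $\type H = \alpha$. Since $F \con G$ by hypothesis and $G \con H$, transitivity of $\con$ yields $F \con H$. Combined with $\type H = \alpha$, this precisely says $H \in N_\alpha(F)$, which is what we needed.

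There is no real obstacle here; the entire argument is a two-line consequence of transitivity of the containment relation, which is essentially by inspection (if every block of $F$ is a subset of some block of $G$, and every block of $G$ is a subset of some block of $H$, then every block of $F$ is a subset of some block of $H$). No combinatorial bijection or knapsack counting is required for this monotonicity statement; the knapsack interpretation of $\na$ is not even used, only the set-theoretic definition from Definition~\ref{knap_num_def}. One could view this proposition as the purely order-theoretic observation that the map $F \mapsto N_\alpha(F)$ is weakly decreasing from $(\CF, \con)$ to $(\mathcal{P}(\CF), \subseteq)$ (with respect to the reversed inclusion), and so $\card{N_\alpha(\cdot)}$ is weakly decreasing as a function on $(\CF, \con)$.
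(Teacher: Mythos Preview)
Your proposal is correct and follows essentially the same approach as the paper's own proof: establish the inclusion $N_\alpha(G) \subseteq N_\alpha(F)$ via transitivity of $\con$, then compare cardinalities.
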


\begin{proof}
   Recall that $\na\tup{F} = \card{\Na\tup{F}}$ and
   likewise $\na\tup{G} = \card{\Na\tup{G}}$.
   However, for each $H \in \Na\tup{G}$, we have
   $G \con H$ and $\type H = \alpha$ (by the definition
   of $\Na\tup{G}$), so that
   $F \con G \con H$
   (since the containment relation is transitive),
   and thus $H \in \Na\tup{F}$ (since $F \con H$ and
   $\type H = \alpha$).
   Thus, we have shown that
   \[
   \Na\tup{G}
   \subseteq
   \Na\tup{F}.
   \]
   Hence,
   $\card{\Na\tup{G}}
   \leq
   \card{\Na\tup{F}}$.
   In other words, $\na\tup{G} \le \na\tup{F}$
   (since $\na\tup{F} = \card{\Na\tup{F}}$ and
   likewise $\na\tup{G} = \card{\Na\tup{G}}$).
   This yields the proposition.
\end{proof}

\begin{prop}
    \label{na.prop.0}
    Let $\alpha \models n$.
	Let $F,G \in \CF$.
    Then, we have $\na\tup{FG} \geq \na\tup{F}$ and $\na\tup{FG} \geq \na\tup{ G }$.
\end{prop}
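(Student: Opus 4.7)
The plan is to reduce this immediately to the two already-available results: Proposition~\ref{cp_prop} \textbf{(b)} and Proposition~\ref{na.prop.00}. By Proposition~\ref{cp_prop} \textbf{(b)}, the product $FG$ is contained in both $F$ and $G$, i.e., $FG \con F$ and $FG \con G$. Proposition~\ref{na.prop.00} tells us that whenever one face is contained in another, the knapsack number of the contained face is at least as large as the knapsack number of the containing face (since every $H$ witnessing the knapsack count of the larger face also witnesses the count of the smaller one, via transitivity of $\con$).

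Therefore, applying Proposition~\ref{na.prop.00} to the pair $(FG, F)$ (in place of $(F,G)$ in the statement of that proposition) gives $\na\tup{FG} \geq \na\tup{F}$, and applying it to the pair $(FG, G)$ gives $\na\tup{FG} \geq \na\tup{G}$. Both inequalities follow and the proposition is established.

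There is essentially no obstacle here, as both ingredients are already in place. The only thing to be careful about is the direction of the monotonicity: containment $F \con G$ means every block of $F$ is inside a block of $G$, so $F$ is the \emph{finer} decomposition, and finer decompositions have \emph{more} ways to be repacked into bags of prescribed sizes. The statement of Proposition~\ref{na.prop.00} confirms this orientation, so the argument goes through directly without any case analysis.
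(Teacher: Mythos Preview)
Your proof is correct and follows exactly the same approach as the paper: invoke Proposition~\ref{cp_prop} \textbf{(b)} to obtain $FG \con F$ and $FG \con G$, then apply Proposition~\ref{na.prop.00} to each containment. The additional commentary on the direction of monotonicity is accurate but not strictly needed.
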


\begin{proof}
    Proposition \ref{cp_prop} \textbf{(b)} yields $FG \con F$ and $FG \con G$.
    Hence, by Proposition~\ref{na.prop.00}, we conclude that
    $\na\tup{FG} \geq \na\tup{F}$ and $\na\tup{FG} \geq \na\tup{ G }$.
\end{proof}

\begin{prop}
    \label{na.prop.1}
    Let $\alpha \models n$.
	Let $F,G \in \CF$ be such that $\type G = \alpha$ and $F \not\con G$.
	Then, we have $\na\tup{FG} > \na\tup{F}$.
\end{prop}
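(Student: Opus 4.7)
The plan is to strengthen the proof of Proposition~\ref{na.prop.0} by exhibiting an explicit element of $\Na\tup{FG}$ that does not lie in $\Na\tup{F}$. In fact, the element will be $G$ itself.

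First I would establish the inclusion $\Na\tup{F} \subseteq \Na\tup{FG}$. Let $H \in \Na\tup{F}$; by definition $F \con H$ and $\type H = \alpha$. Proposition~\ref{cp_prop}~\textbf{(b)} gives $FG \con F$, and transitivity of $\con$ then yields $FG \con H$. Hence $H \in \Na\tup{FG}$, and so $\na\tup{F} = \card{\Na\tup{F}} \le \card{\Na\tup{FG}} = \na\tup{FG}$ (this recovers Proposition~\ref{na.prop.0}).

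Next I would show that the inclusion is strict by checking that $G \in \Na\tup{FG} \setminus \Na\tup{F}$. On one hand, Proposition~\ref{cp_prop}~\textbf{(b)} gives $FG \con G$, and $\type G = \alpha$ by hypothesis, so $G \in \Na\tup{FG}$. On the other hand, $F \not\con G$ by hypothesis, so $G \notin \Na\tup{F}$. Therefore $\Na\tup{F}$ is a proper subset of $\Na\tup{FG}$, which gives the strict inequality $\na\tup{F} < \na\tup{FG}$.

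There is no real obstacle to this argument; the only thing to be careful about is the direction of $\con$ in Proposition~\ref{cp_prop}~\textbf{(b)} (namely that $FG$ is contained in both $F$ and $G$, not the other way around), which is exactly what makes $G$ available as a ``witness'' in $\Na\tup{FG}$ once it is forbidden from $\Na\tup{F}$ by the assumption $F \not\con G$.
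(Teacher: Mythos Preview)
Your proof is correct and follows essentially the same approach as the paper: both establish the inclusion $\Na\tup{F} \subseteq \Na\tup{FG}$ via $FG \con F$ and transitivity, and then use $G$ itself as the witness showing the inclusion is proper.
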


\begin{proof}
   Recall that $\na\tup{F} = \card{\Na\tup{F}}$ and
   likewise $\na\tup{FG} = \card{\Na\tup{FG}}$.
   Hence, in order to prove that $\na\tup{FG} > \na\tup{F}$, %(thus verifying the proposition),
   it suffices to show that $\Na\tup{F}$ is a proper subset of $\Na\tup{FG}$.
   
   It is easy to see that $\Na\tup{F}$ is a subset of $\Na\tup{FG}$: In fact, any $H \in \Na\tup{F}$ satisfies $F \con H$ and thus (by Proposition \ref{cp_prop} \textbf{(b)}) also $FG \con F \con H$, so that $H \in \Na\tup{FG}$.
   
   It remains to show that the subset $\Na\tup{F}$ of $\Na\tup{FG}$ is proper.
   For this, we must find an element that belongs to $\Na\tup{FG}$ but not to $\Na\tup{F}$.
   But $G$ is such an element: Indeed, $G \in \Na\tup{FG}$ (because Proposition \ref{cp_prop} \textbf{(b)} shows that $FG \con G$ and $\type G = \alpha$) and $G \notin \Na\tup{F}$ (since $F \not\con G$).
   Thus, the proof is finished.
\end{proof}

\begin{prop}
    \label{na.prop.2}
    Let $\alpha \models n$ and $F \in \CF$.
	Then, $\na\tup{F} = n_{\rev \alpha}\tup{F}$.
\end{prop}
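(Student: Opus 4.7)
The plan is to produce an explicit bijection between the two sets $\Na\tup{F}$ and $N_{\rev\alpha}\tup{F}$, thereby showing they have equal cardinality. The natural candidate is ``reverse the block order'': given a face $G = \tup{G_1, G_2, \ldots, G_k} \in \CF$, define $\rev G := \tup{G_k, G_{k-1}, \ldots, G_1}$.

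First, I would observe that $\rev$ is a well-defined involution on $\CF$ (reversing a tuple of nonempty disjoint sets whose union is $\interval{n}$ gives another such tuple, and reversing twice recovers $G$). Next, I would check the two properties that make it carry $\Na\tup{F}$ into $N_{\rev\alpha}\tup{F}$. For the type: if $\type G = \tup{\card{G_1}, \ldots, \card{G_k}} = \alpha$, then $\type\tup{\rev G} = \tup{\card{G_k}, \ldots, \card{G_1}} = \rev\alpha$. For the containment: the relation $F \con G$ asks only that every block of $F$ be a subset of \emph{some} block of $G$, which is a condition on the \emph{set} of blocks of $G$ and is therefore insensitive to their order. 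In particular, $F \con G$ if and only if $F \con \rev G$.

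Combining these two observations, $\rev$ restricts to a map $\Na\tup{F} \to N_{\rev\alpha}\tup{F}$, and since $\rev$ is an involution on $\CF$ this restriction is a bijection (with inverse the analogous map $N_{\rev\alpha}\tup{F} \to \Na\tup{F}$). Taking cardinalities yields $\na\tup{F} = \card{\Na\tup{F}} = \card{N_{\rev\alpha}\tup{F}} = n_{\rev\alpha}\tup{F}$, which is the desired equality.

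I do not expect any genuine obstacle here; the only subtle point is the order-independence of the $\con$ relation, which was already noted in the paragraph following the definition of containment. The entire proof should fit into a short paragraph.
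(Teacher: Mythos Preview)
Your proposal is correct and matches the paper's proof essentially verbatim: both define $\rev G$ on faces, observe that containment $F \con G$ depends only on the set of blocks of $G$, note that $\type(\rev G) = \rev(\type G)$, and conclude that $G \mapsto \rev G$ is a bijection $\Na\tup{F} \to N_{\rev\alpha}\tup{F}$.
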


\begin{proof}
    If $F$ is contained in some face $G$, then $F$ will still be contained in any face obtained by reordering the blocks of $G$.
	Thus, in particular, $F$ is contained in a face $G = \tup{G_{1},G_{2},\dots,G_{k}}$ if and only if it is contained in its reverse $\rev G := \tup{G_{k},G_{k-1},\dots,G_{1}}$.
	Furthermore, a face $G$ satisfies $\type G = \alpha$ if and only if $\type\tup{\rev G} = \rev(\alpha)$, since we always have $\type\tup{\rev G} = \rev(\type G)$.
	Thus, the map
	\begin{align*}
	\Na\tup{F} &\to N_{\rev \alpha}\tup{F}, \\
	G &\mapsto \rev G
	\end{align*}
	is well-defined.
	This map is furthermore a bijection, since an inverse map can be defined in the exact same way (note that $\rev\tup{\rev \alpha} = \alpha$).
	By the bijection principle, we thus obtain
	$\card{\Na\tup{F}} = \card{N_{\rev \alpha}\tup{F}}$.
	In other words, $\na\tup{F} = n_{\rev \alpha}\tup{F}$.
\end{proof}

\begin{rem}
    Let $\alpha \models n$ and $F \in \CF$.
	In our above proof, we essentially showed that $\na\tup{F}$ does not change if we permute the entries of the composition $\alpha$.
	But there are further symmetries: $\na\tup{F}$ also remains unchanged if we permute the blocks of $F$ or apply a permutation $w \in S_n$ to $F$.
	% If $G$ is obtained from $F$ by reordering the blocks, then $\na\tup{F} = \na\tup{G}$.
	% More generally:
	% If $w \in S_n$, and if the face $G$ is obtained from $wF$ by reordering the blocks, then $\na\tup{F} = \na\tup{G}$.
	Hence, $\na\tup{F}$ depends on the \textbf{unordered} type of $F$ (that is, of the partition of $n$ obtained from the composition $\type F$ by sorting the entries in decreasing order) rather than on $F$ itself.
	In particular, the size of the set $\na\tup{\CF}$ is at most the number $p\tup{n}$ of partitions of $n$.
\end{rem}

\section{\label{sec.longpol}The knapsack filtration and the first annihilating polynomials}

From now on, \textbf{we fix a composition $\alpha$ of $n$}.

Our goal in this section is to establish polynomials that annihilate\footnote{We say that a polynomial $f \in \kk\sbr{x}$ \defin{annihilates} an element $a$ of a $\kk$-algebra if $f\tup{a} = 0$.} $\BBa$ and $\wo \BBa$ (Theorem~\ref{main.thm.3a} and Theorem~\ref{thm.main.1}, respectively).
The former polynomial is well-known, and the latter is suboptimal (i.e., has higher degree than necessary);
however, these polynomials make natural stepping stones on the way to our main result (Theorem~\ref{main.thm.3}), allowing us to introduce the relevant ideas one at a time.

\subsection{A polynomial annihilating $\BBa$}

The following theorem (a particular case of \cite[Proposition 3.12]{ncsf2} and of \cite[Theorem 4.1]{Schocker})
gives a polynomial that annihilates $\BBa$:

\begin{thm}
\label{main.thm.3a}
The element $\BBa \in \dsn$ satisfies
\begin{equation}
\prod_{k\in \na\tup{\CF}} \tup{\BBa - k}
=0.
\label{eq.main.3a}
\end{equation}
(This product ranges over all \textbf{distinct}
elements $k \in \na\tup{\CF}$. Thus, even if several different
faces $F$ have the same knapsack number $k$, there
is only one $\BBa - k$ factor in the product.)
\end{thm}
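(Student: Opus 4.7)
The plan is to transfer the identity to the face algebra $\kf$ via Lemma~\ref{lem.standin2}: it suffices to prove $\prod_{k \in \na\tup{\CF}} \tup{\BBta - k} = 0$ in $\kf$, since this equivalence carries it back to $\dsn$. Working in $\kf$ is advantageous because the products $H \BBta$ for $H \in \CF$ admit a transparent combinatorial decomposition that exposes the knapsack numbers as ``diagonal entries'' with respect to a natural filtration.

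The central preliminary computation, for any face $H \in \CF$, is
\[
H \BBta \;=\; \sum_{\substack{F \in \CF; \\ \type F = \alpha}} HF \;=\; \na\tup{H}\cdot H \;+\; \sum_{\substack{F \in \CF; \\ \type F = \alpha; \\ H \not\con F}} HF ,
\]
where the splitting uses Proposition~\ref{cp_prop}\textbf{(c)}: each of the $\na\tup{H}$ faces $F$ with $H \con F$ contributes $HF = H$. Proposition~\ref{na.prop.1} then guarantees that every surviving term $HF$ in the remaining sum satisfies $\na\tup{HF} > \na\tup{H}$. Equivalently, $H\tup{\BBta - \na\tup{H}}$ is a linear combination of faces whose knapsack number strictly exceeds $\na\tup{H}$.

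Next, enumerate $\na\tup{\CF} = \set{k_1 < k_2 < \cdots < k_r}$ and define the $\kk$-submodules $V_i := \vspan\set{H \in \CF : \na\tup{H} \geq k_i}$ for $i = 1, 2, \ldots, r$, with $V_{r+1} := 0$. The main step is to show that right multiplication by $\BBta - k_i$ sends $V_i$ into $V_{i+1}$. For $H \in V_i$ with $\na\tup{H} = k_i$, the display above expresses $H\tup{\BBta - k_i}$ as a sum of faces with $\na$-value $> k_i$, hence $\geq k_{i+1}$ (since these values live in $\na\tup{\CF}$). For $H \in V_i$ with $\na\tup{H} > k_i$, we already have $H \in V_{i+1}$, and Proposition~\ref{na.prop.0} shows that every term $HF$ of $H \BBta$ also satisfies $\na\tup{HF} \geq \na\tup{H} \geq k_{i+1}$, so $H\BBta \in V_{i+1}$ and thus $H\tup{\BBta - k_i} \in V_{i+1}$.

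Finally, since the factors $\BBta - k_i$ all lie in the commutative subalgebra generated by $\BBta$, we may apply them to the identity face $\tup{\interval{n}} \in V_1$ in the order $i = 1, 2, \ldots, r$; each successive application drops us one rung down the filtration, so the result lands in $V_{r+1} = 0$. Hence $\prod_{i=1}^{r} \tup{\BBta - k_i} = \tup{\interval{n}} \cdot \prod_{i=1}^{r} \tup{\BBta - k_i} = 0$, and the theorem follows. The main obstacle, I think, is picking the right filtration: using length (the more obvious statistic) gives a bound but not the sharp eigenvalue set, whereas filtering by $\na\tup{H}$ aligns exactly with the factors in the product, and Proposition~\ref{na.prop.1}'s strict inequality is precisely what makes the filtration strictly decrease at each step.
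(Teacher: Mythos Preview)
Your proof is correct and follows essentially the same route as the paper: you transfer to $\kf$ via Lemma~\ref{lem.standin2}, filter by the knapsack number (your $V_i$ is exactly the paper's $\kf_{k_i}$ from Definition~\ref{def.ksf}), and show that right multiplication by $\BBta - k_i$ drops one level in the filtration (the paper's Lemma~\ref{main.lem.1a}), using the same splitting of $H\BBta$ and the same appeal to Propositions~\ref{cp_prop}\textbf{(c)}, \ref{na.prop.0}, and \ref{na.prop.1}. The only cosmetic difference is that the paper indexes the knapsack values from $k_0$ rather than $k_1$ and runs the induction starting from the empty product in $\kf_{k_0} = \kf$ rather than from the unit face.
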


To prove this theorem, we need some preparations.
We begin with a definition, which will be equally important in further proofs later on:

\begin{defn}[The knapsack filtration of $\kf$]\phantomsection
    \label{def.ksf}\ %
    \begin{enumerate}[]
        \item\textbf{(a)} For each $k \in \NN \cup \set{\infty}$, we define
		$\CF_{k} := \set{F \in \CF : \na\tup{F} \ge k}$.
		(Thus, $\CF_\infty = \varnothing$.)
		
		We furthermore let $\kf_k$ denote the $\kk$-linear span of $\CF_k$ inside $\kf$.
		
        \item\textbf{(b)} Write the set $ \na\tup{ \CF } $ as $ \set{k_0 < k_1 < k_2 < \cdots < k_m} $.
        Set $k_{m+1} = \infty$.
        Then the $\CF_{k_i}$ form a filtration of $\CF$:
            \[
            \CF = \CF_{k_{0}} \supseteq \CF_{k_{1}} \supseteq \cdots \supseteq \CF_{k_{m}} \supseteq \CF_{k_{m+1}} = \varnothing.
            \]
        Therefore their $\kk$-linear spans form a filtration of $\kf$:
            \[
            \kf = \kf_{k_{0}} \supseteq \kf_{k_{1}} \supseteq \cdots \supseteq \kf_{k_{m}} \supseteq \kf_{k_{m+1}} = 0.
            \]
        We shall call this the \defin{knapsack filtration} of $\kf$.
    \end{enumerate}
\end{defn}

To illustrate Definition~\ref{def.ksf}, we compute the smallest element $k_0$ of $\na\tup{\CF}$:
% not used so far!

\begin{prop}
\label{prop.k0}
\begin{enumerate}
\item[\textbf{(a)}] If $\alpha \neq \tup{n}$, then $k_0 = 0$.
\item[\textbf{(b)}] If $\alpha = \tup{n}$, then $k_0 = 1$ and $m = 0$.
\end{enumerate}
\end{prop}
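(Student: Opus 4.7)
The plan is to handle the two cases separately, each with a direct computation from the definition of $\na$.

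For part \textbf{(b)}, I would invoke Example~\ref{exa.knap_num_min}, which states that when $\alpha = \tup{n}$, we have $n_{\tup{n}}\tup{F} = 1$ for every face $F \in \CF$. Hence $\na\tup{\CF} = \set{1}$, so this set has a unique element, namely $k_0 = 1$, and $m = 0$ since the indexing $k_0 < k_1 < \cdots < k_m$ terminates at index $0$.

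For part \textbf{(a)}, note first that $\na\tup{F} \geq 0$ for every $F \in \CF$ by definition, so $k_0 \geq 0$. It suffices to exhibit a face $F$ with $\na\tup{F} = 0$. I would take $F := \tup{\interval{n}}$, the face with a single block. The key observation is that if $F \con G$ for some face $G$, then the single block $\interval{n}$ of $F$ must be a subset of some block of $G$; but since each block of $G$ is itself a subset of $\interval{n}$, this forces $G$ to contain $\interval{n}$ as a block, and hence $G = \tup{\interval{n}}$, which has type $\tup{n}$. Since $\alpha \neq \tup{n}$, no such $G$ has $\type G = \alpha$, so $\Na\tup{F} = \varnothing$ and $\na\tup{F} = 0$. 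This gives $k_0 = 0$.

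There is no real obstacle here; both parts are immediate from unwinding definitions, and the only genuine content is the observation that the single-block face $\tup{\interval{n}}$ is ``maximal'' under $\con$, so its knapsack number is nonzero only for the type $\tup{n}$.
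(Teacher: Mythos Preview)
Your proposal is correct and follows essentially the same approach as the paper: both parts use exactly the face $\tup{\interval{n}}$ for \textbf{(a)} and invoke Example~\ref{exa.knap_num_min} for \textbf{(b)}, with the same reasoning throughout.
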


\begin{proof}
    \textbf{(a)} Assume that $\alpha \neq \tup{n}$.
	%The only face of type $ \tup{ n } $ is $\tup{ \interval{n} } $.
	Thus,
    \begin{align*}
        \Na\tup{ \tup{ \interval{n} } }
		&= {\set{G \in \CF : \tup{ \interval{n} } \con G \text{ and } \type G = \alpha }}
		= \varnothing ,
    \end{align*}
    since the face $ \tup{ \interval{n} } $ is contained only in itself and is of type $ \tup{ n } \ne \alpha $ by assumption.
	Therefore, $\na\tup{ \tup{ \interval{n} } } = 0$,
	so that $0 = \na\tup{ \tup{ \interval{n} } } \in \na\tup{ \CF } $.
	Since $\na\tup{ \CF }$ is a set of nonnegative integers, this shows that $0$ is its smallest element; that is, $ k_{0}=0 $.

% If $F$ is the face $\tup{\interval{n}} \in \CF$ (this is the face consisting of a single block $\interval{n}$), then there exists no face $G$ such that $F \con G$ and $\type G = \alpha$ (indeed, the only face $G$ satisfying $F \con G$ is $F$ itself, but the type of $F$ is $\tup{n} \neq \alpha$).
% Hence, this face $F$ satisfies $\Na\tup{F} = \varnothing$ and therefore $\na\tup{F} = \card{\Na\tup{F}} = 0$.
% Thus, the set $\na\tup{\CF}$ contains $0$.
% Since this set consists of nonnegative integers, we thus conclude that $0$ is its smallest element.
% In other words, $k_0 = 0$ (since the definition of the $k_j$'s shows that $k_0$ is the smallest element of $\na\tup{\CF}$).
%
    \textbf{(b)} Assume that $\alpha = \tup{n}$.
	%Then $ \tup{ \interval{n} } $ is the only face of type $ \tup{ n } $.
	For every face $ F \in \CF $, we have
	$n_{\tup{ n }}\tup{F} = 1$ by
	\eqref{eq.exa.knap_num_min.n}.
	% $F \con \tup{ \interval{n} } $ and thus
    % \[
        % N_{\tup{ n }}\tup{ F }
		% = \set{G \in \CF : F \con G \text{ and } \type G = \tup{ n } }
		% = \set{ \tup{\interval{n}} },
    % \]
    % so that $n_{\tup{ n }}\tup{F} = 1$.
	Consequently $ n_{\tup{ n }}\tup{ \CF } = \set{1} $,
	and thus $ k_{m}=k_{0}=1 $ and $m = 0$, as desired.
\end{proof}

The following simple fact will be used without mention:

\begin{prop}
\label{prop.kfk-ideal}
Let $k \in \NN \cup \set{\infty}$.
Then, $\kf_k$ is a (two-sided) ideal of $ \kf $.
\end{prop}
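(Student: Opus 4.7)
The plan is straightforward and reduces entirely to Proposition~\ref{na.prop.0}, so I expect no real obstacle. First I would dispose of the degenerate case $k = \infty$: here $\CF_\infty = \varnothing$, so $\kf_\infty = 0$, and the zero submodule is trivially a two-sided ideal.

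Next, assume $k \in \NN$. Since $\kf_k$ is by definition the $\kk$-linear span of $\CF_k$ inside $\kf$, and since $\kf$ itself is the $\kk$-linear span of $\CF$, it suffices by $\kk$-bilinearity of the multiplication on $\kf$ to verify the set-level statement: for every $F \in \CF_k$ and every $G \in \CF$, both products $FG$ and $GF$ (computed in the face monoid $\CF$) again lie in $\CF_k$. Indeed, once this is shown, distributing an arbitrary element of $\kf$ over the basis $\CF$ and an arbitrary element of $\kf_k$ over the basis $\CF_k$ immediately produces a $\kk$-linear combination of elements of $\CF_k$, i.e., an element of $\kf_k$.

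To verify this set-level closure, I would directly invoke Proposition~\ref{na.prop.0}, which asserts that $\na\tup{FG} \geq \na\tup{F}$ and $\na\tup{FG} \geq \na\tup{G}$ for all $F, G \in \CF$. Applying this to an arbitrary $F \in \CF_k$ and $G \in \CF$, the first inequality gives $\na\tup{FG} \geq \na\tup{F} \geq k$, so $FG \in \CF_k$. Swapping the roles of $F$ and $G$ in the proposition (or equivalently applying the second inequality to the product $GF$), we get $\na\tup{GF} \geq \na\tup{F} \geq k$, so $GF \in \CF_k$. This completes the reduction and hence the proof.

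The only conceivable subtlety is keeping track of what ``ideal'' means here, but since $\kf$ is noncommutative in general, the two-sidedness is precisely what the two inequalities in Proposition~\ref{na.prop.0} give us in parallel; neither direction is harder than the other.
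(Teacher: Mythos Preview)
Your proof is correct and follows essentially the same route as the paper: reduce by bilinearity to showing that $\CF_k$ absorbs multiplication by arbitrary faces on either side, then invoke Proposition~\ref{na.prop.0}. The paper phrases the absorption step symmetrically (``if one of $F,G$ lies in $\CF_k$ then $FG\in\CF_k$'', via $\na\tup{FG}\ge\max\set{\na\tup{F},\na\tup{G}}$) rather than splitting into left and right cases, but the content is identical.
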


\begin{proof}
The set $\kf_k$ is defined as a $\kk$-linear span, and thus is closed under addition and contains zero.
It remains to prove that it satisfies the absorption axiom -- i.e., that if one of two elements $\FF, \GG \in \kf$ belongs to $\kf_k$, then $\FF \GG \in \kf_k$ as well.
By linearity, it suffices to show that if one of two faces $F, G \in \CF$ belongs to $\CF_k$, then $FG \in \CF_k$ as well (since $\kf_k$ and $\kf$ are the spans of $\CF_k$ and $\CF$).

But this is easy:
Let $F, G \in \CF$ be two faces. Assume that one of them belongs to $\CF_k$.
That is, one of the knapsack numbers $\na\tup{F}$ and $\na\tup{G}$ is $\geq k$.
Hence, we have $\max\set{\na\tup{F}, \na\tup{G}} \geq k$.
But Proposition \ref{na.prop.0} shows that
$\na\tup{FG} \geq \na\tup{F}$ and $\na\tup{FG} \geq \na\tup{G}$,
so that
$\na\tup{FG} \geq \max\set{\na\tup{F}, \na\tup{G}} \geq k$.
In other words, $FG \in \CF_k$.
This is just what we needed to prove.
\end{proof}

This shows that the knapsack filtration of $\kf$ is a filtration by ideals (i.e., by $\tup{\kf,\kf}$-bimodules).
% \jonathan{Did you mean just $\kf$-bimodules?}
% \darij{Both are correct.}
Hence, the subquotients $\kf_{k_i} / \kf_{k_{i+1}}$ of this filtration are $\kf$-$\kf$-bimodules.
The following lemma says that right multiplication by $\BBta$ acts on these subquotients as scaling by $k_i$:

\begin{lem}
    \label{main.lem.1}
    Let $ F \in \CF $. Let $i \in \interval{0, m}$ be such that $\na\tup{F} = k_i$. Then we have
    \[
    F\BBta \equiv k_i F  \mod  \kf_{k_{i+1}}.
    \]
\end{lem}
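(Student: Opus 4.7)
The plan is to compute $F \BBta$ directly by expanding $\BBta = \sum_{\substack{G \in \CF;\\ \type G = \alpha}} G$ and then splitting the sum according to the containment relation $F \con G$.

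First, I would write
\[
F \BBta = \sum_{\substack{G \in \CF;\\ \type G = \alpha}} FG
= \sum_{\substack{G \in \CF;\\ \type G = \alpha;\\ F \con G}} FG
+ \sum_{\substack{G \in \CF;\\ \type G = \alpha;\\ F \not\con G}} FG.
\]
For the first sum, Proposition~\ref{cp_prop}~\textbf{(c)} tells me that $F \con G$ implies $FG = F$, so each term collapses to $F$. The number of such $G$ is exactly $\na\tup{F} = k_i$ by the definition of the knapsack number. Hence the first sum contributes $k_i F$.

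For the second sum, the key input is Proposition~\ref{na.prop.1}: whenever $\type G = \alpha$ and $F \not\con G$, we have $\na\tup{FG} > \na\tup{F} = k_i$. Since the only values attained by $\na$ are $k_0 < k_1 < \cdots < k_m$, this strict inequality forces $\na\tup{FG} \geq k_{i+1}$, so $FG \in \CF_{k_{i+1}}$ and therefore $FG \in \kf_{k_{i+1}}$. Summing over all such $G$ keeps us inside the $\kk$-submodule $\kf_{k_{i+1}}$, so the second sum lies in $\kf_{k_{i+1}}$. Combining the two gives $F\BBta \equiv k_i F \mod \kf_{k_{i+1}}$, as required.

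There is no real obstacle here: both halves of the argument are direct applications of results already proved in the preceding subsection. The only subtlety worth noting is the boundary case $i = m$, in which $k_{m+1} = \infty$ and $\kf_{k_{m+1}} = 0$; here one must observe that $\na\tup{F} = k_m$ being maximal rules out the existence of any $G$ with $\type G = \alpha$ and $F \not\con G$ (otherwise Proposition~\ref{na.prop.1} would produce a knapsack number exceeding $k_m$, contradicting maximality), so the second sum is empty and the congruence becomes the equality $F\BBta = k_m F$.
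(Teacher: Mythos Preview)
Your proof is correct and follows essentially the same approach as the paper's: expand $\BBta$, split the sum on whether $F \con G$, apply Proposition~\ref{cp_prop}~\textbf{(c)} to the first part and Proposition~\ref{na.prop.1} to the second. Your explicit treatment of the boundary case $i = m$ is a nice touch that the paper leaves implicit.
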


\begin{proof}
   The definition of $\BBta$ yields $\BBta = \sum_{\substack{G \in \CF; \\ \type G = \alpha}}G$. Hence,
   \begin{align}
       F\BBta
	   &= \sum_{\substack{G \in \CF; \\ \type G = \alpha}} FG
	   = \sum_{\substack{G \in \CF; \\ \type G = \alpha; \\ F \con G}} \underbrace{FG}_{\substack{=F \\ \text{(by Proposition \ref{cp_prop} \textbf{(c)})}}}
	   + \sum_{\substack{G \in \CF; \\ \type G = \alpha; \\ F \not\con G}} FG
	   \nonumber\\
       &= \sum_{\substack{G \in \CF; \\ \type G = \alpha; \\ F \con G}} F + \sum_{\substack{G \in \CF; \\ \type G = \alpha; \\ F \not\con G}} FG.
	   \label{eq.main.lem.1.1}
   \end{align}
   But the sum $\sum_{\substack{G \in \CF; \\ \type G = \alpha; \\ F \con G}} F$ can be rewritten as $\sum_{G \in \Na\tup{F}} F$ (by the definition of $\Na\tup{F}$).
   %and thus equals $\card{\Na\tup{F}} F = \na\tup{F} F$ (since $\card{\Na\tup{F}} = \na\tup{F}$).
   Hence,
   \begin{align}
		\sum_{\substack{G \in \CF; \\ \type G = \alpha; \\ F \con G}} F
		= \sum_{G \in \Na\tup{F}} F
		= \underbrace{\card{\Na\tup{F}}}_{=\na\tup{F} = k_i} F
		= k_i F.
		\label{eq.main.lem.1.3}
   \end{align}
   On the other hand, if $G \in \CF$ satisfies $\type G = \alpha$ and $F \not\con G$, then Proposition \ref{na.prop.1} yields $ \na\tup{ FG } > \na\tup{ F } = k_i $ and thus $ \na\tup{ FG } \ge k_{i+1}$ (since $ \na\tup{ FG } $ belongs to the set $ \na\tup{ \CF } = \set{k_0 < k_1 < k_2 < \cdots  < k_m} $, and thus cannot be $> k_i$ without being $\geq k_{i+1}$), so that $FG \in \CF_{k_{i+1}} \subseteq \kf_{k_{i+1}}$ and therefore
   \begin{align}
		FG \equiv 0 \mod \kf_{k_{i+1}}.
	   \label{eq.main.lem.1.2}
   \end{align}
   Thus, \eqref{eq.main.lem.1.1} becomes
   \begin{align*}
       F\BBta
       &= \underbrace{\sum_{\substack{G \in \CF; \\ \type G = \alpha; \\ F \con G}} F}_{\substack{= k_i F \\ \text{(by \eqref{eq.main.lem.1.3})}}}
	   + \sum_{\substack{G \in \CF; \\ \type G = \alpha; \\ F \not\con G}}\ \ \underbrace{FG}_{\substack{\equiv 0 \mod \kf_{k_{i+1}} \\ \text{(by \eqref{eq.main.lem.1.2})}}}
	   \equiv k_i F \mod \kf_{k_{i+1}}.
   \end{align*}
\end{proof}

\begin{lem}
    \label{main.lem.1a}
    Let $i \in \interval{0, m}$. If $\FF \in \kf_{k_i}$, then
    \[
    \FF \tup{\BBta - k_i} \in \kf_{k_{i+1}}.
    \]
\end{lem}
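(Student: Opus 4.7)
The plan is to reduce the claim to the basis $\CF_{k_i}$ of $\kf_{k_i}$ by $\kk$-linearity, and then to apply Lemma~\ref{main.lem.1} case by case according to the precise value of $\na\tup{F}$. More concretely, since $\kf_{k_i}$ is the $\kk$-linear span of $\CF_{k_i} = \set{F \in \CF : \na\tup{F} \geq k_i}$, and since the map $\FF \mapsto \FF\tup{\BBta - k_i}$ is $\kk$-linear, it suffices to show that $F\tup{\BBta - k_i} \in \kf_{k_{i+1}}$ for every face $F \in \CF_{k_i}$.

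Fix such a face $F$. By definition of $\CF_{k_i}$, we have $\na\tup{F} \geq k_i$. But $\na\tup{F}$ lies in the set $\na\tup{\CF} = \set{k_0 < k_1 < \cdots < k_m}$, so there is a unique $j \in \interval{i, m}$ with $\na\tup{F} = k_j$. Lemma~\ref{main.lem.1} (applied to this $j$) then gives
\[
F\BBta \equiv k_j F \mod \kf_{k_{j+1}},
\]
i.e., $F\BBta - k_j F \in \kf_{k_{j+1}} \subseteq \kf_{k_{i+1}}$ (using that the knapsack filtration is decreasing and $j+1 \geq i+1$).

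If $j = i$, this already says $F\tup{\BBta - k_i} = F\BBta - k_i F \in \kf_{k_{i+1}}$, and we are done. Otherwise $j > i$, so $j \geq i+1$, which means $F \in \CF_{k_j} \subseteq \CF_{k_{i+1}} \subseteq \kf_{k_{i+1}}$; in particular $\tup{k_j - k_i} F \in \kf_{k_{i+1}}$. Adding this to the congruence $F\BBta - k_j F \in \kf_{k_{i+1}}$ established above yields $F\BBta - k_i F \in \kf_{k_{i+1}}$, i.e., $F\tup{\BBta - k_i} \in \kf_{k_{i+1}}$, as desired.

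I do not expect any real obstacle here: Lemma~\ref{main.lem.1} does all the work, and the only subtlety is remembering to handle the case $\na\tup{F} > k_i$ separately (where $F$ itself already lies in the target ideal, so the scalar $k_j$ may harmlessly be replaced by $k_i$). This lemma is the inductive step that will let us chain the factors $\tup{\BBta - k_0}, \tup{\BBta - k_1}, \ldots, \tup{\BBta - k_m}$ together into a product annihilating $\BBta$ (and, via Lemma~\ref{lem.standin2}, $\BBa$), thereby proving Theorem~\ref{main.thm.3a}.
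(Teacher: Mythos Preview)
Your proof is correct and follows essentially the same approach as the paper: reduce by linearity to a single face $F \in \CF_{k_i}$, then split into the cases $\na\tup{F} = k_i$ (handled by Lemma~\ref{main.lem.1}) and $\na\tup{F} > k_i$ (where $F$ already lies in $\kf_{k_{i+1}}$). The only cosmetic difference is that in the second case the paper invokes the ideal property of $\kf_{k_{i+1}}$ directly rather than first applying Lemma~\ref{main.lem.1} and then correcting the scalar.
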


\begin{proof}
By linearity, it suffices to prove that the lemma holds when $\FF$ is a single face $ F \in \CF_{k_i}$.
Thus, let us assume that this is the case; thus, $ \na\tup{ F } \ge k_i $.
Thus, we have either $ \na\tup{ F } > k_i $ or $ \na\tup{ F } = k_i $.
If $ \na\tup{ F } > k_i $, then $\na\tup{F}$ must be at least $ k_{i+1} $
(since 
$\na\tup{F} \in \na\tup{ \CF } = \set{k_0 < k_1 < k_2 < \cdots  < k_{m}} $),
so that $ F \in \CF_{k_{i+1}} $.
In this case, we thus conclude that $\FF = F \in \CF_{k_{i+1}}
\subseteq \kf_{k_{i+1}}$ and therefore
$\FF \tup{\BBta - k_i} \in \kf_{k_{i+1}}$ as well (since
$ \kf_{k_{i+1}} $ is an ideal of $ \kf $).
So Lemma~\ref{main.lem.1a} is proved in this case.

Hence, we WLOG assume that $\na\tup{F} = k_i$.
Thus, Lemma~\ref{main.lem.1} yields
$F\BBta \equiv k_i F \mod  \kf_{k_{i+1}}$.
In other words, $F \BBta - k_i F \in \kf_{k_{i+1}}$.
But $\FF = F$, so that
\[
\FF \tup{\BBta - k_i}
= F \tup{\BBta - k_i}
= F \BBta - k_i F \in \kf_{k_{i+1}}.
\]
This proves Lemma~\ref{main.lem.1a}.
\end{proof}

\begin{proof}[Proof of Theorem \ref{main.thm.3a}]
Define the polynomial
\[
f\tup{x}
:= \prod_{k \in \na\tup{\CF}} \tup{x - k}
\in \kk\sbr{x}.
\]
Thus, our goal is to prove that $f\tup{\BBa} = 0$.
This is
equivalent to proving that $f\tup{\BBta} = 0$,
because of Lemma~\ref{lem.standin2}.
So this is what we need to do.

In other words, we need to prove the equality
\[
\prod_{k \in \na\tup{\CF}} \tup{\BBta - k} = 0
\]
(by the definition of $f$).
Using the notations of Definition~\ref{def.ksf}, we can rewrite this as
\begin{equation}
\prod_{j=0}^{m}
\tup{ \BBta-k_{j} }
= 0
\label{main.thm3a.eq.1}
\end{equation}
(since $k_0, k_1, \ldots, k_m$ are the elements of $\na\tup{\CF}$ in increasing order).

We will obtain this equality as a particular case ($i=m$) of the equality
\begin{equation}
\prod_{j=0}^{i} \tup{\BBta - k_j}
\in \kf_{k_{i+1}},
\label{main.thm3a.eq.2}
\end{equation}
which we claim to hold for each $i\in \interval{-1,m}$.
This latter equality \eqref{main.thm3a.eq.2} will be proved by induction
on $i$:

The \textit{base case} ($i=-1$) is saying that the
empty product belongs to $\kf_{k_0}$. But this is clear,
since $\kf_{k_0} = \kf$.

The \textit{induction step} (from $i-1$ to $i$) follows from Lemma
\ref{main.lem.1a}, because
\begin{align*}
\prod_{j=0}^{i} \tup{\BBta - k_j}
& = \underbrace{\tup{ \prod_{j=0}^{i-1}
\tup{ \BBta - k_j } }  }_{\substack{\in
\kf_{k_i}\\\text{(by the induction hypothesis)}}}
\tup{ \BBta - k_i }  \\
& \in \kf_{k_i} \tup{\BBta - k_i}
\subseteq \kf_{k_{i+1}}
\ \ \ \ \ \ \ \ \ \ \text{by Lemma \ref{main.lem.1a}.}
\end{align*}

Thus, \eqref{main.thm3a.eq.2} is proved.
Applying \eqref{main.thm3a.eq.2} to $i=m$,
we find
$\prod_{j=0}^m \tup{\BBta - k_j}
\in \kf_{k_{m+1}} = 0$,
so that
\eqref{main.thm3a.eq.1} follows.
As explained above, this completes the proof of
Theorem \ref{main.thm.3a}.
\end{proof}

\subsection{A polynomial annihilating $\wo \BBa$}

We now present a polynomial that annihilates $\wo \BBa$:

\begin{thm}
\label{thm.main.1}
    The element $\wo \BBa \in \dsn$ satisfies
    \begin{equation}
    \wo \BBa\prod_{\substack{k \in \na\tup{\CF}; \\ k \ne 0}}\tup{\wo\BBa + k}\tup{\wo\BBa - k} = 0.
    \label{eq.main.1}
    \end{equation}
\end{thm}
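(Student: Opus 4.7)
The plan is to use Lemma~\ref{lem.standin} to transfer the claim into the face algebra, where it becomes an identity in $M := \wotil \BBta \in \kfsn$. Since the factors $\tup{M+k}\tup{M-k} = M^2 - k^2$ all commute with $M$, and \eqref{eq.w0conj_cor.2} identifies $M^2 = \BBt_{\rev \alpha}\BBta$, the goal reduces to showing
\[
    M \prod_{\substack{k \in \na\tup{\CF};\\ k \ne 0}} \tup{\BBt_{\rev \alpha}\BBta - k^2} = 0
\]
in $\kfsn$.

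The strategy is then to mimic the proof of Theorem~\ref{main.thm.3a}, but with $M^2 = \BBt_{\rev \alpha}\BBta$ in place of $\BBta$. Here Proposition~\ref{na.prop.2} is crucial: since $n_{\rev \alpha} = \na$ on $\CF$, the knapsack filtration of Definition~\ref{def.ksf} is the same whether built from $\alpha$ or from $\rev \alpha$, and Lemma~\ref{main.lem.1} applies equally well to $\rev \alpha$. Applying that lemma twice --- first via right multiplication by $\BBt_{\rev \alpha}$, then by $\BBta$, using that $\kf_{k_{i+1}}$ is an ideal --- one sees that if $\na\tup{F} = k_i$, then
\[
    F M^2 = F \BBt_{\rev \alpha} \BBta \equiv k_i \cdot F\BBta \equiv k_i^2 F \pmod{\kf_{k_{i+1}}}.
\]
The exact analogue of Lemma~\ref{main.lem.1a} then gives $\FF\tup{M^2 - k_i^2} \in \kf_{k_{i+1}}$ for every $\FF \in \kf_{k_i}$, and the inductive product argument of Theorem~\ref{main.thm.3a} yields $\prod_{i=0}^{m}\tup{M^2 - k_i^2} = 0$.

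The main obstacle is that when $0 \in \na\tup{\CF}$ (that is, when $k_0 = 0$, which by Proposition~\ref{prop.k0} happens exactly for $\alpha \ne \tup{n}$), the conclusion of the previous paragraph factors as $M^2 \prod_{k \ne 0}\tup{M^2 - k^2} = 0$, with one extra $M$ that must be shed. The resolution is to notice that the $\BBta$ sitting inside $M$ already absorbs one step of the filtration: for any $F, G \in \CF$ with $\type G = \alpha$, Proposition~\ref{cp_prop} \textbf{(b)} gives $FG \con G$, so $G \in \Na\tup{FG}$ and $\na\tup{FG} \ge 1 \ge k_1$. Summing over $G$ and then passing to linear combinations shows $\FF \BBta \in \kf_{k_1}$ for every $\FF \in \kf$; in particular $M = \wotil \BBta \in \kf_{k_1}$. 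Using this as the new base and running the induction from $i = 1$ through $i = m$ produces $M \prod_{i=1}^{m}\tup{M^2 - k_i^2} \in \kf_{k_{m+1}} = 0$, which is precisely the desired identity since $k_1, \ldots, k_m$ are the nonzero elements of $\na\tup{\CF}$. The boundary case $\alpha = \tup{n}$ is immediate: by Proposition~\ref{prop.k0} \textbf{(b)}, $\na\tup{\CF} = \set{1}$ and the full filtration collapses to $M^2 - 1 = 0$, so multiplying on the left by $M$ is superfluous. Transporting the identity back across Lemma~\ref{lem.standin} then yields the theorem.
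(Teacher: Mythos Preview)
Your proof is correct and follows essentially the same route as the paper: transfer to the face algebra via Lemma~\ref{lem.standin}, identify $(\wotil\BBta)^2 = \BBt_{\rev\alpha}\BBta$ by \eqref{eq.w0conj_cor.2}, and push through the knapsack filtration (the paper packages your two applications of Lemma~\ref{main.lem.1} as Lemma~\ref{main.cor.1} and your filtration step as Lemma~\ref{main.lem.2}, and unifies your two cases by introducing an index $p$ with $k_p$ the smallest positive knapsack number).

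One small slip to fix: the chain ``$\na(FG) \geq 1 \geq k_1$'' is not valid in general --- for $\alpha = (2,2)$ the paper's own example gives $\na(\CF) = \{0,2,6\}$, so $k_1 = 2 > 1$. The correct justification (which the paper gives in its base case) is that $\na(FG) \geq 1 > 0 = k_0$ together with $\na(FG) \in \na(\CF) = \{k_0 < k_1 < \cdots < k_m\}$ forces $\na(FG) \geq k_1$.
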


The proof will rely on the following two lemmas:

\begin{lem}
    \label{main.cor.1}
    Let $ F \in \CF $. Let $i \in \interval{0, m}$ be such that $ \na\tup{ F } = k_i $. Then
    \[
    F \BBt_{\rev \alpha} \BBta \equiv k_i^2 F \mod \kf_{k_{i+1}}.
    \]
\end{lem}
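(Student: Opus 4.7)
The plan is to reduce this to Lemma~\ref{main.lem.1}, applied twice: once with composition $\rev \alpha$ and once with $\alpha$, bridged by the ideal property of $\kf_{k_{i+1}}$.

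First I would observe that the knapsack filtration $\kf_k$ does not depend on whether it is built from the knapsack numbers $\na$ or $n_{\rev \alpha}$. Indeed, Proposition~\ref{na.prop.2} says $\na\tup{F} = n_{\rev \alpha}\tup{F}$ for every face $F \in \CF$, so the sets $\na\tup{\CF}$ and $n_{\rev \alpha}\tup{\CF}$ coincide (and in particular share the increasing listing $k_0 < k_1 < \cdots < k_m$), and the sets $\CF_k$ are the same in both cases. This means that Lemma~\ref{main.lem.1} applies verbatim with $\alpha$ replaced by $\rev \alpha$: for our $F$ with $\na\tup{F} = k_i$ we automatically have $n_{\rev \alpha}\tup{F} = k_i$, and hence
\[
F \BBt_{\rev \alpha} \equiv k_i F \mod \kf_{k_{i+1}}.
\]

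Next I would multiply this congruence on the right by $\BBta$. Because $\kf_{k_{i+1}}$ is a (two-sided) ideal of $\kf$ (Proposition~\ref{prop.kfk-ideal}), multiplying a relation of the form $a \equiv b \mod \kf_{k_{i+1}}$ on the right by any element of $\kf$ preserves the congruence. Therefore
\[
F \BBt_{\rev \alpha} \BBta \equiv k_i F \BBta \mod \kf_{k_{i+1}}.
\]
Applying Lemma~\ref{main.lem.1} directly (with composition $\alpha$) gives $F \BBta \equiv k_i F \mod \kf_{k_{i+1}}$, and multiplying through by the scalar $k_i$ yields $k_i F \BBta \equiv k_i^2 F \mod \kf_{k_{i+1}}$. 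Chaining the two congruences produces the claimed identity $F \BBt_{\rev \alpha} \BBta \equiv k_i^2 F \mod \kf_{k_{i+1}}$.

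I do not expect any serious obstacle here. The only subtle point worth spelling out carefully in the write-up is that the indexing $k_0 < k_1 < \cdots < k_m$ used in Lemma~\ref{main.lem.1} is the listing of $\na\tup{\CF}$, so invoking that lemma for $\rev \alpha$ requires the identification $\na\tup{\CF} = n_{\rev \alpha}\tup{\CF}$ coming from Proposition~\ref{na.prop.2}; once this is noted, the rest is a two-line computation.
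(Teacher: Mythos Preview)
Your proposal is correct and essentially identical to the paper's own proof: both apply Lemma~\ref{main.lem.1} once for $\rev\alpha$ and once for $\alpha$, use Proposition~\ref{na.prop.2} to justify that the filtration and the $k_i$'s are unchanged under $\alpha \mapsto \rev\alpha$, and then multiply through using that $\kf_{k_{i+1}}$ is an ideal. You have even flagged the one subtle point the paper also pauses on, namely that $n_{\rev\alpha}\tup{\CF} = \na\tup{\CF}$.
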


\begin{proof}
    By Proposition \ref{na.prop.2}, we have
	$n_{\rev \alpha}\tup{ F } = \na\tup{ F } = k_i$.
	But Lemma \ref{main.lem.1} yields
	$F \BBta \equiv k_i F \mod \kf_{k_{i+1}}$.
	Likewise, Lemma \ref{main.lem.1} (applied to $\rev \alpha$ instead of $\alpha$) yields
	$F \BBt_{\rev \alpha} \equiv k_i F \mod \kf_{k_{i+1}}$
	(since $n_{\rev \alpha}\tup{ F } = k_i$, and since Proposition~\ref{na.prop.2} shows that $n_{\rev \alpha}\tup{\CF} = \na\tup{\CF} = \set{k_0 < k_1 < k_2 < \cdots < k_m}$).
    Since $\kf_{k_{i+1}}$ is a right ideal, we can multiply this congruence by $\BBta$, thus obtaining
    \begin{align*}
    F \BBt_{\rev \alpha} \BBta
	&\equiv k_i F \BBta
    \equiv k_i k_i F  \qquad \tup{\text{since } F \BBta \equiv k_i F \mod \kf_{k_{i+1}}}
    \\
    &= k_i^2 F \mod \kf_{k_{i+1}},
    \end{align*}
    as desired.
\end{proof}

\begin{lem}
    \label{main.lem.2}
    Let $i \in \interval{0, m}$. If $ \FF \in \kf_{k_i} $, then 
    \[
        \FF\tup{ \wotil \BBta + k_i }\tup{ \wotil \BBta - k_i } \in \kf_{k_{i+1}}.
    \]
\end{lem}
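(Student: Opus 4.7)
The plan is to reduce the statement to Lemma~\ref{main.cor.1} via a single algebraic manipulation. First I would expand
\[
\tup{\wotil\BBta + k_i}\tup{\wotil\BBta - k_i} = \tup{\wotil\BBta}^2 - k_i^2,
\]
since $k_i$ is a scalar and commutes with everything. Next I would invoke the identity \eqref{eq.w0conj_cor.2} from Corollary~\ref{w0conj_cor}, which says $\tup{\wotil\BBta}^2 = \BBt_{\rev\alpha}\BBta$. Consequently, the quantity in question equals
\[
\FF\tup{\BBt_{\rev\alpha}\BBta - k_i^2} = \FF\,\BBt_{\rev\alpha}\BBta - k_i^2\FF,
\]
and it suffices to show this lies in $\kf_{k_{i+1}}$.

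By $\kk$-linearity, I would reduce to the case where $\FF = F$ is a single face belonging to $\CF_{k_i}$, i.e., with $\na\tup{F} \geq k_i$. Since $\na\tup{F}$ must be one of the elements of $\na\tup{\CF} = \set{k_0 < k_1 < \cdots < k_m}$, there are only two cases.

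If $\na\tup{F} > k_i$, then necessarily $\na\tup{F} \geq k_{i+1}$, so $F \in \CF_{k_{i+1}} \subseteq \kf_{k_{i+1}}$. Because $\kf_{k_{i+1}}$ is a two-sided ideal (Proposition~\ref{prop.kfk-ideal}), both $F\,\BBt_{\rev\alpha}\BBta$ and $k_i^2 F$ lie in $\kf_{k_{i+1}}$, and we are done. If $\na\tup{F} = k_i$, then Lemma~\ref{main.cor.1} applies directly and gives $F\,\BBt_{\rev\alpha}\BBta \equiv k_i^2 F \mod \kf_{k_{i+1}}$, which is exactly what is needed.

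I do not anticipate any real obstacle here: the entire content of the lemma is packaged into the identity $\tup{\wotil\BBta}^2 = \BBt_{\rev\alpha}\BBta$ from \eqref{eq.w0conj_cor.2} combined with Lemma~\ref{main.cor.1}, and the routine casework on whether $\na\tup{F}$ exceeds $k_i$ or equals it. The real work was done in proving Corollary~\ref{w0conj_cor} and Lemma~\ref{main.cor.1}; this lemma is essentially their formal combination, tailored to provide the inductive step for the next theorem's annihilation argument (analogous to how Lemma~\ref{main.lem.1a} provided the inductive step in the proof of Theorem~\ref{main.thm.3a}).
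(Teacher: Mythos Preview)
Your proposal is correct and follows essentially the same approach as the paper: reduce to a single face, use $\tup{\wotil\BBta}^2 = \BBt_{\rev\alpha}\BBta$ from \eqref{eq.w0conj_cor.2}, dispatch the case $\na\tup{F}>k_i$ via the ideal property, and invoke Lemma~\ref{main.cor.1} when $\na\tup{F}=k_i$. The only cosmetic difference is that you perform the algebraic simplification before the case split whereas the paper does it after.
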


\begin{proof}
    It suffices to show that the lemma holds when $\FF$ is a single face $ F \in \CF_{k_i}$, so let us assume this is the case; thus, $ \na\tup{ F } \ge k_i $.
	The case when $ \na\tup{ F } > k_i $ can be handled just as in the proof of Lemma~\ref{main.lem.1a} above\footnote{%
	If $ \na\tup{ F } > k_i $, then $\na\tup{F}$ must be at least $ k_{i+1} $, so that $ F \in \CF_{k_{i+1}} $, thus $\FF = F \in \CF_{k_{i+1}} \subseteq \kf_{k_{i+1}}$. Then, since $ \kf_{k_{i+1}} $ is an ideal of $ \kf $, we must have $ \FF \tup{ \wotil\BBta + k_i }\tup{ \wotil\BBta - k_i } \in \kf_{k_{i+1}}$ as well.}.
    Hence, we WLOG assume that $ \na\tup{ F } = k_i $. Then,
	Lemma~\ref{main.cor.1} yields
	$F \BBt_{\rev \alpha} \BBta \equiv k_i^2 F \mod \kf_{k_{i+1}}$.
	In other words,
	\begin{align}
	F \BBt_{\rev \alpha} \BBta - k_i^2 F \in \kf_{k_{i+1}}.
	\label{pf.main.lem.2.4}
	\end{align}
	Now,
	\[
	\tup{ \wotil\BBta + k_i }\tup{ \wotil\BBta - k_i }
	= \tup{ \wotil\BBta }^2 - k_i^2
	= \BBt_{\rev \alpha} \BBta - k_i^2
	\qquad \tup{\text{by \eqref{eq.w0conj_cor.2}}},
	\]
	so that
    \begin{align*}
        \FF \tup{ \wotil\BBta + k_i }\tup{ \wotil\BBta - k_i }
		&= \FF \tup{\BBt_{\rev \alpha} \BBta - k_i^2}
		= F \tup{\BBt_{\rev \alpha} \BBta - k_i^2}
		\qquad \left(\text{since } \FF = F\right) \\
		&= F \BBt_{\rev \alpha} \BBta - k_i^2 F
		\in \kf_{k_{i+1}}
		\qquad \left(\text{by \eqref{pf.main.lem.2.4}}\right).
		\qedhere
    \end{align*}
\end{proof}

\begin{proof}[Proof of Theorem \ref{thm.main.1}]
Define the polynomial
\[
f\tup{x}
:= x \prod_{\substack{k \in \na\tup{\CF}; \\ k \ne 0}}\tup{x + k}\tup{x - k}
\in \kk\sbr{x}.
\]
Thus, our goal is to prove that $f\tup{\wo\BBa} = 0$.
According to Lemma~\ref{lem.standin}, this is
equivalent to proving that $f\tup{\wotil \BBta} = 0$.
So this is what we need to do.

In other words, we need to prove the equality
\[
\wotil\BBta\prod_{\substack{k \in \na\tup{\CF}; \\ k \ne 0}}\tup{\wotil\BBta + k}\tup{\wotil\BBta - k} = 0
\]
(by the definition of $f$).
Using the notations of Definition~\ref{def.ksf}, we can rewrite this as
\begin{equation}
\wotil\BBta\prod_{j=p}^{m}
\tup{ \wotil\BBta+k_{j} } \tup{ \wotil\BBta-k_{j} }
= 0,
\label{main.thm.eq.1}
\end{equation}
where $p$ is the smallest element of $\interval{0, m}$ satisfying $k_p > 0$ (clearly, $p$ is either $0$ or $1$, depending on whether $k_0$ is positive or $0$).

We will obtain this equality as a particular case ($i=m$) of the equality
\begin{equation}
\wotil\BBta
\prod_{j=p}^{i} \tup{ \wotil\BBta + k_{j} }
\tup{ \wotil\BBta - k_{j} }  \in \kf_{k_{i+1}},
\label{main.thm.eq.2}
\end{equation}
which we claim to hold for each $i\in \interval{p-1,m}$.
This latter equality \eqref{main.thm.eq.2} will be proved by induction
on $i$:

The \textit{base case} ($i=p-1$) is saying that $\wotil\BBta\in
\kf_{k_p}$. To prove this, it suffices to show that
$\BBta\in\kf_{k_p}$
(since $\kf_{k_p}$ is an ideal).
But this is easy: Any face $F\in\CF$ that satisfies
$\type F = \alpha$
must satisfy $\Na\tup{F} \neq \varnothing$ (indeed, $F$ itself is a
$G\in\CF$ satisfying $F\con G$ and $\type G = \alpha$;
thus, the set $\Na\tup{F}$ contains at least $F$
as an element) and thus $\na\tup{F} > 0$ and therefore
$\na\tup{F} \geq k_p$
(since the definition of $p$ shows that $k_p$ is the smallest
positive element of $\na\tup{\CF}$),
so that $F \in\CF_{k_p} \subseteq \kf_{k_p}$.
Thus, $\BBta$ (being a sum of such $F$'s) belongs to $\kf_{k_p}$
as well.
This completes the base case.

The \textit{induction step} (from $i-1$ to $i$) follows from Lemma
\ref{main.lem.2}, because
\begin{align*}
& \wotil\BBta\prod_{j=p}^{i} \tup{
\tup{ \wotil\BBta + k_j } \tup{ \wotil\BBta - k_j }
} \\
& =\underbrace{\tup{ \wotil\BBta\prod_{j=p}^{i-1}
\tup{ \wotil\BBta + k_j } \tup{ \wotil\BBta - k_j }
} }_{\substack{\in
\kf_{k_i}\\\text{(by the induction hypothesis)}}}
\tup{ \wotil\BBta + k_i } \tup{ \wotil\BBta - k_i } \\
& \in \kf_{k_i}
\tup{ \wotil\BBta + k_i } \tup{ \wotil\BBta - k_i }
\subseteq \kf_{k_{i+1}}
\ \ \ \ \ \ \ \ \ \ \text{by Lemma \ref{main.lem.2}.}
\end{align*}

Thus, \eqref{main.thm.eq.2} is proved.
Applying \eqref{main.thm.eq.2} to $i=m$
and recalling that $\kf_{k_{m+1}}=0$, we obtain
\eqref{main.thm.eq.1}, and Theorem \ref{thm.main.1} follows.  
\end{proof}

\section{\label{sec.altsum}Alternating sum identities for set compositions}

To improve on the polynomial in Theorem \ref{thm.main.1},
we need two combinatorial identities regarding set compositions:

\begin{lem}
\label{lem.altsum.faces-all}
We have
\[
    \sum_{G \in \CF} \tup{ -1 }^{\ell\tup{ G }}  = \tup{ -1 }^n.
\]
\end{lem}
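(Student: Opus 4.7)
The plan is to prove this by induction on $n$, using a recurrence obtained by conditioning on the first block of a set composition. Writing $a_n := \sum_{G \in \CF(n)} (-1)^{\ell(G)}$, the goal is to show $a_n = (-1)^n$.

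The base case $n = 0$ is immediate: the only set composition of $\varnothing$ is the empty tuple $()$, which has length $0$, so $a_0 = 1 = (-1)^0$. For the inductive step with $n \geq 1$, I will observe that every set composition $G = (G_1, G_2, \ldots, G_k) \in \CF(n)$ has $k \geq 1$ and is uniquely determined by its first block $G_1$ (a nonempty subset of $[n]$) together with the tail $(G_2, \ldots, G_k)$, which is a set composition of $[n] \setminus G_1$. Since $\ell(G) = 1 + \ell(G_2, \ldots, G_k)$, grouping by the size $j = |G_1|$ (with $\binom{n}{j}$ choices for the subset itself) yields
\[
a_n \;=\; \sum_{j=1}^n \binom{n}{j} (-1) \cdot a_{n-j} \;=\; -\sum_{j=0}^{n-1} \binom{n}{j} a_j .
\]

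Applying the inductive hypothesis $a_j = (-1)^j$ for all $j < n$, the right-hand side becomes $-\sum_{j=0}^{n-1} \binom{n}{j} (-1)^j$. The classical identity $\sum_{j=0}^n \binom{n}{j} (-1)^j = 0$ (valid for $n \geq 1$) then gives $\sum_{j=0}^{n-1} \binom{n}{j} (-1)^j = -\binom{n}{n}(-1)^n = -(-1)^n$, so $a_n = -(-(-1)^n) = (-1)^n$, completing the induction.

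I expect no substantive obstacle here; the recurrence derivation and the $n \geq 1$ caveat for the binomial identity are the only places where care is needed. An alternative, equally short approach would note that the exponential generating function for set compositions is $\sum_{k \geq 0} (e^x - 1)^k = 1/(2 - e^x)$, so the signed version evaluates as $\sum_{k \geq 0} (-1)^k (e^x - 1)^k = 1/e^x = e^{-x}$, whose coefficient of $x^n/n!$ is $(-1)^n$; but the inductive argument above is more self-contained and avoids analytic machinery.
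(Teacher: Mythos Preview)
Your proof is correct. The recurrence $a_n = -\sum_{j=0}^{n-1}\binom{n}{j}a_j$ (obtained by stripping off the first block) together with the binomial identity for $n\geq 1$ gives exactly $(-1)^n$. The only implicit step is that the tail $(G_2,\ldots,G_k)$ is a set composition of $[n]\setminus G_1$, not literally of $[n-j]$; but any bijection of underlying sets induces a length-preserving bijection of set compositions, so identifying the signed sum with $a_{n-j}$ is harmless.

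Your route is genuinely different from the paper's. The paper deliberately chooses an algebraic proof using the machinery it has built: it observes that any face $A$ of type $(1,1,\ldots,1)$ satisfies $AF=A$ for all $F\in\CF$, so $A\wotil=\lambda A$ with $\lambda=\sum_{F\in\CF}(-1)^{n-\ell(F)}$; then it shows $\wo\BB_{(1,1,\ldots,1)}=\BB_{(1,1,\ldots,1)}$ in $\ksn$, applies the anti-isomorphism $\rho$ to obtain $\BBt_{(1,1,\ldots,1)}\wotil=\BBt_{(1,1,\ldots,1)}$, and compares coefficients to force $\lambda=1$. The paper's proof thus exercises Bidigare's anti-isomorphism and the face-algebra structure, which is thematically appropriate but logically heavier. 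Your induction (and the EGF variant you sketch) is entirely self-contained, requires none of the paper's apparatus, and is the standard combinatorial argument---indeed the paper itself cites \cite[Theorem 2.2.2]{Sag20} for essentially this approach before presenting its own ``new proof using the tools of this paper.'' Each approach buys something: yours is portable and elementary; theirs illustrates that the face-algebra framework can recover classical identities as a byproduct.
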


\begin{proof}
This is essentially \cite[Theorem 2.2.2]{Sag20} (see the first paragraph in the proof of \cite[Theorem 2.2.2]{Sag20} for an explanation).
However, let us give a new proof here, using the tools of this paper.

Consider the composition $\tup{1, 1, \ldots, 1}$
of $n$ (with $n$ many $1$'s).
The definition of $\BBt_{\tup{1, 1, \ldots, 1}}$
yields
\begin{align}
\BBt_{\tup{1, 1, \ldots, 1}}
&= \sum_{\substack{F\in \CF;\\
\type F=\tup{1, 1, \ldots, 1}}} F
= \sum_{\substack{A\in \CF;\\
\type A=\tup{1, 1, \ldots, 1}}} A.
\label{pf.lem.altsum.faces-all.BBt111}
\end{align}
One of the addends in this sum is the face
$\tup{\set{1}, \set{2}, \ldots, \set{n}} \in \CF$.
Hence, the coefficient of this face in
$\BBt_{\tup{1, 1, \ldots, 1}}$ is $1$.

Set
\begin{equation}
\lambda := \sum_{F\in\CF} \tup{-1}^{n-\ell \tup{F}}
\in \ZZ.
\label{pf.lem.altsum.faces-all.0}
\end{equation}

Let $A\in\CF$ be a face of type $\tup{1, 1, \ldots, 1}\models
n$. (Note that there are $n!$ such faces, each of them having the form
$\tup{ \set{\sigma\tup{1}}, \set{\sigma\tup{2}},
\ldots, \set{\sigma\tup{n}} }$
for some permutation $\sigma\in S_n$.)
Then, the blocks of $A$ have size $1$, and therefore
cannot be subdivided any further by intersecting
them with the blocks of another face $F\in\CF$. Hence,
\begin{equation}
AF = A \qquad \text{for each } F\in\CF .
\label{pf.lem.altsum.faces-all.1}
\end{equation}
But Proposition \ref{w0face.prop} yields
\begin{align*}
A\wotil
&= A\sum_{F\in\CF} \tup{-1}^{n-\ell\tup{F} }F
= \sum_{F\in\CF} \tup{-1}^{n-\ell\tup{F} }
\underbrace{AF}_{\substack{=A\\
\text{(by \eqref{pf.lem.altsum.faces-all.1})}}}
= \underbrace{\sum_{F\in\CF}
\tup{-1}^{n-\ell\tup{F} }}_{=\lambda}A
= \lambda A.
\end{align*}

Forget that we fixed $A$. We thus have proved the equality
$A\wotil = \lambda A$ for each face
$A\in\CF$ of type $\tup{1, 1, \ldots, 1}\models n$.
Summing this equality over all such faces $A$, we obtain
\begin{equation}
\sum_{\substack{A\in\CF;\\\type A= \tup{1,1,\ldots,1}
}} A\wotil
= \lambda\sum_{\substack{A\in\CF;\\
\type A=\tup{1, 1, \ldots, 1}}
}A.
\end{equation}
Using \eqref{pf.lem.altsum.faces-all.BBt111}, we can rewrite 
this as
\[
\BBt_{\tup{1, 1, \ldots, 1}}
\wotil
= \lambda\BBt_{\tup{1, 1, \ldots, 1}}.
\]

On the other hand,
$\tup{1, 1, \ldots, 1} = \gaps\tup{\interval{n-1}}$, so that
\[
\BB_{\tup{1, 1, \ldots, 1}}
= \BB_{\interval{n-1}} = \sum_{w \in S_n} w
\]
(by \eqref{eq.exam.Bn-1.1}), and therefore
\[
\wo \BB_{\tup{1, 1, \ldots, 1}}
= \wo \sum_{w \in S_n} w
= \sum_{w \in S_n} \wo w
= \sum_{w \in S_n} w
\]
(here, we have substituted $w$ for $\wo w$ in the
sum). Comparing these two equalities, we obtain
$\wo\BB_{\tup{1, 1, \ldots, 1}}
=\BB_{\tup{1, 1, \ldots, 1}}$.
Applying the map $\rho$ to this equality, we find
\[
\rho \tup{ \wo\BB_{\tup{1, 1, \ldots, 1}} }
=\rho \tup{ \BB_{\tup{1, 1, \ldots, 1}} }
=\BBt_{\tup{1, 1, \ldots, 1}}\qquad\left(  \text{by
the definition of }\rho\right)  ,
\]
so that
\begin{align*}
\BBt_{\tup{1, 1, \ldots, 1}}
& =\rho \tup{ \wo \BB_{\tup{1, 1, \ldots, 1}} } \\
& =\underbrace{\rho \tup{ \BB_{\tup{1, 1, \ldots, 1}} }
}_{=\BBt_{\tup{1, 1, \ldots, 1}}}\underbrace{\rho
\tup{ \wo } }_{=\wotil}\qquad\left(  \text{since }
\rho\text{ is a }\kk\text{-algebra anti-morphism}\right)  \\
& =\BBt_{\tup{1, 1, \ldots, 1}}
\wotil=\lambda\BBt_{\tup{1, 1, \ldots, 1}}.
\end{align*}
Comparing coefficients in front of the face
$\tup{\set{1}, \set{2}, \ldots, \set{n}} \in \CF$
%(which is one of
%the $n!$ faces that make up the sum $\BBt_{\left(
%1,1,\ldots,1\right)  }$)
in this equality, we obtain $1 = \lambda$
(since the coefficient of this face in
$\BBt_{\tup{1, 1, \ldots, 1}}$ is $1$).
Thus, $1 = \lambda
=\sum_{F\in\CF} \tup{-1}^{n-\ell\tup{F} }$
(by the definition of $\lambda$). Dividing this
equality by $\tup{-1}^{n}$,
we find $\tup{-1}^{n}=\sum_{F\in\CF} \tup{-1}
^{\ell\tup{F} }=\sum_{G\in\CF} \tup{-1}
^{\ell\tup{G} }$, qed.
\end{proof}

\begin{lem}
\label{lem.altsum.faces}
Let $F \in \CF$ be a face. Then,
\[
    \sum_{\substack{G \in \CF; \\ F \con G}} \tup{ -1 }^{n-\ell\tup{ G }}
	= \tup{ -1 }^{n-\ell\tup{ F }} .
\]
\end{lem}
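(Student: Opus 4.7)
The plan is to reduce this sum over faces containing $F$ to an alternating sum over all set compositions of a smaller ground set, and then invoke Lemma~\ref{lem.altsum.faces-all} as a black box.

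First, I would apply the length-preserving bijection $f$ from Proposition~\ref{ccb_prop}, which identifies the indexing set $\set{G \in \CF : F \con G}$ with the set $\CF\tup{m}$ of all set compositions of $\interval{m}$, where $m := \ell\tup{F}$. Since $\ell\tup{f\tup{G}} = \ell\tup{G}$, the sum on the left-hand side rewrites as $\sum_{H \in \CF\tup{m}} \tup{-1}^{n - \ell\tup{H}}$. Pulling out the factor $\tup{-1}^{n-m}$, which does not depend on $H$, this becomes
\[
\tup{-1}^{n-m} \sum_{H \in \CF\tup{m}} \tup{-1}^{m - \ell\tup{H}}
= \tup{-1}^{n-m} \tup{-1}^m \sum_{H \in \CF\tup{m}} \tup{-1}^{\ell\tup{H}},
\]
using $\tup{-1}^{m - \ell\tup{H}} = \tup{-1}^m \tup{-1}^{\ell\tup{H}}$.

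Next, I would apply Lemma~\ref{lem.altsum.faces-all} with $m$ in place of $n$ (its proof goes through verbatim for any nonnegative integer), which yields $\sum_{H \in \CF\tup{m}} \tup{-1}^{\ell\tup{H}} = \tup{-1}^m$. Plugging this in collapses the expression to $\tup{-1}^{n-m} \tup{-1}^m \tup{-1}^m = \tup{-1}^{n-m} = \tup{-1}^{n - \ell\tup{F}}$, which is exactly the claim.

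The only substantive input is the length-preserving bijection in Proposition~\ref{ccb_prop}; once that is in hand, the lemma falls out as a clean corollary of Lemma~\ref{lem.altsum.faces-all}. There is no real obstacle beyond bookkeeping the sign $\tup{-1}^{n-m}$ correctly, and the edge case $m = 0$ (which forces $n = 0$ and makes the statement vacuous) is handled automatically by the same manipulation.
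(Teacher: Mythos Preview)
Your proof is correct and follows essentially the same approach as the paper: apply the length-preserving bijection from Proposition~\ref{ccb_prop} to reduce to a sum over $\CF\tup{m}$, then invoke Lemma~\ref{lem.altsum.faces-all} with $m$ in place of $n$. The only difference is cosmetic sign bookkeeping (you factor out $\tup{-1}^{n-m}$ and then $\tup{-1}^m$ in two steps, whereas the paper pulls out $\tup{-1}^n$ directly), which of course leads to the same conclusion.
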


\begin{proof}
Write the set composition $F$ as $F = \tup{F_1, F_2, \ldots, F_m}$,
so that $\ell\tup{F} = m$.
Then,
Proposition~\ref{ccb_prop}
shows that there is a length-preserving bijection
\begin{align*}
f : \set{G\in\CF : F\con G}   &  \to 
\set{H : H\models \interval{m} }  ,\\
\tup{G_1, G_2, \ldots, G_k} &\mapsto \tup{H_1, H_2, \ldots, H_k}
            \text{ where } H_{i} = \set{j \in \interval{m} : F_{j} \subseteq G_{i}}.
\end{align*}
Using this bijection, we can reindex the sum
$\sum\limits_{\substack{G\in \CF; \\F\con G}}
\tup{-1}^{n-\ell\tup{G}}$ as follows:
\[
\sum_{\substack{G\in\CF; \\F\con G}}
\tup{-1}^{n-\ell\tup{G} }
=\sum_{H\models \interval{m}  }
\tup{-1}^{n-\ell\tup{H} }
=\sum_{G\models \interval{m}  }
\tup{-1}^{n-\ell\tup{G} }
= \tup{-1}^n
\sum_{G\models\interval{m}  }
\tup{-1}^{\ell\tup{G} }.
\]
However, applying Lemma~\ref{lem.altsum.faces-all} to $m$
instead of $n$, we find
\[
\sum_{G\models\interval{m}  } \tup{-1}^{\ell\tup{G} }
=\tup{-1}^m.
\]
In light of this, the previous equality simplifies to
\[
\sum_{\substack{G\in\CF; \\F\con G}}
\tup{-1}^{n-\ell\tup{G} }
=\tup{-1}^{n} \tup{-1}^{m}
=\tup{-1}^{n-m}
=\tup{-1}^{n-\ell\tup{F} }
\]
(since $m = \ell\tup{F}$),
as desired.
\end{proof}

\section{\label{sec.shortpol}The optimal polynomial for $\wo \BBa$}

Recall that $\alpha$ is a fixed composition of $n$.

As we mentioned above, the equality \eqref{eq.main.1}
provides a polynomial that annihilates $\wo \BBa$,
% (in particular, it guarantees that $\wo \BBa$ is
% diagonalizable over $\QQ$),
but this polynomial is not optimal: Some of the
factors can be removed from the product without
turning it nonzero.
In this section, we shall find an optimal polynomial
that annihilates $\wo \BBa$
(meaning that -- at least when $\kk$ is a field of
characteristic $0$ -- it is the minimal polynomial
of $\wo \BBa$, as we will see in Theorem~\ref{thm.mp.1}).

First, we need some more notations.

\begin{defn}\phantomsection\ %
    \label{def.specv}
    %Let $\alpha \models n$ be a composition.
	
	\begin{enumerate}
	
	\item[\textbf{(a)}] The \defin{signed knapsack number $\nna \tup{F}$}
	of a face $F \in \CF$ is the integer defined by
	\begin{align}
	\nna \tup{F}
	= \tup{-1}^{n-\ell\tup{F}} \na \tup{F} .
	\label{eq.def.specv.nnaF=}
	\end{align}

	\item[\textbf{(b)}]
	We will furthermore write $\nna\tup{\CF}$ for the set
	$\set{\nna\tup{F} : F \in \CF}$, which consists of the
	signed knapsack numbers of all faces.
	This is the image of $ \CF $ under the map $\nna$.
	It is obviously a finite set of integers.
	
	\end{enumerate}
\end{defn}

Note that each face $F\in\CF$ satisfies
\begin{align}
\card{\nna \tup{F}} = \na\tup{F}
\label{eq.nna.abs}
\end{align}
(by \eqref{eq.def.specv.nnaF=},
since $\na\tup{F}$ is nonnegative).
Hence, each signed knapsack number $k \in \nna\tup{\CF}$
satisfies
\[
\abs{k}
\in n_{\alpha} \tup{\CF}
= \set{ k_0 < k_1 < \cdots < k_{m} }
\]
(see Definition~\ref{def.ksf} \textbf{(b)} for the latter equality).

Our main result takes the following form
(recall that $\alpha$ is a fixed composition of $n$):

\begin{thm}
\label{main.thm.3}
The elements $\wo \BBa$ and $\BBa \wo$ of $\dsn$ satisfy
\begin{equation}
\prod_{k \in \nna\tup{\CF}} \tup{\wo\BBa - k}
=0
\label{eq.main.2a}
\end{equation}
and
\begin{equation}
\prod_{k \in \nna\tup{\CF}} \tup{\BBa\wo - k}
=0.
\label{eq.main.2b}
\end{equation}
(Each of these products ranges over all \textbf{distinct}
elements $k \in \nna\tup{\CF}$. Thus, even if several different
faces $F$ have the same signed knapsack number $k$, there
is only one $\wo\BBa - k$ (resp. $\BBa\wo - k$) factor in
each product.)
\end{thm}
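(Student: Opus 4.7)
My plan is to adapt the filtration argument of Theorem~\ref{thm.main.1}, sharpening it so that only the necessary factors appear at each knapsack level. By Lemma~\ref{lem.standin} it suffices to prove $\prod_{k \in \nna(\CF)}(\wotil \BBta - k) = 0$ in $\kf$, which yields both \eqref{eq.main.2a} and \eqref{eq.main.2b}. Group $\nna(\CF)$ by absolute value: for each $i \in \interval{0, m}$ set $T_i := \set{\nna(F) : F \in \CF,\ \na(F) = k_i} \subseteq \set{-k_i, +k_i}$, so that $\nna(\CF) = T_0 \sqcup T_1 \sqcup \cdots \sqcup T_m$. I will prove by induction on $i \in \interval{-1, m}$ the filtration-level statement
\[
\prod_{j=0}^{i} \prod_{k \in T_j} (\wotil \BBta - k) \in \kf_{k_{i+1}};
\]
taking $i = m$ then gives the desired vanishing since $\kf_{k_{m+1}} = 0$. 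By linearity, the inductive step reduces to showing that $F \prod_{k \in T_i}(\wotil \BBta - k) \in \kf_{k_{i+1}}$ for every face $F$ with $\na(F) = k_i$.

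If $\card{T_i} = 2$, then $T_i = \set{-k_i, +k_i}$, the product equals $\BBt_{\rev\alpha} \BBta - k_i^2$ by \eqref{eq.w0conj_cor.2}, and the required containment is exactly Lemma~\ref{main.cor.1}. If $\card{T_i} = 1$, write $T_i = \set{\epsilon k_i}$, where $\epsilon = (-1)^{n-\ell(F)}$ is the common value of $(-1)^{n-\ell(F')}$ over all $F'$ with $\na(F') = k_i$. Here the claim to establish is $F \wotil \BBta \equiv \epsilon k_i F \pmod{\kf_{k_{i+1}}}$. To prove it, I combine $\wotil \BBta = \BBt_{\rev\alpha} \wotil$ (from \eqref{eq.w0conj_cor.1}) with $F \BBt_{\rev\alpha} \equiv k_i F \pmod{\kf_{k_{i+1}}}$ (Lemma~\ref{main.lem.1} applied to $\rev\alpha$, using $n_{\rev\alpha}(F) = \na(F) = k_i$ from Proposition~\ref{na.prop.2}), thereby reducing the problem to $F \wotil \equiv \epsilon F \pmod{\kf_{k_{i+1}}}$. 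Expanding $F \wotil = \sum_G (-1)^{n-\ell(G)} FG$ and splitting by whether $F \con G$ (in which case $FG = F$ by Proposition~\ref{cp_prop} \textbf{(c)}), Lemma~\ref{lem.altsum.faces} contributes exactly $\epsilon F$ from the $F \con G$ part; what remains is a signed sum over proper refinements $FG$ of $F$.

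The crux of the proof---and the step I expect to be the main obstacle---is showing that every such proper refinement $FG$ in fact satisfies $\na(FG) > k_i$, so that the remainder lies in $\kf_{k_{i+1}}$. I will argue by contradiction: suppose $H := FG$ has $\na(H) = k_i$. Since $H \con F$ and the blocks of $H$ respect $F$'s block order, one can build a chain $F = F_0, F_1, \ldots, F_p = H$ of refinements of $F$ in which each $F_{t+1}$ is obtained from $F_t$ by splitting a single block into two consecutive pieces, so that $F_{t+1} \con F_t$ and $\ell(F_{t+1}) = \ell(F_t) + 1$. Proposition~\ref{na.prop.00} then forces $\na(F_0) \leq \na(F_1) \leq \cdots \leq \na(F_p)$; since the endpoints both equal $k_i$, every $\na(F_t)$ equals $k_i$. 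In particular, $F_1$ has $\na = k_i$ and length $\ell(F) + 1$, giving it the opposite parity to $F$, which places $-\epsilon k_i$ into $T_i$ and contradicts $\card{T_i} = 1$. The delicate bookkeeping is in constructing the one-split chain explicitly (a greedy peeling of $H$'s refinement data from the leftmost non-matching block of $F$), but once this is carried out, both cases of the induction step close and Theorem~\ref{main.thm.3} follows.
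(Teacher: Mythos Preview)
Your proposal is correct and follows essentially the same strategy as the paper's proof: both reduce via Lemma~\ref{lem.standin} to $\kf$, group $\nna(\CF)$ by absolute value $k_i$, and prove the filtration-level statement by induction, handling the $\card{T_i}=2$ case via Lemma~\ref{main.cor.1}/\ref{main.lem.2} and the $\card{T_i}=1$ case by showing $F\wotil \equiv (-1)^{n-\ell(F)} F \pmod{\kf_{k_{i+1}}}$ through the expansion of $F\wotil$ and Lemma~\ref{lem.altsum.faces}.

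The one place where the paper is more economical is your ``crux'': to show that $F\not\con G$ forces $\na(FG)>k_i$, the paper (in Lemma~\ref{main.lem.3} \textbf{(a)}) does not build an entire refinement chain from $F$ to $FG$. It picks a single block $F_u$ not contained in any block of $G$, chooses one block $G_v$ meeting $F_u$, and forms the face $H$ obtained from $F$ by replacing $F_u$ with $F_u\cap G_v,\ F_u\setminus G_v$. This $H$ already satisfies $FG\con H\con F$ and $\ell(H)=\ell(F)+1$, and the parity contradiction fires immediately on $H$ alone. Your chain construction is valid, but only its first step $F_1$ is ever used---so the greedy peeling can be replaced by this one explicit split.
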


The proof of this will rely on a counterpart to Lemma~\ref{main.lem.2}:

\begin{lem}
\label{main.lem.3}
Let $k \in \nna\tup{\CF}$.
Let $i\in \interval{0, m}$ be such that $\abs{k} = k_i$.
Let $\FF\in\kf_{k_i}$. Then:

\begin{enumerate}
\item[\textbf{(a)}] If $-k \notin \nna\tup{\CF}$,
then
\[
\FF \tup{ \wotil - \sign k }
\in\kf_{k_{i+1}}.
\]
Here, $\sign k$ means the sign of $k$ (that is, $1$ if $k>0$ and
$-1$ if $k<0$).

\item[\textbf{(b)}] We have
\[
\FF \tup{ \BBta - \abs{k} }
\in\kf_{k_{i+1}}.
\]

\item[\textbf{(c)}] If
$\nna \tup{\CF} \cap \set{k,-k} = \set{k}$
(that is, if $-k = k$ or $-k \notin \nna\tup{\CF}$),
then
\[
\FF \tup{\wotil\BBta - k}
\in\kf_{k_{i+1}}.
\]

\end{enumerate}
\end{lem}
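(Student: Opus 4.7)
The overall plan is to reduce by $\kk$-linearity to $\FF = F$ a single face with $\na\tup{F} \geq k_i$. If $\na\tup{F} > k_i$, then $\na\tup{F} \in \na\tup{\CF}$ forces $\na\tup{F} \geq k_{i+1}$, so $F \in \kf_{k_{i+1}}$, and all three conclusions follow immediately from the fact that $\kf_{k_{i+1}}$ is a two-sided ideal (Proposition~\ref{prop.kfk-ideal}). I may thus assume $\na\tup{F} = k_i$, which also pins $\nna\tup{F} \in \set{k,-k}$.

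Part (b) is then immediate from Lemma~\ref{main.lem.1}, which gives $F\BBta \equiv k_i F \pmod{\kf_{k_{i+1}}}$; since $k_i = \abs{k}$, this is exactly $F\tup{\BBta - \abs{k}} \in \kf_{k_{i+1}}$. Part (c) will follow from (a) and (b) once (a) is established: when $k \neq 0$, the hypothesis $\nna\tup{\CF} \cap \set{k,-k} = \set{k}$ gives $-k \notin \nna\tup{\CF}$, so (a) yields $\FF\wotil \equiv \sign\tup{k} \FF \pmod{\kf_{k_{i+1}}}$ and (b) yields $\FF\BBta \equiv \abs{k}\FF \pmod{\kf_{k_{i+1}}}$; multiplying (using the two-sided ideal property), $\FF\wotil\BBta \equiv \sign\tup{k}\FF\BBta \equiv \sign\tup{k}\abs{k}\FF = k\FF \pmod{\kf_{k_{i+1}}}$. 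When $k = 0$ we have $k_0 = 0$ and $i = 0$, and the claim becomes $\FF\wotil\BBta \in \kf_{k_1}$; this holds once we know $\BBta \in \kf_{k_1}$ (since $\kf_{k_1}$ is a two-sided ideal), and this last containment follows because every type-$\alpha$ face $G$ lies in $\Na\tup{G}$, forcing $\na\tup{G} \geq 1 > k_0$ and hence $\na\tup{G} \geq k_1$.

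The main work, and the main obstacle, is part (a). Under the hypothesis $-k \notin \nna\tup{\CF}$, the value $\nna\tup{F} \in \set{k,-k}$ must equal $k$, so $\tup{-1}^{n-\ell\tup{F}} = \sign\tup{k}$. Using Proposition~\ref{w0face.prop} and splitting the sum over $G \in \CF$ according to whether $F \con G$, I obtain
\[
F\wotil
= \tup{\sum_{\substack{G \in \CF; \\ F \con G}} \tup{-1}^{n-\ell\tup{G}}} F
+ \sum_{\substack{G \in \CF; \\ F \not\con G}} \tup{-1}^{n-\ell\tup{G}} FG ,
\]
where the first sum collapses because $FG = F$ whenever $F \con G$ (Proposition~\ref{cp_prop}(c)). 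By Lemma~\ref{lem.altsum.faces}, this first sum equals $\tup{-1}^{n-\ell\tup{F}} F = \sign\tup{k} F$. So (a) reduces to showing the ``off-diagonal'' sum $\sum_{G: F \not\con G} \tup{-1}^{n-\ell\tup{G}} FG$ lies in $\kf_{k_{i+1}}$, for which it suffices to prove $\na\tup{FG} > k_i$ for every $G$ with $F \not\con G$.

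This last step is a merging argument. Suppose for contradiction that $\na\tup{FG} = k_i$ for some $G$ with $F \not\con G$. Then $FG \neq F$ (by Proposition~\ref{cp_prop}(c)) yet $FG \con F$ (by Proposition~\ref{cp_prop}(b)), so $FG$ strictly refines $F$, and some block of $F$ must contain at least two blocks of $FG$. Merging two such $FG$-blocks in place (keeping the order of the remaining blocks) produces a face $H'$ with $\ell\tup{H'} = \ell\tup{FG} - 1$ satisfying $FG \con H' \con F$. Two applications of Proposition~\ref{na.prop.00} squeeze $\na\tup{H'}$ between $\na\tup{F} = k_i$ and $\na\tup{FG} = k_i$, so $\na\tup{H'} = k_i$ as well. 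Then $\nna\tup{H'} = -\nna\tup{FG}$ (because their lengths differ by one), and both lie in $\nna\tup{\CF} \cap \set{k,-k}$; hence $\set{k,-k} \subseteq \nna\tup{\CF}$, contradicting $-k \notin \nna\tup{\CF}$.
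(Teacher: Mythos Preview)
Your proof is correct and follows essentially the same approach as the paper. The only minor variations are that in part \textbf{(a)} you construct the intermediate face $H'$ by merging two blocks of $FG$ (getting $\ell\tup{H'} = \ell\tup{FG}-1$), whereas the paper splits a block of $F$ (getting $\ell\tup{H} = \ell\tup{F}+1$); and in the $k=0$ case of part \textbf{(c)} you argue directly that $\BBta \in \kf_{k_1}$, whereas the paper applies part \textbf{(b)} to $\FF\wotil$ in place of $\FF$.
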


\begin{proof}
\textbf{(a)} Assume that $-k \notin \nna\tup{\CF}$.
Then, $-k \neq k$ (since $k \in \nna\tup{\CF}$), so that
$k \neq 0$. Thus, $\sign k$ is well-defined.

We must prove that
$\FF \tup{ \wotil - \sign k }
\in\kf_{k_{i+1}}$.
By linearity, we WLOG assume that $\FF$ is a single face
$F = \tup{F_1, F_2, \ldots, F_p} \in \CF_{k_i}$.
Thus, $\na\tup{F} \geq k_i$, so that
$\na\tup{F} > k_i$ or $\na\tup{F} = k_i$.
If
$\na\tup{F} > k_i$, then $\na\tup{F} \geq k_{i+1}$
(since $ \na\tup{ F } $ belongs to the set $ \na\tup{ \CF } = \set{k_0 < k_1 < k_2 < \cdots  < k_m} $, and thus cannot be $> k_i$ without being $\geq k_{i+1}$),
so that
$\FF = F \in \CF_{k_{i+1}} \subseteq \kf_{k_{i+1}}$,
and so the claim follows because $\kf_{k_{i+1}}$
is an ideal of $\kf$. Hence, we WLOG assume that
$\na\tup{F} = k_i$.

Therefore, \eqref{eq.nna.abs} yields
$\card{\nna\tup{F}} = \na\tup{F} = k_i = \abs{k}$.
In other words, $\nna\tup{F} = \pm k$. But
$\nna \tup{F}$ cannot be $-k$, since $-k \notin
\nna\tup{\CF}$. Hence, we conclude that
$\nna\tup{F} = k$.
Hence,
$k = \nna\tup{F} = \tup{-1}^{n-\ell \tup{F} }\na\tup{F}$,
so that
\begin{equation}
\sign k
= \tup{-1}^{n-\ell\tup{F} }
\label{pf.main.lem.3.a.sign}
\end{equation}
(since $\na\tup{F} \geq 0$ and $k \neq 0$).

Next, we claim that if $G\in\CF$ is any face such that
$F\not \con G$, then
\begin{equation}
FG\in\CF_{k_{i+1}}.
\label{pf.main.lem.3.a.ineq}
\end{equation}

[\textit{Proof of \eqref{pf.main.lem.3.a.ineq}:} Let $G\in \CF$ be any
face such that $F\not \con G$. Then, there exists some block
$F_{u}$ of $F$ that is not a subset of any block of $G$. Consider this block
$F_{u}$. Thus, $F_{u}$ intersects at least two distinct blocks of $G$. Let
$G_{v}$ be one of the blocks of $G$ that $F_{u}$ intersects. Then,
the sets $F_u \cap G_v$ and $F_u \setminus G_v$ are both nonempty
and disjoint, and their union is $F_u$.
Hence, replacing
the block $F_{u}$ of $F$ by the two nonempty blocks $F_{u}\cap G_{v}$ and
$F_{u}\setminus G_{v}$, we obtain a new set composition
\[
H := \tup{ F_{1},F_{2},\ldots,F_{u-1},\ F_{u}\cap G_{v},\ F_{u}\setminus
G_{v},\ F_{u+1},F_{u+2},\ldots,F_{p} } ,
\]
which has length $p+1 = \ell\tup{F} +1$
(since $p = \ell\tup{F}$) and satisfies
$FG\con H\con F$. Thus, Proposition \ref{na.prop.00} yields
$\na\tup{FG} \geq \na\tup{H} \geq \na\tup{F}$.

Recall that $\nna\tup{F} = k$.
Thus, $-\nna\tup{F} = -k \notin \nna\tup{\CF}$.
Hence, in particular, $-\nna\tup{F} \neq \nna\tup{H}$.
However, if we had $\na\tup{H} = \na\tup{F}$,
then we would have $\nna\tup{H} = -\nna\tup{F}$
(since $\ell\tup{H} = \ell\tup{F} + 1$ shows that
the signs $\tup{-1}^{n-\ell\tup{H}}$ and
$\tup{-1}^{n-\ell\tup{F}}$ in the definitions of
$\nna\tup{H}$ and $\nna\tup{F}$ are opposite),
which would contradict $-\nna\tup{F} \neq \nna\tup{H}$.
Thus, we cannot have $\na\tup{H} = \na\tup{F}$.
Therefore, $\na\tup{H} > \na\tup{F}$
(since $\na\tup{H} \geq \na\tup{F}$).
Altogether, 
$\na\tup{FG} \geq \na\tup{H} > \na\tup{F} = k_i$.
Hence, $\na\tup{FG} \geq k_{i+1}$
(again since $ \na\tup{ FG } $ belongs to the set
$ \na\tup{ \CF } = \set{k_0 < k_1 < k_2 < \cdots  < k_m} $)
and therefore $FG\in\CF_{k_{i+1}}$. This proves
\eqref{pf.main.lem.3.a.ineq}.] \medskip

Now, multiplying the equalities $\FF=F$ and \eqref{eq.w0face}, we
obtain
\begin{align*}
\FF\wotil &  =F\sum_{G\in\CF}\tup{ -1 }^{n-\ell
\tup{G} }G=\sum_{G\in\CF}\tup{ -1 }^{n-\ell\tup{G}
}FG\\
&  =\sum_{\substack{G\in\CF;\\F\con G}}
\tup{ -1 }^{n-\ell\tup{G} }FG+\sum_{\substack{G\in\CF;
\\F\not \con G}}\tup{ -1 }^{n-\ell\tup{G}
}\underbrace{FG}_{\substack{\in\CF_{k_{i+1}}\\\text{(by
\eqref{pf.main.lem.3.a.ineq})}}}\\
&  \equiv\sum_{\substack{G\in\CF;\\F\con G}}
\tup{ -1 }^{n-\ell\tup{G} }
\underbrace{FG}_{\substack{=F\\\text{(by
Proposition \ref{cp_prop} \textbf{(c)})}}}\\
&  =\underbrace{\sum_{\substack{G\in\CF;\\F\con
G}}\tup{ -1 }^{n-\ell\tup{G} }}_{\substack{=
\tup{-1}^{n-\ell\tup{F} }\\
\text{(by Lemma \ref{lem.altsum.faces})}}}
F
= \underbrace{\tup{-1}^{n-\ell \tup{F}}}_{\substack{
= \sign k \\ \text{(by \eqref{pf.main.lem.3.a.sign})}}}
 \underbrace{F}_{= \FF}
= \sign k \cdot \FF
\mod\kf_{k_{i+1}}.
\end{align*}
In other words,
\[
\FF \wotil - \sign k \cdot \FF \in \kf_{k_{i+1}}.
\]
Equivalently,
\[
\FF \tup{\wotil - \sign k}
\in \kf_{k_{i+1}}.
\]
Thus, Lemma \ref{main.lem.3} \textbf{(a)} is proved.
\medskip

\textbf{(b)}
%As in the proof of part \textbf{(a)}, we WLOG assume that
%$\FF$ is a single face $F\in\CF_{k_i}$, and that
%$\na\tup{F} = k_i$.
Lemma \ref{main.lem.1a} yields
\[
\FF \tup{\BBta - k_i}
\in \kf_{k_{i+1}} .
\]
Since $\abs{k} = k_i$, we can rewrite this as
$\FF\left(  \BBta-\abs{k} \right)
\in \kf_{k_{i+1}}$.
This proves Lemma \ref{main.lem.3} \textbf{(b)}. \medskip

\textbf{(c)} Recall that $\kf_{k_{i+1}}$ is an ideal of
$\kf$, so we can compute modulo it as with integers.

Now, assume that $\nna \tup{\CF} \cap \set{k,-k} = \set{k}$
(that is, that $-k = k$ or $-k \notin \nna\tup{\CF}$).
Thus, we are in one of the following two cases:

\textit{Case 1:} We have $-k = k$.

\textit{Case 2:} We have $-k \notin \nna\tup{\CF}$.

We begin by handling Case 1. In this case, $-k = k$.
Hence, $k = 0$, %and $i=0$,
so that our goal is to prove that
$\FF\wotil \BBta \in \kf_{k_{i+1}}$.
But $\FF \in \kf_{k_i}$ and thus
$\FF\wotil \in \kf_{k_i}$
(since $\kf_{k_i}$ is an ideal).
Hence, part \textbf{(b)} (applied to $\FF\wotil$
instead of $\FF$) yields
$\FF\wotil
\left(  \BBta-\abs{k} \right)
\in \kf_{k_{i+1}}$.
In view of $k = 0$, this simplifies to
$\FF\wotil \BBta \in \kf_{k_{i+1}}$,
which is what we desired to prove.
Thus, Lemma \ref{main.lem.3} \textbf{(c)} is proved in Case 1.

Now let us consider Case 2. In this case,
$-k \notin \nna\tup{\CF}$. Hence, part
\textbf{(a)} yields
$\FF \tup{\wotil - \sign k} \in \kf_{k_{i+1}}$.
In other words,
\begin{align}
\FF\wotil \equiv \sign k\cdot\FF
\mod \kf_{k_{i+1}}.
\label{pf.main.lem.3.c.1}
\end{align}
But part \textbf{(b)} yields
$\FF \tup{\BBta - \abs{k}} \in \kf_{k_{i+1}}$.
That is,
\[
\FF\BBta \equiv \abs{k} \FF
\mod \kf_{k_{i+1}}.
\]
Multiplying the congruence \eqref{pf.main.lem.3.c.1}
with $\BBta$ from the right, we find
\begin{align*}
\FF \wotil \BBta
& \equiv\sign k\cdot\underbrace{\FF
\BBta}_{\equiv\abs{k} \FF
\mod\kf_{k_{i+1}}}
\equiv\underbrace{\sign k\cdot\abs{k} }_{=k}\FF
=k\FF\mod\kf_{k_{i+1}}.
\end{align*}
In other words,
\[
\FF\tup{\wotil\BBta - k}\in\kf_{k_{i+1}}.
\]
Thus, Lemma \ref{main.lem.3} \textbf{(c)} is proved in Case 2.
Hence, the proof of Lemma \ref{main.lem.3} \textbf{(c)} is complete in both cases.
\end{proof}

We combine Lemmas \ref{main.lem.2} and \ref{main.lem.3} into one:

\begin{lem}
\label{lem.main.last}
Let $i \in \interval{0, m}$ and $\FF \in \kf_{k_i}$. Then,
\[
\FF\prod_{\substack{h \in \nna\tup{\CF};\\
\abs{h} = k_i}}
\left(  \wotil \BBta -h\right)
\in \kf_{k_{i+1}}.
\]
\end{lem}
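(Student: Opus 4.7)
The plan is to dispatch Lemma \ref{lem.main.last} by a clean case analysis on the set
\[
S := \set{h \in \nna\tup{\CF} \ :\ \abs{h} = k_i},
\]
which indexes the product. Since $k_i \in \na\tup{\CF}$, there is some face $F\in\CF$ with $\na\tup{F}=k_i$, and its signed knapsack number $\nna\tup{F} = \tup{-1}^{n-\ell\tup{F}}k_i$ lies in $S$; so $S$ is always nonempty. Moreover, every element $h\in S$ satisfies $\abs{h}=k_i$, hence $h\in\set{k_i,-k_i}$. Thus, if $k_i = 0$, then $S=\set{0}$, while if $k_i > 0$, then $S$ is one of the three sets $\set{k_i}$, $\set{-k_i}$, $\set{k_i,-k_i}$. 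This exhausts the possibilities, so I would only need to handle two generic scenarios: $\card{S} = 1$ and $\card{S} = 2$.

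In the scenario $\card{S}=1$, let $k$ be the unique element of $S$. Then $-k \notin \nna\tup{\CF}$ or $-k = k$; in either case, $\nna\tup{\CF}\cap\set{k,-k}=\set{k}$. The product in the lemma reduces to the single factor $\wotil \BBta - k$, and Lemma \ref{main.lem.3} \textbf{(c)} (applied to this $k$ and to $\FF\in\kf_{k_i}$, noting $\abs{k}=k_i$) yields exactly
\[
\FF\tup{\wotil \BBta - k} \in \kf_{k_{i+1}},
\]
as required.

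In the scenario $\card{S}=2$, we have $S = \set{k_i,-k_i}$ with $k_i > 0$, so the product becomes
\[
\tup{\wotil \BBta - k_i}\tup{\wotil \BBta + k_i}
= \tup{\wotil \BBta + k_i}\tup{\wotil \BBta - k_i}.
\]
This is precisely the expression handled by Lemma \ref{main.lem.2}, which gives
\[
\FF\tup{\wotil \BBta + k_i}\tup{\wotil \BBta - k_i} \in \kf_{k_{i+1}}.
\]
Combining the two scenarios completes the proof.

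The only point requiring care is the bookkeeping in the case split: one must verify that the product in the lemma statement is really over at most two factors (which follows from $\abs{h}=k_i$ forcing $h\in\set{k_i,-k_i}$), and that the two scenarios line up exactly with the hypotheses of Lemma \ref{main.lem.3} \textbf{(c)} and Lemma \ref{main.lem.2} respectively. No new combinatorial input is needed beyond what has already been established; the work of Section \ref{sec.altsum} has been absorbed into Lemma \ref{main.lem.3}, and the squaring identity $\tup{\wotil \BBta}^2 = \BBt_{\rev\alpha}\BBta$ from Corollary \ref{w0conj_cor} has been absorbed into Lemma \ref{main.lem.2}.
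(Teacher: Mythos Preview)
Your proof is correct and follows essentially the same approach as the paper: a case split on whether the index set $S$ has one or two elements, invoking Lemma~\ref{main.lem.3}~\textbf{(c)} in the former case and Lemma~\ref{main.lem.2} in the latter. Your organization via the explicit set $S$ is slightly cleaner, but the logic is identical.
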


\begin{proof}
We have $\na\tup{\CF} = \set{ k_{0}<k_1<\cdots<k_{m} }$
and thus $k_i \in \na\tup{\CF}$.
Hence, there exists a face $F\in \CF$ such that
$\na\tup{F} = k_i$. Consider this face $F$.
Its signed knapsack number $\nna \tup{F}$
must then be either $k_i$ or $-k_i$
(since it is defined to be  $\underbrace{\tup{-1}^{n-\ell\left(
F\right)  }}_{=\pm 1}\underbrace{\na\tup{F}  }_{=k_i}=\pm
k_i$). Therefore, at least one of $k_i$ and $-k_i$
equals $\nna\tup{F}$ and thus belongs to $\nna\tup{\CF}$.
In other words, there exists an element
$k\in \set{k_i, -k_i}$ that belongs to $\nna\tup{\CF}$.
We are thus in one of the following two cases:

\textit{Case 1:} There exists exactly one element $k\in
\set{k_i, -k_i}$ that belongs to $\nna\tup{\CF}$.
(That is, either $k_i = -k_i$, or exactly one of
the two numbers $k_i$ and $-k_i$ belongs to $\nna\tup{\CF}$.)

\textit{Case 2:} There are two elements $k\in
\set{k_i, -k_i}$ that belong to $\nna\tup{\CF}$.
(That is, $k_i$ and $-k_i$ are distinct, and both
belong to $\nna\tup{\CF}$.)
% \jonathan{It feels like we could be more concise with case 1. Really isn't the last equality enough of an explanation?\\

% Let $k$ be the single signed knapsack number in $ \set{k_i, -k_i} $. Then $ -k $ is not a signed knapsack number and thus
% \[
% \FF\prod_{\substack{h \in \nna\tup{\CF};\\
% \abs{h} = k_i}} \tup{ \wotil\BBta - h }
% = \FF \tup{\wotil\BBta - k}
% \in \kf_{k_{i+1}}.
% \]
% by Lemma \ref{main.lem.3} \textbf{(c)}.}

Let us consider Case 1. In this case,
there exists exactly one element
$k \in \set{k_i, -k_i}$ that belongs to $\nna\tup{\CF}$.
Consider this $k$. Then, $k = \pm k_i$, so that
$\abs{k} = \abs{k_i} = k_i$ (since
$k_i = \na\tup{F} \geq 0$). This also yields
$\set{k, -k} = \set{k_i, -k_i}$.
But $k$ is the only element
of $\set{k_i, -k_i}$ that belongs to $\nna\tup{\CF}$.
In other words, $k$ is the only element
of $\set{k, -k}$ that belongs to $\nna\tup{\CF}$
(since $\set{k, -k} = \set{k_i, -k_i}$).
That is, $\nna\tup{\CF} \cap \set{k,-k} = \set{k}$.
Hence,
Lemma \ref{main.lem.3} \textbf{(c)} shows that
\[
\FF\tup{\wotil\BBta - k} \in \kf_{k_{i+1}}
\qquad \left(\text{since $\abs{k} = k_i$}\right).
\]
However, the only $h \in \nna\tup{\CF}$ satisfying
$\abs{h} = k_i$ is $k$ (since the condition
$\abs{h} = k_i$ is equivalent to
$h \in \set{k_i, -k_i}$, but we know that
$k$ is the only element
of $\set{k_i, -k_i}$ that belongs to $\nna\tup{\CF}$).
Thus,
\[
\prod_{\substack{h \in \nna\tup{\CF};\\
\abs{h} = k_i}} \tup{ \wotil\BBta - h }
= \wotil\BBta - k.
\]
Hence,
\[
\FF \prod_{\substack{h \in \nna\tup{\CF};\\
\abs{h} = k_i}} \tup{ \wotil\BBta - h }
= \FF \tup{\wotil\BBta - k} \in \kf_{k_{i+1}}.
\]
This proves Lemma \ref{lem.main.last} in Case 1.

Let us next consider Case 2.
In this case, there are two elements $k\in
\set{k_i, -k_i}$ that belong to $\nna\tup{\CF}$.
Thus, both numbers $k_i$ and $-k_i$ belong to
$\nna\tup{\CF}$ (and are distinct).
Hence, the elements $h \in \nna\tup{\CF}$
satisfying $\abs{h} = k_i$ are $k_i$ and $-k_i$
(and these two values are distinct). Therefore,
\[
\prod_{\substack{h \in \nna\tup{\CF};
\\ \abs{h} = k_i}} \tup{ \wotil\BBta - h }
= \tup{ \wotil \BBta - \tup{-k_i} }\tup{ \wotil \BBta - k_i}
= \tup{ \wotil \BBta + k_i}\tup{ \wotil \BBta - k_i}.
\]
Multiplying this by $\FF$ from the left, we find
\[
\FF \prod_{\substack{h \in \nna\tup{\CF};
\\ \abs{h} = k_i}} \tup{ \wotil\BBta - h }
= \FF \tup{ \wotil \BBta + k_i}\tup{ \wotil \BBta - k_i}
\in \kf_{k_{i+1}}
\]
by Lemma \ref{main.lem.2}.
Thus, Lemma \ref{lem.main.last} is proved in Case 2.
\end{proof}

We are now ready to prove Theorem~\ref{main.thm.3}:

\begin{proof}[Proof of Theorem~\ref{main.thm.3}.]
Define the polynomial
\[
f\tup{x}
:= \prod_{k \in \nna\tup{\CF}} \tup{x - k}
\in \kk\sbr{x}.
\]
Thus, our goal is to prove that $f\tup{\wo\BBa} = 0$
and $f\tup{\BBa\wo} = 0$.
According to Lemma~\ref{lem.standin}, both of these
equalities will follow if we can show that
$f\tup{\wotil \BBta} = 0$.
So this is what we shall now do.

First, we shall prove that each
$i \in \interval{0, m+1}$ satisfies
\begin{equation}
\prod_{\substack{k \in \nna\tup{\CF};\\
\abs{k} < k_i}} \tup{ \wotil\BBta - k }
\in \kf_{k_i}.
\label{pf.main-ind}
\end{equation}
Once this is proved, we will easily obtain
$f\tup{\wotil \BBta} = 0$ by applying
\eqref{pf.main-ind} to $i=m+1$.

We shall prove \eqref{pf.main-ind} by induction on $i$:

\textit{Base case:} The validity of \eqref{pf.main-ind} for $i=0$ is obvious,
since the left hand side is an empty product (thus equals $1$), while the
right hand side $\kf_{k_{0}}$ is the whole algebra
$\kf$.

\textit{Induction step:} Fix $i\in \interval{0, m}$. Assume that
\eqref{pf.main-ind} holds for this $i$. We must prove that \eqref{pf.main-ind}
holds for $i+1$ instead of $i$ as well.

We note that any $k \in \nna\tup{\CF}$ that satisfies $\abs{k}
<k_{i+1}$ must satisfy $\abs{k} \leq k_i$
(since each $k \in \nna\tup{\CF}$ satisfies
$\abs{k} \in \na\tup{\CF} = \set{k_0 < k_1 < \cdots < k_m}$),
and thus
must satisfy either $\abs{k} < k_i$ or $\abs{k} = k_i$.
Hence,
\begin{align*}
\prod_{\substack{k \in \nna\tup{\CF};\\\abs{k}
<k_{i+1}}}\left(  \wotil\BBta-k\right)
& =\underbrace{\prod_{\substack{k \in \nna\tup{\CF};\\
\abs{k} < k_i}}\left(  \wotil\BBta-k\right)  }_{\substack{\in
\kf_{k_i}\\\text{(by our induction hypothesis,
which}\\\text{says that \eqref{pf.main-ind} holds for }i\text{)}}%
}\cdot\underbrace{\prod_{\substack{k \in \nna\tup{\CF};\\
\abs{k} = k_i}}\left(  \wotil\BBta-k\right)  }_{=
\prod\limits_{\substack{h \in \nna\tup{\CF};\\\abs{h} =k_i%
}}\left(  \wotil\BBta-h\right)  }\\
& \in\kf_{k_i}\cdot\prod_{\substack{h \in \nna\tup{\CF};
\\ \abs{h} = k_i}}\left(  \wotil\BBta-h\right)
\subseteq \kf_{k_{i+1}}
\end{align*}
(by Lemma \ref{lem.main.last}). This shows that \eqref{pf.main-ind} holds for
$i+1$ instead of $i$. Thus, the induction is complete, so that
\eqref{pf.main-ind} is proved.

Now, applying \eqref{pf.main-ind} to $i=m+1$, we obtain
\[
\prod_{\substack{k \in \nna\tup{\CF};\\
\abs{k} <k_{m+1}}} \left(  \wotil\BBta-k\right)
\in \kf_{k_{m+1}}.
\]
Since each $k \in \nna\tup{\CF}$ satisfies $\abs{k} <k_{m+1}$
(because $k_{m+1}=\infty$), and since $\CF_{k_{m+1}}=\varnothing$, we
can rewrite this as
\[
\prod_{k \in \nna\tup{\CF}}\left(  \wotil\BBta-k\right)
\in\kk\varnothing = 0.
\]
In other words,
\begin{align}
\prod_{k \in \nna\tup{\CF}}\left(
\wotil\BBta-k\right)  =0.
\label{pf.eq.main.2.at}
\end{align}
Since the left hand side of this equality is
$f\tup{\wotil \BBta}$ (by the definition of $f\tup{x}$),
we can rewrite this as $f\tup{\wotil \BBta} = 0$.
% But the left hand side of this equality is
% \[
% \rho\left(  \prod_{k \in \nna\tup{\CF}}\left(  \BBa\wo-k\right)
% \right)
% \]
% (since $\rho$ is a $\kk$-algebra anti-morphism sending $\wo$
% and $\BBa$ to $\wotil$ and $\BBta$). Thus, we conclude that
% \[
% \rho\left(  \prod_{k \in \nna\tup{\CF}}\left(  \BBa\wo-k\right)
% \right)  =0.
% \]
% Since $\rho$ is injective (being an anti-isomorphism), we thus can unapply
% $\rho$ here and obtain
% \begin{equation}
% \prod_{k \in \nna\tup{\CF}}\left(  \BBa\wo-k\right) = 0.
% \label{pf.main.thm.3.at}
% \end{equation}
% Finally, recall that the element $\wo$ of $\ksn$ is invertible.
% Thus, conjugation by $\wo$ is an automorphism of the $\kk$-algebra
% $\ksn$.
% This automorphism sends $\BBa\wo$ to
% $\wo\tup{\BBa\wo}\wo^{-1} = \wo\BBa$. Hence, applying this
% automorphism to \eqref{pf.main.thm.3.at},
% we obtain \eqref{eq.main.2}.
As we explained above, this proves Theorem~\ref{main.thm.3}.
\end{proof}

\section{\label{sec.minpol}Identifying the minimal polynomial}

Recall that if $\kk$ is a field, and if $a$ is an element of a
$\kk$-algebra $A$, then the \defin{minimal polynomial} of $a$
(over $\kk$) means the smallest-degree monic polynomial
$p \in \kk\sbr{x}$ satisfying $p\tup{a} = 0$.
This always exists when $A$ is finite-dimensional over $\kk$. In
particular, if $A$ is a matrix ring $\kk^{r\times r}$, then this is
just the minimal polynomial of a matrix.

In this section, \textbf{we shall assume that $\kk$ is a field of
characteristic $0$}. Note that the minimal polynomial of an element
$a$ of a $\kk$-algebra $A$ does not change if we replace $A$
by a larger algebra that contains $A$ as a subalgebra. Thus, we shall denote
this minimal polynomial by $\mu\tup{a}$ without explicitly
specifying $A$.

If two elements of a $\kk$-algebra are conjugate, then they have the
same minimal polynomials. Thus,
\[
\mu\tup{\BBa\wo} = \mu\tup{\wo\BBa} ,
\]
since the two elements $\BBa\wo$ and $\wo\BBa$ of $\ksn$ are conjugate
(indeed, conjugating $\BBa\wo$ by $\wo$ yields
$\wo\tup{\BBa\wo}\wo^{-1} = \wo\BBa$). This minimal polynomial
$\mu\tup{\BBa\wo} = \mu\tup{\wo\BBa}$ will be the focus of our study
in this section.

Injective $\kk$-algebra morphisms preserve minimal polynomials: That
is, if $f:A\to B$ is an injective $\kk$-algebra morphism, and
if $a\in A$ has a minimal polynomial, then
$f\tup{a}$ has the same minimal polynomial as $a$
(that is, $\mu\tup{f\tup{a}}  =\mu\tup{a}$).
This is because each polynomial $p\in
\kk\sbr{x}  $ satisfies $p\tup{f\tup{a}}
= f\tup{p\tup{a}}$, but the injectivity
of $f$ ensures that $f\tup{p\tup{a}} = 0$ if and
only if $p\tup{a} = 0$. The same reasoning works when
$f$ is an injective $\kk$-algebra anti-morphism instead of a
$\kk$-algebra morphism; thus, we conclude that
\begin{equation}
\mu\tup{ f\tup{a} } = \mu\tup{a}
\label{eq.mu.fa=a.antimor}
\end{equation}
holds in this case as well. In particular,
we can apply this to $f$ being the $\kk$-algebra anti-isomorphism
$\rho:\dsn \to \kfsn$, and conclude that
\[
\mu\tup{\rho\tup{a}}
= \mu\tup{a}
\qquad\text{for each } a \in \dsn.
\]
Applying this to $a = \BBa\wo$,
we obtain $\mu\tup{\rho\tup{\BBa\wo}}
= \mu\tup{\BBa\wo}$.
In view of \eqref{pf.lem.standin.rhoBw},
we can rewrite this as
\begin{equation}
\mu\tup{\wotil \BBta}
= \mu\tup{\BBa\wo}
= \mu\tup{\wo\BBa}
\label{eq.minpol.red-to-F}
\end{equation}
(as we saw above).
We shall use this to compute $\mu\tup{\wo\BBa}  $.

\begin{thm}
\label{thm.mp.1}
Assume that $\kk$ is a field of characteristic $0$.
Let $\alpha \models n$.
Then, the minimal polynomial of the element $\wo\BBa$ of $\ksn$ is
\begin{align}
\mu\tup{\wo\BBa}
=
\prod_{k \in \nna\tup{\CF}}\tup{x-k}  .
\label{eq.thm.mp.1.eq}
\end{align}
\end{thm}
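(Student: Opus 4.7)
My plan is to combine Theorem~\ref{main.thm.3} (which gives divisibility of the minimal polynomial by $\prod_{k \in \nna\tup{\CF}}\tup{x-k}$) with a trace computation on the subquotients of the knapsack filtration to upgrade this divisibility to an equality. By \eqref{eq.minpol.red-to-F}, it suffices to compute $\mu\tup{\wotil \BBta}$ in place of $\mu\tup{\wo \BBa}$. Setting $f\tup{x} := \prod_{k \in \nna\tup{\CF}}\tup{x-k}$, Theorem~\ref{main.thm.3} gives $f\tup{\wo \BBa} = 0$, hence, applying the anti-isomorphism $\rho$, $f\tup{\wotil \BBta} = 0$, so $\mu\tup{\wotil \BBta}$ divides $f$. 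Since $f$ has distinct roots and $\kk$ has characteristic $0$, the operator $T := R_{\wotil \BBta}$ of right multiplication on $\kf$ is diagonalizable, and it remains only to show that every $k \in \nna\tup{\CF}$ is actually an eigenvalue of $T$.

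Fix $i \in \interval{0, m}$. The ideal $\kf_{k_i}$ is $T$-invariant (Proposition~\ref{prop.kfk-ideal}), so $T$ descends to the subquotient $\kf_{k_i}/\kf_{k_{i+1}}$, which has basis $\set{[F] : F \in \CF,\ \na\tup{F} = k_i}$. To compute the trace of $T$ on this subquotient I will evaluate $T[F]$ modulo $\kf_{k_{i+1}}$: expanding $\wotil$ via Proposition~\ref{w0face.prop}, splitting the resulting sum according to whether $F \con G$ or not, collapsing the $F \con G$ part by Lemma~\ref{lem.altsum.faces} (which yields the sign $\tup{-1}^{n-\ell\tup{F}}$), and applying Lemma~\ref{main.lem.1} to simplify each multiplication by $\BBta$ on the subquotient, I expect to obtain
\[
T[F] \equiv \tup{-1}^{n-\ell\tup{F}} k_i \, [F] + k_i \sum_{\substack{F \not\con G; \\ \na\tup{FG} = k_i}} \tup{-1}^{n-\ell\tup{G}} [FG] \pmod{\kf_{k_{i+1}}}.
\]
Since $F \not\con G$ forces $\ell\tup{FG} > \ell\tup{F}$ (Proposition~\ref{cp_prop}\textbf{(d)}), we have $FG \neq F$ throughout the second sum, so the diagonal entry of $T$ at $[F]$ is exactly $\nna\tup{F} = \tup{-1}^{n-\ell\tup{F}} k_i$. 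Summing over $F$,
\[
\operatorname{tr}\tup{T \mid \kf_{k_i}/\kf_{k_{i+1}}} = \sum_{\substack{F \in \CF; \\ \na\tup{F} = k_i}} \nna\tup{F} = k_i \tup{a_+ - a_-},
\]
where $a_\pm := \card{\set{F \in \CF : \nna\tup{F} = \pm k_i}}$.

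By Lemma~\ref{lem.main.last}, $T$ on this subquotient satisfies $\prod_{h \in \nna\tup{\CF},\ \abs{h}=k_i}\tup{T-h}=0$, a polynomial with distinct roots, so $T$ is diagonalizable there with eigenvalues in $\set{k_i, -k_i} \cap \nna\tup{\CF}$. When $k_i > 0$, the multiplicities $d_+, d_-$ of $\pm k_i$ are pinned down by $d_+ + d_- = a_+ + a_-$ (dimension) and $k_i d_+ - k_i d_- = k_i\tup{a_+ - a_-}$ (trace), forcing $d_\pm = a_\pm$. When $k_i = 0$ (which, by Proposition~\ref{prop.k0}, happens only for $i=0$ with $\alpha \neq \tup{n}$), the annihilating polynomial reduces to $T = 0$ on the subquotient, so $0$ appears with positive multiplicity. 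Either way, each $k \in \nna\tup{\CF}$ is an eigenvalue of $T$ on the subquotient with $k_i = \abs{k}$. Since $T$ is diagonalizable on $\kf$, its eigenspace decomposition restricts to every $T$-invariant subspace and hence to every such subquotient, so an eigenvalue on a subquotient is automatically an eigenvalue of $T$ on $\kf$. Thus $\tup{x-k} \mid \mu\tup{\wotil \BBta}$ for every $k \in \nna\tup{\CF}$, giving $\mu\tup{\wotil \BBta} = f$, which via \eqref{eq.minpol.red-to-F} is the theorem. The main obstacle is the trace computation: the clean identification of the diagonal entry of $T$ at $[F]$ as precisely $\nna\tup{F}$ depends on Lemma~\ref{lem.altsum.faces} producing the correct sign and on verifying that the off-diagonal $[FG]$ terms genuinely avoid $[F]$; once that is in hand, the trace-plus-dimension linear system forces $d_\pm = a_\pm$ and the theorem follows.
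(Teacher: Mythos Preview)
Your proof is correct, but it takes a somewhat more circuitous route than the paper's. Both arguments start from Theorem~\ref{main.thm.3} to obtain $\mu\tup{\wotil\BBta} \mid f$ and then need to show that every $k \in \nna\tup{\CF}$ is actually an eigenvalue of $T = R_{\wotil\BBta}$ on $\kf$. The paper does this in one stroke via Lemma~\ref{mblks_lem}: ordering the basis $\CF$ of $\kf$ by increasing length, the computation $F\wotil\BBta = \nna\tup{F}F + \tup{\text{strictly longer faces}}$ makes the matrix of $T$ lower-triangular with diagonal entries $\nna\tup{F}$, so the eigenvalues (with multiplicities) are read off immediately.

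You instead work on the knapsack subquotients $\kf_{k_i}/\kf_{k_{i+1}}$ and set up a trace-plus-dimension linear system, using Lemma~\ref{lem.main.last} to know that only $\pm k_i$ can occur. This is valid, and has the virtue of staying entirely within the knapsack-filtration machinery already built for Theorem~\ref{main.thm.3}. But notice that your own computation already contains the paper's shortcut: you observe that every off-diagonal contribution $[FG]$ in $T[F]$ has $\ell\tup{FG} > \ell\tup{F}$. Had you ordered the basis of each subquotient by length, this would have made $T$ triangular on the subquotient with diagonal $\nna\tup{F}$, and the trace argument would become unnecessary. In effect, your diagonal-entry computation is Lemma~\ref{mblks_lem} restricted to a subquotient; the paper simply applies the same observation on all of $\kf$ at once. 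The paper's approach is shorter and yields the eigenvalue multiplicities directly, while yours recovers the same multiplicity formula $d_\pm = a_\pm$ through the linear system.
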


This will follow from a description of the eigenvalues of the ``right multiplication by $\wotil\BBta$'' operator on $\kf$:
%by combining Theorem \ref{main.thm.3} with a kind of converse:

\begin{thm}
\label{thm.spec.w0Baright}
Assume that $\kk$ is a field of characteristic $0$.
Let $\alpha \models n$.
Then, the $\kk$-linear operator
\begin{align*}
\kf  & \to \kf,\\
\FF  & \mapsto \FF\wotil\BBta
\end{align*}
is diagonalizable, and its eigenvalues are the signed knapsack numbers
$\nna\tup{F} $ of the faces $F\in \CF$.
(The algebraic multiplicity of each eigenvalue $\lambda$ is the number of the faces $F$
whose signed knapsack number is $\lambda$.)
\end{thm}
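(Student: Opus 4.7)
The plan is to combine Theorem~\ref{main.thm.3} (which pins down diagonalizability and constrains the spectrum) with a level-by-level analysis along the knapsack filtration of $\kf$. First, Theorem~\ref{main.thm.3} together with Lemma~\ref{lem.standin} shows that the polynomial $\prod_{k \in \nna\tup{\CF}} \tup{x - k}$ annihilates $\wotil\BBta$, and hence also the right-multiplication operator $T : \FF \mapsto \FF\wotil\BBta$ on $\kf$. Over a field of characteristic $0$ this polynomial has pairwise distinct roots, so $T$ is diagonalizable and its spectrum is contained in $\nna\tup{\CF}$. The knapsack filtration $\kf = \kf_{k_0} \supseteq \kf_{k_1} \supseteq \cdots \supseteq \kf_{k_{m+1}} = 0$ consists of two-sided ideals (Proposition~\ref{prop.kfk-ideal}), so it is $T$-invariant; since $T$ is diagonalizable, the multiplicity of each eigenvalue on $\kf$ is the sum of its multiplicities on the subquotients $V_i := \kf_{k_i}/\kf_{k_{i+1}}$, which have bases $\set{[F] : F \in \CF,\ \na\tup{F} = k_i}$.

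The key identification is $T|_{V_i} = k_i \cdot R_{\wotil}|_{V_i}$, where $R_{\wotil}$ denotes right multiplication by $\wotil$. Indeed, $T = R_{\wotil\BBta} = R_{\BBta} \circ R_{\wotil}$, and Lemma~\ref{main.lem.1} implies that $R_{\BBta}$ acts as $k_i \cdot \id$ on $V_i$. Moreover, since $\wo^2 = \id$ in $\ksn$ and $\rho$ is a unital $\kk$-algebra anti-isomorphism, $\wotil^2 = \rho\tup{\id} = \tup{\interval{n}}$ is the unit of $\kf$, so $R_{\wotil}$ is a $\kk$-linear involution on all of $\kf$ (and hence on each $V_i$, which $R_{\wotil}$ preserves). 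Therefore the eigenvalues of $T$ on $V_i$ lie in $\set{+k_i, -k_i}$, and it remains only to pin down the multiplicities.

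To this end, I would compute $\operatorname{tr}\tup{R_{\wotil}|_{V_i}}$ in the basis above. Expanding $\wotil = \sum_H \tup{-1}^{n-\ell\tup{H}} H$ via Proposition~\ref{w0face.prop}, splitting the sum $F\wotil = \sum_H \tup{-1}^{n-\ell\tup{H}} FH$ according to whether $F \con H$, and applying Lemma~\ref{lem.altsum.faces} to collapse the $F \con H$ part, yields
\[ F\wotil \;=\; \tup{-1}^{n-\ell\tup{F}} F \;+\; \sum_{\substack{H \in \CF;\\ F \not\con H}} \tup{-1}^{n-\ell\tup{H}} FH. \]
By Proposition~\ref{cp_prop}\textbf{(c)}, every summand $FH$ in the second sum is a face distinct from $F$, so it projects either to $0$ or to a basis element other than $[F]$ in $V_i$. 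The diagonal coefficient at $F$ is therefore $\tup{-1}^{n-\ell\tup{F}}$, and hence
\[ \operatorname{tr}\tup{R_{\wotil}|_{V_i}} \;=\; \sum_{\substack{F \in \CF;\\ \na\tup{F} = k_i}} \tup{-1}^{n-\ell\tup{F}} \;=\; m\tup{k_i} - m\tup{-k_i}, \]
writing $m\tup{\lambda} := \card{\set{F \in \CF : \nna\tup{F} = \lambda}}$.

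When $k_i > 0$, combining this trace with $\dim V_i^+ + \dim V_i^- = \card{\set{F : \na\tup{F} = k_i}} = m\tup{k_i} + m\tup{-k_i}$ (where $V_i^\pm$ denote the $\pm k_i$-eigenspaces of $T$ on $V_i$) yields $\dim V_i^\pm = m\tup{\pm k_i}$. The case $k_i = 0$ is simpler still: $T$ vanishes on $V_0$, which thus lies entirely in the $0$-eigenspace and contributes $\dim V_0 = m\tup{0}$. Summing the contributions over $i$ gives the claimed spectrum and multiplicities. The main obstacle is conceptual rather than technical: recognizing that $T$ on each subquotient factors as $k_i$ times an involution whose trace can be read off from the single alternating-sum identity in Lemma~\ref{lem.altsum.faces}. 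Once that structural observation is in place, the rest is a short dimension count.
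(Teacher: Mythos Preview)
Your proof is correct, and it takes a genuinely different route from the paper's.

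The paper proves this theorem via Lemma~\ref{mblks_lem} (the ``More blocks lemma''): it orders the basis $\CF$ by increasing length, so that the matrix of $T$ becomes lower-triangular with diagonal entries $\nna\tup{F}$, and then reads off the eigenvalues directly from the diagonal. The diagonalizability argument is the same as yours.

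Your argument instead works along the \emph{knapsack} filtration rather than the length filtration. The key structural observation you make --- that on each subquotient $V_i$ the operator $T$ factors as $k_i$ times the involution $R_{\wotil}$ (using $\wotil^2 = 1$) --- is not isolated in the paper, and it is an attractive way to see why the eigenvalues on each layer come in $\pm$ pairs. You then recover the individual multiplicities from the trace of $R_{\wotil}|_{V_i}$, which you compute via Lemma~\ref{lem.altsum.faces}; this is essentially the same alternating-sum computation that the paper carries out inside the proof of Lemma~\ref{mblks_lem}, just projected to $V_i$ rather than to the ``more blocks'' ideal. Your observation that the off-diagonal terms $FH$ with $F \not\con H$ satisfy $FH \neq F$ (by Proposition~\ref{cp_prop}~\textbf{(c)}) and hence cannot contribute to the diagonal is exactly what makes the trace computation go through without the full triangularity.

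In short: the paper gets a single global triangularization; you get a layer-by-layer decomposition into scalar multiples of involutions plus a trace count. Both reach the same destination. Your approach arguably explains \emph{why} the signs appear (they are the $\pm 1$ eigenvalues of an honest involution), at the cost of a slightly longer dimension/trace bookkeeping at the end.
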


To prove this, we will need the following two lemmas:

\begin{lem}[More blocks lemma]
    \label{mblks_lem}
    Let $F \in \CF$ and $\alpha \models n$. Then we have
    \[
    F\cdot \wotil\BBta = \nna \tup{F}F + \sum_{\substack{G \in \CF;\\ \ell \tup{G} > \ell \tup{F}}} c_{G}G
	\qquad \text{for some scalars $c_G \in \kk$.}
    \]
\end{lem}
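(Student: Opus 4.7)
The plan is to expand $F \cdot \wotil\BBta$ as a triple sum and isolate the terms that could contribute to faces of length $\ell(F)$. Using Proposition~\ref{w0face.prop} and Definition~\ref{BBta_defn}, I would write
\[
F \cdot \wotil\BBta
= \sum_{H \in \CF}\ \sum_{\substack{K \in \CF; \\ \type K = \alpha}} (-1)^{n - \ell(H)}\, F H K.
\]
So the task reduces to understanding the triple products $FHK \in \CF$.

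The first key observation is that $\ell(FHK) \geq \ell(F)$ for every $H, K$. Indeed, Proposition~\ref{cp_prop}~\textbf{(d)} gives $\ell(FH) \geq \ell(F)$, with equality iff $F \con H$; applying the same proposition to $FH$ and $K$ gives $\ell((FH)K) \geq \ell(FH)$, with equality iff $FH \con K$. Thus $\ell(FHK) = \ell(F)$ forces both $F \con H$ and $FH \con K$. But $F \con H$ implies $FH = F$ by Proposition~\ref{cp_prop}~\textbf{(c)}, and then the second condition becomes $F \con K$, which likewise forces $FK = F$. Hence whenever $\ell(FHK) = \ell(F)$, we actually have $FHK = F$. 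All remaining contributions therefore land on faces $G$ with $\ell(G) > \ell(F)$, providing the desired form of the sum.

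It remains to compute the coefficient of $F$. By the above analysis, the terms with $FHK = F$ are exactly those with $F \con H$ and $F \con K$ (with $\type K = \alpha$). The $K$-sum is independent of $H$ and equals
\[
\#\set{K \in \CF : F \con K,\ \type K = \alpha} = \na(F).
\]
The $H$-sum is exactly the one evaluated in Lemma~\ref{lem.altsum.faces}:
\[
\sum_{\substack{H \in \CF;\\ F \con H}} (-1)^{n - \ell(H)} = (-1)^{n - \ell(F)}.
\]
Multiplying, the coefficient of $F$ in $F \cdot \wotil\BBta$ equals $(-1)^{n - \ell(F)}\, \na(F) = \nna(F)$, by Definition~\ref{def.specv}.

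Neither step presents much obstacle: the length inequality is a direct chaining of Proposition~\ref{cp_prop}, and the sign-sum over $H$ is a direct invocation of the alternating-sum identity Lemma~\ref{lem.altsum.faces}. The only subtle point worth writing carefully is the equality case, i.e., verifying that $\ell(FHK) = \ell(F)$ forces $FHK = F$ rather than merely some face of the same length; this is the one place where the proof rests on the ``$FG = F \iff F \con G$'' characterization in Proposition~\ref{cp_prop}~\textbf{(c)}.
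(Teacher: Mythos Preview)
Your proof is correct and uses the same key ingredients as the paper (Proposition~\ref{cp_prop} \textbf{(c)}, \textbf{(d)} and Lemma~\ref{lem.altsum.faces}), just organized slightly differently: the paper computes $F\wotil$ and $F\BBta$ separately modulo the ideal $\kk I$ spanned by faces of length $> \ell(F)$ and then multiplies the congruences, whereas you expand the product at once and directly characterize when $\ell(FHK) = \ell(F)$. Both arguments are equivalent; your version has the minor advantage of not needing to invoke the ideal property of $\kk I$ explicitly.
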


\begin{proof}
    Let $I$ be the set of all faces $G \in \CF$ satisfying $\ell\tup{ G } > \ell\tup{ F } $.
	Let $\kk I$ be the $\kk$-linear span of this set $I$ in $\kf$.
	This span $\kk I$ is an ideal of $\kf$ (by an argument analogous to the proof of Proposition~\ref{prop.kfk-ideal}, but now using Proposition~\ref{cp_prop} \textbf{(d)} and \textbf{(e)} instead of Proposition \ref{na.prop.0}).
	Our goal is to show that
	$F\cdot \wotil\BBta \equiv  \nna \tup{F}F \mod \kk I$.

	%Note that if $ G,H \in \CF $, then $ F $ is contained in the product $ GH $ if and only if $ F $ is contained in both of the factors $ G $ and $ H $.
	%Then if either $ F \not\con G$ or $ F \not\con H $ we may conclude that $FGH \in I$ and thus $ FGH \equiv 0 \mod I $.
	But Definition \ref{BBta_defn} yields
    \begin{align}
           F \cdot \BBta &= F \cdot \sum_{\substack{G \in \CF; \\ \type G = \alpha}} G
		   = \sum_{\substack{G \in \CF; \\ \type G = \alpha}} FG \nonumber\\
           &= \sum_{\substack{G \in \CF; \\ \type G = \alpha; \\ F \con G}} \ \ \underbrace{FG}_{\substack{= F \\ \text{(by Proposition \ref{cp_prop} \textbf{(c)})}}}
		   + \sum_{\substack{G \in \CF; \\ \type G = \alpha; \\ F \not\con G}} \ \ \underbrace{FG}_{\substack{\equiv 0 \mod \kk I \\ \text{(since Proposition \ref{cp_prop} \textbf{(d)}} \\ \text{yields $\ell\tup{FG} > \ell\tup{F}$,} \\ \text{so that $FG \in I$)}}} \nonumber\\
           &\equiv \sum_{\substack{G \in \CF; \\ \type G = \alpha; \\ F \con G}} F
		   = \na\tup{F} F \mod \kk I
		   \label{pf.mblks_lem.1}
    \end{align}
	(since the sum consists of $\na\tup{F}$ equal addends, by Definition \ref{knap_num_def}).
    Furthermore, \eqref{eq.w0face} yields
    \begin{align*}
           F \cdot \wotil &= F \cdot \sum_{G \in \CF} \tup{ -1 }^{n-\ell\tup{ G }}G
		   = \sum_{G \in \CF} \tup{ -1 }^{n-\ell\tup{ G }} FG 		   \\
           &= \sum_{\substack{G \in \CF; \\ F \con G}} \tup{ -1 }^{n-\ell\tup{ G }} \underbrace{FG}_{\substack{= F \\ \text{(by Proposition \ref{cp_prop} \textbf{(c)})}}}
		   + \sum_{\substack{G \in \CF; \\ F \not\con G}} \tup{ -1 }^{n-\ell\tup{ G }}\underbrace{FG}_{\substack{\equiv 0 \mod \kk I \\ \text{(since Proposition \ref{cp_prop} \textbf{(d)}} \\ \text{yields $\ell\tup{FG} > \ell\tup{F}$,} \\ \text{so that $FG \in I$)}}} \\
           &\equiv \underbrace{\sum_{\substack{G \in \CF; \\ F \con G}} \tup{ -1 }^{n-\ell\tup{ G }}}_{\substack{= \tup{-1}^{n - \ell\tup{F}} \\ \text{(by Lemma \ref{lem.altsum.faces})}}} F
		   = \tup{-1}^{n - \ell\tup{F}} F \mod \kk I.
    \end{align*}
	Since $\kk I$ is an ideal, we can multiply the latter congruence by $\BBta$ from the right, obtaining
    \begin{align*}
           F \cdot \wotil\BBta
           &\equiv \tup{-1}^{n - \ell\tup{F}} F \cdot \BBta
           \equiv \underbrace{\tup{-1}^{n - \ell\tup{F}} \na\tup{F}}_{= \nna\tup{F}} F
		   \qquad \tup{\text{by \eqref{pf.mblks_lem.1}}} \\
		   &= \nna\tup{F} F \mod \kk I.
    \end{align*}
	Recalling how $I$ was defined, we see that this is the claim of the lemma.
\end{proof}

\begin{lem}
\label{lem.muPhi}
Let $\kk$ be a field.
Let $A$ be a $\kk$-algebra. Let $a \in A$ be an element
that has a minimal polynomial.
Let
\begin{align*}
R_a : A &\to A, \\
      x &\mapsto xa
\end{align*}
be the map known as ``right multiplication by $a$''.
This $R_a$ is a $\kk$-linear map, i.e., belongs to
the endomorphism ring $\End A$.
Then,
\[
\mu\tup{R_a} = \mu\tup{a}.
\]
\end{lem}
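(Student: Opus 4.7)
The plan is to show that polynomial evaluation intertwines the assignment $a \mapsto R_a$, i.e., $p\tup{R_a} = R_{p\tup{a}}$ for every $p\tup{x} \in \kk\sbr{x}$, and then to use the injectivity of $a \mapsto R_a$ (which comes from $A$ being unital) to deduce that $R_a$ and $a$ have the same annihilating polynomials.

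First I would check that the map $R : A \to \End A$, $a \mapsto R_a$ is $\kk$-linear and satisfies $R_a \circ R_b = R_{ba}$. Linearity is immediate from the distributivity of multiplication, and the composition identity is a one-line associativity check: $\tup{R_a \circ R_b}\tup{x} = R_a\tup{xb} = xba = R_{ba}\tup{x}$. By induction this gives $R_a^k = R_{a^k}$ for every $k \in \NN$, and combining with $\kk$-linearity yields $p\tup{R_a} = R_{p\tup{a}}$ for every univariate polynomial $p$. The order-reversal in $R_a \circ R_b = R_{ba}$ is invisible here because only powers of a single element occur.

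Next I would observe that the map $R$ is injective: if $R_b = 0$, then $b = R_b\tup{1_A} = 0$, so $R_b = 0$ forces $b = 0$ (the converse being trivial). Applying this to $b = p\tup{a}$, we get the equivalence $p\tup{R_a} = 0 \iff R_{p\tup{a}} = 0 \iff p\tup{a} = 0$. Hence the ideals of $\kk\sbr{x}$ consisting of polynomials that annihilate $R_a$ and that annihilate $a$ are literally the same ideal, and so their monic generators -- namely $\mu\tup{R_a}$ and $\mu\tup{a}$ -- coincide.

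There is no real obstacle here; the lemma is essentially a translation. The only point that deserves a brief mention is the unitality of $A$, which is the standing convention for the $\kk$-algebras ($\ksn$, $\kf$, $\dsn$, $\kfsn$) appearing throughout the paper, and which is what makes $R$ injective. Everything else is formal manipulation of polynomial evaluation.
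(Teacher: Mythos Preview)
Your proof is correct and takes essentially the same approach as the paper: both identify $a \mapsto R_a$ as an injective $\kk$-algebra anti-morphism (via $R_a \circ R_b = R_{ba}$ and $R_b\tup{1_A} = b$), observe that $p\tup{R_a} = R_{p\tup{a}}$ for all polynomials $p$, and conclude that $R_a$ and $a$ share the same annihilating polynomials and hence the same minimal polynomial. The paper merely packages the last step as an appeal to the previously stated fact that injective anti-morphisms preserve minimal polynomials, whereas you spell it out directly.
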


\begin{proof}
Forget that we fixed $a$.
We thus have defined an endomorphism $R_a \in \End A$
for each $a \in A$.
The map $f : A \to \End A$ that sends each $a \in A$
to this endomorphism $R_a$ is easily seen to be
an injective $\kk$-algebra anti-morphism
(injective because $R_a\tup{1} = 1a = a$;
anti-morphism because $R_b\tup{R_a\tup{x}}
= \tup{xa}b = x\tup{ab} = R_{ab}\tup{x}$).
Thus, \eqref{eq.mu.fa=a.antimor} yields
$\mu\tup{f\tup{a}} = \mu\tup{a}$ for each $a \in A$.
In other words,
$\mu\tup{R_a} = \mu\tup{a}$ for each $a \in A$
(since $f\tup{a} = R_a$).
This proves the lemma.
\end{proof}

\begin{proof}[Proof of Theorem \ref{thm.spec.w0Baright}.]
Let us denote this operator by $\Phi$. Define a polynomial
\[
f\tup{x}
:= \prod_{k \in \nna\tup{\CF}}\tup{x-k}
\in\kk\sbr{x} .
\]
This polynomial factors into distinct monic linear factors
(distinct because $\kk$ has characteristic $0$).
Clearly, \eqref{pf.eq.main.2.at} says that
$f\left( \wotil\BBta \right) =0$.
Thus, the minimal polynomial $\mu\tup{\wotil\BBta}$
must divide $f$ (since the minimal polynomial
$\mu\tup{a}$ of an element $a$ of a $\kk$-algebra always
divides any polynomial that annihilates $a$).

As we recall, the operator $\Phi$ sends each element
$\FF \in \kf$ to $\FF \wotil\BBta$.
In other words, this operator $\Phi$ is
right multiplication by $\wotil\BBta$.
Hence, Lemma~\ref{lem.muPhi} (applied to
$A = \kf$ and $a = \wotil\BBta$ and $R_a = \Phi$)
shows that
$\mu\tup{\Phi} = \mu\tup{\wotil\BBta}$.
Hence, the polynomial $\mu\tup{\Phi}$ divides $f$
(since the polynomial $\mu\tup{\wotil\BBta}$
divides $f$).

Therefore, this polynomial $\mu\tup{\Phi}$
factors into distinct monic linear factors
(because the same is true of $f$).
Hence, the operator
$\Phi$ is diagonalizable (because $\kf$ is a
finite-dimensional $\kk$-vector space, and thus
any endomorphism of $\kf$ whose minimal polynomial
factors into distinct monic linear factors must
be diagonalizable).

It remains to determine its eigenvalues.
The faces in $\CF$ form a basis of the
$\kk$-vector space $\kf$.
Let us order this basis by increasing length of
the face (breaking ties arbitrarily).
Then, the operator $\Phi$ is represented by a
lower-triangular matrix with respect to this
ordered basis,
because Lemma \ref{mblks_lem} gives an expansion of
$\Phi\tup{F} = F\cdot \wotil\BBta$ as a linear
combination of $F$ and longer (thus later) faces.
But the eigenvalues of a lower-triangular matrix are
its diagonal entries (and their algebraic
multiplicities are how often they appear on the diagonal).
In our case, the diagonal entries of the matrix
that represents $\Phi$ are the numbers
$\nna\tup{F}$, again according to Lemma
\ref{mblks_lem}. Hence, the eigenvalues of $\Phi$ are these
numbers $\nna\tup{F}$. This completes the
proof of Theorem \ref{thm.spec.w0Baright}.
\end{proof}

\begin{proof}[Proof of Theorem \ref{thm.mp.1}.]
Let $\Phi:\kf\to\kf$ be the operator defined in Theorem
\ref{thm.spec.w0Baright}.
As shown in that theorem, the eigenvalues of this
operator $\Phi$ are the signed knapsack numbers
$\nna\tup{F}$ of the faces $F \in \CF$.
Thus, the set of all eigenvalues of $\Phi$ is
$\nna\tup{\CF}$.
Since $\Phi$ is furthermore diagonalizable
(again by Theorem \ref{thm.spec.w0Baright}), this shows that the
minimal polynomial $\mu\tup{\Phi}$ of $\Phi$ is the polynomial
\[
f\tup{x}  :=\prod_{k \in \nna\tup{\CF}}\tup{x-k}
\in\kk\sbr{x}
\]
(since the minimal polynomial of a diagonalizable
endomorphism is the product of $x-\lambda$ over the
distinct eigenvalues $\lambda$ of the endomorphism).

However, $\Phi$ is just the right multiplication by
$\wotil\BBta$. Hence, Lemma~\ref{lem.muPhi}
(applied to $A = \kf$ and $a = \wotil\BBta$ and $R_a = \Phi$)
shows that
$\mu\tup{\Phi} = \mu\tup{\wotil\BBta}$.
Thus,
$\mu\tup{\wotil\BBta} = \mu\tup{\Phi} = f\tup{x}$,
as we just saw.

% Hence, given a polynomial $g\tup{x}  \in\kk%
% \sbr{x}$, the operator $g\left(  \Phi\right)  $ is just the right
% multiplication by $g\tup{\wotil \BBta}  $. In particular, this shows
% that $g\left(  \Phi\right)  =0$ if and only if $g\tup{\wotil \BBta}
% =0$ (because the right multiplication by an element of $\kf$
% is $0$ only if this element is $0$). Thus, the minimal polynomial
% $\mu\tup{\Phi}$ of $\Phi$ is
% the minimal polynomial $\mu\tup{\wotil\BBta}$ of $\wotil\BBta$.
% Since the former polynomial is
% $f\tup{x}  $, we thus conclude that the latter polynomial is
% $f\tup{x}  $ as well. Thus,
% $\mu\tup{\wotil\BBta} = f\tup{x}$.
Comparing this with \eqref{eq.minpol.red-to-F}, we obtain
$\mu\tup{\wo\BBa} = f\tup{x}$.
Upon recalling the definition of $f\tup{x}$,
this proves Theorem \ref{thm.mp.1}.
\end{proof}

\section{\label{sec.ttr}Application to top-to-random-and-reverse}

In this section, we shall focus on the case when $\alpha$ is the composition $\tup{1,n-1}$
(for $n > 1$).
In this case, $\BBa = \BB_{\tup{1, n-1}}$ is the image of the \defin{top-to-random shuffle} under the \defin{antipode} map of the symmetric group algebra $\ksn$.
We recall their definitions:

\begin{defn}
The \defin{top-to-random shuffle} is the element
\begin{align}
\A := \sum_{i=1}^n \cyc{1,2,\ldots,i} \in \ksn ,
\label{eq.A=sum-cyc}
\end{align}
where $\cyc{k_1,k_2,\ldots,k_m}$ denotes the $m$-cycle in $S_n$ that sends $k_1 \mapsto k_2 \mapsto \cdots \mapsto k_m \mapsto k_1$.
\end{defn}

\begin{defn}
The \defin{antipode} of the symmetric group algebra $\ksn$ is the $\kk$-linear map $S : \ksn \to \ksn$ that sends each basis vector (permutation) $w \in S_n$ to $w^{-1}$.
It is well-known that $S$ is a $\kk$-algebra anti-isomorphism.
\end{defn}

Now, our above claim about $\BB_{\tup{1, n-1}}$ can be stated as follows:

\begin{prop}
\label{prop.BBa-A}
We have
\[
S\tup{\A} = \BB_{\tup{1, n-1}}.
\]
\end{prop}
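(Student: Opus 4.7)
The plan is to compute both $S\tup{\A}$ and $\BB_{\tup{1,n-1}}$ explicitly as sums of permutations (in one-line notation) and observe that the two sums coincide termwise.

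First, I would apply the antipode $S$ to the formula \eqref{eq.A=sum-cyc}. Since $S$ sends each permutation to its inverse, and the inverse of the cycle $\cyc{1,2,\ldots,i}$ is obtained by traversing it in reverse, we get
\[
S\tup{\A} = \sum_{i=1}^n \cyc{1,2,\ldots,i}^{-1} = \sum_{i=1}^n \cyc{i, i-1, \ldots, 1}.
\]
The permutation $\cyc{i, i-1, \ldots, 1}$ sends $1\mapsto i$, sends $j \mapsto j-1$ for each $j \in \interval{2,i}$, and fixes every $j > i$; hence its one-line notation is $\sbr{i,\ 1,\ 2,\ \ldots,\ i-1,\ i+1,\ \ldots,\ n}$ (and for $i=1$ this is just the identity).

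Second, I would rewrite $\BB_{\tup{1,n-1}}$ in its set-indexed form. By the $\gaps$ bijection, the composition $\tup{1,n-1}$ corresponds to the singleton $\set{1} \subseteq \interval{n-1}$; thus, by Definition~\ref{dsn_def}~\textbf{(c)},
\[
\BB_{\tup{1,n-1}} = \BB_{\set{1}} = \sum_{\substack{w \in S_n;\\ \Des w \subseteq \set{1}}} w.
\]
A permutation $w \in S_n$ satisfies $\Des w \subseteq \set{1}$ if and only if $w\tup{2} < w\tup{3} < \cdots < w\tup{n}$; equivalently, $w$ is uniquely determined by its value $k := w\tup{1} \in \interval{n}$, and then the remaining entries $w\tup{2}, \ldots, w\tup{n}$ are $1,2,\ldots,k-1,k+1,\ldots,n$ in increasing order. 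Thus the one-line notation of such a $w$ is $\sbr{k,\ 1,\ 2,\ \ldots,\ k-1,\ k+1,\ \ldots,\ n}$, and as $k$ ranges over $\interval{n}$ these give all $n$ summands of $\BB_{\tup{1,n-1}}$.

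Finally, matching the summand of $S\tup{\A}$ indexed by $i$ with the summand of $\BB_{\tup{1,n-1}}$ corresponding to $k = i$ shows that the two sums are equal term by term, proving the proposition. The argument is entirely a bookkeeping exercise in one-line notation, so there is no real obstacle; the only points demanding care are computing the inverse of $\cyc{1,2,\ldots,i}$ correctly and translating between the composition index $\tup{1,n-1}$ and the set index $\set{1}$ via $\gaps$.
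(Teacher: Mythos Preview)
Your proof is correct and follows essentially the same approach as the paper: both compute $S(\A)$ as $\sum_{i=1}^n \cyc{i,i-1,\ldots,1}$, identify $\BB_{\tup{1,n-1}} = \BB_{\set{1}}$ as the sum over permutations $w$ with $w(2) < \cdots < w(n)$, and match the two sums termwise. The only cosmetic difference is that you spell out the one-line notation explicitly, whereas the paper stays in cycle notation throughout.
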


\begin{proof}
This is well-known, but we give a proof for completeness.
By definition of $\BBa$, we have
\begin{align}
\BB_{\tup{1, n-1}}
&= \BB_{\gaps^{-1}\tup{1, n-1}}
= \BB_{\set{1}}
\qquad \tup{\text{since }
\gaps^{-1}\tup{1, n-1} = \set{1}}
\nonumber\\
& =\sum_{\substack{w\in S_n;\\
\Des w\subseteq \set{1}}} w.
\label{pf.prop.BBa-A.1}
\end{align}
But the permutations $w\in S_n$ that satisfy $\Des %
w\subseteq \set{1}$ are precisely those permutations that satisfy
$w\tup{2} < w\tup{3} < \cdots < w\tup{n}$; in other
words, they consist of an arbitrary element $i\in \interval{n}$ in their
first position, followed by the remaining elements of $\interval{n}$ in
increasing order. Such permutations can be written as cycles
$\cyc{i,i-1,\ldots,2,1}$. Hence, we can rewrite
\eqref{pf.prop.BBa-A.1} as
\[
\BB_{\tup{1, n-1}}
=\sum_{i=1}^{n}\cyc{i,i-1,\ldots,2,1}.
\]
On the other hand, the definition of $\A$ yields
\[
S \tup{\A}
= S \tup{\sum_{i=1}^{n} \cyc{1,2,\ldots,i} }
= \sum_{i=1}^{n}\cyc{1,2,\ldots,i}^{-1},
\]
since the antipode $S$ sends each permutation to its inverse. The right hand
sides of these two equalities are equal, since
$\cyc{i,i-1,\ldots,2,1} = \cyc{1,2,\ldots,i}^{-1}$ for each
$i$. Therefore, the left hand sides are equal as well. In other words,
$\BB_{\tup{1, n-1}} = S\tup{\A}$.
% \begin{align*}
    % S\tup{ \A }	&= \sum_{\substack{w \in S_n \\ w^{-1}\tup{2} < \cdots < w^{-1}\tup{n}}}S\tup{w} = \sum_{\substack{w \in S_n \\ w^{-1}\tup{2} < \cdots < w^{-1}\tup{n}}}w^{-1}\\
                % &= \sum_{\substack{w \in S_n \\ w\tup{i} < w\tup{i+1}}} w\\
                % & \qquad\tup{ \text{By substituting } w \text{ for } w^{-1}\text{ and noting that } w^{-1} \text{ is increasing on } [2,n]}\\
                % &= \BB_{\set{1}} = \BB_{\tup{ 1,n-1 }} \quad \tup{ \text{recall Definition \ref{dsn_def} \textbf{(c)} }}.\\
% \end{align*}
%\sum_{\substack{w \in S_n; \\ w^{-1}\tup{2} < \cdots < w^{-1}\tup{n}}} w
\end{proof}

Since the antipode $S$ is a $\kk$-algebra anti-morphism, we have
\begin{align}
S\tup{\A\wo} = S\tup{\wo} S\tup{\A} = \wo \BBA
\label{eq.SAwo}
\end{align}
(since $S\tup{\wo} = \wo^{-1} = \wo$ and $S\tup{\A} = \BBA$ by Proposition~\ref{prop.BBa-A}).
% This helps us compute the minimal polynomial $\mu\tup{\A\wo}$ of $\A\wo$ when $\kk$ is a field of characteristic $0$.
Thus, if $\kk$ is a field of characteristic $0$,
then %the eigenvalues of $ \A\wo $ are the eigenvalues of $ S\tup{ \A\wo } = \wo\BBA $, and these elements have the same minimal polynomial
the element $\wo\BBA = S\tup{ \A\wo } $ has the same minimal polynomial as $\A\wo$
(by \eqref{eq.mu.fa=a.antimor}, since $S : \ksn \to \ksn$ is an algebra anti-isomorphism).
From Theorem \ref{thm.mp.1}, this minimal polynomial is 
\[
\mu\tup{\wo\BBA} = \prod_{k \in \nn_{\tup{ 1,n-1 }}\tup{\CF}} \tup{ x-k }.
\]
%where the signed knapsack numbers are those numbers defined in Definition \ref{def.specv} for the composition $ \alpha = \tup{1,n-1} \models n $.
To make this concrete, we will determine the set
$\nn_{\tup{1,n-1}}\tup{\CF}$ indexing this product.

\begin{prop}
\label{prop.1n-1.specs}
Assume that $n > 1$. % and $\alpha = \tup{1, n-1}$.
% (If $n = 1$, then this should be read as $\alpha = \tup{1}$.)
% But the set is wrong for $n = 1$, so I am not allowing this.
Then, the set $\nn_{\tup{1,n-1}}\tup{\CF}$ is the set
\[
L\tup{ n } := \set{-n+2} \cup \interval{-n+4, n-3} \cup \set{0} \cup \set{n}.
\]
% We need the \set{0} to make the $n=3$ case work.
\end{prop}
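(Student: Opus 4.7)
The plan is to proceed in three steps: identify the knapsack number $n_{\tup{1,n-1}}\tup{F}$ as a simple combinatorial statistic of $F$, determine which values of this statistic actually occur, and then read off the signed knapsack numbers. I expect the main work to lie in the boundary case analysis, which is what creates the asymmetric shape of $L\tup{n}$.

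First, I will show that $n_{\tup{1,n-1}}\tup{F}$ equals the number of singleton blocks of $F$ (which I will call $s\tup{F}$). The faces of type $\tup{1,n-1}$ are exactly the $n$ faces $\tup{\set{i},\ \interval{n}\setminus\set{i}}$ for $i\in\interval{n}$. For such a face to contain $F$, every block of $F$ must lie inside $\set{i}$ or inside $\interval{n}\setminus\set{i}$; since the only nonempty subset of $\set{i}$ is $\set{i}$ itself, this forces the block of $F$ containing $i$ to equal $\set{i}$. Conversely, any $F$ with $\set{i}$ as a block is contained in that face. Hence $\nn_{\tup{1,n-1}}\tup{F} = \tup{-1}^{n-\ell\tup{F}} s\tup{F}$.

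Second, I will parametrize the pairs $\tup{k,s} = \tup{\ell\tup{F},\ s\tup{F}}$ that can arise from an actual face. The $k - s$ non-singleton blocks of $F$ have size $\geq 2$ and together cover $n - s$ elements, forcing $n - s \geq 2\tup{k - s}$. A direct construction gives the converse, so the realizable pairs are $\tup{n,n}$ (all blocks singletons), together with all $\tup{k,s}$ satisfying $k \geq 1$, $0 \leq s \leq k - 1$, and $2k - s \leq n$. Rewriting with $s$ fixed, the valid $k$'s form the integer interval $\interval{s+1,\ \floor{\tup{n+s}/2}}$, which is nonempty iff $s \leq n - 2$.

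Finally, I will run the sign $\tup{-1}^{n-k}$ over this interval. The pair $\tup{n,n}$ contributes $+n$, and any pair with $s = 0$ (for instance $\tup{1,0}$) contributes $0$. For $1 \leq s \leq n - 4$ the interval has at least two consecutive integers, so it contains $k$'s of both parities, yielding both $+s$ and $-s$. The interval collapses at the boundary: at $s = n - 3$ it is $\set{n-2}$, whose parity matches that of $n$, so only $+\tup{n-3}$ appears; at $s = n - 2$ it is $\set{n-1}$, of opposite parity, so only $-\tup{n-2}$ appears; at $s = n - 1$ it is empty. Assembling these yields exactly $\set{-n+2}\cup\interval{-n+4,\ n-3}\cup\set{0}\cup\set{n} = L\tup{n}$, as claimed. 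The subtlety of the boundary cases $s\in\set{n-3, n-2, n-1}$ is precisely what accounts for the gap in $L\tup{n}$ between $n-3$ and $n$ and the isolated presence of $-\tup{n-2}$.
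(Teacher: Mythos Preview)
Your proof is correct and follows essentially the same approach as the paper: both first identify $n_{\tup{1,n-1}}\tup{F}$ with the number of singleton blocks of $F$ (the paper's Lemma~\ref{lem.spec.form}), then analyze which pairs $\tup{\ell\tup{F},\,\sin F}$ can arise. Your organization via the inequality $2k - s \leq n$ parametrizing all realizable pairs is a bit more systematic than the paper's separate constructions for each element of $L\tup{n}$ followed by ad hoc exclusion of $-n$, $-n+3$, and $n-2$, but the underlying ideas are the same.
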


Once this proposition is proved, we will be able to conclude:

\begin{thm}
    \label{bb1_thm2}
    Assume that $\kk$ is a field of characteristic $0$,
	and that $n > 1$.
	Then, the elements $ \wo\A $ and $ \A\wo $ of $ \ksn $ have minimal polynomial
    \[
        \mu\tup{\wo\A}
		= \mu\tup{\A\wo}
		= \prod_{k \in L\tup{ n }} \tup{ x-k }.
    \]
\end{thm}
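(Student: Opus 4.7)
The plan is to chain together the previously established results, so that the theorem becomes an essentially immediate corollary of Theorem~\ref{thm.mp.1} applied to the specific composition $\alpha = \tup{1, n-1}$. First, I would reduce the two minimal polynomials $\mu\tup{\wo\A}$ and $\mu\tup{\A\wo}$ to a single one: since $\wo$ is an involution, conjugation by $\wo$ sends $\A\wo$ to $\wo\A$, so these elements of $\ksn$ are conjugate and therefore share a minimal polynomial. It thus suffices to compute $\mu\tup{\A\wo}$.

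Second, I would move to the descent algebra via the antipode. The discussion surrounding \eqref{eq.SAwo} already gives $S\tup{\A\wo} = \wo\BBA$, using Proposition~\ref{prop.BBa-A} and $S\tup{\wo} = \wo$. Because $S$ is an injective $\kk$-algebra anti-morphism, \eqref{eq.mu.fa=a.antimor} applied to $f = S$ and $a = \A\wo$ yields $\mu\tup{\A\wo} = \mu\tup{\wo\BBA}$. Third, I would invoke Theorem~\ref{thm.mp.1} applied to $\alpha = \tup{1, n-1}$, which gives
\[
\mu\tup{\wo\BBA} = \prod_{k \in \nn_{\tup{1,n-1}}\tup{\CF}} \tup{x - k}.
\]

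Finally, I would identify the indexing set. This is precisely the content of Proposition~\ref{prop.1n-1.specs}, which asserts $\nn_{\tup{1,n-1}}\tup{\CF} = L\tup{n}$. Substituting yields $\mu\tup{\A\wo} = \prod_{k \in L\tup{n}} \tup{x-k}$, and combining with $\mu\tup{\wo\A} = \mu\tup{\A\wo}$ from the first step completes the proof.

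The bulk of the conceptual work has already been packaged into the preceding results, so for this theorem itself there is essentially no obstacle beyond correctly citing what has been proved. The genuine combinatorial obstacle is hidden in Proposition~\ref{prop.1n-1.specs}, which requires enumerating the signed knapsack numbers $\nn_{\tup{1,n-1}}\tup{F} = \tup{-1}^{n-\ell\tup{F}} n_{\tup{1,n-1}}\tup{F}$ over all $F \in \CF$; here $n_{\tup{1,n-1}}\tup{F}$ counts the ways to pick a singleton block of $F$ (the other blocks automatically merging into the remaining size-$\tup{n-1}$ block), so the resulting values, tracked together with their signs $\tup{-1}^{n-\ell\tup{F}}$, exhaust exactly $L\tup{n}$. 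That enumeration, rather than the assembly above, is the actual content of the top-to-random-and-reverse analysis.
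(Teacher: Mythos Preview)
Your proposal is correct and follows essentially the same approach as the paper: reduce to a single minimal polynomial via conjugacy, pass through the antipode using \eqref{eq.SAwo} and \eqref{eq.mu.fa=a.antimor} to get $\mu\tup{\A\wo} = \mu\tup{\wo\BBA}$, apply Theorem~\ref{thm.mp.1} with $\alpha = \tup{1,n-1}$, and then invoke Proposition~\ref{prop.1n-1.specs} to identify the index set as $L\tup{n}$. Your closing remark correctly locates the real combinatorial work in Proposition~\ref{prop.1n-1.specs}.
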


\begin{proof}
    The elements $ \wo\A $ and $ \A\wo $ are conjugate (via conjugation by $\wo$), and thus have the same minimal polynomial.
	That is, $\mu\tup{\wo\A} = \mu\tup{\A\wo}$.
	Furthermore, $S : \ksn \to \ksn$ is a $\kk$-algebra anti-isomorphism.
	Hence, \eqref{eq.mu.fa=a.antimor} yields
	$\mu\tup{S\tup{\A\wo}} = \mu\tup{\A\wo}$.
	By \eqref{eq.SAwo}, this rewrites as
	$\mu\tup{\wo\BBA} = \mu\tup{\A\wo}$.
	But by Theorem \ref{thm.mp.1}, the minimal polynomial of $ \wo\BBA $ is
    \[
        \mu\tup{\wo\BBA} = \prod_{k \in \nn_{\tup{1,n-1}}\tup{\CF}} \tup{ x-k } = \prod_{k \in L\tup{ n }} \tup{ x-k } \quad\tup{ \text{by Proposition \ref{prop.1n-1.specs}} }.
    \]
    Comparing these two equalities, we find $\mu\tup{\A\wo} = \prod_{k \in L\tup{ n }} \tup{ x-k }$. Together with $\mu\tup{\wo\A} = \mu\tup{\A\wo}$, this completes our proof.
\end{proof}

It remains to prove Proposition \ref{prop.1n-1.specs}. We begin with a definition.

\begin{defn}[Singleton blocks]
    For each face $F = \tup{F_1, F_2, \ldots, F_k} \in \CF$,
    we set
    \begin{align*}
    \Sin F &:= \set{F_i : \card{F_i} = 1}
	= \set{\text{all blocks of $F$ that have size $1$}}
    \qquad \text{ and } \\
    \sin F &:= \card{\Sin F}.
    \end{align*}
    Thus, $\sin F$ is the number of all size-$1$ blocks (``singleton blocks'') of $F$.
%     For any face $ F = \tup{ F_{1},F_{2},\dots,F_{k} } \in \CF$ a block $ F_{i} $ of $ F $ is called a \defin{singleton block} if $ \card{F_{i}}=1 $. We will say $ i \in [n] $ is a singleton of $ F $ if $ \set{i} $ is a block of $ F $. We denote
% \[
%     \sin F = \card{\set{i \in [n] : i \text{ is a singleton of }F}}.
%     \]
\end{defn}
\begin{exmp}
The set composition $\tup{4, 25, 1, 7, 36}$ of $\interval{7}$ has singleton blocks $\set{4}$, $\set{1}$, $\set{7}$, so we have $\Sin\tup{4, 25, 1, 7, 36} = \set{\set{4},\set{1},\set{7}}$ and $\sin\tup{4, 25, 1, 7, 36} = 3$.

\end{exmp}

\begin{lem}
    \label{lem.spec.form}
    Consider the composition $\tup{1, n-1}$ of $n>1$.
	For each $F \in \CF$, we have
    \begin{align}
        \nA\tup{ F } &= \sin F
		\qquad \text{and}
		\label{eq.lem.spec.form.na}
		\\
		\nn_{\tup{1,n-1}}\tup{F}
		&= \tup{ -1 }^{n-\ell\tup{ F }} \sin F.
		\label{eq.lem.spec.form.nna}
    \end{align}
\end{lem}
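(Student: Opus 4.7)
The plan is to prove \eqref{eq.lem.spec.form.na} directly by producing a bijection between $\Na\tup{F}$ and $\Sin F$, and then to deduce \eqref{eq.lem.spec.form.nna} from the definition of the signed knapsack number.

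First I would unpack what a face $G$ of type $\tup{1, n-1}$ looks like: it must have exactly two blocks, of sizes $1$ and $n-1$ in this order, so $G = \tup{\set{i}, \interval{n}\setminus\set{i}}$ for a unique $i \in \interval{n}$. Conversely, every $i \in \interval{n}$ yields such a face. Thus the map $i \mapsto G_i := \tup{\set{i}, \interval{n}\setminus\set{i}}$ is a bijection from $\interval{n}$ onto the set of all faces of type $\tup{1,n-1}$.

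Next I would analyze when $F \con G_i$. Writing $F = \tup{F_1, F_2, \ldots, F_p}$, the containment means every block $F_j$ must sit inside one of the two blocks $\set{i}$ and $\interval{n}\setminus\set{i}$ of $G_i$. The element $i$ lies in some block $F_{j_0}$ of $F$; this block cannot be contained in $\interval{n}\setminus\set{i}$, so it must be contained in $\set{i}$, forcing $F_{j_0} = \set{i}$. Conversely, if $\set{i}$ is a singleton block of $F$, then all other blocks of $F$ avoid $i$ and therefore are contained in $\interval{n}\setminus\set{i}$, giving $F \con G_i$. Thus $F \con G_i$ if and only if $\set{i} \in \Sin F$. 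Restricting the bijection $i \mapsto G_i$ to those $i$ with $\set{i} \in \Sin F$ therefore gives a bijection $\Sin F \to \Na\tup{F}$, and taking cardinalities proves \eqref{eq.lem.spec.form.na}.

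For \eqref{eq.lem.spec.form.nna}, I would simply apply the definition \eqref{eq.def.specv.nnaF=} of the signed knapsack number to $\alpha = \tup{1, n-1}$:
\[
\nn_{\tup{1,n-1}}\tup{F}
= \tup{-1}^{n-\ell\tup{F}} n_{\tup{1,n-1}}\tup{F}
= \tup{-1}^{n-\ell\tup{F}} \sin F,
\]
using the first equality that was just established. There is no real obstacle here; the only thing to double-check is the edge-case bookkeeping (that $n > 1$ is needed to ensure $\tup{1, n-1}$ is an honest composition into two parts), which is guaranteed by hypothesis.
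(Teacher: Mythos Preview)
Your proof is correct and is essentially the same as the paper's: both identify the faces of type $\tup{1,n-1}$ as the $G_i = \tup{\set{i}, \interval{n}\setminus\set{i}}$, check that $F \con G_i$ holds precisely when $\set{i}$ is a singleton block of $F$, and conclude via the resulting bijection between $\Sin F$ and $N_{\tup{1,n-1}}\tup{F}$. The paper phrases the bijection in the other direction, but the argument is the same.
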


\begin{proof}
Let $\alpha = \tup{1, n-1}$.
Fix $F \in \CF$.

The faces $G \in \CF$ having type $\alpha = \tup{1, n-1}$
are precisely the $n$ faces of the form
\[
\tup{\set{k}, \interval{n} \setminus \set{k}}
\qquad \text{ for some }  k \in \interval{n}.
\]
Hence, in particular, all the faces $G \in \Na\tup{F}$ %for any given $F \in \CF$
have this form.

Moreover, if a face $G \in \Na\tup{F}$ is written as $\tup{\set{k}, \interval{n} \setminus \set{k}}$, then $\set{k}$ is a block of $F$ (indeed, $G \in \Na\tup{F}$ entails $F \con G$, so that each block of $F$ is a subset of a block of $G$;
in other words, each block of $F$ is either a subset of $\set{k}$ or a subset of $\interval{n}\setminus\set{k}$;
but the only way this is possible is when $\set{k}$ is a block of $F$).
Thus, it must satisfy $\set{k} \in \Sin F$.
Hence, we can define a map
\begin{align*}
\Na\tup{F} &\to \Sin F, \\
\tup{\set{k}, \interval{n}\setminus\set{k}} &\mapsto \set{k}.
\end{align*}
This map is injective (obviously) and surjective (since each singleton block $\set{k}$ of $F$ does satisfy $F \con \tup{\set{k}, \interval{n}\setminus\set{k}}$ and therefore $\tup{\set{k}, \interval{n}\setminus\set{k}} \in \Na\tup{F}$).
Hence, it is bijective.
Thus, $\card{\Na\tup{F}} = \card{\Sin F} = \sin F$, so that
\begin{align}
\na\tup{F} = \card{\Na\tup{F}} = \sin F.
\label{pf.prop.1n-1.specs.sin=na}
\end{align}
This proves \eqref{eq.lem.spec.form.na} (since $\alpha = \tup{1, n-1}$).
Moreover, the definition of $\nna\tup{F}$ shows that
\begin{align}
\nna\tup{F}
= \tup{-1}^{n - \ell\tup{F}} \na\tup{F}
= \tup{-1}^{n - \ell\tup{F}} \sin F
\label{pf.prop.1n-1.specs.sin=nta}
\end{align}
(by \eqref{pf.prop.1n-1.specs.sin=na}).
This proves \eqref{eq.lem.spec.form.nna}, and thus completes the proof.
\end{proof}

\begin{proof}[Proof of Proposition \ref{prop.1n-1.specs}.]
We assume that $n>3$; the other cases ($n=2$ and $n=3$) are left to the reader.

Let $\alpha = \tup{1,n-1}$.
Thus, we must prove that
$\nna\tup{\CF} = L\tup{n}$.
In other words, we must prove that each
$r \in L\tup{n}$ belongs to $\nna\tup{\CF}$,
and conversely, that each
$k \in \nna\tup{\CF}$ belongs to $L\tup{n}$.

In order to prove that each $r \in L\tup{n}$ belongs to $\nna\tup{\CF}$, we
need to construct a face $F\in \CF$ satisfying $\nna \tup{F} = r$.
We do this case by case:

\begin{enumerate}
\item If $r = \pm s$ for some $s\in \interval{0, n-4}$, then we set
\begin{align*}
\text{either }F  & =\left(  \set{1} , \set{2},
\ldots, \set{s}  , \interval{s+1, n} \right)  \\
\text{or }F  & = \left(  \set{1} , \set{2},
\ldots, \set{s}  , \set{s+1, s+2} , \interval{s+3, n}
\right)  .
\end{align*}
Both of these faces satisfy $\sin F=s$ (here we are using $s\leq n-4$, which
ensures that the last block $\interval{s+3, n}$ of the second $F$ has
size $>1$), but their lengths are $s+1$ and $s+2$ respectively, so that the
corresponding $\tup{-1}^{n-\ell\tup{F} }$ factors are $1$
and $-1$ in some order. Thus, by \eqref{eq.lem.spec.form.nna}, we
conclude that the signed knapsack numbers
$\nna\tup{F} = \nn_{\tup{1, n-1}}\tup{F}$ of these two faces
$F$ are $s$ and $-s$ in some order; in particular, one of them is
$r$ (since $r = \pm s$).
This shows that $r$ is the signed knapsack number of a face.
In other words, $r \in \nna\tup{\CF}$.

\item If $r = n-3$, then we set $F = \left(  \set{1}, \set{2},
\ldots, \set{n-3}, \interval{n-2, n}  \right)  $.
This face satisfies $\sin F=n-3$ and $\ell\tup{F} =n-2$, so that
\eqref{eq.lem.spec.form.nna} yields
$\nna\tup{F} =\tup{-1}^{n-\tup{n-2}} \tup{n-3} = n-3 = r$.
Therefore, $r$ is the signed knapsack number of a face.
That is, $r \in \nna\tup{\CF}$.

\item If $r = -n+2$, then we set $F = \left(  \set{1}, \set{2},
\ldots, \set{n-2}, \interval{n-1, n}  \right)  $.
This face satisfies $\sin F=n-2$ and $\ell\tup{F} =n-1$, so that
\eqref{eq.lem.spec.form.nna} yields
$\nna\tup{F} =\tup{-1}^{n-\tup{n-1}} \tup{n-2} = -n+2 = r$.
Therefore, $r$ is the signed knapsack number of a face.
In other words, $r \in \nna\tup{\CF}$.

\item If $r = n$, then we set $F = \left(  \set{1}, \set{2},
\ldots, \set{n}  \right)  $, which satisfies
$\nna\tup{F} =n=r$.
Thus, $r = \nna\tup{F} \in \nna\tup{\CF}$.
\end{enumerate}

This shows that each $r \in L\tup{n}$ belongs to
$\nna\tup{\CF}$.

It remains to prove that conversely, each $k \in \nna\tup{\CF}$
belongs to $L\tup{n}$. So we fix a $k \in \nna\tup{\CF}$. Then,
$k=\nna\tup{F} $ for some face $F\in \CF$. Consider this $F$.
Recall that $\alpha = \tup{1, n-1}$.
Hence, by \eqref{eq.lem.spec.form.nna}, we have
$\nna\tup{F} = \tup{-1}^{n-\ell\tup{F} }\sin F = \pm\sin F$.
But $\sin F$ is
clearly an element of $\interval{0, n}$, and cannot be $n-1$ (because if
$F$ contains $n-1$ singleton blocks, then the last remaining block must also be
a singleton block, so $F$ actually has $n$ singleton blocks). Thus, $\sin
F\in \interval{0, n}  \setminus \set{n-1} $, so that
\[
k = \nna\tup{F}
= \pm \underbrace{\sin F}_{\in \interval{0, n} \setminus \set{ n-1 } }
\in \interval{-n, n} \setminus \set{ -n+1,n-1 } .
\]
This is almost enough to show that $k\in L\left(  n\right)$; we only need to
prove that $k$ cannot be $-n$ or $-n+3$ or $n-2$. This can be done fairly easily:

\begin{enumerate}
\item If $k$ was $-n$, then $\sin F$ would be $n$ (since $k=\pm\sin F$), which
would mean that $F$ has $n$ singleton blocks; thus, $F$ would have the form
$\left(  \set{\sigma\tup{1}}  ,\set{\sigma\tup{2}}  ,\ldots,\set{\sigma\tup{n}}
\right)$ for some $\sigma \in S_n$.
But such faces $F$ have signed knapsack number
$\nna\tup{F} = \tup{-1}^{n-\ell\tup{F} }\sin F
= \tup{-1}^{n-n} n = n$, not $-n$.

\item If $k$ was $-n+3$, then $\sin F$ would be $n-3$ (since $k=\pm\sin F$),
which would mean that $F$ has $n-3$ singleton blocks, which account for all
but three elements of $\interval{n}$; the remaining three elements would
then be all in one single block (as there is no way to break them into more
than one block without creating singleton blocks). Thus, $F$ would have $n-2$
blocks in total, and therefore the signed knapsack number $\nna \tup{F}$
would be $\tup{-1}^{n-\left(  n-2\right)
}\left(  n-3\right)  =n-3$, not $-n+3$.

\item If $k$ was $n-2$, then a similar argument would show that
$\nna \tup{F}$ is $-n+2$, not $n-2$.
\end{enumerate}

Hence, $k$ cannot be $-n$ or $-n+3$ or $n-2$. This shows that $k\in L\left(
n\right)$.
In other words, $k$ belongs to $L\tup{n}$.
This completes the proof of Proposition \ref{prop.1n-1.specs}.
\end{proof}

\section{\label{sec.pos}Positive linear combinations}

% \darij{Now prove similar results about $\sum_{\alpha \models n} \kappa_\alpha \wo \BBa$ and $\sum_{\alpha \models n} \kappa_\alpha \BBa$, where $\kappa_\alpha$ are nonnegative reals. The latter recovers Brown's \cite[Theorem 4.1]{Schocker}, which follows from \cite[Theorem 5]{Brown-SRM}.}

Our above results can be generalized with fairly little effort, from single elements $\BBa$ to their linear combinations $\sum_{\alpha \models n} \gamma_\alpha \BBa \in \dsn$.
However, since the proofs use inequalities between the knapsack numbers $\na\tup{F}$ (such as Proposition~\ref{na.prop.1}), we cannot let the coefficients $\gamma_\alpha$ be arbitrary.
It turns out that requiring them to be nonnegative reals is sufficient.
This idea goes back to Brown \cite[Theorem 5]{Brown-SRM} and was stated explicitly for the $\BBa$ by Schocker \cite[Theorem 4.1]{Schocker}, but we shall now extend it to $\wo\BBa$ and prove it anew.

Instead of fixing a composition $\alpha \models n$,
\textbf{we now fix a family $\bgamma = \tup{\gamma_\alpha}_{\alpha \models n}$ of nonnegative real numbers $\gamma_\alpha \geq 0$ indexed by the compositions $\alpha$ of $n$}.
We assume that the ring $\kk$ is ordered and contains these numbers $\gamma_\alpha$ (for instance, $\kk = \RR$, or $\kk = \QQ$ if these numbers are rational).

We define the element
\[
\BB_{\bgamma} := \sum_{\alpha \models n} \gamma_\alpha \BBa \in \dsn.
\]

For each face $F \in \CF$, we define the \defin{$\bgamma$-weighted knapsack number of $F$} to be the number
\[
n_{\bgamma}\tup{F} := \sum_{\alpha \models n} \gamma_\alpha n_\alpha\tup{F} \in \kk,
\]
and we define the \defin{$\bgamma$-weighted signed knapsack number of $F$} to be the number
\[
\nn_{\bgamma}\tup{F} := \sum_{\alpha \models n} \gamma_\alpha \nn_\alpha\tup{F} \in \kk.
\]
We set $n_{\bgamma}\tup{\CF} = \set{n_{\bgamma}\tup{F} \mid F \in \CF}$
and $\nn_{\bgamma}\tup{\CF} = \set{\nn_{\bgamma}\tup{F} \mid F \in \CF}$.

Now, we claim the following generalizations of Theorem \ref{main.thm.3a} and Theorem \ref{main.thm.3}:

\begin{thm}
\label{thm.gen.3a}
The element $\BB_{\bgamma} \in \dsn$ satisfies
\begin{equation}
\prod_{k\in n_{\bgamma}\tup{\CF}} \tup{\BB_{\bgamma} - k}
=0.
\label{eq.gen.3a}
\end{equation}
\end{thm}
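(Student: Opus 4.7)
The plan is to mimic the proof of Theorem~\ref{main.thm.3a}, with the single composition $\alpha$ replaced by the whole family $\bgamma$. The only new ingredient is that the assumption $\gamma_\alpha \geq 0$ is used to promote the monotonicity of individual knapsack numbers under multiplication (Propositions~\ref{na.prop.0} and~\ref{na.prop.1}) to the weighted knapsack number $n_{\bgamma}$.

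First, I would transfer the problem to the face algebra via the anti-isomorphism $\rho$. Setting $\BBt_{\bgamma} := \rho\tup{\BB_{\bgamma}} = \sum_{\alpha \models n} \gamma_\alpha \BBta$, the same reasoning as in Lemma~\ref{lem.standin2} shows that $f\tup{\BB_{\bgamma}} = 0$ is equivalent to $f\tup{\BBt_{\bgamma}} = 0$ for any polynomial $f \in \kk\sbr{x}$. So it suffices to prove
\[
\prod_{K \in n_{\bgamma}\tup{\CF}} \tup{\BBt_{\bgamma} - K} = 0.
\]
Next, introduce the \emph{weighted knapsack filtration}. List the elements of $n_{\bgamma}\tup{\CF}$ in increasing order as $K_0 < K_1 < \cdots < K_M$, and set $K_{M+1} = \infty$. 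For each such $K$, define $\CF_K := \set{F \in \CF : n_{\bgamma}\tup{F} \geq K}$ and let $\kf_K$ be its $\kk$-linear span. Since Proposition~\ref{na.prop.0} gives $n_\alpha\tup{FG} \geq \max\set{n_\alpha\tup{F}, n_\alpha\tup{G}}$ for every $\alpha \models n$ and every $F, G \in \CF$, and since the $\gamma_\alpha$ are nonnegative, we obtain $n_{\bgamma}\tup{FG} \geq \max\set{n_{\bgamma}\tup{F}, n_{\bgamma}\tup{G}}$; hence each $\kf_{K_i}$ is a two-sided ideal of $\kf$.

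The core step is the analog of Lemma~\ref{main.lem.1a}: for each $i \in \interval{0, M}$ and each $\FF \in \kf_{K_i}$, we have $\FF \tup{\BBt_{\bgamma} - K_i} \in \kf_{K_{i+1}}$. To prove this, reduce by linearity to the case $\FF = F$ of a single face; if $n_{\bgamma}\tup{F} > K_i$ then $F \in \kf_{K_{i+1}}$ and the claim follows from the ideal property, so assume $n_{\bgamma}\tup{F} = K_i$. Expand, using the computation from the proof of Lemma~\ref{main.lem.1} for each $\alpha$,
\[
F \BBt_{\bgamma} = \sum_{\alpha \models n} \gamma_\alpha F \BBta = n_{\bgamma}\tup{F} F + \sum_{\substack{G \in \CF; \\ F \not\con G}} \gamma_{\type G} FG.
\]
In the last sum, any term with $\gamma_{\type G} > 0$ satisfies, by Proposition~\ref{na.prop.1} applied to $\alpha = \type G$, the strict inequality $n_{\type G}\tup{FG} > n_{\type G}\tup{F}$, while $n_\alpha\tup{FG} \geq n_\alpha\tup{F}$ for all other $\alpha$ by Proposition~\ref{na.prop.0}; combining these with $\bgamma \geq 0$ yields $n_{\bgamma}\tup{FG} > n_{\bgamma}\tup{F} = K_i$, hence $n_{\bgamma}\tup{FG} \geq K_{i+1}$ (since $n_{\bgamma}\tup{FG}$ lies in the discrete set $n_{\bgamma}\tup{\CF}$), and thus $FG \in \kf_{K_{i+1}}$. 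This proves the claim.

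Finally, a straightforward induction on $i$, identical to that in the proof of Theorem~\ref{main.thm.3a}, gives $\prod_{j=0}^{i}\tup{\BBt_{\bgamma} - K_j} \in \kf_{K_{i+1}}$; taking $i = M$ and recalling $\kf_{K_{M+1}} = 0$ yields the desired annihilation. The only subtlety --- and the one place where this argument could fail --- is the nonnegativity of the $\gamma_\alpha$, which is exactly what turns the ``for each $\alpha$'' inequalities of Propositions~\ref{na.prop.0} and~\ref{na.prop.1} into their weighted counterparts; with signed coefficients, a strict inequality for one $\alpha$ could be cancelled by negative contributions from others.
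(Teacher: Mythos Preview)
Your proof is correct and follows essentially the same approach as the paper: transfer to $\kf$ via $\rho$, build the weighted knapsack filtration, and prove the analog of Lemma~\ref{main.lem.1a} by showing $n_{\bgamma}\tup{FG} > n_{\bgamma}\tup{F}$ whenever $F \not\con G$ and $\gamma_{\type G} > 0$. The paper packages that last inequality as a separate statement (Proposition~\ref{gen.na.prop.1}) and the congruence as Lemma~\ref{gen.lem.1}, but your inline treatment is the same argument.
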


\begin{thm}
\label{thm.gen.3}
The elements $\wo \BB_{\bgamma}$ and $\BB_{\bgamma} \wo$ of $\dsn$ satisfy
\begin{equation}
\prod_{k \in \nn_{\bgamma}\tup{\CF}} \tup{\wo\BB_{\bgamma} - k}
=0
\label{eq.gen.2a}
\end{equation}
and
\begin{equation}
\prod_{k \in \nn_{\bgamma}\tup{\CF}} \tup{\BB_{\bgamma}\wo - k}
=0.
\label{eq.gen.2b}
\end{equation}
\end{thm}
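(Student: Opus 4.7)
The plan is to mirror the proofs of Theorems \ref{main.thm.3a} and \ref{main.thm.3} essentially verbatim, with the single composition $\alpha$ replaced throughout by the family $\bgamma$. Define $\BBt_{\bgamma} := \sum_{\alpha \models n} \gamma_\alpha \BBta \in \kfsn$; by linearity of the anti-isomorphism $\rho$, we have $\rho\tup{\BB_{\bgamma}} = \BBt_{\bgamma}$, so the same arguments used for Lemmas \ref{lem.standin} and \ref{lem.standin2} (now with $\BB_{\bgamma}$ in place of $\BBa$) reduce both claims to the analogous identities for $\BBt_{\bgamma}$ and $\wotil \BBt_{\bgamma}$ inside $\kfsn$. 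Setting $\rev \bgamma := \tup{\gamma_{\rev \alpha}}_{\alpha \models n}$, Corollary \ref{w0conj_cor} and linearity give $\wotil \BBt_{\bgamma} \wotil = \BBt_{\rev \bgamma}$ and hence $\tup{\wotil \BBt_{\bgamma}}^2 = \BBt_{\rev \bgamma} \BBt_{\bgamma}$; Proposition \ref{na.prop.2} and linearity yield $n_{\rev \bgamma}\tup{F} = n_{\bgamma}\tup{F}$ for every face $F$; and the sign identity $\nn_{\bgamma}\tup{F} = \tup{-1}^{n - \ell\tup{F}} n_{\bgamma}\tup{F}$ is immediate from the definitions.

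Next, enumerate the finite set $n_{\bgamma}\tup{\CF} = \set{k_0 < k_1 < \cdots < k_m} \subseteq \kk$ and define the weighted knapsack filtration $\CF_k := \set{F \in \CF : n_{\bgamma}\tup{F} \geq k}$, with $\kf_k$ the $\kk$-linear span of $\CF_k$. Each $\kf_k$ is a two-sided ideal, because Proposition \ref{na.prop.0} (applied componentwise and weighted by the $\gamma_\beta \geq 0$) gives $n_{\bgamma}\tup{FG} \geq \max\set{n_{\bgamma}\tup{F}, n_{\bgamma}\tup{G}}$. The crucial computation, replacing Lemma \ref{main.lem.1}, is that for every face $F$ with $n_{\bgamma}\tup{F} = k_i$,
\[
F \BBt_{\bgamma} = n_{\bgamma}\tup{F} F + \sum_{\alpha \models n} \gamma_\alpha \sum_{\substack{G \in \CF; \\ \type G = \alpha; \\ F \not\con G}} FG \equiv k_i F \mod \kf_{k_{i+1}}.
\]
Indeed, for any $\alpha$ with $\gamma_\alpha > 0$ and any $G$ in the inner sum, Proposition \ref{na.prop.1} gives the strict inequality $n_\alpha\tup{FG} > n_\alpha\tup{F}$, while Proposition \ref{na.prop.00} (applied to $FG \con F$) gives $n_\beta\tup{FG} \geq n_\beta\tup{F}$ for every $\beta$; weighting by $\gamma_\beta \geq 0$ and summing yields $n_{\bgamma}\tup{FG} > n_{\bgamma}\tup{F} = k_i$, so $n_{\bgamma}\tup{FG} \geq k_{i+1}$ and $FG \in \kf_{k_{i+1}}$. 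The weighted analogue of Lemma \ref{main.lem.1a} then follows by linearity, and the induction in the proof of Theorem \ref{main.thm.3a} yields Theorem \ref{thm.gen.3a}.

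For Theorem \ref{thm.gen.3}, I would repeat the arguments of Section \ref{sec.shortpol} in the weighted filtration. The analogues of Lemma \ref{main.cor.1} and Lemma \ref{main.lem.2} are immediate from $n_{\rev \bgamma}\tup{F} = n_{\bgamma}\tup{F}$ and $\tup{\wotil \BBt_{\bgamma}}^2 = \BBt_{\rev \bgamma} \BBt_{\bgamma}$. The weighted version of Lemma \ref{main.lem.3}(a) is the one genuinely new step: for $F \not\con G$, construct the same auxiliary face $H$ of length $\ell\tup{F}+1$ with $FG \con H \con F$; Proposition \ref{na.prop.00} then gives $n_{\bgamma}\tup{FG} \geq n_{\bgamma}\tup{H} \geq n_{\bgamma}\tup{F}$, and if the middle inequality were an equality the sign identity would force $\nn_{\bgamma}\tup{H} = -\nn_{\bgamma}\tup{F}$ (opposite signs since the lengths differ by one), contradicting the hypothesis $-\nn_{\bgamma}\tup{F} \notin \nn_{\bgamma}\tup{\CF}$. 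Lemma \ref{lem.altsum.faces} is used unchanged; parts (b) and (c) of Lemma \ref{main.lem.3}, Lemma \ref{lem.main.last}, and the final induction then carry over verbatim, establishing \eqref{eq.gen.2a}, while conjugation by $\wo$ gives \eqref{eq.gen.2b}. The main obstacle is exactly the combining of one strict inequality with several weak inequalities into a strict weighted inequality, together with the ideal property of the $\kf_k$: both depend essentially on $\gamma_\alpha \geq 0$, and I see no straightforward way to weaken this hypothesis.
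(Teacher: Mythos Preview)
Your proposal is correct and follows essentially the same route as the paper: reduce via $\rho$ to the face algebra, build the weighted knapsack filtration from $n_{\bgamma}$, prove it consists of two-sided ideals by weighting Proposition~\ref{na.prop.0}, and then rerun the arguments of Sections~\ref{sec.longpol} and~\ref{sec.shortpol} with $\BBta$, $\na$, $\nna$ replaced by $\BBt_{\bgamma}$, $n_{\bgamma}$, $\nn_{\bgamma}$. The paper singles out exactly the two places you flagged as needing a genuine reproof rather than mere linearity --- the weighted versions of Proposition~\ref{na.prop.1} and Lemma~\ref{main.lem.1} --- and your argument for these (one strict componentwise inequality plus weak ones, weighted by the $\gamma_\alpha \geq 0$, yielding a strict total) matches the paper's Proposition~\ref{gen.na.prop.1} and Lemma~\ref{gen.lem.1}; you have also spelled out the weighted analogue of Lemma~\ref{main.lem.3}(a), which the paper leaves to the reader.
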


The proofs of Theorem \ref{thm.gen.3a} and Theorem \ref{thm.gen.3} proceed mostly analogously to the proofs of Theorem \ref{main.thm.3a} and Theorem \ref{main.thm.3};
we just need to replace every object $O_\alpha$ that depends on a composition $\alpha$ by the corresponding linear combination $\sum_{\alpha \models n} \gamma_\alpha O_\alpha$ of such objects.
In particular, we need to
\begin{itemize}
\item replace $\BBa$ by $\BB_{\bgamma}$,
\item replace $\BBta$ by $\BBt_{\bgamma} := \sum_{\alpha \models n} \gamma_\alpha \BBta \in \kfsn$ (noting that $\rho\tup{\BB_{\bgamma}} = \BBt_{\bgamma}$ follows from \eqref{eq.rhoBa} by linearity),
\item replace $\BBt_{\rev \alpha}$ by $\BBt_{\rev \bgamma} := \sum_{\alpha \models n} \gamma_\alpha \BBt_{\rev \alpha}$,
\item replace $\na\tup{F}$ by $n_{\bgamma}\tup{F}$,
\item replace $n_{\rev \alpha}\tup{F}$ by $n_{\rev \bgamma}\tup{F} := \sum_{\alpha \models n} \gamma_\alpha n_{\rev \alpha}\tup{F}$,
\item replace $\nna\tup{F}$ by $\nn_{\bgamma}\tup{F}$ (which equals $\tup{-1}^{n-\ell\tup{F}} n_{\bgamma}\tup{F}$),
\item etc. (beware: $\BBt_{\rev \alpha} \BBta$ must be replaced by $\BBt_{\rev \bgamma} \BBt_{\bgamma}$, not by $\sum_{\alpha \models n} \gamma_\alpha \BBt_{\rev \alpha} \BBta$).
\end{itemize}
The reader can easily make the necessary changes to almost all relevant results in Sections \ref{sec.facealg}, \ref{sec.longpol} and \ref{sec.shortpol}, including the lemmas.\footnote{Do not bother changing Proposition~\ref{prop.k0} and Theorem~\ref{thm.main.1}, as these are not needed anyway. No changes are needed in Section~\ref{sec.altsum}.}
Some of the modified lemmas can be derived from the original unmodified ones, whereas others can be proved analogously.
For instance:
\begin{itemize}
\item Corollary~\ref{w0conj_cor} must be replaced by the statement that $\wotil \BBt_{\bgamma} \wotil = \BBt_{\rev \bgamma}$ and $\tup{\wotil \BBt_{\bgamma}}^2 = \BBt_{\rev \bgamma} \BBt_{\bgamma}$. The first of these equalities follows by applying \eqref{eq.w0conj_cor.1} to each composition $\alpha$ of $n$, multiplying it by $\gamma_\alpha$ and summing the resulting equalities. The second equality follows from the first in the same way as \eqref{eq.w0conj_cor.2} was derived from \eqref{eq.w0conj_cor.1}.
\item Proposition~\ref{na.prop.2} must be replaced by the statement that each $F \in \CF$ satisfies $n_{\bgamma}\tup{F} = n_{\rev \bgamma}\tup{F}$. This follows from applying Proposition~\ref{na.prop.2} to each $\alpha \models n$, multiplying the result by $\gamma_\alpha$ and summing over all $\alpha$'s.
\item Lemma~\ref{main.cor.1} must be replaced by the statement that every $F \in \CF$ and every $i \in \interval{0, m}$ satisfying $n_{\bgamma} \tup{ F } = k_i $ satisfy $F \BBt_{\rev \gamma} \BBt_{\gamma} \equiv k_i^2 F \mod \kf_{k_{i+1}}$, where we are now writing the set $n_{\bgamma}\tup{\CF}$ (not $\na\tup{\CF}$) as $ \set{k_0 < k_1 < k_2 < \cdots < k_m} $. The proof of this is analogous to the above proof of Lemma~\ref{main.cor.1}, of course using the generalized versions of Proposition \ref{na.prop.2} and Lemma \ref{main.lem.1}.
\end{itemize}

More substantial changes are needed to the proof of Proposition~\ref{na.prop.1} and to the proof of Lemma~\ref{main.lem.1}; thus we shall dwell on them now.
Proposition~\ref{na.prop.1} must be replaced by the following:

\begin{prop}
    \label{gen.na.prop.1}
	Let $F,G \in \CF$ be such that $\gamma_{\type G} > 0$ and $F \not\con G$.
	Then, we have $n_{\bgamma}\tup{FG} > n_{\bgamma}\tup{F}$.
\end{prop}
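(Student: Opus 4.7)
The plan is to reduce this statement to the single-composition case (Proposition~\ref{na.prop.1}) via linearity, using the defining expansion $n_{\bgamma}\tup{H} = \sum_{\alpha \models n} \gamma_\alpha n_\alpha\tup{H}$. The key observation is that every term in the sum moves in the right direction (each $n_\alpha\tup{FG} \geq n_\alpha\tup{F}$ by Proposition~\ref{na.prop.0}), and that the hypothesis $\gamma_{\type G} > 0$ pinpoints a specific term where we get a strict increase.

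More concretely, I would proceed as follows. First, applying Proposition~\ref{na.prop.0} to each composition $\alpha \models n$ yields the weak inequality $n_\alpha\tup{FG} \geq n_\alpha\tup{F}$. Multiplying by $\gamma_\alpha \geq 0$ and summing over all $\alpha \models n$ gives
\[
n_{\bgamma}\tup{FG} = \sum_{\alpha \models n} \gamma_\alpha n_\alpha\tup{FG} \geq \sum_{\alpha \models n} \gamma_\alpha n_\alpha\tup{F} = n_{\bgamma}\tup{F}.
\]
Next, set $\beta := \type G$. Because $\type G = \beta$ and $F \not\con G$, Proposition~\ref{na.prop.1} (applied to $\alpha = \beta$) yields the strict inequality $n_\beta\tup{FG} > n_\beta\tup{F}$. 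Since $\gamma_\beta = \gamma_{\type G} > 0$ by hypothesis, the $\alpha = \beta$ summand contributes strictly more on the left-hand side than on the right-hand side, while all remaining summands contribute at least as much. Therefore the aggregate inequality is strict, giving $n_{\bgamma}\tup{FG} > n_{\bgamma}\tup{F}$.

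There is no real obstacle here; the only subtlety is the use of ordering on $\kk$, which is what ensures that the sum of nonnegative weights times a collection of weak inequalities plus one strict inequality stays strict. This is exactly the role played by the assumption that $\kk$ is an ordered ring containing the $\gamma_\alpha$, made at the start of Section~\ref{sec.pos}. This is the proposition that fundamentally requires the nonnegativity of the coefficients $\gamma_\alpha$: if some $\gamma_\alpha$ could be negative, the weighted sum of the knapsack-number inequalities could go either way, and the whole filtration argument later on (the generalized analogues of Lemma~\ref{main.lem.2} and Lemma~\ref{main.lem.3}) would break down.
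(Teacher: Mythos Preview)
Your proof is correct and essentially identical to the paper's own argument: both apply Proposition~\ref{na.prop.0} termwise to obtain the weak inequality, and then invoke Proposition~\ref{na.prop.1} at the single composition $\alpha = \type G$ (together with $\gamma_{\type G} > 0$) to force strictness. Your closing remark about the role of the ordered-ring hypothesis and the nonnegativity of the $\gamma_\alpha$ is accurate and matches the paper's own commentary in Section~\ref{sec.pos}.
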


\begin{proof}
Proposition~\ref{na.prop.0} shows that $\na\tup{FG} \geq \na\tup{F}$ for each composition $\alpha$.
Multiplying this inequality by the nonnegative scalar $\gamma_\alpha$ and summing over all $\alpha$, we obtain
\[
\sum_{\alpha \models n} \gamma_\alpha n_\alpha\tup{FG}
\geq \sum_{\alpha \models n} \gamma_\alpha n_\alpha\tup{F}.
\]
In other words, $n_{\bgamma}\tup{FG} \geq n_{\bgamma}\tup{F}$.
It remains to prove that this inequality is strict (that is, $>$ rather than $=$).
For this purpose, we need to ensure that at least one of the inequalities $\gamma_\alpha n_\alpha\tup{FG} \geq \gamma_\alpha n_\alpha\tup{F}$ that were summed is strict.
In other words, we need to find a composition $\alpha \models n$ satisfying $\gamma_\alpha > 0$ and $\na\tup{FG} > \na\tup{F}$.
But $\type G$ is such a composition $\alpha$, since $\gamma_{\type G} > 0$ (by assumption) and $n_{\type G}\tup{FG} > n_{\type G}\tup{F}$ (by Proposition~\ref{na.prop.1}).
\end{proof}

We now come to the generalized Lemma~\ref{main.lem.1}. The changes to the lemma itself are pretty obvious, but the proof requires some adapting:

\begin{lem}
    \label{gen.lem.1}
    Let $ F \in \CF $. Let $i \in \interval{0, m}$ be such that $n_{\bgamma}\tup{F} = k_i$. Then we have
    \[
    F\BBt_{\bgamma} \equiv k_i F  \mod \kf_{k_{i+1}}.
    \]
	(Note that $ \set{k_0 < k_1 < k_2 < \cdots < k_m} $ now stands for $n_{\bgamma}\tup{\CF}$, not for $\na\tup{\CF}$.)
\end{lem}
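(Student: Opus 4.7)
The plan is to adapt the proof of Lemma~\ref{main.lem.1} by replacing the knapsack numbers $\na$ with their $\bgamma$-weighted analogues $n_{\bgamma}$ throughout, and by invoking Proposition~\ref{gen.na.prop.1} in place of Proposition~\ref{na.prop.1}. Implicit in the statement is that the knapsack filtration is now reindexed by $n_{\bgamma}$, i.e., $\CF_k := \set{F \in \CF : n_{\bgamma}\tup{F} \geq k}$; this is still a filtration of $\kf$ by ideals, by the same argument as Proposition~\ref{prop.kfk-ideal}, since Proposition~\ref{na.prop.0} together with $\gamma_\alpha \geq 0$ gives $n_{\bgamma}\tup{FG} \geq \max\set{n_{\bgamma}\tup{F},\, n_{\bgamma}\tup{G}}$.

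First I would collapse the double sum in the definition of $\BBt_{\bgamma}$, multiply on the left by $F$, and split the result according to whether $F \con G$:
\[
F\BBt_{\bgamma}
= F \sum_{G \in \CF} \gamma_{\type G}\, G
= \sum_{\substack{G \in \CF; \\ F \con G}} \gamma_{\type G}\, F
+ \sum_{\substack{G \in \CF; \\ F \not\con G}} \gamma_{\type G}\, FG,
\]
where Proposition~\ref{cp_prop} \textbf{(c)} has been used to simplify $FG$ to $F$ in the $F \con G$ summands. Unwinding definitions, the coefficient of $F$ in the first sum is
\[
\sum_{\substack{G \in \CF; \\ F \con G}} \gamma_{\type G}
= \sum_{\alpha \models n} \gamma_\alpha \card{\Na\tup{F}}
= \sum_{\alpha \models n} \gamma_\alpha \na\tup{F}
= n_{\bgamma}\tup{F}
= k_i,
\]
so the first piece reduces to $k_i F$.

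The main (and essentially only) genuine step is to show that every summand $\gamma_{\type G} FG$ of the second sum lies in $\kf_{k_{i+1}}$. If $\gamma_{\type G} = 0$, the term vanishes. Otherwise $\gamma_{\type G} > 0$, and then Proposition~\ref{gen.na.prop.1} yields $n_{\bgamma}\tup{FG} > n_{\bgamma}\tup{F} = k_i$; since $n_{\bgamma}\tup{FG}$ belongs to the finite set $n_{\bgamma}\tup{\CF} = \set{k_0 < k_1 < \cdots < k_m}$, this forces $n_{\bgamma}\tup{FG} \geq k_{i+1}$, so that $FG \in \CF_{k_{i+1}} \subseteq \kf_{k_{i+1}}$. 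Combining the two pieces yields $F\BBt_{\bgamma} \equiv k_i F \mod \kf_{k_{i+1}}$, as desired. Compared to Lemma~\ref{main.lem.1}, the only real subtlety is the need to discard the $\gamma_{\type G} = 0$ terms \emph{before} invoking Proposition~\ref{gen.na.prop.1}, whose hypothesis demands strict positivity of $\gamma_{\type G}$.
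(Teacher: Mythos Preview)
Your proof is correct and follows essentially the same approach as the paper's own proof: collapse $\BBt_{\bgamma}$ into $\sum_{G \in \CF} \gamma_{\type G} G$, split $F\BBt_{\bgamma}$ according to whether $F \con G$, identify the first piece as $k_i F$, and kill the second piece via Proposition~\ref{gen.na.prop.1} after separately disposing of the $\gamma_{\type G} = 0$ terms. Your explicit remark that the $\gamma_{\type G} = 0$ case must be handled before invoking Proposition~\ref{gen.na.prop.1} matches the paper's treatment exactly.
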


\begin{proof}
   The definition of $\BBt_{\bgamma}$ yields
   \[
   \BBt_{\bgamma}
   = \sum_{\alpha \models n} \gamma_\alpha \BBta
   = \sum_{\alpha \models n} \gamma_\alpha \sum_{\substack{G \in \CF; \\ \type G = \alpha}}G
   = \sum_{G \in \CF} \gamma_{\type G} G.
   \]
   Multiplying this by $F$ from the left, we find
   \begin{align}
       F\BBt_{\bgamma}
	   &= \sum_{G \in \CF} \gamma_{\type G} FG
	   = \sum_{\substack{G \in \CF; \\ F \con G}} \gamma_{\type G} \underbrace{FG}_{\substack{=F \\ \text{(by Proposition \ref{cp_prop} \textbf{(c)})}}}
	   + \sum_{\substack{G \in \CF; \\ F \not\con G}} \gamma_{\type G} FG
	   \nonumber\\
       &= \sum_{\substack{G \in \CF; \\ F \con G}} \gamma_{\type G} F + \sum_{\substack{G \in \CF; \\ F \not\con G}} \gamma_{\type G} FG.
	   \label{eq.gen.lem.1.1}
   \end{align}
   But
   \begin{align}
		\sum_{\substack{G \in \CF; \\ F \con G}} \gamma_{\type G}
		&= \sum_{\alpha \models n} \underbrace{\sum_{\substack{G \in \CF; \\ F \con G; \\ \type G = \alpha}} \gamma_\alpha}_{= \card{\Na\tup{F}} \gamma_\alpha = \na\tup{F} \gamma_\alpha}
		= \sum_{\alpha \models n} \na\tup{F} \gamma_\alpha
		= \sum_{\alpha \models n} \gamma_\alpha \na\tup{F}
		= n_{\bgamma}\tup{F}
		\nonumber \\
		&= k_i.
		\label{eq.gen.lem.1.3}
   \end{align}
   On the other hand, if $G \in \CF$ satisfies $\gamma_{\type G} > 0$ and $F \not\con G$, then Proposition \ref{gen.na.prop.1} yields $ n_{\bgamma}\tup{ FG } > n_{\bgamma}\tup{ F } = k_i $ and thus $ n_{\bgamma}\tup{ FG } \ge k_{i+1}$ (by the standard argument), so that $FG \in \CF_{k_{i+1}} \subseteq \kf_{k_{i+1}}$ and therefore
   \begin{align}
	   \gamma_{\type G} FG \equiv 0 \mod \kf_{k_{i+1}}.
	   \label{eq.gen.lem.1.2}
   \end{align}
   The same conclusion can be reached if $\gamma_{\type G} = 0$, albeit for the trivial reason that the left hand side of this congruence is $0$ then.
   Hence, \eqref{eq.gen.lem.1.2} holds for each $G \in \CF$ satisfying $F \not\con G$.
   Thus, \eqref{eq.gen.lem.1.1} becomes
   \begin{align*}
       F\BBt_{\bgamma}
       &= \underbrace{\sum_{\substack{G \in \CF; \\ F \con G}} \gamma_{\type G}}_{\substack{= k_i \\ \text{(by \eqref{eq.gen.lem.1.3})}}} F
	   + \sum_{\substack{G \in \CF; \\ F \not\con G}}\ \ \underbrace{\gamma_{\type G} FG}_{\substack{\equiv 0 \mod \kf_{k_{i+1}} \\ \text{(by \eqref{eq.gen.lem.1.2})}}}
	   \equiv k_i F \mod \kf_{k_{i+1}}.
   \end{align*}
\end{proof}

By making similar changes to the results in Section~\ref{sec.minpol}, we obtain the following generalizations of Theorem~\ref{thm.mp.1} and Theorem~\ref{thm.spec.w0Baright}:

\begin{thm}
\label{thm.gen.mp.1}
Assume that $\kk$ is a field of characteristic $0$.
Then, the minimal polynomial of the element $\wo\BB_{\bgamma}$ of $\ksn$ is
\[
\mu\tup{\wo\BB_{\bgamma}}
=
\prod_{k \in \nn_{\bgamma}\tup{\CF}}\tup{x-k}  .
\]
\end{thm}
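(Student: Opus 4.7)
The plan is to follow the proof of Theorem~\ref{thm.mp.1} almost verbatim, with each object $O_\alpha$ replaced by its weighted analog $\sum_{\alpha} \gamma_\alpha O_\alpha$ as outlined in the excerpt. The first step is to reduce from $\wo \BB_\bgamma$ to the face-algebra element $\wotil\BBt_\bgamma$. Since $\rho$ is a $\kk$-algebra anti-morphism and linear in its input, we have $\rho(\BB_\bgamma) = \BBt_\bgamma$ and hence $\rho(\BB_\bgamma \wo) = \rho(\wo)\rho(\BB_\bgamma) = \wotil\BBt_\bgamma$. A verbatim copy of the argument for Lemma~\ref{lem.standin} (using that $\rho$ is an anti-isomorphism and that conjugation by $\wo$ is an isomorphism) then yields
\[
\mu(\wo \BB_\bgamma) \;=\; \mu(\BB_\bgamma \wo) \;=\; \mu(\wotil \BBt_\bgamma),
\]
so it suffices to compute $\mu(\wotil\BBt_\bgamma)$.

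Next, I would study the right-multiplication operator $\Phi : \kf \to \kf,\ \FF \mapsto \FF \wotil\BBt_\bgamma$. Lemma~\ref{lem.muPhi} (applied with $A = \kf$ and $a = \wotil\BBt_\bgamma$) shows $\mu(\Phi) = \mu(\wotil\BBt_\bgamma)$, so it is enough to identify $\mu(\Phi)$. For this, I would establish the generalized \emph{more blocks lemma}: for every face $F \in \CF$,
\[
F \cdot \wotil \BBt_\bgamma \;=\; \nn_\bgamma(F)\, F \;+\; \sum_{\substack{G \in \CF;\\ \ell(G) > \ell(F)}} c_G\, G
\]
for some $c_G \in \kk$. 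This follows from Lemma~\ref{mblks_lem} by multiplying by $\gamma_\alpha$ and summing over $\alpha \models n$; the "longer faces" ideal $\kk I$ is independent of $\alpha$, so no new combinatorics intervenes. Ordering the basis $\CF$ of $\kf$ by increasing length of the face (breaking ties arbitrarily), $\Phi$ is then represented by a lower-triangular matrix whose diagonal entries are the signed weighted knapsack numbers $\nn_\bgamma(F)$. In particular, the set of distinct eigenvalues of $\Phi$ is $\nn_\bgamma(\CF)$.

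To promote this to a minimal-polynomial statement I would invoke Theorem~\ref{thm.gen.3}: the polynomial $f(x) := \prod_{k \in \nn_\bgamma(\CF)}(x - k)$ annihilates $\wo \BB_\bgamma$, hence also $\wotil\BBt_\bgamma$, hence $\Phi$. Since $\kk$ has characteristic zero, $f$ factors into \emph{distinct} monic linear factors, so $\Phi$ is diagonalizable. A diagonalizable operator on a finite-dimensional space has minimal polynomial equal to $\prod_\lambda (x - \lambda)$ over its distinct eigenvalues, so $\mu(\Phi) = f(x)$, and combining the equalities gives $\mu(\wo \BB_\bgamma) = f(x)$. The principal obstacle lies not in this final step but upstream, in the proof of Theorem~\ref{thm.gen.3}: one must faithfully adapt Lemmas \ref{main.lem.3} and \ref{lem.main.last}, which in turn rely on the fact that the hypothesis $\gamma_\alpha \geq 0$ lets Proposition~\ref{gen.na.prop.1} promote the componentwise inequalities $n_\alpha(FG) \geq n_\alpha(F)$ to a \emph{strict} inequality for $n_\bgamma$, using the witness $\alpha = \type G$ with $\gamma_{\type G} > 0$.
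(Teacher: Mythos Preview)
Your proposal is correct and follows essentially the same approach as the paper: the paper proves Theorem~\ref{thm.gen.mp.1} by stating that one should make the obvious $\bgamma$-weighted substitutions in the proofs of Theorems~\ref{thm.mp.1} and~\ref{thm.spec.w0Baright}, and you have spelled out precisely those substitutions (the reduction via $\rho$ and conjugation, the generalized more-blocks lemma obtained by taking $\gamma_\alpha$-weighted sums of Lemma~\ref{mblks_lem}, the triangularization of $\Phi$, and the appeal to Theorem~\ref{thm.gen.3} for diagonalizability). Your remark that the ideal $\kk I$ in Lemma~\ref{mblks_lem} is independent of $\alpha$ is exactly the observation that makes the linear-combination argument go through without further work.
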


\begin{thm}
\label{thm.gen.spec.w0Baright}
Assume that $\kk$ is a field of characteristic $0$.
Then, the $\kk$-linear operator
\begin{align*}
\kf  & \to \kf,\\
\FF  & \mapsto \FF\wotil\BBt_{\bgamma}
\end{align*}
is diagonalizable, and its eigenvalues are the $\bgamma$-weighted signed knapsack numbers
$\nn_{\bgamma}\tup{F} $ of the faces $F\in \CF$.
(The algebraic multiplicity of each eigenvalue $\lambda$ is the number of faces $F$
whose $\bgamma$-weighted signed knapsack number is $\lambda$.)
\end{thm}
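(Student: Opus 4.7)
The plan is to follow the proof of Theorem~\ref{thm.spec.w0Baright} essentially verbatim, with the substitutions $\BBta \rightsquigarrow \BBt_{\bgamma}$ and $\nna \rightsquigarrow \nn_{\bgamma}$ throughout. Let $\Phi$ denote the operator $\FF \mapsto \FF \wotil \BBt_{\bgamma}$ on $\kf$.

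First, I invoke Theorem~\ref{thm.gen.3} to conclude that the polynomial
\[
f\tup{x} := \prod_{k \in \nn_{\bgamma}\tup{\CF}} \tup{x-k} \in \kk\sbr{x}
\]
annihilates $\wo\BB_{\bgamma}$. The $\bgamma$-weighted analogue of Lemma~\ref{lem.standin} -- which follows from the fact that $\rho$ is a $\kk$-algebra anti-isomorphism sending $\wo\BB_{\bgamma}$ to $\wotil\BBt_{\bgamma}$ -- then yields $f\tup{\wotil\BBt_{\bgamma}} = 0$. Since $\kk$ has characteristic $0$ and the factors are indexed by the \emph{distinct} elements of $\nn_{\bgamma}\tup{\CF}$, the polynomial $f$ factors as a product of distinct monic linear factors. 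By Lemma~\ref{lem.muPhi}, $\mu\tup{\Phi} = \mu\tup{\wotil\BBt_{\bgamma}}$ divides $f$, so $\mu\tup{\Phi}$ also splits into distinct monic linear factors. Hence $\Phi$ is diagonalizable on the finite-dimensional $\kk$-vector space $\kf$.

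Next, I establish a $\bgamma$-weighted version of Lemma~\ref{mblks_lem}: for each $F \in \CF$,
\[
F \cdot \wotil \BBt_{\bgamma} = \nn_{\bgamma}\tup{F} \cdot F + \sum_{\substack{G \in \CF;\\ \ell\tup{G} > \ell\tup{F}}} c_G G
\]
for some scalars $c_G \in \kk$. This is immediate from Lemma~\ref{mblks_lem}: multiplying the original identity by $\gamma_\alpha$ and summing over $\alpha \models n$ produces the coefficient $\sum_\alpha \gamma_\alpha \nna\tup{F} = \nn_{\bgamma}\tup{F}$ in front of $F$, while every remaining face $G$ in the sum still satisfies $\ell\tup{G} > \ell\tup{F}$. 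Ordering the basis $\CF$ of $\kf$ by increasing length (ties broken arbitrarily), this expansion shows that the matrix of $\Phi$ is lower-triangular with diagonal entries $\nn_{\bgamma}\tup{F}$ indexed by $F \in \CF$. The eigenvalues of a triangular matrix are exactly its diagonal entries, with algebraic multiplicities equal to their numbers of occurrences on the diagonal. This completes the proof.

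The only serious obstacle lies upstream in Theorem~\ref{thm.gen.3}, whose proof requires adapting each step of Sections~\ref{sec.facealg}--\ref{sec.shortpol} to the weighted setting. The delicate point is Proposition~\ref{gen.na.prop.1}, where the nonnegativity of the $\gamma_\alpha$ (together with $\gamma_{\type G} > 0$ for the witnessing $G$) is essential to preserve the strict inequality when summing. Once that analogue is in place, the diagonalization argument above reduces to the same abstract combination -- an annihilating polynomial that splits into distinct linear factors, paired with a triangular expansion in a length-sorted basis -- and requires no genuinely new combinatorial input.
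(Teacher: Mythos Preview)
Your proposal is correct and follows essentially the same approach as the paper, which explicitly says that Theorem~\ref{thm.gen.spec.w0Baright} is obtained ``by making similar changes to the results in Section~\ref{sec.minpol}'' --- i.e., by rerunning the proof of Theorem~\ref{thm.spec.w0Baright} with $\BBta$ and $\nna$ replaced by $\BBt_{\bgamma}$ and $\nn_{\bgamma}$. One tiny slip: $\rho$ sends $\BB_{\bgamma}\wo$ (not $\wo\BB_{\bgamma}$) to $\wotil\BBt_{\bgamma}$, since $\rho$ is an anti-morphism; but this is harmless, as the weighted Lemma~\ref{lem.standin} (proved exactly like the original) still gives the equivalence $f(\wo\BB_{\bgamma})=0 \Leftrightarrow f(\wotil\BBt_{\bgamma})=0$ that you need.
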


Actually, the claim about eigenvalues and their algebraic multiplicities in Theorem~\ref{thm.gen.spec.w0Baright} holds even if $\kk$ is just a field (not necessarily ordered, not necessarily of characteristic $0$) and the $\gamma_\alpha$ are arbitrary scalars (not necessarily $\geq 0$).
However, the operator is not always diagonalizable in this generality.
Similarly, if we drop all assumptions on the field $\kk$ and the scalars $\gamma_\alpha$ in Theorem~\ref{thm.gen.mp.1}, then the minimal polynomial $\mu\tup{\wo\BB_{\bgamma}}$ is still a product of factors of the form $x - k$ with $k \in \nn_{\bgamma}\tup{\CF}$, but some of these factors can appear multiple times.

\appendix

\section{\label{sec.omitted_proofs}Omitted proofs}

\subsection{\label{subsec.omitted_proofs.ccb_prop}Proof of Proposition \ref{ccb_prop}}

\begin{proof}[Proof of Proposition \ref{ccb_prop}]
    Let $A = \set{G \in \CF : F \con G}$, and $B = \set{H : H \models \interval{m}}$.
	Define the maps $f : A \to B$ and $g : B \to A$ by
    \begin{align*}
    f\tup{G_1, G_2, \ldots, G_k} &:= \tup{H_1, H_2, \ldots, H_k} \text{ where } H_{i} = \set{j \in \interval{m} : F_{j} \subseteq G_{i}}\\
    &\qquad\text{and }\\
    g\tup{H_1, H_2, \ldots, H_k} &:= \tup{G_1, G_2, \ldots, G_k} \text{ where } G_{i} = \bigcup\limits_{j \in H_{i}} F_{j}.
    \end{align*}
    We first show these maps are well-defined:
	
	\begin{enumerate}
	\item To see that $f$ is well-defined, we let $G = \tup{G_1, G_2, \ldots, G_k} \in A$. Define the set $H_{i} = \set{j \in \interval{m} : F_{j} \subseteq G_{i}}$ for each $i \in \interval{k}$.
	We must show that $\tup{H_1, H_2, \ldots, H_k} \in B$, that is, $\tup{H_1, H_2, \ldots, H_k} \models \interval{m}$.
	
	From $G = \tup{G_1, G_2, \ldots, G_k} \in A$, we see that
	$G = \tup{G_1, G_2, \ldots, G_k} \models \interval{n}$
	and $F \con G$.
	Thus, each block $F_{j}$ of $F$ is a subset of some $G_{i}$ (since $F \con G$) and therefore satisfies $j \in H_{i}$ for this $i$, so that $j \in \bigcup_{i \in \interval{k}} H_i$.
	Hence, $\bigcup_{i \in \interval{k}} H_i = \interval{m}$.
	
	To see that the sets $H_{i}$ are disjoint, note that if $j \in H_{i} \cap H_{i'}$ for two distinct indices $i$ and $i'$, then $F_{j} \subseteq G_{i}$ and $F_{j} \subseteq G_{i'}$, so that $F_{j} \subseteq G_{i} \cap G_{i'} = \varnothing$ (since the blocks of $G$ are disjoint), which contradicts the nonemptiness of $F_j$.
	
	Finally, we claim that the sets $H_i$ are nonempty.
	Indeed, let $i \in \interval{k}$. Then, the block $G_i$ is nonempty, so there exists some $x \in G_i$. Pick this $x$, and let $F_j$ be the block of $F$ that contains $x$. From $F \con G$, we see that $F_j$ is a subset of \textbf{some} $G_p$. This $G_p$ must then satisfy $x \in F_j \subseteq G_p$ and thus $x \in G_p \cap G_i$ (since $x \in G_i$), so that $G_p \cap G_i \neq \varnothing$, which is impossible unless $p = i$ (since different blocks of $G$ are disjoint). Hence, $p = i$, so that $F_j \subseteq G_p = G_i$.
	Therefore, $j \in H_i$, and thus $H_i$ is nonempty.
	
	Thus we have shown that the sets $H_i$ are disjoint and nonempty and satisfy $\bigcup_{i \in \interval{k}} H_i = \interval{m}$. In other words, $\tup{H_1, H_2, \ldots, H_k} \models \interval{m}$.
	This shows that $f$ is well-defined.
	
	\item To see that $g$ is well-defined, we let $H = \tup{H_1, H_2, \ldots, H_k} \in B$, that is, $\tup{H_1, H_2, \ldots, H_k} \models \interval{m}$.
	We define the set $G_i = \bigcup_{j \in H_{i}} F_{j}$ for each $i \in \interval{k}$.
	We must show that $\tup{G_1, G_2, \ldots, G_k} \in A$, that is, we must show that $\tup{G_1, G_2, \ldots, G_k} \in \CF$ and $F \con \tup{G_1, G_2, \ldots, G_k}$.
	
	Each of the sets $G_i$ is nonempty, since it is a nonempty union of nonempty sets (as each $H_i$ and each $F_j$ are nonempty).
	Furthermore, their definition shows that
	$\bigcup_{i \in \interval{k}} G_i
	= \bigcup_{i \in \interval{k}} \bigcup_{j \in H_i} F_j
	= \bigcup_{j \in \interval{m}} F_j
	= \interval{n}$,
	where the second equality sign is a consequence of the fact that each $j \in \interval{m}$ belongs to some $H_i$.
	Finally, the sets $G_i$ are disjoint, since each element of $\interval{n}$ belongs to exactly one $F_j$, and since the corresponding $j$ belongs to exactly one $H_i$.
	These three facts together show that $\tup{G_1, G_2, \ldots, G_k} \models \interval{n}$, that is, $\tup{G_1, G_2, \ldots, G_k} \in \CF$.
	
	It remains to show that $F \con \tup{G_1, G_2, \ldots, G_k}$.
	This is again clear: Each block $F_j$ of $F$ is a subset of a block of $G$ (namely, of $G_i$, where $i$ is such that $j \in H_i$; this $i$ exists because $H$ is a set composition of $\interval{m}$).
	
	Altogether, we have thus shown that $\tup{G_1, G_2, \ldots, G_k} \in A$.
	The map $g$ is therefore well-defined.
	\end{enumerate}
	
	Now we show that these maps $f$ and $g$ are mutually inverse:
	\begin{enumerate}
	\item To show that $g \circ f = \id$, we must prove that $g\tup{f\tup{G}} = G$ for each set composition $G = \tup{G_1, G_2, \ldots, G_k} \in A$. So let us fix such set composition $G$. % it suffices to show that the blocks of $g\tup{f\tup{G}}$ and of $G$ are equal.
	Let $i \in \interval{k}$.
	Then, the $i$-th block of $f\tup{G}$ is $f\tup{G}_i = \set{j \in \interval{m} : F_{j} \subseteq G_i}$ (by the definition of $f$).
	Hence, the $i$-th block of $g\tup{f\tup{G}}$ is in turn
    \begin{align*}
    g\tup{f\tup{G}}_i &= \bigcup_{j \in f\tup{G}_i} F_{j} = \bigcup_{j \in \interval{m} \text{ satisfies } F_{j} \subseteq G_i} \underbrace{F_{j}}_{\subseteq G_i} \subseteq G_i.
    \end{align*}
	Now, let us prove the reverse inclusion $G_i \subseteq g\tup{f\tup{G}}_i$.
	Indeed, let $x \in G_i$.
	Since $g\tup{f\tup{G}}$ is a set composition of $\interval{n}$, we have $\interval{n} = \bigcup_{s \in \interval{k}} g\tup{f\tup{G}}_{s}$.
	Hence, $x \in G_i \subseteq \interval{n} = \bigcup_{s \in \interval{k}} g\tup{f\tup{G}}_{s}$.
	In other words, $x \in g\tup{f\tup{G}}_{s}$ for some $s \in \interval{k}$.
	Consider this $s$.
	Just as we showed $g\tup{f\tup{G}}_i \subseteq G_i$, we see that $g\tup{f\tup{G}}_{s} \subseteq G_{s}$.
	Hence, $x \in g\tup{f\tup{G}}_{s} \subseteq G_{s}$.
	Combined with $x \in G_i$, this leads to $x \in G_i \cap G_{s}$, so that $G_i \cap G_s \neq \varnothing$.
	But this is only possible if $i = s$, since otherwise the blocks $G_i$ and $G_{s}$ would be disjoint.
	Hence, we have $i = s$. Thus, from $x \in g\tup{f\tup{G}}_{s}$, we obtain $x \in g\tup{f\tup{G}}_i$.
	Since we have proved this for each $x \in G_i$, we thus conclude that $G_i \subseteq g\tup{f\tup{G}}_i$. Combined with $g\tup{f\tup{G}}_i \subseteq G_i$, this shows that $g\tup{f\tup{G}}_i = G_i$.
	% Thus we have shown that the two set compositions $g\tup{f\tup{G}}$ and $G$ satisfy the subset relation
	% \[
	% g\tup{f\tup{G}}_{i} \subseteq G_i
	% \qquad \text{ for each } i \in \interval{k}.
	% \]
	% If this subset relation was proper (i.e., not an equality) for some $i$, then the union $\bigcup_{i \in \interval{k}} g\tup{f\tup{G}}_{i}$ would be a proper subset of the union $\bigcup_{i \in \interval{k}} G_i$ (since both unions are disjoint unions); but this is impossible, since both unions are $\interval{n}$ (because both $g\tup{f\tup{G}}$ and $G$ are set compositions of $\interval{n}$).
	% Thus, we actually have the equality $g\tup{f\tup{G}}_{i} = G_i$ for each $i$.
	
	So we have shown that $g\tup{f\tup{G}}_i = G_i$ for each $i \in \interval{k}$.
	In other words, $g\tup{f\tup{G}} = G$. Hence, $g \circ f = \id$ is proved.
    % To see that the above is equal to $G_{i}$, note that if we had any other block of $F$ such that $F_{j} \cap G_{i} \ne \varnothing$ then since $G$ is contained in $F$ we have some block $F_{i'}$ so that $G_{j} \subseteq F_{i'}$. Consequently $F_{i} \cap F_{i'} \supseteq F_{i} \cap G_{j} \cap F_{i'} = F_{i} \cap G_{j} \ne \varnothing$ so that $F_{i}=F_{i'}$ as the blocks of $F$ are equal or disjoint. Thus $g\tup{f\tup{G}} = G$, as desired.
	\item Next, we show that $f \circ g = \id$.
	Indeed, let $H = \tup{H_1, H_2, \ldots, H_k} \models \interval{m}$. Then, for each $i \in \interval{k}$, we have
	(using the definitions of $f$ and $g$ as before)
    \begin{align*}
    f\tup{g\tup{H}}_{i} &= \set{j \in \interval{m} : F_{j} \subseteq g\tup{H}_{i}}
	= \set{p \in \interval{m} : F_p \subseteq g\tup{H}_{i}}
	\\
	&= \set{p \in \interval{m} : F_p \subseteq \bigcup_{j \in H_{i}} F_{j}}
    = H_i
    \end{align*}
	(since the blocks $F_j$ of $F$ are disjoint and nonempty, so that the only blocks $F_p$ that are subsets of the union $\bigcup_{j \in H_{i}} F_{j}$ are the very addends of this union).
    That is, $f\tup{g\tup{H}} = H$.
	This shows that $f \circ g = \id$.
	\end{enumerate}
	
	Thus, $f$ and $g$ are mutually inverse, and hence are bijections.
	These bijections $f$ and $g$ are obviously length-preserving.
	In particular, $f$ is a length-preserving bijection.
	This proves Proposition \ref{ccb_prop}, since our $f$ is precisely the $f$ defined in the proposition.
\end{proof}

\subsection{\label{subsec.omitted_proofs.cp_prop}Proof of Proposition \ref{cp_prop}}

\begin{proof}[Proof of Proposition~\ref{cp_prop}.]
    Write the faces $F$ and $G$ in the forms
	$F = \tup{F_1, F_2, \ldots, F_k}$ and
	$G = \tup{G_1, G_2, \ldots, G_m}$.
	Thus, of course, $\ell\tup{F} = k$ and $\ell\tup{G} = m$.

    \begin{enumerate}[]
    \item\textbf{(b)} Each block of $FG$ is of the form $F_{i} \cap G_{j}$
    for some $i \in \interval{k}$ and $j \in \interval{m}$ (by the definition of $FG$).
	Hence, it is a subset of a block of $F$ (namely, of $F_{i}$)
	and a subset of a block of $G$ (namely, of $G_{j}$).
	Thus, $FG \con F$ and $FG \con G$, as desired.

    \item\textbf{(c)} ``If'': Assume that $F$ is contained in $G$.
    Then, each block $F_i$ of $F$ is a subset of some block
    $G_{j\tup{i}}$ of $G$,
    and therefore disjoint from all the other blocks of $G$
    (since the blocks of $G$ are disjoint); therefore,
    the intersection $F_i \cap G_{j\tup{i}}$ is nonempty (and
    equals $F_i$),
    whereas all other intersections
	$F_i \cap G_r$ with $r \neq j \tup{i}$ are empty.
    But the definition of $FG$ yields
    \[
            FG = \tup{F_{1} \cap G_{1}, \dots, F_{1} \cap G_m, \dots, F_k \cap G_m}^{\red}
            = \tup{F_1 \cap G_{j\tup{1}}, F_2 \cap G_{j\tup{2}}, \dots, F_k \cap G_{j\tup{k}}},
    \]
    since the reduction operation removes all the empty intersections
    $F_i \cap G_r$ and leaves only the nonempty intersections
    $F_i \cap G_{j\tup{i}}$ around. But since the latter nonempty
    intersections $F_i \cap G_{j\tup{i}}$ are just the blocks $F_i$
    (because $F_i \subseteq G_{j\tup{i}}$), we can rewrite this as
    \[
            FG = \tup{F_1, F_2, \dots, F_k} = F.
    \]
    This proves the ``if'' direction.
    
    ``Only if'': Assume that $FG = F$. But part \textbf{(b)} shows
	that $FG$ is contained in $G$.
	Hence, $F$ is contained in $G$ (since $FG = F$).
    This proves the ``only if'' direction.

    \item\textbf{(d)} The blocks of $FG$ are the nonempty intersections of the
    form $F_{i}\cap G_{j}$ (by the definition of $FG$).
    Each block $F_{i}$ of $F$ gives rise to at least one
    such nonempty intersection $F_{i}\cap G_{j}$ (since $\bigcup\limits_{j}\left(
    F_{i}\cap G_{j}\right)  = F_i \cap \underbrace{\bigcup\limits_j G_j}_{= \interval{n}}
    = F_i \cap \interval{n} = F_{i}$ is nonempty). Consequently, there are at least
    $k$ nonempty intersections of the form $F_{i}\cap G_{j}$ (since there are
    $k$ blocks $F_i$ of $F$).
    In other words, $FG$ has at least $k$ blocks.
    In other words, $\ell\tup{FG} \ge k = \ell\tup{F}$.
	%(since $F$ has $k$ blocks).
    
    It remains to show that this inequality becomes an equality if and only if
    $F$ is contained in $G$.
    
    ``If'': Assume that $F$ is contained in $G$. Then, part \textbf{(c)}
    shows that $FG = F$. Hence, $\ell\tup{FG} = \ell\tup{F}$. Thus, equality
    holds.
    
    ``Only if'': Now assume that $F$ is not contained in $G$.
    Then, there is a block of $F$ -- say, $F_{i_{0}}$ -- that is not a subset of
    any block of $G$. Hence, $F_{i_{0}}$ has nonempty intersection with more than
    one block of $G$. But the blocks of $FG$ are the nonempty intersections of the
    form $F_{i}\cap G_{j}$. Each block $F_{i}$ of $F$ gives rise to at least one
    such nonempty intersection $F_{i}\cap G_{j}$ (as we already saw above),
    but the block $F_{i_{0}}$ gives
    rise to more than one such nonempty intersection (since $F_{i_{0}}$ has
    nonempty intersection with more than
    one block of $G$). Consequently, the total
    number of nonempty intersections of the form $F_{i}\cap G_{j}$ is larger than
    the number of blocks $F_{i}$ of $F$. In other words,
    $\ell\tup{FG} > \ell\tup{F}$.
    Hence, equality does not hold in $\ell\tup{FG} \ge \ell\tup{F}$.
    The proof of part \textbf{(d)} is thus complete.
    
    \item\textbf{(e)} This is similar to the proof of
    $\ell\tup{FG} \ge \ell\tup{F}$ in part \textbf{(d)}.
    (This time, we have to argue that each block $G_j$ of $G$
    gives rise to at least one nonempty intersection of the
    form $F_i \cap G_j$.)
	Alternatively, we can apply part \textbf{(d)} with the
	roles of $F$ and $G$ interchanged, and then observe that
	$\ell\tup{FG} = \ell\tup{GF}$ because the set compositions
	$FG$ and $GF$ differ only in the order of their blocks.
    
    \item\textbf{(a)} Assume that $F$ is contained in $G$.
    Then, $FG = F$ by part \textbf{(c)}.
    But part \textbf{(e)} yields $\ell\tup{FG} \ge \ell\tup{G}$.
    Since $FG = F$, we can rewrite this as
    $\ell\tup{F} \ge \ell\tup{G}$.
    \qedhere
    \end{enumerate}
\end{proof}

\subsection{\label{subsec.omitted_proofs.rho}Proof of Theorem \ref{dsn_kfsn_morph}}

We shall now give a self-contained proof of Theorem \ref{dsn_kfsn_morph}.
This is essentially just
Bidigare's proof (\cite[Theorem 3.8.1]{Bidigare-thesis},
\cite[proof of Theorem 2.1]{Saliola}, \cite[Theorem 1]{Hsiao}, etc.),
restated in our language for the sake of consistency.

We begin with a lemma that is so simple that its usefulness is hard to
believe. It is implicit in most expositions of Theorem \ref{dsn_kfsn_morph},
but in our view it is worth isolating, as it makes the rest of the proof more transparent.

\begin{lem}
\label{lem.rho.monoid}
Let $B$ and $C$ be two monoids (written
multiplicatively).
Let $f:B\to C$ be any map.

Let $S$ be a set on which both monoids $B$ and $C$ act from the left. Let
$s\in S$ be an element that generates a free $C$-orbit -- i.e., that has the
property that
\begin{equation}
\text{if }c_{1},c_{2}\in C\text{ satisfy }c_{1}s=c_{2}s\text{, then }
c_{1}=c_{2}\text{.}
\label{eq.lem.rho.monoid.free}
\end{equation}
Assume furthermore that
\begin{equation}
b\tup{cs} = c\tup{bs}
\qquad\text{for all }b\in B\text{ and }c\in C.
\label{eq.lem.rho.monoid.bcs}
\end{equation}
Assume moreover that
\begin{equation}
bs=f\tup{b}   s
\qquad\text{for each }b\in B.
\label{eq.lem.rho.monoid.rhobs}
\end{equation}
Then, $f$ is a monoid anti-morphism, i.e., it satisfies
\[
f\tup{ab}  =f\tup{b}   f\tup{a} 
\qquad\text{for all }a,b\in B
\]
and $f\left(  1_{B}\right)  =1_{C}$.
\end{lem}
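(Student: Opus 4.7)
The plan is to exploit the freeness hypothesis \eqref{eq.lem.rho.monoid.free} as a cancellation principle: to prove an equality $c_1 = c_2$ in $C$, it suffices to check that $c_1 s = c_2 s$ in $S$. So for each of the two claimed identities ($f(ab) = f(b) f(a)$ and $f(1_B) = 1_C$), I would evaluate both sides at $s$ and show the results agree, then invoke \eqref{eq.lem.rho.monoid.free}.

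For the anti-multiplicativity, fix $a, b \in B$. I would compute $(ab)s$ in two different ways. On one side, the $B$-action axiom gives $(ab)s = a(bs)$; applying \eqref{eq.lem.rho.monoid.rhobs} yields $a(bs) = a\tup{f(b) s}$, then \eqref{eq.lem.rho.monoid.bcs} (with $c = f(b)$) rewrites this as $f(b)\tup{as}$, and a second application of \eqref{eq.lem.rho.monoid.rhobs} followed by the $C$-action axiom gives $f(b)\tup{f(a) s} = \tup{f(b) f(a)} s$. On the other side, \eqref{eq.lem.rho.monoid.rhobs} applied to the element $ab \in B$ gives $(ab)s = f(ab) s$ directly. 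Equating and canceling $s$ via \eqref{eq.lem.rho.monoid.free} yields $f(ab) = f(b) f(a)$.

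For $f(1_B) = 1_C$, the same trick is even easier: $1_B s = s = 1_C s$ (the latter by the $C$-action axiom), while \eqref{eq.lem.rho.monoid.rhobs} applied to $1_B$ gives $1_B s = f(1_B) s$. Thus $f(1_B) s = 1_C s$, and freeness again yields $f(1_B) = 1_C$.

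There is no serious obstacle here; the whole point of the lemma is that the three hypotheses are precisely tailored so that this two-way computation closes. The only thing to be careful about is the direction of the commutation \eqref{eq.lem.rho.monoid.bcs} — it is what swaps the order of $f(a)$ and $f(b)$ and thereby produces the \emph{anti}-morphism rather than a morphism, so the reader should be guided to notice that this sign-flip is baked into the axioms.
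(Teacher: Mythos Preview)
Your proof is correct and essentially identical to the paper's own argument: both compute $(ab)s$ two ways using \eqref{eq.lem.rho.monoid.rhobs} and \eqref{eq.lem.rho.monoid.bcs}, then cancel via \eqref{eq.lem.rho.monoid.free}, and handle the identity the same way. The only cosmetic difference is the order in which the two claims are treated.
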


\begin{proof}
From \eqref{eq.lem.rho.monoid.rhobs}, we have $1_{B}s=f\left(  1_{B}\right)
s$, so that $f\left(  1_{B}\right)  s=1_{B}s=s=1_{C}s$. Thus,
\eqref{eq.lem.rho.monoid.free} (applied to $c_{1}=f\left(  1_{B}\right)$ and
$c_{2}=1_{C}$) yields $f\left(  1_{B}\right)  =1_{C}$.

It thus remains to show that $f\tup{ab}  =f\tup{b}   f\left(
a\right)$ for all $a,b\in B$.

To prove this, we fix any $a,b\in B$. Then, \eqref{eq.lem.rho.monoid.rhobs}
(applied to $ab$ instead of $b$) shows that $abs=f\tup{ab}  s$.
However, \eqref{eq.lem.rho.monoid.rhobs} also yields $bs=f\tup{b}   s$
and $as=f\tup{a}   s$.

On the other hand, \eqref{eq.lem.rho.monoid.bcs} (applied to $a$ and $f\left(
b\right)$ instead of $b$ and $c$) shows that $a\left(  f\tup{b} 
s\right)  =f\tup{b}   \left(  as\right)$. In view of $bs=f\left(
b\right)  s$, we can rewrite this as $a\tup{bs}  =f\tup{b} 
\underbrace{\left(  as\right)  }_{=f\tup{a}   s}=f\tup{b} 
f\tup{a}   s$. Comparing this with $a\tup{bs}  =abs=f\left(
ab\right)  s$, we find $f\tup{ab}  s=f\tup{b}   f\left(
a\right)  s$. Therefore, \eqref{eq.lem.rho.monoid.free} (applied to
$c_{1}=f\tup{ab}$ and $c_{2}=f\tup{b}   f\tup{a} 
$) yields $f\tup{ab}  =f\tup{b}   f\tup{a} $. This
completes our proof of Lemma \ref{lem.rho.monoid}.
\end{proof}

\begin{proof}[Proof of Theorem \ref{dsn_kfsn_morph}.]
Recall the left action of
$S_n$ on $\CF$ introduced in Definition \ref{kfsn_defn}. We shall
need one more piece of notation:

For any permutation $w\in S_n$, we let $P_{w}$ be the face
\[
P_{w}
:= \left(  \set{w\tup{1}} ,\ \set{w\tup{2}}  ,
\ \ldots,\ \set{w\tup{n}}  \right)
\in\CF.
\]
In particular, $P_{\id} = \left(  \set{1},\ \set{2},
\ \ldots,\ \set{n}  \right)$. It is easy to see that
each permutation $w\in S_n$ satisfies
\begin{equation}
P_{w}=wP_{\id},
\label{eq.Fw.=wFid}
\end{equation}
where the action of $S_n$ on $\CF$ is the one from Definition
\ref{kfsn_defn}. Note that the $\kk$-module $\kf$ has
a linear left $S_n$-action (as we know from Definition \ref{kfsn_defn}), and
thus is a left $\ksn$-module. Furthermore, the
$\kk$-module $\kf$ is a left $\kfsn$-module
(by left multiplication, since $\kfsn$ is a subalgebra of $\kf$).
We shall use both of these module structures on $\kf$ in what follows.

Now, we recall that the family $\left(  \BBta\right)  _{\alpha\models n}$
is a basis of the $\kk$-module $\kfsn$ (by Proposition \ref{kfsn_prop2}). The
$\kk$-linear map $\rho:\dsn\to\kfsn$ (defined in Definition \ref{defn_rho})
sends the basis $\left(   \BBa \right)  _{\alpha\models n}$ of
$\dsn$ to the basis
$\left(  \BBta\right)  _{\alpha\models n}$ of $\kfsn$
(by its definition), and thus is a $\kk$-module
isomorphism (like any $\kk$-linear map that sends a basis to a basis).
Hence, it has a $\kk$-linear inverse $\rho^{-1} : \kfsn
\to \dsn$. We shall
view $\rho^{-1}$ as a $\kk$-linear map $\kfsn \to \ksn$
(since $\dsn$ is a $\kk$-submodule of $\ksn$).

We shall now attempt to apply Lemma \ref{lem.rho.monoid} to $B=\kfsn$ and $C=\ksn
$ (both viewed as monoids with respect to multiplication) and $f=\rho^{-1}$
and $S=\kf$ (where the monoids $B$ and $C$ act on $S$ since
$S=\kf$ is both a left $\kfsn$-module and a left $\ksn
$-module) and $s=P_{\id}\in\kf$. If we can
show that all the conditions of Lemma \ref{lem.rho.monoid} are satisfied for
these inputs, then Lemma \ref{lem.rho.monoid} will yield that $\rho^{-1}$ is a
monoid anti-morphism (with respect to multiplication), and this will quickly
finish our proof of Theorem \ref{dsn_kfsn_morph} (see below for the details).

We shall now show that the conditions of Lemma \ref{lem.rho.monoid} are
satisfied:

\begin{enumerate}
\item We claim that \eqref{eq.lem.rho.monoid.free} is satisfied. In other
words, we claim that
\[
\text{if }c_{1},c_{2}\in\ksn  \text{ satisfy }
c_{1}P_{\id} = c_{2}P_{\id}\text{, then }
c_{1} = c_{2}\text{.}
\]

\textit{Proof:} In other words, we must show that
the map $\ksn \to \kf,\ c\mapsto cP_{\id}$ is
injective. But this map is $\kk$-linear, and it sends the basis vectors
$w\in S_n$ of the $\kk$-module $\ksn$ to
the elements $wP_{\id}=P_{w}$ (by \eqref{eq.Fw.=wFid}), which
are distinct basis vectors of $\kf$ (since all the faces
$P_{w}$ for $w\in S_n$ are distinct) and therefore are $\kk$-linearly
independent. Thus, this map must be injective (since a $\kk$-linear map
that sends a basis of its domain to a $\kk$-linearly independent family
in its target must always be injective).
This proves that \eqref{eq.lem.rho.monoid.free} is satisfied.

\item We claim that \eqref{eq.lem.rho.monoid.bcs} is satisfied. In other
words, we claim that
\[
b\left(  cP_{\id}\right)  =c\left(  bP_{\id}\right)
\qquad \text{for all } b\in\kfsn \text{ and } c\in\ksn .
\]

\textit{Proof:} Let $b\in\kfsn$ and
$c\in\ksn$. We must prove the equality $b\left(
cP_{\id}\right)  =c\left(  bP_{\id}\right)$.
Note that this equality depends $\kk$-linearly on $c$. Hence, by
linearity, we can WLOG assume that $c$ is a permutation $w\in S_n$. Assume
this, and consider this $w$. Recall that $S_n$ acts on the monoid algebra
$\kf$ by $\kk$-algebra automorphisms (see the proof
of Proposition \ref{kfsn_prop1}); thus,
\[
w\left(  pq\right)  =\left(  wp\right)  \left(  wq\right)
\qquad\text{for all }p,q\in\kf.
\]
Applying this to $p=b$ and $q=P_{\id}$, we obtain $w\left(
bP_{\id}\right)  =\left(  wb\right)  \left(
wP_{\id}\right)$. But $b\in\kfsn$ and thus $wb=b$
(by the definition of $\kfsn$). Hence, $w\left(
bP_{\id}\right)  =\underbrace{\left(  wb\right)  }_{=b}\left(
wP_{\id}\right)  =b\left(  wP_{\id}\right)$.
Since $c=w$, we can rewrite this as $c\left(  bP_{\id}\right)
=b\left(  cP_{\id}\right)$. In other words, $b\left(
cP_{\id}\right)  =c\left(  bP_{\id}\right)$.
This proves that \eqref{eq.lem.rho.monoid.bcs} is satisfied.

\item We claim that \eqref{eq.lem.rho.monoid.rhobs} is satisfied.
In other words, we claim that
\[
bP_{\id}=\rho^{-1}\tup{b}   P_{\id}
\qquad\text{for each }b\in\kfsn.
\]

\textit{Proof:} Let $b\in\kfsn$. We
must show that $bP_{\id}=\rho^{-1}\tup{b} 
P_{\id}$.

Since this equality depends $\kk$-linearly on $b$, we can WLOG assume
that $b = \BBta$ for some composition $\alpha$ of $n$
(since we know from Proposition \ref{kfsn_prop2} that the family $\left(
 \BBta \right)  _{\alpha\models n}$ is a basis of
$\kfsn$). Assume this, and consider
this $\alpha$. Thus, $b = \BBta$. But the definition
of $\rho$ yields $\rho\tup{\BBa} = \BBta $.
Hence, $b = \BBta = \rho\tup{\BBa}$, so that
\[
\rho^{-1}\tup{b}   = \BBa
=\sum\limits_{\substack{w\in S_n;\\
\Des w\subseteq\gaps^{-1}\tup{\alpha}  }}w
\qquad \qquad \left(  \text{by the definition of
} \BBa \right)  .
\]
Hence,
\begin{align}
\rho^{-1}\tup{b}   P_{\id} &  =\left(  \sum
\limits_{\substack{w\in S_n;\\\Des w\subseteq
\gaps^{-1}\tup{\alpha}  }}w\right)
P_{\id}=\sum\limits_{\substack{w\in S_n;\\\Des %
w\subseteq\gaps^{-1}\tup{\alpha}
}}\underbrace{wP_{\id}}_{\substack{=P_{w}\\\text{(by
\eqref{eq.Fw.=wFid})}}}\nonumber\\
&  =\sum\limits_{\substack{w\in S_n;\\\Des w\subseteq
\gaps^{-1}\tup{\alpha}  }}P_{w} .
\label{pf.eq.lem.rho.monoid.rhobs.1}
\end{align}
On the other hand, from $b= \BBta =\sum
\limits_{\substack{F\in\CF;\\\type F=\alpha}}F$, we
obtain
\begin{equation}
bP_{\id}
= \left(  \sum\limits_{\substack{F\in\CF;\\
\type F=\alpha}}F\right)  P_{\id}
= \sum\limits_{\substack{F\in\CF;\\\type F=\alpha
}} FP_{\id}.
\label{pf.eq.lem.rho.monoid.rhobs.2}
\end{equation}
Let us now prove that the right hand sides of
\eqref{pf.eq.lem.rho.monoid.rhobs.1} and of
\eqref{pf.eq.lem.rho.monoid.rhobs.2} are the same.

Indeed, write the composition $\alpha$ as $\alpha=\left(  \alpha_{1}%
,\alpha_{2},\ldots,\alpha_{k}\right)$. Let $\left(  I_{1},I_{2},\ldots
,I_{k}\right)  \in\CF$ be the unique set composition of $\interval{n}$
that has type $\alpha$ and whose blocks $I_{1},I_{2},\ldots,I_{k}$
are intervals of $\interval{n}$ arranged from left to right. That is,
the block $I_{1}$ consists of the $\alpha_{1}$ smallest elements of
$\interval{n}$ (that is, $I_{1}= \interval{1,\alpha_1}$); the next block
$I_{2}$ consists of the $\alpha_{2}$ next-smallest elements of
$\interval{n}$ (that is, $I_{2}= \interval{ \alpha_{1}+1,\alpha_{1}+\alpha
_{2} }$); the next block $I_{3}$ consists of the $\alpha_{3}$
next-smallest elements of $\interval{n}$; and so on. Explicitly, these
blocks $I_{j}$ are given by
\[
I_{j}
= \interval{ \alpha_{1}+\alpha_{2}+\cdots+\alpha_{j-1}+1,
           \ \alpha_{1}+\alpha_{2}+\cdots+\alpha_{j} }
\qquad\text{for all } j \in \interval{k} .
\]
For example, if $n = 8$ and $\alpha = \tup{2,3,1,2}$, then
\[
\tup{I_1,I_2,\ldots,I_k}
= \tup{\set{1,2},\ \set{3,4,5},\ \set{6},\ \set{7,8}}.
\]

Note that the set $\gaps^{-1}\tup{\alpha}$
consists precisely of the numbers $\alpha_{1}+\alpha_{2}+\cdots+\alpha_{j}$
for all $j\in \interval{k-1}$; these numbers are the maxima of the
intervals $I_{1},I_{2},\ldots,I_{k-1}$ (whereas the maximum of $I_{k}$ is
$n$). Thus, the permutations $w\in S_n$ that satisfy
$\Des w \subseteq\gaps^{-1}\tup{\alpha}$ are
exactly those permutations $w\in S_n$ that have no descents except at the
maxima of the intervals $I_{1},I_{2},\ldots,I_{k-1}$ (and possibly not even at
those maxima); in other words, they are exactly those permutations $w\in
S_n$ that are increasing on each of the intervals $I_{1},I_{2}%
,\ldots,I_{k}$. Obviously, such a permutation $w$ is
uniquely determined by the \textbf{sets}
$w \tup{I_1},\ w \tup{I_2},\ \ldots, \ w \tup{I_k}$
(since $w$ must send the elements of each $I_j$ to the elements of
$w\tup{I_j}$ in increasing order), and these sets have respective sizes
$\alpha_{1}, \alpha_{2}, \ldots, \alpha_{k}$
(since $\card{w \tup{I_j}} = \card{I_j} = \alpha_j$
for each $j \in \interval{k}$)
and thus form a set composition $\left(
w \tup{I_1},\ w \tup{I_2},\ \ldots, \ w \tup{I_k}
\right)  \in\CF$ of type $\alpha$.
Thus, we can define an injective map
\begin{align*}
\Omal : \set{ w\in S_n\ \mid\ \Des w\subseteq
\gaps^{-1}\tup{\alpha} }   &
\to \set{ F\in\CF\ \mid\ \type F=\alpha } ,\\
w &  \mapsto\left(
w \tup{I_1},\ w \tup{I_2},\ \ldots, \ w \tup{I_k}
\right)  ,
\end{align*}
which sends each $w$ to the $k$-tuple formed of these sets $w\left(
I_{1}\right)  ,\ w\left(  I_{2}\right)  ,\ \ldots,\ w\left(  I_{k}\right)$.
This map $\Omal$ is a bijection, since any set composition $F
= \left(  F_{1},F_{2},\ldots,F_{k}\right)  \in\CF$ of type
$\alpha$ can be written as
$\left( w \tup{I_1},\ w \tup{I_2},\ \ldots, \ w \tup{I_k}
 \right)$ for a unique permutation $w\in S_n$ that satisfies
$\Des w\subseteq\gaps^{-1}\left(
\alpha\right)$ (namely, the permutation $w$ whose values on each interval
$I_{j}$ are the elements of $F_{j}$ in increasing order).
For example, if $n = 8$ and $\alpha = \tup{2,3,1,2}$ and
if $w = \ponl{3, 5, 1, 4, 8, 2, 6, 7}$, then
$\Omal \tup{w} = \tup{35, 148, 2, 67}$.

Next, we claim that the bijection $\Omal$ has the following
property: For any permutation $w\in S_n$ satisfying
$\Des w\subseteq\gaps^{-1}\tup{\alpha}$, we have
\begin{equation}
\Omal \tup{w}  P_{\id} = P_w .
\label{pf.eq.lem.rho.monoid.rhobs.4}
\end{equation}

[\textit{Proof of \eqref{pf.eq.lem.rho.monoid.rhobs.4}:}
Let $w\in S_n$ be a permutation satisfying
$\Des w\subseteq\gaps^{-1} \tup{\alpha}$. Then,
from $\Omal \left(  w\right)
=\left(  w\left(  I_{1}\right)  ,\ w\left(  I_{2}\right)
,\ \ldots,\ w\left(  I_{k}\right)  \right)$ (which follows from the
definition of $\Omal$) and $P_{\id} = \left(  \set{1}  ,
\ \set{2}  ,\ \ldots,\ \set{n}  \right)$,
we obtain
\begin{align*}
\Omal \tup{w}  P_{\id}
&  = \left(  w\left(
I_{1}\right)  ,\ w\left(  I_{2}\right)  ,\ \ldots,\ w\left(  I_{k}\right)
\right)  \left(  \set{1}  ,\ \set{2}  ,\ \ldots
,\ \set{n}  \right)  \\
&  =\left(  w\left(  I_{1}\right)  \cap\set{1}  ,\ \dots,\ w\left(
I_{1}\right)  \cap\set{n}  ,\ \dots,\ w\left(  I_{k}\right)
\cap\set{n}  \right)  ^{\red}
\end{align*}
(by the definition of the product on $\CF$). In other words,
$\Omal \tup{w}  P_{\id}$ is the set
composition whose blocks are the nonempty intersections of the form $w\left(
I_{j}\right)  \cap \set{p}$ for all $j\in \interval{k}$ and
$p\in\interval{n}$, in the order of lexicographically increasing
$\left(  j,p\right)$. Thus, each block of the set composition
$\Omal\tup{w}  P_{\id}$ has size $1$ (since it has
the form $w\left(  I_{j}\right)  \cap \set{p}$ for some
$j\in \interval{k}$ and $p\in \interval{n}$, and thus is a subset of
the $1$-element set $\set{p}$; but this shows that it has size
$0$ or $1$, and of course it cannot have size $0$ when it is nonempty, so that
it must then have size $1$).

So we have shown that $\Omal \tup{w}  P_{\id}$ is a
set composition of $\interval{n}$ whose each block has size $1$.
Therefore, the blocks of this set composition $\Omal \left(
w\right)  P_{\id}$ are $\set{1},\ \set{2},
\ \ldots,\ \set{n}$ in some order. We are now going
to identify this order. Namely, we shall show that the order in which these
blocks appear in $\Omal \tup{w}  P_{\id}$ is
$\set{w\tup{1}} ,\ \set{w\tup{2}}  ,\ \ldots,\ \set{w\tup{n}}$.
Once this is proved, it will follow that
\[
\Omal \tup{w}  P_{\id}
= \left(  \set{w\tup{1}}  ,\ \set{w\tup{2}},
\ \ldots,\ \set{w\tup{n}}  \right)  =P_{w},
\]
and thus \eqref{pf.eq.lem.rho.monoid.rhobs.4} will be proved.

So it remains to show that the order in which the blocks $\set{1}
,\ \set{2}  ,\ \ldots,\ \set{n}$ appear in
$\Omal \tup{w}  P_{\id}$ is $\set{w\tup{1}}  ,\ \set{w\tup{2}}
,\ \ldots,\ \set{w\tup{n}}$. Clearly, it suffices to
show that if $a,b\in\interval{n}$ are such that $a<b$, then the block
$\set{w\tup{a}}$ appears before\footnote{The word
``before''\ means ``someplace before'', not ``immediately
before''.} $\set{w\tup{b}}$ in
$\Omal \tup{w}  P_{\id}$. So we shall show this.

Fix two elements $a,b\in\interval{n}$ such that $a<b$. We must show
that
\begin{equation}
\set{w\tup{a}}  \text{ appears before }
\set{w\tup{b}}  \text{ in }\Omal \tup{w}
P_{\id}.
\label{pf.eq.lem.rho.monoid.rhobs.4.pf.goal}
\end{equation}

Recall that $\Des w \subseteq \gaps^{-1} \tup{\alpha}$;
thus, the permutation $w$ is increasing on each of
the intervals $I_{1},I_{2},\ldots,I_{k}$ (as we have seen above). We
distinguish between two cases:

\textit{Case 1:} The elements $a$ and $b$ belong to the same interval $I_{i}$
for some $i\in\interval{k}$.

\textit{Case 2:} The elements $a$ and $b$ belong to two different intervals
$I_{i}$. In other words, $a\in I_{i_{1}}$ and $b\in I_{i_{2}}$ for some
$i_{1}\neq i_{2}$ in $\interval{k}$.

Consider Case 1 first. In this case, $a,b\in I_{i}$ for some $i\in
\interval{k}$. Consider this $i$. Then, the permutation $w$ is increasing on
$I_{i}$ (since $w$ is increasing on each of the intervals $I_{1},I_{2}%
,\ldots,I_{k}$). Thus, from $a,b\in I_{i}$ and $a<b$, we obtain $w\left(
a\right)  <w\tup{b} $. Furthermore, from $a\in I_{i}$, we obtain
$w\tup{a}   \in w\left(  I_{i}\right)$, so that
$w \tup{I_i} \cap \set{w\tup{a}} = \set{w\tup{a}}$.
Likewise,
$w \tup{I_i} \cap \set{w\tup{b}} = \set{w\tup{b}}$.

Now, recall that $\Omal \tup{w}  P_{\id}$ is
the set composition whose blocks are the nonempty intersections of the form
$w\left(  I_{j}\right)  \cap \set{p}$ for all $j\in
\interval{k}$ and $p\in\interval{n}$, in the order of lexicographically
increasing $\tup{j, p}$.
% The equalities
% $w \tup{I_i} \cap \set{w\tup{a}} = \set{w\tup{a}}$
% and
% $w \tup{I_i} \cap \set{w\tup{b}} = \set{w\tup{b}}$
% show that both sets $\set{w\tup{a}}$ and
% $\set{w\tup{b}$ appear among these intersections,
% and in fact the former appears before the latter
% in $\Omal \tup{w} P_{\id}$
Hence, the intersection $w\left(
I_{i}\right)  \cap\set{w\tup{a}}$ appears before
$w\left(  I_{i}\right)  \cap\set{w\tup{b}}$ in this
set composition (since $w\tup{a}   <w\tup{b} $ and thus
$\tup{i, w\tup{a}} < \tup{i, w\tup{b}}$ in the
lexicographic order).
In other words, the block $\set{w\tup{a}}$ appears before
$\set{w\tup{b}}$ in this set composition
(since $w \tup{I_i} \cap \set{w\tup{a}} = \set{w\tup{a}}$
and $w \tup{I_i} \cap \set{w\tup{b}} = \set{w\tup{b}}$).
This proves
\eqref{pf.eq.lem.rho.monoid.rhobs.4.pf.goal} in Case 1.

Let us now consider Case 2. In this case, $a\in I_{i_{1}}$ and $b\in I_{i_{2}%
}$ for some $i_{1}\neq i_{2}$ in $\interval{k}$. Consider these
$i_{1}\neq i_{2}$. If we had $i_{1}>i_{2}$, then we would have $a>b$ (since
the intervals $I_{1},I_{2},\ldots,I_{k}$ are arranged from left to right, so
that any element of $I_{i_{1}}$ would be larger than any element of $I_{i_{2}}$),
which would contradict $a<b$. Hence, $i_{1}\leq i_{2}$, so that $i_{1}<i_{2}$
(since $i_{1}\neq i_{2}$). Furthermore, from $a\in I_{i_{1}}$, we obtain
$w\tup{a}   \in w\left(  I_{i_{1}}\right)$, so that $w\left(
I_{i_{1}}\right)  \cap\set{w\tup{a}}  = \set{w\tup{a}}$.
Likewise, $w\left(  I_{i_{2}}\right)  \cap \set{w\tup{b}}
= \set{w\tup{b}}$.

Now, recall that $\Omal \tup{w} P_{\id}$ is
the set composition whose blocks are the nonempty intersections of the form
$w\left(  I_{j}\right)  \cap \set{p}$ for all $j\in
\interval{k}$ and $p\in\interval{n}$, in the order of lexicographically
increasing $\left(  j,p\right)$. Hence, the intersection $w\left(  I_{i_{1}%
}\right)  \cap\set{w\tup{a}}$ appears before $w\left(
I_{i_{2}}\right)  \cap\set{w\tup{b}}$ in this set
composition (since $i_{1}<i_{2}$ and thus
$\tup{i_1, w\tup{a}} < \tup{i_2, w\tup{b}}$
in the lexicographic order). In other words, the
block $\set{w\tup{a}}$ appears before $\set{w\tup{b}}$
in this set composition. This proves
\eqref{pf.eq.lem.rho.monoid.rhobs.4.pf.goal} in Case 2.

We have now proved \eqref{pf.eq.lem.rho.monoid.rhobs.4.pf.goal}
in both cases;
thus, \eqref{pf.eq.lem.rho.monoid.rhobs.4.pf.goal} always holds.
As we explained above, this concludes the proof of
\eqref{pf.eq.lem.rho.monoid.rhobs.4}.]

Now, \eqref{pf.eq.lem.rho.monoid.rhobs.2} becomes
\[
bP_{\id}
= \sum\limits_{\substack{F\in\CF;\\
\type F=\alpha}}FP_{\id}
= \sum\limits_{\substack{w\in S_n;\\
\Des w\subseteq \gaps^{-1}\tup{\alpha}  }}
\Omal\tup{w}  P_{\id}
\]
(here, we have substituted $\Omal \tup{w}$ for $F$ in the
sum, since the map $\Omal$ is a bijection). Hence,
\[
bP_{\id}=\sum\limits_{\substack{w\in S_n;\\
\Des w\subseteq\gaps^{-1} \tup{\alpha}  }}
\underbrace{\Omal \tup{w}
P_{\id}}_{\substack{=P_{w}\\\text{(by
\eqref{pf.eq.lem.rho.monoid.rhobs.4})}}}
= \sum\limits_{\substack{w\in S_n;\\
\Des w\subseteq\gaps^{-1} \tup{\alpha} }}P_{w}.
\]
Comparing this with \eqref{pf.eq.lem.rho.monoid.rhobs.1},
we obtain $bP_{\id} = \rho^{-1}\tup{b}  P_{\id}$.
This proves that \eqref{eq.lem.rho.monoid.rhobs} is satisfied.
\end{enumerate}

Thus, all conditions of Lemma \ref{lem.rho.monoid} are satisfied.
Hence, Lemma \ref{lem.rho.monoid} shows that $\rho^{-1}$ (viewed as a map
$\kfsn\to\ksn$, where both $\kfsn$
and $\ksn$ are regarded as monoids with respect to
multiplication) is a monoid anti-morphism.
Hence, $\rho^{-1}$ is a
$\kk$-algebra anti-morphism (since $\rho^{-1}$ is $\kk$-linear).
Thus, its image $\operatorname{Im}\rho^{-1}$ is a $\kk$-subalgebra of
$\ksn$ (since the image of a $\kk$-algebra
anti-morphism is always a $\kk$-subalgebra of the target). Since
$\operatorname{Im}\rho^{-1}=\dsn$ (because $\rho^{-1}$ was
defined as the inverse of the map $\rho:\dsn\to\kfsn$),
we thus have shown that
$\dsn$ is a $\kk$-subalgebra of $\ksn$.
Furthermore, the map $\rho$ is the inverse of the
$\kk$-algebra anti-morphism $\rho^{-1}$, and thus is a
$\kk$-algebra anti-isomorphism itself. This completes the proof of Theorem
\ref{dsn_kfsn_morph}.
\end{proof}

\subsection{\label{subsec.omitted_proofs.perm-fix-basis}The fixed points of a permutation module}

Let us prove the following general fact (\cite[\S 3.3.1, \textquotedblleft Invariants of Permutation Representations\textquotedblright]{Lorenz18}) about permutation modules (which we have used in the proof of Proposition~\ref{kfsn_prop2}):

\begin{prop}
\label{prop.perm-fix-basis}
Let $G$ be a group, and let $X$ be a finite left $G$-set.
Consider the invariant subspace $\tup{\kk X}^G := \set{\xx \in \kk X : w\xx = \xx \text{ for all } w \in G}$ of the permutation module $\kk X$.

For each orbit $\mathcal{O}$ of the $G$-action on $X$, let $\sum_{x \in \mathcal{O}} x$ be the sum of all elements of this orbit in $\kk X$. This sum is called an \defin{orbit sum}.

Then, the invariant subspace $\tup{\kk X}^G$ (as a $\kk$-module) has a basis consisting of the orbit sums (i.e., of the sums $\sum_{x \in \mathcal{O}} x$ for each orbit $\mathcal{O}$ of the $G$-action on $X$).
\end{prop}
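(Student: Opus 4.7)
The plan is to decompose the finite set $X$ into its $G$-orbits $\mathcal{O}_1, \mathcal{O}_2, \ldots, \mathcal{O}_r$ (finitely many, since $X$ is finite) and then verify three things about the orbit sums $s_i := \sum_{x \in \mathcal{O}_i} x$: that they lie in $\tup{\kk X}^G$, that they are $\kk$-linearly independent, and that they span $\tup{\kk X}^G$.

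For membership in $\tup{\kk X}^G$, I would observe that every $w \in G$ permutes the elements of each orbit $\mathcal{O}_i$ (the $G$-action restricts to a transitive action on each orbit), so $w s_i = \sum_{x \in \mathcal{O}_i} wx = \sum_{y \in \mathcal{O}_i} y = s_i$ after reindexing. For linear independence, I would use the fact that the orbits form a set partition of $X$, so the orbit sums $s_1, s_2, \ldots, s_r$ are supported on pairwise disjoint subsets of the basis $X$ of $\kk X$; any nontrivial linear combination would therefore be a nonzero linear combination of distinct basis elements of $\kk X$, hence nonzero.

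The spanning step is the only one that requires a small calculation. Given $\xx = \sum_{x \in X} a_x x \in \tup{\kk X}^G$, the condition $w \xx = \xx$ for each $w \in G$ expands (after substituting $wx$ for $x$ in the sum) to $\sum_{x \in X} a_x \tup{wx} = \sum_{y \in X} a_{w^{-1} y} y$, and comparing coefficients of each basis element $y \in X$ yields $a_{w^{-1} y} = a_y$ for all $w \in G$ and $y \in X$. Hence $a_x$ depends only on the orbit of $x$; letting $c_i \in \kk$ denote the common value of $a_x$ for $x \in \mathcal{O}_i$, we get $\xx = \sum_{i=1}^r c_i s_i$.

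There is no real obstacle in this proof; the only subtle point is being careful with the indexing in the coefficient-comparison argument. The finiteness of $X$ is used only to ensure that the sum $\xx = \sum_{x \in X} a_x x$ makes sense with $a_x \in \kk$ (though the same proof works verbatim for arbitrary $X$ if one restricts to finitely supported formal sums, which is the standard convention for $\kk X$ anyway).
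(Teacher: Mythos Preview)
Your proposal is correct and follows essentially the same approach as the paper: both proofs verify the same three properties (invariance of orbit sums, linear independence via disjoint supports, and spanning via the observation that invariance forces coefficients to be constant on orbits) in the same way. Your coefficient-comparison step is stated slightly more explicitly than the paper's, but the arguments are otherwise identical.
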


\begin{proof}
This fact is proved by combining three simple observations:
	\begin{enumerate}
	\item Each orbit sum $\sum_{x \in \mathcal{O}} x$ belongs to $\tup{\kk X}^G$, because the action of a $w \in G$ on this sum merely permutes its addends: $w \sum_{x \in \mathcal{O}} x = \sum_{x \in \mathcal{O}} wx = \sum_{x \in w\mathcal{O}} x = \sum_{x \in \mathcal{O}} x$ because $w\mathcal{O} = \mathcal{O}$.
	\item If an element $\xx = \sum_{x \in X} \lambda_x x \in \kk X$ (with coefficients $\lambda_x \in \kk$) belongs to $\tup{\kk X}^G$, then every two elements $x,y \in X$ that lie in the same $G$-orbit have the same coefficient (i.e., satisfy $\lambda_x = \lambda_y$), because the action of an appropriately chosen $w \in G$ puts $\lambda_x$ in the position of $\lambda_y$.
	Thus, given such an element $\xx \in \tup{\kk X}^G$, we can rename its coefficients $\lambda_x$ as $\lambda_{Gx}$, where $Gx$ (as usual) denotes the $G$-orbit containing $x$.
	Hence, we can rewrite $\xx = \sum_{x \in X} \lambda_x x$ as $\xx = \sum_{x \in X} \lambda_{Gx} x = \sum_{\mathcal{O} \text{ is a $G$-orbit}} \lambda_{\mathcal{O}} \tup{\sum_{x \in \mathcal{O}} x}$, which shows that $\xx$ is a linear combination of orbit sums.
	Hence, the orbit sums span $\tup{\kk X}^G$.
	\item The orbit sums $\sum_{x \in \mathcal{O}} x$ are $\kk$-linearly independent, since any $\kk$-linear combination $\sum_{\mathcal{O} \text{ is a $G$-orbit}} \lambda_{\mathcal{O}} \tup{\sum_{x \in \mathcal{O}} x}$ can be rewritten as $\sum_{x \in X} \lambda_{Gx} x$ and thus cannot be $0$ unless all the $\lambda_{Gx}$ are $0$, that is, unless all the $\lambda_{\mathcal{O}}$ are $0$.
	\end{enumerate}
Thus, the orbit sums form a basis of $\tup{\kk X}^G$.
\end{proof}

\printbibliography

\end{document}